\documentclass[11pt,regno]{amsart}
\usepackage[normalem]{ulem}
\usepackage{euscript,epstopdf,amscd,amsgen,times}
\usepackage{amssymb,amsmath,amsthm,amssymb,latexsym,mathtools}
\usepackage{color}
\usepackage[dvipsnames]{xcolor}
\usepackage{graphicx}
\usepackage{mathrsfs}
\usepackage[abs]{overpic}

\makeatletter
\newcommand*\bigcdot{\mathpalette\bigcdot@{.5}}
\newcommand*\bigcdot@[2]{\mathbin{\vcenter{\hbox{\scalebox{#2}{$\m@th#1\bullet$}}}}}
\makeatother

\newtheorem*{notation*}{Notation}

\newtheorem*{proposition*}{Proposition}
\newtheorem*{theorem*}{Theorem}

\newtheorem*{Stheorem}{Stabilisation of cycles}
\newtheorem*{Rtheorem}{Robust tangencies}

\newtheorem*{Cav}{Caveat}

\newtheorem{defi}{Definition}[section]
\newtheorem{teo}[defi]{Theorem}

\newtheorem{prop}[defi]{Proposition}

\newtheorem{coro}{Corollary}[section]
\newtheorem{teor}{Theorem}[section]

\newtheorem{cl}[defi]{Claim}

\newtheorem{remark}[defi]{Remark}
\newtheorem{notation}[defi]{Notation}
\newtheorem{lemma}{Lemma}[section]

\usepackage{amsfonts}
\usepackage{amssymb}
\usepackage{color}

\numberwithin{equation}{section}
\DeclareMathOperator{\diff}{Diff}

\DeclareMathOperator{\loc}{loc}
\DeclareMathOperator{\V}{\mathcal{P}^{\mathrm s}_{Y}}

\def\ru{{\mathrm{u}}}
\def\ruu{{\mathrm{uu}}}

\def\rs{{\mathrm{s}}}
\def\rss{{\mathrm{ss}}}

\def\loc{{\mathrm{loc}}}

\def\bv{\mathbf{v}}

\def\ve{\varepsilon}
\def\puf{\rho}

\def\vpp{\mathrm{BH}}

\def\bfs{{\mathbf{s}}}

\def\BH{{\mathrm{BH}}}

\def\gesp{{\geqslant}}
\def\lesp{{\leqslant}}

\newcommand{\eqdef}{\stackrel{\scriptscriptstyle\rm def}{=}}

\makeatletter
\newcases{mycases}{\quad}{%
  \hfil$\m@th\displaystyle{##}$}{$\m@th\displaystyle{##}$\hfil}{\lbrace}{.}
\makeatother

\title[Stabilisation and robust tangencies of cycles]{
Nontransverse heterodimensional cycles:
stabilisation and robust tangencies}
\author[L. J. D\'iaz and S. A. P\'erez ]{Lorenzo J. D\'iaz and Sebasti\'an A. P\'erez}
\address{Departamento de Matem\'atica PUC-Rio, Marqu\^es de S\~ao Vicente 225, G\'avea, Rio de Janeiro 225453-900, Brazil}
\email{lodiaz@mat.puc-rio.br}

\address{Instituto de Matem\'aticas, Pontificia Universidad Cat\'olica de Valpara\'iso, Blanco Viel 596, Cerro Bar\'on, Valpara\'iso, Chile.}
\email{sebastian.perez.o@pucv.cl}

\begin{document}

\begin{abstract} 
We consider three-dimensional diffeomorphisms having simultaneously heterodimensional cycles 
and heterodimensional tangencies associated to saddle-foci. These cycles lead to a completely nondominated bifurcation setting.
For every $r\gesp 2$, we exhibit a class
of such diffeomorphisms whose heterodimensional cycles can be $C^r$ stabilised 
and  (simultaneously)  approximated by diffeomorphisms with $C^r$ robust homoclinic tangencies. 
The complexity of our nondominated setting with plenty of homoclinic and heteroclinic intersections
is used to overcome the difficulty of performing $C^r$ perturbations, $r\geqslant 2$, which are remarkably   more difficult than $C^1$ ones. 
Our proof is reminiscent of the Palis-Takens' approach to get
surface diffeomorphisms with infinitely many sinks 
(Newhouse phenomenon)
in the unfolding of homoclinic tangencies of surface diffeomorphisms. This proof
involves a scheme of renormalisation along nontransverse heteroclinic orbits converging to a center-unstable H\'enon-like family displaying blender-horseshoes. A crucial step is the analysis of the
embeddings of these blender-horseshoes in a nondominated context.

\end{abstract}

\thanks{This paper is part of the PhD thesis of SP (PUC-Rio) supported by CNPq and CAPES - Finance Code 001 (Brazil).
LJD was partially supported by INCTMat-Faperj and CNPq (Brazil). SP was supported by Programa Postdoctorado FONDECYT 3190174 (Chile) and partially 
supported by CMUP grant PTDC/CTM/BIO-4043-2014 -- project UID/MAT/00144/2019 (Portugal).
The authors  thank the hospitality of 
CMUP,
PUC (Chile), and USACH (Chile).}

\keywords{Blender-horseshoe,
Center-unstable H\'enon-like family,
Heterodimensional cycle and tangency,
Homoclinic tangency,
Renormalisation scheme,
Stabilisation of a cycle.}
\subjclass[2020]{Primary: 37C20. Secondary: 37C29, 37D20, 37D30}

\maketitle

\begin{flushright}
To Jacob Palis, in the occasion of his 80th birthday
\end{flushright}

\section{Introduction}
\label{s.intro1}
Palis' density  conjecture  \cite{Pal:00} claims that bifurcations through \emph{cycles} (either homoclinic tangencies or
heterodimensional cycles) associated to 
\emph{saddles}
(hyperbolic periodic points)
are the main mechanisms for destroying hyperbolic dynamics:
any
nonhyperbolic
system can be approximated by diffeomorphisms displaying one of
those bifurcations.
A homoclinic tangency associated to a saddle occurs when the invariant 
(stable and unstable)
sets of a saddle have a nontransverse intersection. 
A heterodimensional cycle associated with a pair of saddles of different 
 \emph{indices} (dimension of the unstable bundle) 
 occurs when the invariant sets of these saddles intersects cyclically.  
 Note that heterodimensional cycles can only occur in dimension at least three and
that there are  settings (as the one in this paper) where both types of bifurcations
 occur simultaneously with overlapping effects.

As a consequence of the Kupka-Smale genericity theorem\footnote{Periodic points of generic diffeomorphisms are hyperbolic and their invariant manifolds
are in general position (i.e., either they  intersect transversely or they are disjoint).},
a cycle associated to saddles is a fragile configuration that can be destroyed by small perturbations.
However, these configurations can become  robust (indestructible by small perturbations) when these saddles are embedded  in some special type of horseshoes. Hence, it is natural to consider also heterodimensional cycles and tangencies associated to (basic) hyperbolic sets (for the precise definition see Section~\ref{ss.cycles}).
One aims to understand when a bifurcation through a fragile cycle associated to saddles can lead to such robust cycles.

Bonatti\footnote{Formulated in Bonatti's  talk 
{\emph{The global dynamics of $C^1$ generic diffeomorphisms or flows},} in the
Second Latin American Congress of Mathematicians, Canc\'un, M\'exico (2004). See also \cite{Bon:11}.}
stated a stronger version of Palis' conjecture using robust cycles:
the union of the $C^r$ open sets of hyperbolic diffeomorphisms (satisfying the Axiom A and the no-cycles properties) 
and of diffeomorphisms with  $C^r$ robust cycles is dense in the space of $C^r$ diffeomorphisms,
see  \cite[Conjecture 1.10]{BonDia:12}. 
For results and recent progress in the previous conjectures, see ~\cite{PujSam:00, CroPuj:15,CroSamYan:15} for the Palis' one and
~\cite{New:70, Mor:11,Asa:08,BonDia:12} for Bonatti's one. Some of these results will be discussed below.

The latter conjecture has several motivations, one of them comes from the study of 
global dynamics of diffeomorphisms when considering
 the decomposition
of the chain recurrence set into its chain of recurrence classes. 
Note first that two saddles involved in a cycle are always in the same class of recurrence.
One aims to put these saddles robustly into the same class. If such saddles are contained in a pair of transitive hyperbolic sets involved in a robust cycle then the continuations
of the hyperbolic sets (and hence the 
ones of the initial saddles) are also in the same class of recurrence. This gives a way to put saddles with different indices  into prescribed recurrence classes. 
This process is known as {\emph{stabilisation of a cycle}}.
More precisely, a heterodimensional cycle of a $C^r$ diffeomorphism  $f$
associated to saddles $P$ and $Q$ can be {\emph{$C^r$ stabilised}} if there are
diffeomorphisms arbitrarily
$C^r$ close to $f$ with a $C^r$ robust cycle associated to transitive hyperbolic sets 
containing the continuations of $P$ and $Q$.
The stabilisation of a homoclinic tangency associated to a saddle is defined analogously.

The stabilisation of cycles depends on the type of cycle, differentiability, and dimension. To avoid  technicalities,  we will restrict our discussion to
dimensions two and three\footnote{This allows us to skip the technical discussion 
of the so-called {\emph{coindex}} of a heterodimensional cycle, since in dimension three the coindex is always one. For phenomena that may occur  in higher dimensions, as
for instance robust tangencies of large codimension, we refer to \cite{BarKiRai:14, BarRai:17}.}.
We first consider homoclinic tangencies. For surface diffeomorphisms this question is completely solved:
  there are no $C^1$ robust tangencies and hence
no homoclinic tangency can be
$C^1$ stabilised, \cite{Mor:11}. On the other hand,
if $r\geqslant 2$ then 
every  such  a tangency can be 
$C^r$ stabilised, \cite{New:}. 
In dimension three, a combination of \cite{New:,Rom:95,PalVia:94} and the theory of normal 
hyperbolicity
implies that,
every $C^r$ homoclinic tangency can be 
$C^r$ stabilised for $r\geqslant 2$. In the $C^1$ case, the stabilisation of homoclinic tangencies
involves geometrical constraints
 and, in general, it is not known which tangencies 
can be stabilised (see also 
\cite{BonCroDiaGou:13}).
For instance, combining normally hyperbolic surfaces and \cite{Mor:11}, one can get
 homoclinic
tangencies that cannot be  $C^1$ stabilised, see also \cite[Sections 4.3--6]{Bon:11}.

Consider now heterodimensional cycles.
First, every three-dimensional heterodimensional cycle leads to $C^1$ robust cycles \cite{BonDia:08}, although these 
cycles may be not related to the saddles in the initial cycle. In \cite{BonDia:12} there is given a class of heterodimensional cycles that cannot be $C^1$
stabilised ({\emph{twisted} cycles).  Finally, in \cite{BonDiaKir:12} it is proved that every nontwisted  cycle
can be $C^1$ stabilised. 
The techniques used in these works are genuinely $C^1$. 
Due to the absence of suitable tools, the stabilisation problem  in higher differentiability is widely open.

To explain our results, we recall that, in dimension three, two saddles with different indices have a {\emph{heterodimensional tangency}} if their two dimensional invariant manifolds have some nontransverse intersection.
These tangencies were introduced in \cite{DiaNogPuj:06} as a source of robustly nondominated/wild dynamics, see also \cite{KirSom:12,BarPer:19}.
In this paper, we consider a class of three-dimensional $C^r$ diffeomorphisms 
whose heterodimensional cycles involve heterodimensional  tangencies (see Figure~\ref{fig:S}).
For every $r\geqslant 2$,
 we state the $C^r$ stabilisation of such cycles and  show that they also provide $C^r$ robust homoclinic tangencies,
 see the Stabilisation  and Robust tangencies theorems below. Let us now provide further details of our statements.

\begin{figure}[h]
\begin{overpic}[scale=0.05]{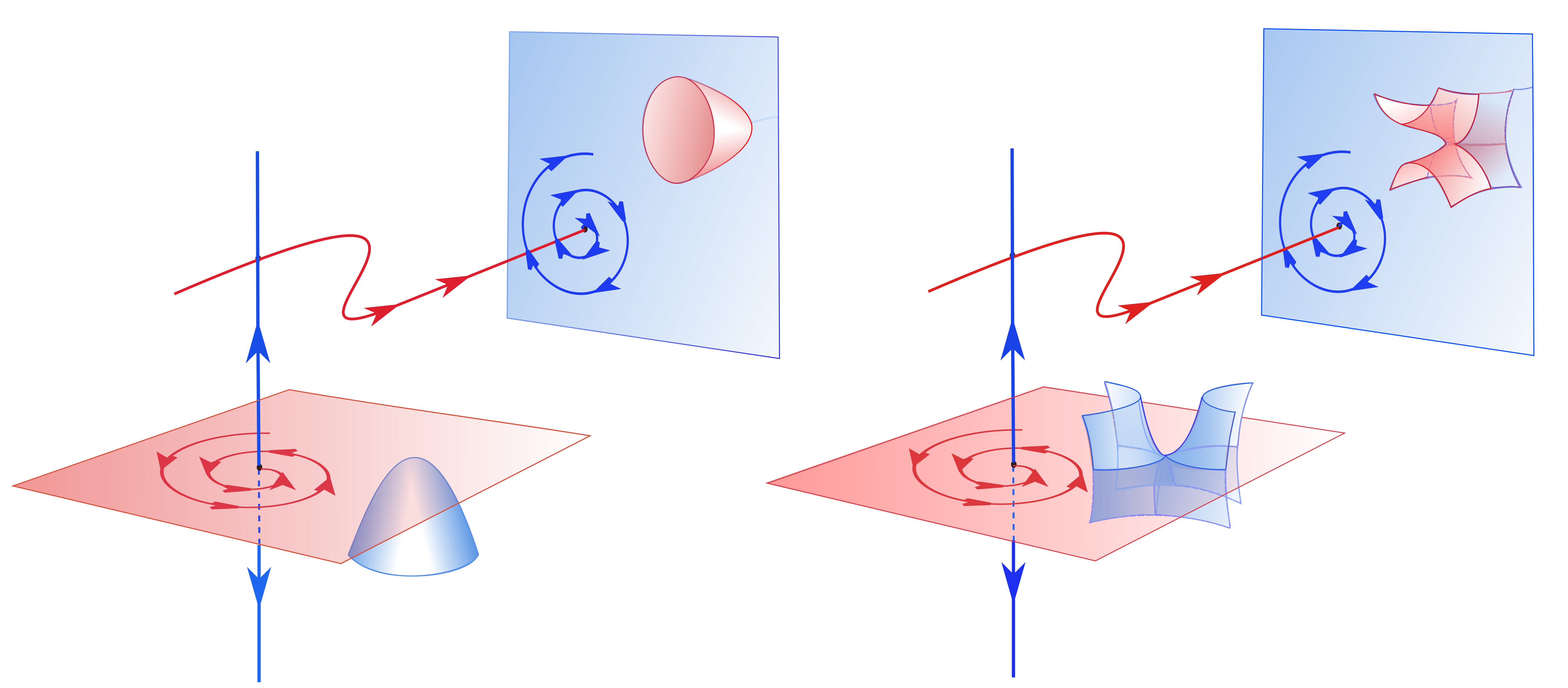} 
 	\put(30,50){{\large$Q$}}
  	\put(112,70){{\large$P$}}
 	\put(160,50){{\large$Q$}}
 	\put(244,70){{\large$P$}}
 \end{overpic}
\caption{Heterodimensional cycles with heterodimensional tangencies.}
\label{fig:S}
\end{figure}
Let $M$ be a three-dimensional compact manifold.
We consider a set 
$\mathcal{H}^r_{{\BH}}(M)$ of $C^r$ diffeomorphisms of $M$ having a
heterodimensional cycle with a heterodimensional tangency associated to saddle-foci
$P$ and $Q$ of indices two and one satisfying the following conditions: 
\begin{itemize}
\item
Linearising assumptions at $P$ and $Q$ and spectral conditions 
implying some sort of locally dissipative behaviour (Section~\ref{ss.linear}).
\medskip
\item
The one-dimensional invariant manifolds 
$W^{\mathrm s}(P,f)$ and $W^{\mathrm u}(Q,f)$ have a quasi-transverse intersection
along the orbit of some point $X$ and the 
two dimensional invariant manifolds 
$W^{\mathrm u}(P,f)$ and $W^{\mathrm s}(Q,f)$ 
have a heterodimensional tangency 
along the orbit of some point $Y$. This tangency may be of hyperbolic or elliptic type (Section~\ref{ss.commentsonthegeometry}).
The type of tangency plays an important role in the resulting dynamics.
\medskip
\item
Conditions on the ``transitions''  from $P$ to $Q$
and from $Q$ to $P$ 
along
the orbits of the heteroclinic points $X$ and $Y$ (Section~\ref{ss.semilocal}).
\end{itemize}
\medskip
The precise description of the set $\mathcal{H}^r_{\BH}(M)$ is given in
Section~\ref{s:bif}.  Our main results are the following, see Theorem~\ref{t:1} for further details.

\begin{Stheorem}
Let  $r\geqslant 2$. Any cycle in 
$\mathcal{H}^r_{\BH}(M)$
can be 
$C^r$ stabilised.
\end{Stheorem}


The next result deals with diffeomorphisms in $\mathcal{H}^r_{\BH}(M)$ whose heterodimensional tangency
is of elliptic type  (see the lefthand side of Figure~\ref{fig:S}).
This leads to the definition of the subset
$\mathcal{H}^r_{\BH, \mathrm{e}^+}(M)$ of $\mathcal{H}^r_{\BH}(M)$, see Section~\ref{ss.commentsonthegeometry}  for the precise definition and a discussion.

\begin{Rtheorem} Let $r\geqslant 2$. Every diffeomorphism in
$\mathcal{H}^r_{\BH, \mathrm{e}^+}(M)$
can be 
$C^r$ approximated by diffeomorphisms with a $C^r$ robust homoclinic tangency associated to a basic set containing the continuation of the saddle-focus of index two.
\end{Rtheorem}

\subsection{Our approach: a renormalisation scheme leading to blender-horseshoes}
To explain the strategy of the proof of our results let us first recall the approach  in {\cite[Chapter~6]{PalTak:93}}%
\footnote{In \cite{PalTak:93} it is proved the generic coexistence of infinitely many sinks,
in this proof the occurrence of robust tangencies is a key step. In this homoclinic case, these robust tangencies imply the stabilisation of the tangency, defined similarly as in the case of a cycle.}
to stabilise homoclinic tangencies of $C^2$ diffeomorphisms.
The construction in \cite{PalTak:93} has  the following main
  ingredients: {\bf{(a)}}  a renormalisation scheme at a homoclinic tangency,
{\bf{(b)}} convergence of the scheme to a quadratic one-parameter family, 
{\bf{(c)}} existence of parameters of the family corresponding to 
{\emph{thick horseshoes}} (horseshoes with 
large ``fractal-like dimension"), and {\bf{(d)}}
control of the localisation of the thick horseshoe guaranteeing  
  that
it is   {\em{homoclinically related}}\footnote{Two hyperbolic sets with the same index are \textit{homoclinically related} if their invariant manifolds intersect   cyclically and transversally.} to the continuation of the initial saddle.
A key property in this approach is that thick  horseshoes
 are $C^2$ robust, thus their existence for the limit map extends to nearby systems. 
 
Our strategy to get the $C^r$ stabilisation 
of cycles in $\mathcal{H}^r_{\BH}(M)$ translates the ideas of \cite{PalTak:93} to a heterodimensional setting following the approach started in  \cite{DiaKirShi:14}. In our construction, the
ingredients 
{\bf{(a)}}--{\bf{(d)}} above
are replaced by:
{\bf{(a')}}  a renormalisation scheme at a heterodimensional tangency,
{\bf{(b')}} convergence of the scheme to a center-unstable H\'enon-like family, 
{\bf{(c')}} existence of parameters corresponding to 
{\emph{blender-horseshoes,}}
{\bf{(d')}} prove that the blender-horseshoes are homoclinically related to the initial saddle of index two
and have a robust cycle with the initial saddle of index one.
Let us observe that, in very rough terms,
blender-horseshoes are local hyperbolic plugs used to get robust heterodimensional cycles,
where they play a role similar to the one of the thick horseshoes for homoclinic tangencies, 
see Section~\ref{s.BH} for details.
As above, a key step  is to
analise how the blender-horseshoes are embedded in 
the global dynamics.   

Here we rely on preliminary results in  \cite{DiaPer:19,DiaPer:19b} towards the development of the strategy {\bf{(a')}}--{\bf{(d')}}. 
In our context, the ``limit'' family is the center-unstable H\'enon-like family given by
\[
G_{\varpi}(x,y,z) \eqdef (y,\mu+y^2+\eta_1\,y\,z+\eta_2\,z^2,\xi\,z+y), \quad \varpi= (\xi, \mu,  \eta_1, \eta_2).
\]
For diffeomorphisms in $\mathcal{H}^r_{\BH}(M)$, 
the renormalisation scheme  and its convergence to the family $G_{\varpi}$ (steps   {\bf{(a')}}  and {\bf{(b')}}).
were obtained in \cite{DiaPer:19}. 
A crucial property  (step  {\bf{(c')}})
is that there is an open set $\mathcal{O}_{\mathrm{BH}}$ of parameters $\varpi$ for which the family $G_\varpi$ exhibits blender-horseshoes, see
\cite{DiaPer:19b}.
In the final step {\bf{(d')}},
we
analyse how these blender-horseshoes are embedded in the global dynamics
(the blender-horseshoe is homoclinically related
to $P$ and has a robust cycle with $Q$). 
This is a major difficulty in our nondominated setting. 
It turns out that  the lack of domination is simultaneously a difficulty and, in some sense, an advantage.
First, the existence of nonreal 
multipliers   makes the renormalisation scheme and the ``existence and localisation" of blenders 
a difficult task.
On the other hand, the dynamics at the bifurcation is very rich and, in particular, enables us to find new
 homoclinic and heteroclinic orbits close to the initial cycle. As a heuristic principle,  
 this richness allows us to overcome the difficulty of performing $C^r$ perturbations, $r\geqslant 2$, which are notably  more problematic than $C^1$ ones. 

The lack of domination also means that there are plenty of possibilities for unfolding  the cycles involving many parameters. 
For instance, comparing with the setting of homoclinic tangencies where any transverse direction of unfolding behaves in the same way,
the lack of domination implies that any direction of unfolding is different. Thus we have eight natural parameters: 
six parameters corresponding to the unfolding
of the nontransverse intersections (three for the heterodimensional tangency and three for the quasi-transverse heteroclinic intersection),  and  two parameters associated to the arguments of the saddle-foci, see Section~\ref{ss.embeddingfamily}. We  see that  ``unfoldings  following appropriate directions" 
lead to robust cycles. However, the complexity of these cycles is huge  and a  complete 
description of the bifurcations is beyond reach. 
 
We now recall some definitions
and state  precisely our results.

\subsection{Stabilisation of cycles and robust tangencies:
precise statements} \label{ss.cycles}  
Let $M$ be a compact boundaryless manifold. 
Let $\mathrm{Diff}^r(M)$ be the space
of $C^r$ diffeomorphisms of $M$ endowed with the $C^r$ uniform topology. Consider $f\in \mathrm{Diff}^r(M)$ and
$\Lambda_f$ a hyperbolic {\emph{transitive set}} (i.e. with a dense orbit) 
 of $f$. Recall that there is a $C^r$ neighbourhood
$\mathcal{U}_f$ of $f$ such that every $g\in \mathcal{U}_f$  has a hyperbolic set $\Lambda_g$ that is 
topologically conjugate to $\Lambda_f$  
called the 
{\emph{continuation}} of $\Lambda_f$. The \emph{index} 
of $\Lambda_f$ is the dimension of its unstable bundle (by transitivity, this number is well defined).

Consider  $f\in \mathrm{Diff}^r(M)$ having a pair of  transitive  hyperbolic sets
$\Lambda_f$ and $\Upsilon_f$
  with different indices. These sets form 
 a {\emph{heterodimensional cycle}} if their invariant stable and unstable
 sets intersect cyclically, i.e.,
 $W^\rs( \Lambda_f)\cap W^\ru( \Upsilon_f)\ne\emptyset$
 and  $W^\ru( \Lambda_f)\cap W^\rs( \Upsilon_f)\ne\emptyset$.
 This cycle is {\emph{$C^r$ robust}} if there is a  $C^r$ neighbourhood
$\mathcal{U}_f$ of $f$ consisting of diffeomorphisms $g$
such that
the sets
$\Lambda_g$ and $\Upsilon_g$ have a heterodimensional cycle.
The notion of a {\emph{$C^r$ robust homoclinic tangency}}
associated to $\Lambda_f$ is stated similarly:
there is a  $C^r$ neighbourhood
$\mathcal{U}_f$ of $f$ such that for every $g\in \mathcal{U}_f$ 
the  invariant stable and unstable sets of $\Lambda_g$ have some nontransverse
intersection. 
Recall that robust cycles cannot be
associated to  {\emph{trivial}} hyperbolic sets (i.e., periodic orbits).

A heterodimensional cycle of a $C^r$ diffeomorphism  $f$
associated to saddles $P_f$ and $Q_f$ can be {\emph{$C^r$ stabilised}} if there are
diffeomorphisms $g \in \mathrm{Diff}^r(M)$ arbitrarily
$C^r$ close $f$ with a $C^r$ robust cycle associated to transitive hyperbolic sets 
$\Lambda_g$ and $\Upsilon_g$ containing the continuations $P_g$ and $Q_g$, respectively.

Our main result is the following theorem.

\begin{teor}\label{t:1} Let $r\geqslant 2$ and $M$  be a compact boundaryless three-dimensional manifold. Given
$f\in \mathcal{H}^r_{\BH}(M)$, with a cycle associated to saddle-foci $P_f$ and $Q_f$
of indices two and one, there are diffeomorphisms $g$ arbitrarily $C^r$ close to 
$f$ 
 with a blender-horseshoe $\Lambda_{g}$ 
 of  index two  such that:
 \begin{itemize}
 \item [(i)] 
 $\Lambda_{g}$ and $Q_{g}$ has a  $C^r$ robust heterodimensional cycle
 and
\item [(ii)] 
$\Lambda_{g}$ and $P_{g}$ are homoclinically related. 
\end{itemize}
Moreover, if $f\in \mathcal{H}^r_{\BH,\mathrm{e}^+}(M)$ then the blender-horseshoe $\Lambda_g$ has a $C^r$ robust homoclinic tangency.
\end{teor}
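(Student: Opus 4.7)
The plan is to execute rigorously the four-step scheme (a')--(d') of the introduction, building on the preparatory work of \cite{DiaPer:19} (renormalisation) and \cite{DiaPer:19b} (blender-horseshoes for $G_\varpi$). First I would embed $f\in\mathcal{H}^r_{\BH}(M)$ into the eight-parameter unfolding family of Section~\ref{ss.embeddingfamily}: three parameters unfolding the heterodimensional tangency along the orbit of $Y$, three for the quasi-transverse intersection along the orbit of $X$, and the two arguments of the saddle-foci $P$ and $Q$. Working in the linearising coordinates near $P$ and $Q$, and considering first-return maps that visit a neighbourhood of $P$ then of $Q$ a large number $n$ of times, the renormalisation scheme of \cite{DiaPer:19} yields, after suitable rescaling, a one-to-one correspondence between the large parameters of the family and the ``small'' parameters $\varpi=(\xi,\mu,\eta_1,\eta_2)$ of the center-unstable H\'enon-like family
\[
G_{\varpi}(x,y,z)=(y,\,\mu+y^2+\eta_1 y z+\eta_2 z^2,\,\xi z+y),
\]
the rescaled returns converging in $C^r$ to $G_{\varpi}$ as $n\to\infty$.

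\textbf{Producing the blender-horseshoe.} Fix a parameter $\varpi$ in the open set $\mathcal{O}_{\BH}$ of \cite{DiaPer:19b} for which $G_\varpi$ exhibits a blender-horseshoe $\Lambda_\varpi$ of index two. Since being a blender-horseshoe is an open property in the $C^r$ topology, for every large $n$ the corresponding diffeomorphism $g=g_n$, arbitrarily $C^r$-close to $f$, has a blender-horseshoe $\Lambda_g$ of index two supported in a small neighbourhood of a point in the orbit of $Y$. The continuations $P_g$ and $Q_g$ remain well-defined saddle-foci close to $P_f$ and $Q_f$.

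\textbf{Embedding of $\Lambda_g$ and verification of (i) and (ii).} The crucial step is to show that $\Lambda_g$ is embedded in the ambient dynamics with the correct intersection properties. For (i), the two-dimensional manifold $W^\rs(Q_g)$ intersects $W^\ru(\Lambda_g)$ transversely near a perturbation of the quasi-transverse point $X$, yielding a $C^r$-robust intersection. In the opposite direction, the one-dimensional arc of $W^\ru(Q_g)$ is pushed, via the transition through a neighbourhood of $P_g$ and using the saddle-focus rotation coming from the argument of $P_g$, into the superposition region of $\Lambda_g$ as a member of the $C^r$-open admissible family that defines the blender-horseshoe; thus $W^\ru(Q_g)$ meets $W^\rs(\Lambda_g)$ robustly. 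Combined, these give the robust cycle. For (ii), since the renormalisation is performed along a heteroclinic arc in $W^\ru(P_f)\cap W^\rs(Q_f)$, the two-dimensional $W^\ru(P_g)$ is a graph over the center-unstable direction spreading across the renormalisation box, containing a disc of the admissible family and therefore meeting $W^\rs(\Lambda_g)$ transversely; symmetrically, $W^\rs(P_g)$ crosses $W^\ru(\Lambda_g)$ transversely using the rotational spreading by the argument of $Q_g$ to guarantee the correct geometric position. This yields the homoclinic relation in (ii).

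\textbf{Robust tangencies in the elliptic case and main obstacle.} When $f\in\mathcal{H}^r_{\BH,\mathrm{e}^+}(M)$, the elliptic type of the tangency is preserved by the renormalisation, so the limit family $G_\varpi$ contains, inside $\mathcal{O}_{\BH}$, parameters at which the invariant two-dimensional center-stable and center-unstable laminations of $\Lambda_\varpi$ meet tangentially; applying a Newhouse-type thickness argument adapted to these persistent invariant laminations of the blender produces, after a further arbitrarily small $C^r$-perturbation, a $C^r$-robust homoclinic tangency for $\Lambda_g$. The main obstacle throughout is step (d'): in the fully nondominated regime with nonreal multipliers, controlling how the invariant manifolds of the new blender actually meet those of $P_g$ and $Q_g$ (rather than some auxiliary saddle with the wrong continuation) requires a delicate geometric bookkeeping. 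It is precisely the rotational spreading due to the saddle-foci arguments, unavailable in any dominated setting, that makes this control possible and in fact leverages the nondominated complexity to the argument's advantage.
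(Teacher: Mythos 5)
Your proposal reproduces the paper's announced scheme (a')--(d') faithfully through the blender-production step, but at step (d') --- which is where essentially all of the work lies --- it asserts the three required intersection properties rather than proving them, and the mechanisms you sketch are not the ones that make the argument go through. Concretely: (1) you skip the preliminary perturbation (Proposition~\ref{p.Lnbis}) that creates the extra quasi-transverse heteroclinic points $X_{1,\ve},X_{2,\ve}\in W^{\rs}(P)\cap W^{\ru}(Q)$ and the transverse homoclinic points $Z^\pm_\ve$ of $Q$; both the one-dimensional connection (Proposition~\ref{p.1dim}, whose proof is a quantitative comparison of renormalised iterates along a disc through $X_{1,\ve}$, with an auxiliary independent unfolding $\theta_{\ve,k}$) and the two-dimensional connection (Proposition~\ref{p.why}, whose endpoint is a transverse hit of a disc of $W^{\rs}(Q)$ centred at $Z^-_\ve$) depend on these new orbits, so ``pushed via the transition and the saddle-focus rotation into the superposition region'' is not a proof. (2) For the two-dimensional intersection you give no control of the \emph{size} of $W^{\ru}(\Lambda_g)$; this is the main difficulty in the nondominated setting and occupies Section~\ref{s.proofof2} (diagonal foliation, convergence to parabolas, the angle/expansion Lemma~\ref{l.maiNiam}, and exponential growth of the strong unstable separatrix of $P^+$). (3) Your route to (ii) --- that $W^{\ru}(P_g)$ contains a disc of the admissible family crossing the renormalisation box --- is unjustified: near the heterodimensional tangency $W^{\ru}(P)$ is tangent to $W^{\rs}(Q)$, and in the elliptic case its iterates produce $\ruu$-\emph{tubes} (which yield tangencies, not transverse superposition discs). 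The paper instead deduces (ii) from (ia)+(ib) together with the accelerating-angles Lemma~\ref{l.closure}, exploiting the irrational rotation at the saddle-foci.

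The robust-tangency part is the clearest gap: you invoke ``a Newhouse-type thickness argument adapted to the invariant laminations of the blender,'' but no such tool is available (thickness/gap arguments are $C^2$ surface-Cantor-set technology and are not what gives robustness here), and you do not use the elliptic geometry in the way that is actually needed. The paper's mechanism is different and essential: ellipticity gives a closed intersection curve of $W^{\ru}(P)\cap W^{\rs}(Q)$ bounding a disc whose forward iterates accumulate on $W^{\ru}(Q)$; since $W^{\ru}(Q)$ contains a disc in the superposition region (Lemma~\ref{l.adiskinthesuperpositionregion}), these iterates contain $\ruu$-tubes in-between, and Proposition~\ref{p.tang}/Corollary~\ref{c.tang} (via the folding-manifold argument of \cite{BonDia:12}) force a $C^r$ robust tangency between $W^{\ru}(P)$ and $W^{\rs}_{\loc}(\Upsilon)$, which becomes a robust homoclinic tangency of the blender because $P$ and $\Upsilon$ are homoclinically related. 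Without the preliminary heteroclinic/homoclinic points, the quantitative renormalisation estimates, and the tube/folding-manifold mechanism, the proposal does not establish the theorem.
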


\subsection{Steps of the proofs}\label{ss.MR}
We now explain the steps of the proof of Theorem~\ref{t:1}.
Consider $f \in \mathcal{H}^r_{\BH}(M)$ with a cycle 
associated to saddle-foci $P_f$ and $Q_f$  as in the theorem.
A preliminary step is to perturb the original cycle to obtain a new diffeomorphism in $\mathcal{H}^r_{\BH}(M)$
(that continue denoting by $f$)
having transverse homoclinic points and  new additional quasi-transverse heteroclinic points associated to $P_f$  and $Q_f$ 
 (see Proposition \ref{p.Lnbis}).
We can now apply the renormalisation scheme to this new cycle,
getting diffeomorphisms $g$ arbitrarily $C^r$ close to $f$ 
whose dynamics in a neighbourhood of the cycle is close to a  H\'enon-like map 
$G_{\varpi}$ with $\varpi\in \mathcal{O}_{\mathrm{BH}}$. By Proposition~\ref{p.blendersparatodos},
 each diffeomorphism $g$ has a
blender-horseshoe $\Lambda_g$ of index two. 
We will  see that the following  holds:

\smallskip

\noindent (ia)  {\em{The two-dimensional manifolds 
$W^\ru(\Lambda_g,g)$ and $W^{\rs}(Q_g,g)$ intersect transversely,}} see Proposition~\ref{p.why}. 
The difficulty of this step is to control the size of the unstable manifold of $\Lambda_g$,
assuring that  it is
sufficiently ``large" so that it is connected to  the stable manifold of $Q_g$. 
We overcome this difficulty with an analysis 
motivated by the constructions in \cite[Section 6.4]{PalTak:93} for homoclinic tangencies of
surface diffeomorphisms.
 
 \smallskip

\noindent (ib)  {\em{The one-dimensional manifolds 
  $W^\rs(\Lambda_g,g)$ and $W^{\ru}(Q_g,g)$ have  nonempty intersection,}}
  see Proposition~\ref{p.1dim}. This step is inspired by
  \cite[Theorem 1.4]{DiaKirShi:14} (see Remark~\ref{r.kirikishinohara} for a discussion) 
  and involves quantitative aspects of the renormalisation scheme in \cite{DiaPer:19}.
  In this step the new quasi-transverse heteroclinic points above play an important role.
  
  \smallskip

\noindent (ii)  {\em{The saddle $P_g$ and the blender-horseshoe $\Lambda_g$ are homoclinically related,}}
see Proposition~\ref{p.homoclinicallyrelated}. This step is a relatively simple consequence of (ia) and (ib)
where the existence of transverse homoclinic points of $Q_f$ is used.

\medskip

Conditions (ia) and  (ii) are $C^r$ open, $r\geqslant 1$, while (ib) is not (due to deficiency of the sum of the dimensions). The blender-horseshoe allows us to make 
this nontransverse intersection $C^r$ robust, $r\geqslant 1$. Thus conditions (ia) and (ib) give a $C^r$ robust cycle between $Q_g$ and $\Lambda_g$. As $\Lambda_g$ and $P_g$ are homoclinically related, they are contained in a larger hyperbolic set,  implying the stabilisation of the initial cycle. 

In the second part of the theorem, about robust tangencies, we
consider diffeomorphisms with elliptic tangencies in $\mathcal{H}^r_{\BH, \mathrm{e}^+}(M)$ (lefthand side of Figure~\ref{fig:S})
and
 study the intersections between the two-dimensional manifolds  of the saddle-foci in the cycle. We see that these intersections generate
``tubes crossing the reference domain of the blender-horseshoe", see Section~\ref{ss.blendertubes}. These tubes will provide  robust tangencies. This step involves the constructions in \cite{BonDia:12} using {\em{folding manifolds}}. 

\begin{remark}
\label{r.kirikishinohara}
{\em{In \cite{DiaKirShi:14}  it is obtained  a renormalisation scheme 
for $C^r$ diffeomorphisms $f$, $r\geqslant 2$, with a configuration
somewhat similar to the one here, where the saddle-foci are replaced by
a pair of saddles with real multipliers.  In \cite{DiaKirShi:14}
the intersection between the one-dimensional manifolds in (ib) is obtained for $C^{1+\alpha}$ perturbations of $f$.
Let us observe a perhaps counterintuitive fact: the intersections between the ``big'' two dimensional manifolds in (ia) are more difficult to obtain than the   intersections between the ``small'' one dimensional manifolds in  (ib). Indeed, in \cite{DiaKirShi:14} the intersections (ia) and (ii)  were not achieved.}}
\end{remark}

\subsection*{Organisation of  the paper}  
The bifurcation setting is described in Section~\ref{s:bif}. 
In Section~\ref{s.translationandrotation}, we introduce the  perturbations used in our constructions.
In Section~\ref{s.newquasi}, we prove that the set of diffeomorphisms having additional ``special" homoclinic  and quasi-transverse heteroclinic intersections is dense in
$\mathcal{H}^r_{\BH }(M)$. These ``special"  homoclinic and heteroclinic points  will play an important role in our proof. 
Blender-horseshoes and their occurrence in center-unstable H\'enon-like families are discussed
in Section~\ref{s.BH}. 
In Section~\ref{s.ren}, we review some relevant ingredients renormalisation scheme in \cite{DiaPer:19} used in our constructions.
In
Section~\ref{s.interplay}, we study 
the interplay between the blender-horseshoes given by the renormalisation scheme and the additional heteroclinic points.  Section~\ref{s.orbitsanditineraries} deals with
orbits and itineraries  associated to the renormalisation scheme.
The proof of Theorem~\ref{t:1} is completed in 
Sections \ref{s.proofof2}--\ref{s.homocliinictangencies}.
Section~\ref{s.proofof2} deals with the intersections between the two-dimensional invariant manifolds
of $Q$ and of the blender-horseshoe. In Section~\ref{s.1-conecc}, we state the occurrence 
 of robust  intersections between the one-dimensional invariant manifolds of $Q$ and  of the blender-horseshoe. In
  Section~\ref{s.homocliinic relation},  we see that the saddle $P$ and the blender-horseshoe are homoclinically related.
  Finally, in Section~\ref{s.homocliinictangencies} we prove the part of the theorem corresponding to robust tangencies.
Section~\ref{s.explicit1} is an appendix  collecting some explicit calculations of the 
renormalisation scheme borrowed from \cite{DiaPer:19}.

\section{The bifurcation setting}
\label{s:bif}
In this section, we describe precisely  the 
set $\mathcal{H}^r_{\BH}(M)$, see Definition~\ref{d.HB}.
We close this section with some comments on the geometry of the cycle. 
Throughout this section we consider diffeomorphisms $f$
having a pair of saddle-foci of different indices 
$P=P_f$ and $Q=Q_f$.

\subsection{The set $\mathcal{H}^r_{\BH}(M)$} \label{ss.Hrhet}
We now explain the conditions in the definition of $\mathcal{H}^r_{\BH}(M)$: linearising dynamics and 
nontransverse intersections and transition maps.

\subsubsection{Linearisable local dynamics} 
\label{ss.linear}
$\,$

\medskip

\noindent
\textbf{(A)} {\emph{Saddle-foci periodic points}}: Let $\pi(P)$ and $\pi(Q)$ be  the periods of $P$ and $Q$.
We assume that 
 $f^{\pi (P)}$ 
and $f^{\pi (Q)}$
are $C^r$ linearisable  in small neighbourhoods 
$U_P$ of $P$ and $U_Q$ of $Q$.  
Denote  the eigenvalues of
$Df^{\pi (P)}(P)$ and $Df^{\pi (Q)}(Q)$  
by
\begin{equation}
\label{e.linear-local}
\begin{split}
& \big(\lambda_P,\sigma_P\,e^{-2\pi i\varphi_P},\sigma_P\,e^{2\pi i\varphi_P}\big) \mbox{ where\,\, $0<|\lambda_P|<1<\sigma_P,\, \varphi_P\in [0,1]$,}
\\
&\big(\lambda_Q\,e^{-2\pi i\varphi_Q},\lambda_Q\,e^{2\pi i\varphi_Q},\sigma_Q\big)
\mbox{ where\,\, $0<\lambda_Q<1<|\sigma_Q|, \, \varphi_Q\in [0,1]$.}
\end{split}
\end{equation}
We assume that
\begin{equation}\label{e.espectralconditions}
0<\Big|
\big||\lambda_{P}|^{\frac1{2}}\,\sigma_{P}
\big|^{\eta}\sigma_Q \Big|<1,\quad\mbox{where}\quad \eta=\dfrac{\log|\lambda_{Q}^{-1}|}{\log |\sigma_P|}.
\end{equation}

In what follows, we assume 
that  in the linearising local coordinates the sets $U_P$ and $U_Q$ are
the ``cubes'' $[-a_P,a_P]^3$ and $[-a_Q,a_Q]^3$, for some 
$a_P,a_Q>0$.
For simplicity, we also assume that the periods $\pi(P)$ and $\pi(Q)$ are equal to one. 

\subsubsection{Nontransverse intersections and transition maps}\label{ss.semilocal}
 
 $\,$
 
 \medskip

\noindent
\textbf{(B)} \emph{Quasi-transverse intersection and its transition map}: The one-dimensional invariant manifolds of $P$ and $Q$ intersect \textit{quasi-transversely} along the orbit of a
 point $X=X_f$, that is $X\in W^{\mathrm s}(P,f)\cap W^{\mathrm u}(Q,f)$ and
$$
T_X W^{\mathrm s}(P,f)+T_X W^{\mathrm u}(Q,f) =T_X W^{\mathrm s}(P,f)\oplus T_X W^{\mathrm u}(Q,f).
$$

After replacing $X$ by some iterate, we can assume that $X\in U_Q$.
Associated to $X$ there is 
a  \textit{transition map} corresponding to some iterate of $f$
 going from $U_Q$  to $U_P$ 
 defined as follows.
There are $N_1\in \mathbb{N}$
such that 
$$
f^{N_1}(X)\eqdef \widetilde{X}\in U_P
\quad \mbox{and} \quad
f^i (X) \not\in U_P \quad \mbox{for every} \quad 0\leqslant i < N_1
$$  
and a 
 small neighbourhood $U_{X}$ of $X$ contained in $U_Q$ 
 such that  
 $$
 f^{N_1}(U_X) \eqdef U_{\widetilde X} \subset U_P.
 $$
In the local coordinates at $P$ and $Q$, the restriction $\mathfrak{T}_1$ of $f^{N_1}$ to
$U_X$  is of the form:
\begin{equation}\label{e.transition1}
\mathfrak{T}_1 (X+W) =
f^{N_1}(X+W)=\widetilde{X}+A(W)+\widetilde{H}(W),
\end{equation}
where
\begin{equation}
\label{e.Aisoftheform}
A=\begin{pmatrix} \alpha_1 & \alpha_2 & \alpha_3 \\
0  & \beta_2 & 0\\
0 & 0 & \gamma_3
\end{pmatrix},
\qquad \alpha_1 \beta_2 \gamma_3 \neq 0,
\end{equation}
and
 $\widetilde{H}:\mathbb{R}^3\to\mathbb{R}^3$  is  such that 
$\widetilde{H}(\textbf{0})=\textbf{0}$ and $D\widetilde{H}(\textbf{0})$ is the null matrix.
Note that  $\alpha_1 \beta_2 \gamma_3 \neq 0$ is not an additional assumption since
$f^{N_1}$ is a diffeomorphism.

\medskip

\noindent
\textbf{(C)} \emph{Heterodimensional tangency 
and its transition map}: 
The two-dimensional invariant manifolds of $P$ and $Q$ intersect along the orbit of a point $Y=Y_f$ that is a
\textit{heterodimensional tangency}, that is,  the orbit of $Y$ is contained in the set 
$$
\big(W^{\mathrm u}(P,f)\cap W^{\mathrm s}(Q,f)\big)\setminus \big(W^{\mathrm u}(P,f)\pitchfork W^{\mathrm s}(Q,f) \big).
$$

As above, after replacing $Y$ by some iterate, we can assume that $Y\in U_P$.
Associated to $Y$ there is 
a  \textit{transition map} corresponding to some iterate of $f$
 going from $U_P$  to $U_Q$ 
defined as follows.
There are $N_2\in \mathbb{N}$
such that 
$$
f^{N_2}(Y)\eqdef \widetilde{Y}\in U_Q\quad \mbox{and}\quad
f^i (Y) \not\in U_Q \quad \mbox{for every} \quad 0\leqslant i < N_2
$$  
and a 
 small neighbourhood $U_{Y}$ of $Y$ contained in $U_P$ 
 such that  
 $$
 f^{N_2}(U_Y) \eqdef U_{\widetilde Y} \subset U_Q.
 $$
In the local coordinates at $Q$ and $P$, the restriction $\mathfrak{T}_2$ of $f^{N_2}$ to
$U_Y$  is  of the form: 

\begin{equation}\label{e.transition2}
\mathfrak{T}_2(Y+W)=f^{N_2}(Y+W)=\widetilde{Y}+B(W)+{H}(W),
\end{equation}
where $B$ is a quadratic map of the form
\begin{equation}\label{e.Bisoftheform}
B \begin{pmatrix} x\\y \\ z \end{pmatrix}=
\begin{pmatrix} a_1 x + a_2  y+ a_3 z \\
b_1 x+ b_2 y^2+b_3 z^2+b_4 y z \\
c_1 x + c_2  y + c_3 z 
\end{pmatrix}, \qquad 
b_1( a_2 c_3- a_3c_2) \neq 0.
\end{equation}
where 
${H}\colon\mathbb{R}^3\to\mathbb{R}^3$ is a map such that 
${H}(\textbf{0})=\textbf{0}$, $D{H}(\textbf{0})$ is the null matrix, and 
$$
\frac{\partial^2}{\partial y^2}{H}_2(\textbf{0})=\frac{\partial^2}{\partial z^2}{H}_2(\textbf{0})=\frac{\partial^2}{\partial y\partial z}{H}_2(\textbf{0})=\textbf{0},
$$ 
here 
${H}_i$ is the $i$-th component of $\widetilde{H}$. 
Note that $b_1( a_2 c_3- a_3c_2) \neq 0$ is not an additional assumption since
$f^{N_2}$ is a diffeomorphism.

The constants $a_1,...,c_3$ in the definition of $B$ satisfy the following 
conditions 
\begin{equation}
\label{e.ctes1sem2}
c_2=c_3 ,\qquad \gamma_3(a_3- a_2) >0
\end{equation}
that will guarantee  the convergence of the renormalisation scheme. 

\begin{notation}\label{n.elementsofthecycle}{\em{Given $f\in\mathcal{H}^r_{\BH}(M)$
we say that $P,Q,X$, and $Y$ are the {\em{elements of the cycle}} of $f$ and that
$N_1$ and $N_2$ are the {\em{transition times of the cycle.}}
}}
\end{notation}

 \begin{notation}[Coordinates of the heteroclinic points]\label{nr.coordinates}
{\em{In what follows, we will assume that, in our local coordinates, the heteroclinic 
points above are of the form: 
\[
\begin{split}
&\widetilde{X} =(1,0,0), \quad Y =(0,1,1) \quad \mbox{(in the neighbourhood $U_P$)},\\
&{X} =(0,1,0), \quad \widetilde{Y} =(1,0,1) \quad \mbox{(in the neighbourhood  $U_Q$)}.
\end{split}
\]
}}
\end{notation}

\subsubsection{Parameters of the transition maps} \label{ss.HrBH}
To each diffeomorphism $f$ satisfying ({\bf{A}})-({\bf{C}}) and $\xi>1$ we associate the following parameters
 \begin{equation} \label{e.barsigma}
 \bar \varsigma=\bar \varsigma(\xi,f)\eqdef (\varsigma_1,\varsigma_2,
\varsigma_3,\varsigma_4,\varsigma_5)\in\mathbb{R}^5,
\end{equation}
where
\begin{equation}
\begin{split}\label{e.2.24}
\varsigma_1&\eqdef\frac{\beta_2(a_2+a_3)}{\sqrt{2}},\,\,\,\,\,
\varsigma_2\eqdef\frac{\beta_2^2(b_2+b_3+b_4)}{2},\,\,\,\,\,
\varsigma_3\eqdef \xi^2\left(\frac{b_2+b_3-b_4}{(a_3-a_2)^{2}}\right),
\\
\varsigma_4&\eqdef \xi\sqrt{2}\left(\frac{\beta_2(b_3-b_2)}{a_3-a_2}\right),\,\,\,\,\,
\varsigma_5\eqdef \frac{\beta_2(c_2+c_3)}{\sqrt{2}},
\end{split}
\end{equation}
here $\beta_2$ is as in  \eqref{e.Aisoftheform}
and  $a_1,\dots, c_3$ are as in \eqref{e.Bisoftheform}.

 \begin{defi}[The set  $\mathcal{H}^r_{\BH}(M)$]
 \label{d.HB}
{\em{The set $\mathcal{H}^r_{\BH}(M)$ consists of the $C^r$ diffeomorphisms $f$ 
 satisfying ({\bf{A}})-({\bf{C}}) 
 such that
 \begin{equation}
\label{e.ctesand2}
 (a_2+a_3)(b_2+b_3+b_4)\neq 0
\end{equation}
and  whose vector $\bar \varsigma(\xi,f)$ satisfies
$$
 (\xi,\varsigma_1^2\varsigma_3\varsigma_2^{-1},\varsigma_1\varsigma_4\varsigma_2^{-1}) \in 
(1.18,1.19)\times  (-\ve_{\mathrm{BH}}, \ve_{\mathrm{BH}})^2,
$$ 
where $\ve_{\mathrm{BH}}$ is a number fixed in Theorem~\ref{t.BH-DKS}.
}}
\end{defi}

\begin{remark} \label{rk:constants1}
{\em{
Equations~\eqref{e.Aisoftheform} and~\eqref{e.ctesand2}  implies that $\varsigma_1\,\varsigma_2\,\varsigma_5\neq 0$. These conditions are used to 
get  blender-horseshoes in the renormalisation scheme.
}}
\end{remark}

\subsection{Geometry of the cycle: the sets $ \mathcal{H}^r_{\BH, \mathrm{h}}(M)$, $ \mathcal{H}^r_{\BH, \mathrm{e}}(M)$, and 
$ \mathcal{H}^r_{\BH, \mathrm{e}^+}(M)$}
\label{ss.commentsonthegeometry} For $R=P,Q$ consider 
$$W^{\ast}_{\mathrm{loc}}(R,f)\eqdef C\big(R, W^{\ast}_{\mathrm{loc}}(R,f) \cap U_R\big),\quad \ast =\rs, \ru,$$
 here $C(x,A)$ is the connected component of the set $A$ containing the point $x$.

The next definition classifies the two types of heterodimensional tangencies that we will consider.
Note that given any $f\in\mathcal{H}^r_{\BH}(M)$ the set
$U_P\setminus  W^{\mathrm u}_{\mathrm{loc}}(P,f)$ has two connected components.

 \begin{defi}[Elliptic and hyperbolic tangencies]
 \label{d.tan}
{\em{The heterodimensional tan\-gen\-cy at $Y$ is  
 \textit{elliptic} if there is  a neighbourhood
$\V$ of $Y$ in $W^{\mathrm s}(Q,f)\cap U_P$ such that the set $\V\setminus \{Y\}$ 
is contained in a connected component of 
$U_P\setminus  W^{\mathrm u}_{\mathrm{loc}}(P,f)$.
The tangency is \textit{hyperbolic}  if 
every neighbourhood  of $Y$ in $W^{\mathrm s}(Q,f)$
contains points  in both  components of $U_P\setminus  W^{\mathrm u}_{\mathrm{loc}}(P,f)$.
}}
\end{defi}
 In  Figure~\ref{fig:S}, the heterodimensional tangency in the left-hand side is elliptic while the
one in the right-hand side is hyperbolic.

We observe that if $f\in \mathcal{H}^r_{\BH}(M)$ then the heterodimensional  tangency at the point $Y$ is either hyperbolic or elliptic. We split the
set 
$ \mathcal{H}^r_{\BH}(M)$
in two parts,
$ \mathcal{H}^r_{\BH, \mathrm{h}}(M)$ and $ \mathcal{H}^r_{\BH, \mathrm{e}}(M)$ consisting of hyperbolic and elliptic heterodimensional tangencies, respectively.

For diffeomorphisms in  $f\in  \mathcal{H}^r_{\BH, \mathrm{e}}(M)$ we need to take in consideration the relative position of the tangency and the quasi-transverse heteroclinic points.
We consider the subset $\mathcal{H}^r_{\BH, \mathrm{e}^+}(M)$ of $\mathcal{H}^r_{\BH, \mathrm{e}}(M)$ such that
(with the notation above) the set $\V\setminus \{Y\}$  and $\widetilde X$ are in the same connected component of $U_P\setminus  W^{\mathrm u}_{\mathrm{loc}}(P,f)$, see Figure~\ref{fig:S}.
These geometrical considerations have the same flavour of those in \cite[Section 2]{PalTak:87}.
 
\section{Translation and rotation-like perturbations}
\label{s.translationandrotation}

In this section, we describe the two types of  $C^r$ perturbations used in our constructions.
We start by introducing a class of auxiliary bump functions.

\begin{Cav}
{\em{For simplicity, throughout this paper,  we will use the term {\em{perturbation}}
 to refer to arbitrarily small ones.}}
\end{Cav}

 \subsection{Auxiliary bump functions}\label{ss.auxilirybump}
 Consider a family of $C^r$ bump functions $b^\theta$, $\theta>1$, such that 
 \begin{equation}
\label{e.bu}
b^\theta(x)= \begin{cases}
                0\,, & |x|\geqslant \theta,\\
                 0 \leqslant b^\theta (x) \leqslant 1\,, &1 \leqslant |x| \leqslant \theta,\\
                  1\,, & |x|\leqslant 1.
                    \end{cases}
\end{equation}    
Associated to $b^\theta$ we consider the family of bump functions
\[
b^\theta_\rho (x) \eqdef b^\theta \left( \frac{x}{\rho} \right), \quad \rho>0
\]
and  the three-dimensional bump functions 
\begin{equation}
\label{e.bump}
\Pi^\theta_\rho \colon \mathbb{R}^3\to[0,1],\quad \Pi^\theta_\rho (x,y,z)=
b^\theta_\rho(x) \, b^\theta_\rho(y) \, b^\theta_\rho (z).
\end{equation}
Denote by $B(x,\tau)$ the open ball in $\mathbb{R}^3$  with center $x$ and radius $\tau$
and  by $\Vert \cdot \Vert_r$ the $C^r$ norm.
Note that the support of $\Pi^\theta_{\rho}$ is the closure of $B(\textbf{0}, {\theta \rho})$ and  that 
 \begin{equation}\label{e.Pinorm}
 \Vert \Pi^\theta_\rho \Vert_{r} \leqslant 
(\Vert b^\theta \Vert_r)^3 \,\rho^{-r}.
\end{equation}
In what follows, for simplicity, when $\theta=2$ we write 
$b^2_\rho=b_\rho$ and
$\Pi^2_\rho = \Pi_\rho$.

\subsection{Translation-like perturbations}
\label{ss.translationlike}
Given a point $Z_0\in \mathbb{R}^3$,
a vector  $\widetilde w \in \mathbb{R}^3$, and small $\rho>0$,
we consider the $C^r$ map
$T_{Z_0,\widetilde w,\rho}: \mathbb{R}^3\to \mathbb{R}^3$
defined by 
\begin{equation}
\label{e.translation-perturbation}
T_{Z_0,\widetilde w, \rho} (Z)=
\quad
\left\{
\begin{split}
& Z+ \Pi_{\rho}(Z-Z_0)\widetilde w,
 \quad
\mbox{if $Z \in B (Z_0, 2\rho)$},\\
& Z, \quad
 \mbox{if $Z\not\in B(Z_0, 2\rho)$}.
\end{split}
\right.
\end{equation}
By construction and by \eqref{e.Pinorm}, it holds
\[
\big\Vert
 T_{Z_0,\widetilde w, \rho}- \mathrm{id} \big\Vert_{r} \leqslant \big\Vert \Pi_{\rho} \big\Vert_{r}\, ||\widetilde w || 
 \leqslant
 (\Vert b \Vert_r)^3 \,\rho^{-r} \, || \widetilde w||.
\]
Therefore, for small $||\widetilde w||$, 
the map $ T_{Z_0,\widetilde w,\rho}$ is a $C^r$ perturbation of the identity supported in $B(Z_0,2{\rho})$. 
Finally observe that
$$
T_{Z_0,\widetilde w, \rho}\big(B(Z_0, 2\rho)\big)= B (Z_0, 2\rho).
$$

\subsection{Rotation-like perturbations}
\label{ss.rotationlike}
We now consider 
maps
$I^x_{\omega},\,I^y_{\omega}\colon \mathbb{R}^3\to\mathbb{R}^3,$
$\omega \in [-\pi, \pi]$,
defined by 
$$
I^x_{\omega}\eqdef \left( \begin{array}{ccc}
1 & 0 & 0 \\
0 & \cos 2\pi\omega & -\sin 2\pi\omega \\
0 & \sin 2\pi\omega & \cos 2\pi\omega\end{array} \right),
\qquad
I^y_{\omega}\eqdef \left( \begin{array}{ccc}
\cos 2\pi\omega & 0 & -\sin 2\pi\omega \\
0 & 1 & 0\\
\sin 2\pi\omega & 0 & \cos 2\pi\omega \end{array} \right),
$$
and for $\theta>1$ and $\kappa>0$  their associated  $C^\infty$ diffeomorphisms 
\begin{equation}\label{e.rotationfamily}
R^\ast_{{\omega, \theta, \kappa}} \colon \mathbb{R}^3\to \mathbb{R}^3, \quad R^\ast_{\omega, 
 \theta, 
\kappa}(W)=
I^\ast_{b^\theta (\kappa ||W||) \, \omega}
(W^T), \quad \ast=x,y,
\end{equation}
where $W^T$ denotes the  transpose of the vector $W\in\mathbb{R}^3$.

Note that the restriction of $R^\ast_{{\omega, \theta, \kappa}}$  to the set $[-\kappa^{-1},\kappa^{-1}]^3$ coincides with $I^\ast_{\omega}$ and  $R^\ast_{{\omega, \theta, \kappa}}
$ is the identity map in the complement of  $[-\theta \kappa^{-1}, \theta \kappa^{-1}]^3$.  Note also
$$
R^\ast_{{\omega,  \theta, \kappa}}\big( [-\theta \kappa^{-1}, \theta \kappa^{-1}]^3\big)= 
[-\theta \kappa^{-1}, \theta \kappa^{-1}]^3, \quad \ast=x,y,
$$ 
and that  there is a constant $C(\theta, \kappa)>0$ such that 
$$
\big\Vert
R^\ast_{{\omega,  \theta,  \kappa}}- \mathrm{id} \big\Vert_{C^r} < C(\theta, \kappa)  |{\omega}|,
\quad \ast=x,y.
 $$  
Thus, for every $\omega$ small enough, 
the map $R^\ast_{{\omega},  \theta,  \kappa}$ is a  $C^r$ perturbations of identity supported in
$[-\theta \kappa^{-1}, \theta \kappa^{-1}]^3$.

\section{New heteroclinic and homoclinic intersections}
\label{s.newquasi}
Recall the definitions of the sets $\mathcal{H}^r_{\BH}(M)$, $\mathcal{H}^r_{\BH, \mathrm{h}}(M)$, and 
$\mathcal{H}^r_{\BH, \mathrm{e}^+}(M)$ in Section~\ref{ss.commentsonthegeometry}.
The main result of this section is 
Proposition~\ref{p.Lnbis}  claiming that for every  $f$ in $\mathcal{H}^r_{\BH, \mathrm{h}}(M)$ (resp. $\mathcal{H}^r_{\BH, \mathrm{e}^+}(M)$)
there are local $C^r$ perturbations $f_\ve$ in
$\mathcal{H}^r_{\BH, \mathrm{h}}(M)$ (resp. $\mathcal{H}^r_{\BH, \mathrm{e}^+}(M)$) of $f$
with pairs of  additional quasi-transverse heteroclinic points in $W^{\mathrm s} (P,f_\varepsilon) \cap W^{\mathrm u} (Q,f_\varepsilon)$
and additional transverse homoclinic points in  $W^{\mathrm s} (Q,f_\varepsilon) \pitchfork W^{\mathrm u} (Q,f_\varepsilon)$. The proof of this proposition is done in Section~\ref{ss.proofpLnbis}.
To prove it, in Section~\ref{ss.preliminaryperturbation}, we state some preliminary results about the invariant manifolds
of the saddle-foci in the cycle.
In Section~\ref{ss.newtransitions}, we study the transitions  associated to the new heteroclinic points.
Finally, in Section~\ref{ss.unstable},  we consider parameterisations of special unstable discs 
throughout the new heteroclinic  points contained the unstable manifold of $Q$. The unfolding the cycle associated to these heteroclinic points will provide 
unstable discs intersecting robustly the stable manifold
of the blender-horseshoes. We now go to the details.
 
 Given $f\in\mathcal{H}^r_{\BH}(M)$ with elements $P,Q,X,Y$  (recall Notation~\ref{n.elementsofthecycle}) 
 define the closed 
 invariant set
 \begin{equation} \label{e.vdelciclo}
 \Gamma_{P,Q,X,Y}(f)\eqdef \mathrm{Orb}(X,f)\cup \mathrm{Orb}(Y,f)\cup \{P,Q\},
\end{equation} 
where $\mathrm{Orb}(W,f)$ denotes the $f$-orbit of the point $W$. 

Recall also the neighbourhoods $U_X$ and $U_Y$ of
$X$ and $Y$ in Section~\ref{ss.semilocal}.
}

In what follows, we use the notation $d_r(f,g)$ for the $C^r$ distance between two maps 
$f,g\in \mathrm{Diff}^r(M)$.  

\begin{prop}\label{p.Lnbis}
Let $f\in 
 \mathcal{H}^r_{\BH, \ast}(M)$, $\ast=\mathrm{h},\mathrm{e}^+$,
with elements $P,Q,X,Y$ and transition times $N_1$ and $N_2$.
For every $\varepsilon, \delta >0$ there is
$f_\varepsilon\in \mathcal{H}^r_{\BH,\ast}(M)$ with  $d_r(f,f_\ve)<\ve$
such that:
\begin{itemize}
\item [(1)]
 $f_\varepsilon$ coincides with $f$ on the sets $ \Gamma_{P,Q,X,Y}(f)$ and 
 $$ 
 \bigcup_{i=0}^{N_1-1} f^i(U_X) \, \cup \, \bigcup_{i=0}^{N_2-1} f^i(U_{Y,\ve})
 $$
 where $U_{Y,\ve}$ is a neighbourhood of $Y$ contained in $U_Y$ depending on $\ve$. 
 \item[(2)]
$f_\varepsilon$ has two quasi-transverse heteroclinic points
$$
X_{1,\varepsilon},\, X_{2,\varepsilon} \in  f_\ve^{-N_1} \big(W^{\mathrm s}_{\loc} (P,f_\varepsilon)\big)\cap W^{\mathrm u} (Q,f_\varepsilon) \cap B(X,\delta)
$$
such that
  $\mathrm{Orb}(X_{1,\varepsilon},f_\varepsilon)$, $\mathrm{Orb}(X_{2,\varepsilon},f_\varepsilon)$, and $\mathrm{Orb}(X,f_\varepsilon)$ are
pairwise disjoint and  $X_{1,\varepsilon}, X_{2,\varepsilon} \to X$  as $\ve\to 0$.
\item[(3)]
 $f_\varepsilon$ has two transverse intersection points 
$$
Z^\pm_{\varepsilon} \in
W^{\mathrm s} (Q,f_\varepsilon)\pitchfork W^{\mathrm u}_{\loc} (Q,f_\varepsilon)
$$
such that in the local coordinates
$$
Z^\pm_{\varepsilon} =(0, 1 \pm \zeta^\pm_\ve, 0), \quad 0< \zeta^\pm_\ve< \delta.
$$
\end{itemize}
\end{prop}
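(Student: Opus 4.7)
The plan is to build $f_\varepsilon$ as a composition of finitely many small translation-like and rotation-like perturbations of $f$, using the constructions from Section~\ref{s.translationandrotation}. Each such perturbation is supported in a small ball $B(Z_0,2\rho)$, and the estimate $\|\Pi_\rho\|_r\le C\rho^{-r}$ from \eqref{e.Pinorm} lets us keep $\|f_\varepsilon-f\|_r<\varepsilon$ by choosing the translation vector small relative to $\rho^r$. All perturbation balls will be chosen disjoint from the preserved set
\[
\mathcal{K}_\varepsilon \eqdef \Gamma_{P,Q,X,Y}(f)\cup \bigcup_{i=0}^{N_1-1}f^i(U_X)\cup \bigcup_{i=0}^{N_2-1}f^i(U_{Y,\varepsilon})
\]
and from small linearising sub-cubes at $P$ and $Q$. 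This ensures condition~(1); the spectral data \eqref{e.linear-local}-\eqref{e.espectralconditions}, the linearisations, the form of the transitions $\mathfrak{T}_1,\mathfrak{T}_2$ (hence the constants in $A,B$ and the vector $\bar\varsigma(\xi,f)$), and the geometric type of the tangency at $Y$ are all untouched, so $f_\varepsilon$ remains in $\mathcal{H}^r_{\BH,\ast}(M)$ with the same elements of the cycle.

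For item~(2), parameterise $W^\ru_\loc(Q,f)\cap U_X$ by $t\mapsto X+te_2=(0,1+t,0)$. By \eqref{e.transition1}-\eqref{e.Aisoftheform} its image under $\mathfrak{T}_1$ is the curve
\[
c(t)=\mathfrak{T}_1(X+te_2)=\widetilde X+(t\alpha_2,\,t\beta_2,\,0)+O(t^2),
\]
which meets $W^\rs_\loc(P,f)=\{y=z=0\}$ only at $t=0$ since $\beta_2\neq 0$. Fix two distinct small values $t_1,t_2\in(0,\delta)$. I would place two translation-like perturbations $T_{W_i,\widetilde w_i,\rho_i}$ at points $W_i$ lying on $W^\rs_\loc(P,f)$ between $\widetilde X$ and $P$, chosen to avoid the accumulating sequence $\{f^n(\widetilde X)\}_{n\ge 1}$ and the linearising neighbourhood of $P$, with shift vectors $\widetilde w_i$ calibrated so that the deformed local stable manifold $W^\rs_\loc(P,f_\varepsilon)$ passes through $c(t_i)$. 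This yields the two new quasi-transverse heteroclinic points $X_{i,\varepsilon}=X+t_ie_2+o(1)$ in $B(X,\delta)$ required by~(2). The same strategy applied near the orbit of $Y$ (using the iterates inside $U_P$ that expand forward images of $(0,1\pm\zeta,0)$ along $W^\ru_\loc(P)$ towards $Y$, followed by $\mathfrak{T}_2$ sending them close to $\widetilde Y\in W^\rs_\loc(Q)$) produces the two transverse homoclinic points $Z^\pm_\varepsilon$ of~(3); here transversality is automatic from the dimension count $\dim W^\rs(Q)+\dim W^\ru_\loc(Q)=2+1=3$ and the generic choice of the shift direction.

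The main obstacle is the bookkeeping of where the perturbations are placed. The preserved set $\mathcal{K}_\varepsilon$ contains the two countable sequences $\{f^n(\widetilde X)\}_{n\ge 0}\subset W^\rs_\loc(P)$ and $\{f^n(\widetilde Y)\}_{n\ge 0}\subset W^\rs_\loc(Q)$ accumulating respectively on $P$ and $Q$, and the four new orbits $\mathrm{Orb}(X_{i,\varepsilon},f_\varepsilon),\mathrm{Orb}(Z^\pm_\varepsilon,f_\varepsilon)$ must be pairwise disjoint and disjoint from $\mathrm{Orb}(X,f_\varepsilon),\mathrm{Orb}(Y,f_\varepsilon)$. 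Since the accumulating sequences are nowhere dense away from $P,Q$, and forward iterates of a ball of radius $\rho$ contract geometrically near $P,Q$ and expand only tamely in finitely many steps elsewhere, a generic choice of the parameters $t_1,t_2,\zeta^\pm$ together with sufficiently small $\rho_i$ secures all the required disjointness. Finally, the hyperbolic or elliptic (type $\mathrm{e}^+$) character of the tangency at $Y$ is inherited for free: a whole neighbourhood of $Y$ in $W^\ru_\loc(P)\cap W^\rs_\loc(Q)$ sits in the unmodified region, so Definition~\ref{d.tan} applies verbatim to $f_\varepsilon$.
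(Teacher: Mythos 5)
There is a genuine gap, and it sits at the heart of item (2). You propose to fix $t_1,t_2\in(0,\delta)$ and ``calibrate'' translation-like shifts so that the deformed manifold $W^{\rs}_{\loc}(P,f_\ve)$ passes through the points $c(t_i)=\mathfrak{T}_1(X+t_i\mathbf{e}_2)$. These points lie at a definite distance (of order $|t_i\beta_2|$, independent of $\ve$) from $W^{\rs}_{\loc}(P,f)$, while the local stable manifold depends continuously on the diffeomorphism: a $C^1$-small perturbation moves it only by $O(\ve)$, so it can never reach $c(t_i)$ for fixed $t_i$. (This also contradicts the statement itself, which requires $X_{i,\ve}\to X$ as $\ve\to 0$, whereas your points stay at distance $\approx t_i$.) Trying to repair this by letting $t_i=t_i(\ve)\to 0$ runs into the quantitative obstruction that the proposal never confronts: a bump supported in a ball of radius $\rho$ producing a displacement $d$ has $C^r$ norm of order $d\,\rho^{-r}$ by \eqref{e.Pinorm}; here the gap to be bridged is $d\approx t_i$, and the support radius must satisfy $\rho\lesssim t_i$ (to keep the two new orbits distinct and to avoid $\widetilde X$ and the preserved set), so the norm is at least of order $t_i^{\,1-r}$, which is not small for any $r\gesp 2$. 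Creating new quasi-transverse intersections of two one-dimensional manifolds in a $3$-manifold by ``pushing one onto the other'' is exactly the move that works only in the $C^1$ topology; this is why the paper does something different. It first produces transverse homoclinic points of $Q$ (your item (3)) by a rotation-like perturbation (Lemma~\ref{l.closure}, accelerating angles), using that forward iterates of the curve $L^{\ru}\subset W^{\ru}(Q)$ accumulate on $W^{\ru}_{\loc}(P)$ and therefore cross surfaces $S_\pm\subset W^{\rs}(Q)$ chosen near the tangency $Y$; then, for item (2), it iterates a disc of $W^{\rs}(P)$ backwards until it comes within distance $\rho^{r+1}/K^3$ of the homoclinic points $\widehat Z_\pm\in W^{\ru}(Q)$, and only then applies a translation of that tiny size supported in a ball of radius $\rho$, whose $C^r$ norm is then $\lesssim\rho$. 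The step ``first wait, via accumulation, until the two manifolds are much closer than the bump radius, then translate'' is precisely what makes the argument work for $r\gesp 2$, and it is absent from your proof.

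A secondary issue is item (3). Your dimension count $2+1=3$ only says that an intersection of $W^{\rs}(Q)$ with $W^{\ru}_{\loc}(Q)$, once it exists, can be made transverse and is robust; it does not produce the intersection. In the paper the existence again comes from accumulation: after making the argument of $Df(P)$ irrational, the iterates of $L^{\ru}_\pm$ accumulate on $W^{\ru}_{\loc}(P)$, which the surfaces $S_\pm\subset W^{\rs}(Q)$ (obtained from the heterodimensional tangency, with the elliptic case $\mathrm{e}^+$ handled via Remark~\ref{r.red}) cross transversely, so the intersections $Z_{i_\pm}$ appear without any translation at all, and $Z^\pm_\ve$ are their pull-backs to $W^{\ru}_{\loc}(Q)$. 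Your sketch (``calibrated shifts'' plus ``generic shift direction'') again leans on creating intersections by translation, which faces the same $C^r$-smallness problem, and it also inverts the logical order of the paper: the homoclinic points of item (3) are not an independent byproduct but the essential input for constructing the heteroclinic points of item (2).
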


A preliminary step of the proof of this proposition is Lemma~\ref{l.closure} in 
Section~\ref{ss.preliminaryperturbation} claiming
that the closure of the one dimensional  invariant manifold  of $P$ (resp. $Q$)
contains the two dimensional invariant manifold of $Q$ (resp. $P$).

\subsection{Density properties of  $W^\mathrm{s} (P,f)$ and  $W^\mathrm{u} (Q,f)$} \label{ss.preliminaryperturbation}
Consider $f\in \diff^r(M)$ with a 
 saddle focus $R$ with $f(R)=R$ such that the eigenvalues of $Df(R)$ are
$\lambda\in \mathbb{R}$ and $\sigma \,e^{\pm 2\,\pi i \varphi}\in \mathbb{C}$, where 
$0<|\lambda | <1<\sigma$ and $\varphi\in[0,1)$, and that is  
$C^r$ linearisable in a neighbourhood
$U_{R}$ of $R$.   We identify $U_R$  with the Cartesian product of the local invariant manifolds of $R$  (where the $x$- and $yz$-spaces are the stable and unstable eigenspaces of $Df(R)$, respectively.) We assume that there are (see Figure~\ref{fig:discos}):
\begin{itemize}
\item
A  one-dimensional 
$C^r$ disc $L\subset U_R$   such that $L$ is  quasi-transverse to $W^\mathrm{s}_{\mathrm{loc}}(R,f)$ at some point $W$ in interior of
$L$. We let $L_+$ and $L_-$ the two connected components of $L\setminus\{W\}$.
\item
A two-dimensional  $C^r$ disc $S\subset U_R$ intersecting transverselly 
$W^\mathrm{u}_{\mathrm{loc}}(R,f)$ in a curve $\gamma$ which is not contained in any radial direction of $W^\mathrm{u}_{\mathrm{loc}}(R,f)$
(i.e., a straight-line  containing the origin).
In this case, we say  that 
the curve $\gamma$ has a \em{nontrivial radial projection}.
\end{itemize}

\begin{figure}[h]
\centering
\begin{overpic}[scale=0.09,
]{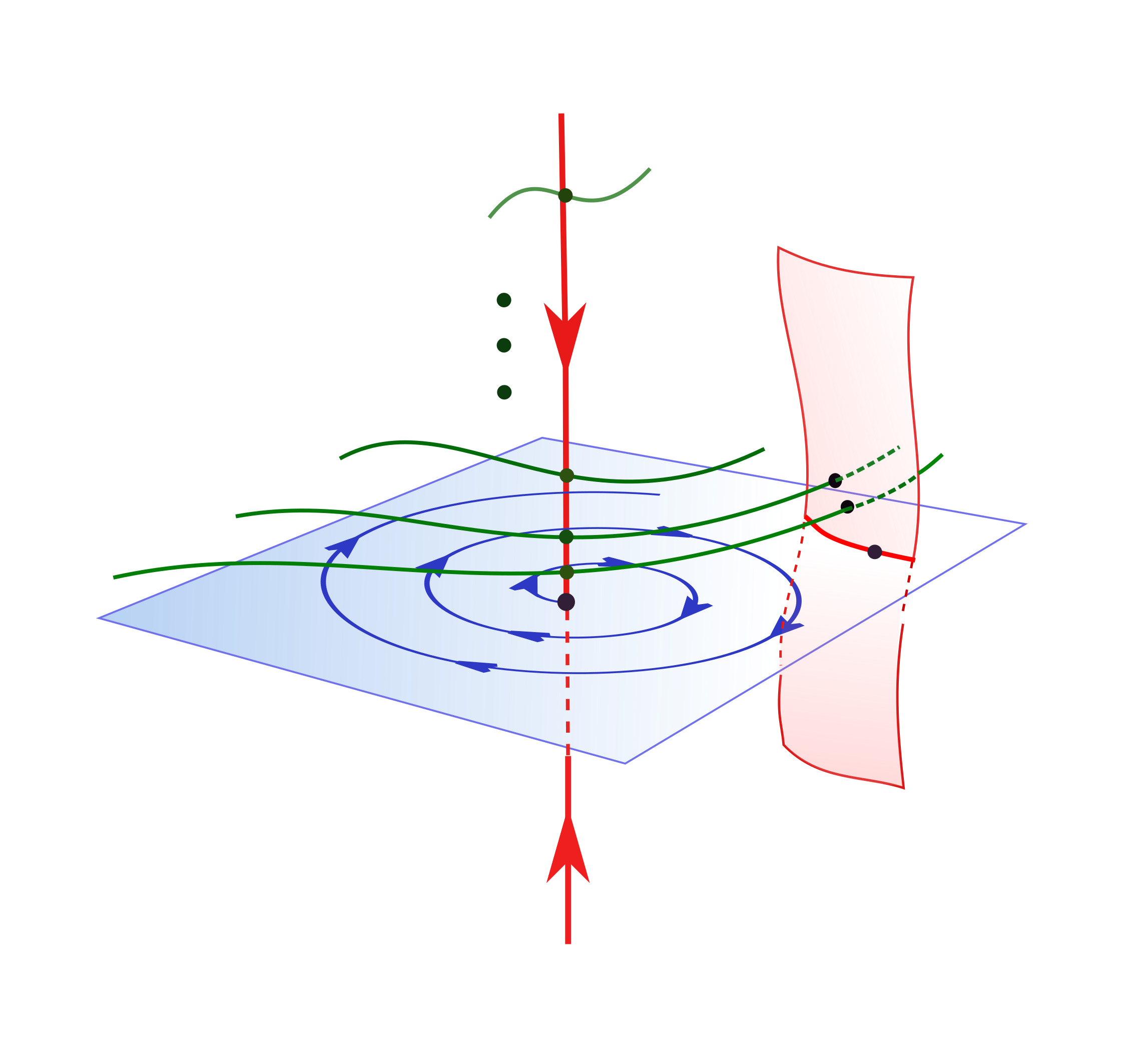} \scriptsize
             \put(112,168){\large{$L^+$}}
               \put(79,167){\large{$L^-$}}
                \put(10,105){\large{$f^j(L)$}}
                 \put(150,150){\large{$S$}}
                       \put(108,76){\Large{$R$}}    
                          \put(150,81){\Large{$\gamma$}}                                   
 \end{overpic}
\caption{The discs $L, L^\pm$ and $S$ and the curve $\gamma$}
\label{fig:discos} \end{figure}

We need the following simple auxiliary  lemma.
 
 \begin{lemma}[Accelerating angles]\label{l.closure} Consider a diffeomorphism $f$, a saddle $R$, a disc $L$, a local surface $S$, 
and a curve 
$\gamma \subset S \cap W^\mathrm{u}_{\mathrm{loc}}(R,f)$  as above. 
Then there is $g$ arbitrarily $C^r$ close to $f$ such that
$W^\mathrm{\ast}_{\mathrm{loc}}(R,g)= W^\mathrm{\ast}_{\mathrm{loc}}(R,f),$ $\ast = \rs, \ru$, and
\begin{itemize}
\item[(a)] 
$W^\mathrm{u}_{\mathrm{loc}}(R,g)$ is simultaneously  contained in
the closure of the sequences of discs $\big(g^{j}(L_+)\big)$
and 
$\big(g^{j}(L_-)\big)$, $j\geqslant 1$,
and 
\item [(b)] there are infinitely many  $j_\pm \geqslant 1$ such that
$g^{j_\pm}(L_\pm)$
meets transversely $S$ at points arbitrarily close to $\gamma$. 
\end{itemize}
\end{lemma}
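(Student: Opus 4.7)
The natural approach is to reduce to the case where the angular rotation $\varphi$ at $R$ is irrational, and then exploit the equidistribution of the angles $\{j\varphi \bmod 1\}$ on the unit circle of the unstable plane. To achieve irrationality, I would compose $f$ with a rotation-like perturbation $R^{x}_{\omega,\theta,\kappa}$ from Section~\ref{ss.rotationlike}, supported in a small cube $[-\theta\kappa^{-1},\theta\kappa^{-1}]^3$ centered at $R$ in the linearising chart. Because $R^{x}_{\omega,\theta,\kappa}$ preserves the stable $x$-axis and the unstable $yz$-plane (the rotation $I^x_{\omega}$ is a linear isometry of these subspaces), the composition $g \eqdef R^{x}_{\omega,\theta,\kappa}\circ f$ is $C^r$ close to $f$ and satisfies $W^{*}_{\mathrm{loc}}(R,g) = W^{*}_{\mathrm{loc}}(R,f)$ for $*=\mathrm{s},\mathrm{u}$. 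The new derivative at $R$ is $I^x_\omega \cdot Df(R)$, whose unstable eigenvalues become $\sigma\,e^{\pm 2\pi i(\varphi+\omega)}$; choosing $\omega$ small and irrational of bounded type makes $\varphi_g \eqdef \varphi+\omega$ irrational. By further shrinking the support $\kappa^{-1}$ so that the perturbation does not move $W$ (nor the finitely many forward iterates of $L$ that still lie outside any arbitrarily small neighbourhood of $R$), we may also assume $g$ coincides with $f$ on $L$ itself.

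\textbf{Asymptotic convergence to radial rays.} In the linearised coordinates, $g$ acts as $(x,u)\mapsto (\lambda x, \sigma R_{\varphi_g} u)$ near $R$, where $R_{\alpha}$ denotes rotation by $2\pi\alpha$ in the $yz$-plane. Writing $W=(x_0,0)$ and the tangent to $L$ at $W$ as $(v_x,v_u)$ with $v_u\neq 0$ (quasi-transversality to $W^{\mathrm s}_{\loc}$), the iterates satisfy $g^j(W)=(\lambda^j x_0,0)\to R$, while $Dg^j(W)(v_x,v_u)=(\lambda^j v_x,\sigma^j R_{j\varphi_g}v_u)$. Since $L$ has fixed length and $g^j$ stretches the unstable direction by $\sigma^j$, the portion of $g^j(L_{\pm})$ lying in $U_R$ is, for large $j$, approximately the straight segment through $g^j(W)$ in the direction $R_{j\varphi_g}v_u$ (for $L_+$) or $-R_{j\varphi_g}v_u$ (for $L_-$), with stable drift of order $\lambda^j$. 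In Hausdorff distance on $U_R$, both sequences $\big(g^j(L_{\pm})\big)_{j\ge 1}$ converge, along subsequences indexed by the angle $j\varphi_g\bmod 1$, to the corresponding radial rays in $W^{\mathrm u}_{\loc}(R,g)$.

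\textbf{Density and transverse intersections with $S$.} Since $\varphi_g$ is irrational, the angle set $\{j\varphi_g\bmod 1:j\ge 1\}$ is dense in $[0,1)$, and likewise for the shifted set $\{j\varphi_g+\tfrac{1}{2}\bmod 1\}$ corresponding to $L_-$. Therefore the radial rays accumulated by the sequences cover all angular directions, giving part~(a): $W^{\mathrm u}_{\loc}(R,g)\subset \overline{\bigcup_{j\ge 1} g^j(L_{\pm})}$. For part~(b), the hypothesis that $\gamma\subset S\cap W^{\mathrm u}_{\loc}(R,g)$ has nontrivial radial projection means the angular projection of $\gamma$ onto $S^1$ contains a nondegenerate interval $I$; by density, infinitely many $j_{\pm}$ give $j_\pm\varphi_g\bmod 1\in I$, so that the limit radial rays cross $\gamma$. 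For each such $j_\pm$, the approximating curve $g^{j_\pm}(L_{\pm})$ meets $S$ near $\gamma$ once $j_\pm$ is large enough. Transversality follows from the computation $T_yS\cap T_yW^{\mathrm u}_{\loc}(R,g)=T_y\gamma$ (using $S\pitchfork W^{\mathrm u}_{\loc}$): a radial ray at $y$ fails to be transverse to $S$ only when the radial direction equals $T_y\gamma$, which happens on an at most discrete subset of $\gamma$ by the nontrivial radial projection hypothesis. Discarding the finitely many bad angles still leaves infinitely many $j_\pm$; for these, the small $O(\lambda^{j_\pm})$ stable tilt of the tangent of $g^{j_\pm}(L_{\pm})$ persists the transversality.

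\textbf{Main obstacle.} The delicate point is not the abstract density of angles, which is classical, but rather ensuring the two constraints simultaneously: the perturbation $g$ must leave $W^{\mathrm s,\mathrm u}_{\loc}(R,\cdot)$ literally unchanged while creating an irrational rotation, and the asymptotic analysis must give \emph{transverse} (not merely nonempty) intersections with $S$. Both are handled by using the built-in invariance of the coordinate planes under $R^{x}_{\omega,\theta,\kappa}$ and by exploiting that $\gamma$ is nowhere radial away from a discrete exceptional set.
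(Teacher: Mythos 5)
Your proposal is correct and follows essentially the same route as the paper: perturb $f$ by a rotation-like map supported near $R$ (as in Section~\ref{ss.rotationlike}) so that the argument of the complex multiplier becomes irrational while the local invariant manifolds are untouched, and then conclude from the density of the angles $j\varphi_g \bmod 1$ together with the $\lambda$-lemma-type convergence of $g^j(L_\pm)$ to radial segments of $W^{\mathrm u}_{\mathrm{loc}}(R,g)$. The paper dismisses the irrational case as obvious, whereas you spell out the asymptotics and the transversality with $S$ along $\gamma$; the only minor imprecision is the claim that the set of points of $\gamma$ where the radial direction is tangent to $\gamma$ is discrete — the correct (and sufficient) statement is that it has empty interior in $\gamma$, since otherwise $\gamma$ would locally solve $\gamma'=c(t)\gamma$ and be contained in a radial line, contradicting the nontrivial radial projection.
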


\begin{proof}
The result is obvious if the argument $\varphi$ of $Df(R)$ is irrational, in that case we can take $g=f$.
Otherwise, it is enough to consider a sequence  $(\alpha_j)\to 0 $ such that
$\varphi + {\alpha_j}$ is irrational,
rotations $I^x_{\alpha_j}$ (defined on $U_R$) with argument $\alpha_j$
(recall the definition in Section~\ref{ss.rotationlike}),
and local perturbations  $g=f_j$ of $f$ of the form $f_j = I^x_{\alpha_j} \circ f$ in the set $U_R$. These perturbations can be chosen supported on an small neighbourhood of the closure of $U_R$.
\end{proof}

\begin{remark} \label{r.alcontrario}
{\em{There are the corresponding version of Lemma~\ref{l.closure} for saddle foci with index one.}}
\end{remark}

\begin{remark}\label{r.red}
{\em{
Changing the surface ``$S$" by a one-dimensional disc and ``transversality'' by 
``quasi-transversality'', the part (b) of Lemma~\ref{l.closure}  can be stated as follows: 
there is arbitrarily large $j_\pm$ such that
$g^{j_\pm}(L_\pm)$
meets quasi-transversely $S$  at some point arbitrarily close to $\gamma$.}}
\end{remark}

\begin{remark}\label{r.theperturbationremains}
{\em{Assume that $f\in \mathcal{H}^r_{\BH}(M)$ and
that $g$ is obtained perturbing $f$ using Lemma~\ref{l.closure}. Then $g$ can be taken such that 
$g\in \mathcal{H}^r_{\BH}(M)$
and  $\Gamma_{P,Q,X,Y}(g)=\Gamma_{P,Q,X,Y}(f)$, recall  \eqref{e.vdelciclo}. }}
\end{remark}

\subsection{Proof of Proposition~\ref{p.Lnbis}}
\label{ss.proofpLnbis}

We first consider hyperbolic tangencies, that is, we assume that
 $f\in \mathcal{H}^r_{\BH,\mathrm{h}}(M)$.
Consider the points $Y$ and $\widetilde{Y}=f^{N_2}(Y)$
corresponding to the heterodimensional tangency in
condition {\bf{(C)}}
in Section~\ref{ss.semilocal}
 and their neighbourhoods $U_Y$  and $U_{\widetilde Y}$ in the definition of the transition map 
$\mathfrak{T}_2$ in \eqref{e.transition2}. 

Using the notation in Definition~\ref{d.tan},
we can select  small two-discs 
$$
\mathcal{P}_Y^\rs\subset
W^{\mathrm{s}}(Q,f)\cap U_Y \quad \mbox{and} \quad
\mathcal{P}^\ru_{\widetilde Y}\subset
W^{\mathrm u}(P,f)\cap U_{\widetilde Y}
$$
containing  $Y$ and $\widetilde Y$ (respectively) in their interiors and assume that
$\mathcal{P}_Y^\rs$ contains a pair of disjoint 
surfaces $S_+$ and $S_-$  intersecting transversely $W^\ru_{\mathrm{loc}} (P,f)$ throughout  curves $\gamma_+$ and $\gamma_-$
with nontrivial radial projection,
see Figure~\ref{fig:newint}. 

Consider now the quasi-transverse heteroclinic points $X$ and  $\widetilde{X}= f^{N_1}(X)$ in condition {\bf{(B)}}
in Section~\ref{ss.semilocal}. Fix small $\delta>0$ such that $B(X,\delta)$ is contained in $U_X$.
Consider small curves  
$$
L^\ru \subset U_P\cap f^{N_1} \big( W_{\loc} ^{\mathrm u}(Q,f) \cap B(X,\delta) \big), \quad 
L^\rs\subset W^{\mathrm s}(P,f)\cap U_Q
$$
 containing $\widetilde X$ and $X$ in their interiors, respectively. See Figure~\ref{fig:newint}.
As above, we let $L^\ru_\pm$ the connected components of $L^\ru   \setminus \{\widetilde X\}$.

Fix small  $\varepsilon >0$.
Applying item (b) of Lemma~\ref{l.closure} to $P$, the surfaces $S_\pm$, and the disc $L^\ru$,
we get a diffeomorphism $\widetilde f_\varepsilon$ with 
$d_r (f,\widetilde f_\ve) < \frac{\ve}{2}$
and  arbitrarily large numbers $i_+, i_-\geqslant  0$ such that  
$\widetilde f_\varepsilon^{i_\pm}(L^\mathrm{u}_\pm)$ transversely intersects
$S_\pm$ at some point $Z_{i_\pm}$. The points $Z_{i_\pm}$ can be chosen converging to some point of $\gamma_\pm$.
Item (3) of the proposition follows taking $Z_\varepsilon^\pm= \widetilde f_{\varepsilon}^{-(N_1+i_\pm)}  (Z_{i_\ell})$. 
Note that, arguing as before, we can assume that the  argument of the complex eigenvalue of $D \widetilde f_{\varepsilon}(Q)$ is irrational.

Note that 
by Remark~\ref{r.theperturbationremains},   
 the transitions of $\widetilde f_\ve$ and 
$f$ are the same, therefore $\widetilde f_\ve\in \mathcal{H}^r_{\BH, \mathrm{h}}(M)$
and $\Gamma_{P,Q,X,Y}(\widetilde f_\ve) =  \Gamma_{P,Q,X,Y}(f)$. Moreover, we observe that all perturbations that we will perform in what follows will keep this property.

To prove item (2),
consider small disjoint closed subdiscs  $\widetilde L^\ru_\pm = \widetilde L^\ru_\pm (\varepsilon)\subset L^{\ru}_\pm$
 as follows (see Figure~\ref{fig:newint}): 
  Let $\ell\in \{+,-\}$, there are numbers $i_\ell =i_\ell(\ve)$ with
\begin{itemize}
\item $\widetilde f^j_\varepsilon ( \widetilde L^\ru_\ell) \subset U_P$ for all $j\in \{0,\dots, i_\ell\}$,
\item
 $\widetilde f^j_\varepsilon( \widetilde L^\ru_\ell) \cap U_Y=\emptyset$ for all $j\in \{0,\dots, i_\ell-1\}$ and 
 $\widetilde f_\ve ^{i_\ell} ( \widetilde L^\ru_\ell) \subset U_Y$, 
 \item
  $\widetilde f_\varepsilon ^{i_\ell}  ( \widetilde L^\ru_\ell) \pitchfork S_{\ell}$ at the point ${Z}_{i_\ell}$,
  \item
 the family 
 $$
 \mathcal{L}^\ru\eqdef
 \big\{ \widetilde f_\varepsilon ^{j}( \widetilde L_\ell^\ru) \colon j\in \{0, \dots, i_\ell +N_2\},\,\, \ell \in \{+,-\}\big\}
 $$
consists of  pairwise disjoint sets,
 \item
 $\widetilde f_\varepsilon^{i_\ell  +N_2} ( \widetilde L^\ru_\ell) \subset U_{\widetilde Y}$,\,\, $\ell=+,-$.
  \end{itemize} 

By the definition of the transition $\mathfrak{T}_2$
and the choice of the neighbourhoods $U_Q,U_Y,U_{\widetilde{Y}}$,
the subfamily of $\mathcal{L}^\ru$ given by
 $$
 \mathcal{L}^\ru_0\eqdef
 \mathcal{L}^\ru \setminus
 \{\widetilde  f_\ve^{i_\ell +N_2}( \widetilde L_\ell^\ru) \}
 $$
is disjoint from $U_P$. 

\begin{figure}
\centering
\begin{overpic}[scale=0.053,
]{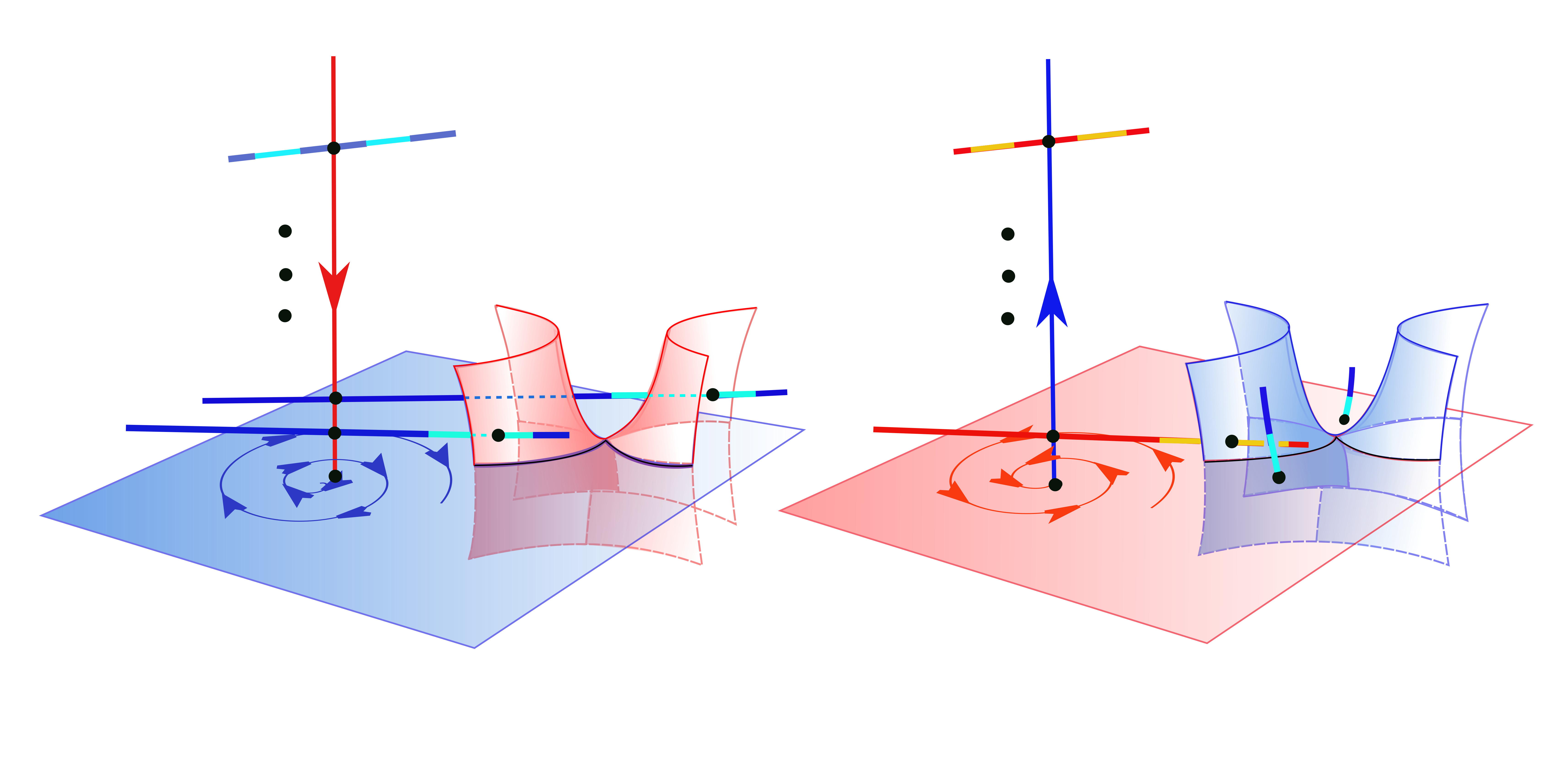} \scriptsize
 \put(80,50){\Large{$P$}}
   \put(83,126){\large{$\widetilde{X}$}}
        \put(51,150){\large{$\widetilde{L}^\mathrm{u}_+$}}
               \put(84,152){\large{$\widetilde{L}^\mathrm{u}_-$}}
                            \put(110,142){\large{$L^\mathrm{u}$}}
     \put(114,58){\large{$\gamma_+$}}     
         \put(165,70){\large{$\gamma_-$}}  
               \put(117,80){\large{$Z_{i_+}$}}     
                       \put(170,90){\large{$Z_{i_-}$}}     
              \put(110,110){\large{$S_+$}}     
                   \put(158,110){\large{$S_-$}} 
\put(255,50){\large{$Q$}}
                          \put(273,142){\large{$L^\mathrm{s}$}}
                \put(219,150){\large{${L}^\mathrm{s}_+$}}
               \put(253,152){\large{${L}^\mathrm{s}_-$}}
                               \put(249,128){\large{${X}$}}
                            \put(293,45){\large{$\widehat{Z}_+$}}  
                               
 \end{overpic}
\caption{The discs $L^\mathrm{u}_\pm$
in $W^\mathrm{u}(Q,f)$ and  
$L^\mathrm{s}_{\pm}$ in $W^\mathrm{s}(P,f)$.
}\label{fig:newint}
\end{figure}

To prove item (2)  of the proposition, consider the homoclinic points of $Q$
\[
 \widehat Z_\ell  \eqdef \widetilde f_\ve^{N_2}(Z_{i_{\ell}})\in W^\rs_{\mathrm{loc}}(Q,\widetilde f_\ve)\pitchfork 
 W^\ru (Q,\widetilde f_\ve),
 \quad \ell =+,-
\]
and take  arbitrarily small 
\begin{equation}
\label{e.puf}
0<\puf < \frac{\ve}{2(\Vert f \Vert_r +\ve)}
\end{equation}
such that
\[
 \begin{split}
 &B(\,\widetilde{Y}, 2\puf\,) \cup  B(\,\widehat{Z}_+, 2\puf\,) \cup  B(\,\widehat{Z}_-, 2\puf\,) \subset U_{\widetilde Y}
 \\
 &\mbox{$B(\,\widehat{Z}_+, 2\puf\,),\, B(\,\widehat{Z}_-, 2\puf\,),\, B(\,\widetilde{Y}, 2\puf\,)$ are pairwise disjoint.}
 \end{split}
\]
Arguing  as above, considering $\widetilde f_\ve^{-1}$ and
applying item (a) of Lemma~\ref{l.closure} to $Q$,  the disc $L^\rs$, and the points
$\widehat{Z}_\pm$ 
 we get  disjoint closed subdiscs $L^\rs_\pm= L^\rs_{\pm} (\puf)$ 
 of $L^\rs$   satisfying the following  conditions 
  (see Figure~\ref{fig:newint}): 
  Let  $K$ be the $C^r$ norm of the map $b_\puf$ in \eqref{e.bu}, there are numbers $k_\pm(\puf)=k_\pm$ such that:
   \begin{itemize}
  \item
 the family of sets
 $$
 \{ \widetilde f^{-i}_{\ve} (L^\rs_{+})\colon i=0,\dots, k_{+}\} \cup \{ \widetilde f_\ve^{-j} (L^\rs_{-} )\colon j=0,\dots, k_{-}\} 
 $$
 is pairwise disjoint,
 \item
 $\widetilde f^{-i}_\ve (L^\rs_{\pm}) \subset \Big(U_Q \setminus B\big(\, \widehat{Z}_\pm, \frac{\puf^{r+1}}{K^3}\,\big)\Big)$ for every $i\in \{0,\dots, k_{\pm}-1\}$,
 \item
  $\widetilde f_\ve^{-k_{\pm} } (L^\rs_{\pm}) \subset B\big(\, \widehat{Z}_\pm, \frac{\puf^{r+1}}{K^3}\,\big)$.
 \end{itemize}

Let $X_{\pm}$ be the closest point of 
  $\widehat Z_\pm$  in $\widetilde f_\ve^{-k_{\pm} } (L^\rs_{\pm})$
 and define the vector 
 \begin{equation}
 \label{e.mayonesa}
 \mathbf{w}_{\pm}\eqdef X_{\pm}-\widehat Z_\pm, \qquad \Vert \mathbf{w}_{\pm} \Vert  \leqslant \frac{\puf^{r+1}}{K^3}.
 \end{equation}
 Using the
 function $\Pi_\puf$ in \eqref{e.bump}, consider the perturbation of the identity given by
 $$
  \vartheta_{\pm,\puf} 
(\widehat{Z}_\pm +W) \eqdef \widehat{Z}_\pm +W + \Pi_\puf (W) \,  \mathbf{w}_{\pm}, \quad \mbox{if} \quad
\widehat{Z}_\pm +W\in
B(\widehat Z_\pm, 2\puf)
$$
 and  the identity otherwise.
 Since 
  $\Vert \mathbf{w}_{\pm} \Vert  \leqslant \frac{\puf^{r+1}}{K^3}$ and 
  $ \Vert \Pi^\theta_\rho \Vert_{r} \leqslant 
K^3 \,\rho^{-r}$ (recall \eqref{e.Pinorm}),
it holds
 $$
 \Vert \vartheta_{	\pm,\puf}-\mathrm{id}\Vert_{r} 
 \leqslant
 \Vert \Pi_\puf \Vert_{r}\cdot \Vert \mathbf{w}_{\pm} \Vert  
 < \puf.
 $$
 
 Finally, consider the perturbation of $\widetilde f_\ve$ defined by
\[
 f_\ve = f_{\ve,\puf} \eqdef \vartheta_{\pm, \puf} \circ \widetilde f_\ve.
 \]
 Recalling the choice of $\puf$ in \eqref{e.puf}, we get 
 $$
d(f,f_{\ve})_r \leqslant
d(f,\widetilde f_{\ve})_r +d(\widetilde f_\ve ,f_{\ve})_r \leqslant
\frac{\ve}{2} +\rho \Vert \widetilde f_\ve \Vert_r
<
\frac{\ve}{2} +\rho (\Vert f \Vert_r+\ve) 
 <\ve.
$$
By construction, $f_{\ve,\puf}$ coincides with $\widetilde f_\ve$ outside 
$\widetilde f^{-1}_\ve\big(  B(\widehat Z_+, 2\puf)\cup B(\widehat Z_-, 2\puf) \big)$ and by \eqref{e.mayonesa} we have that the points $X$, ${X}_{1, \ve}$, and ${X}_{2, \ve}$ with
$$
{X}_{1, \ve}\eqdef f_\ve^{k_+} (X_+),\quad {X}_{2, \ve} \eqdef  f_\ve^{k_-} ({X}_{-})\in  f_{\ve,\puf} ^{-N_1} \big(W^{\mathrm s}_{\loc} (P,  f_{\varepsilon,\puf})\big)\cap
W^\ru(Q, f_{\ve,\puf}) 
$$
are quasi-transverse heteroclinic points of $f_{\ve,\puf}$ 
with different orbits. 

Thus, $f_{\ve,\puf} \in \mathcal{H}^r_{\BH, \mathrm{h}}(M)$ 
and satisfies items (1)-(3)  in the proposition.

We now study the elliptic case when $f\in \mathcal{H}^r_{\BH,\mathrm{e}^+}(M)$.  We apply the variation of Lemma~\ref{l.closure} in  Remark~\ref{r.red} 
and observe that an arbitrarily small modification of the angle provides two transverse intersections (for the same iterate). The rest of the proof is identical to the hyperbolic case.

Finally, note that  in our construction the transitions are preserved, thus if
 $f \in \mathcal{H}^r_{\BH,\ast}(M)$ then $f_{\ve,\puf} \in \mathcal{H}^r_{\BH,\ast}(M)$, $\ast=\mathrm{h},\mathrm{e}^+$. The proof of the proposition is now complete.
 \hfill $\qed$

\subsection{Transitions for the new heteroclinic points}
\label{ss.newtransitions}
Given $f\in  \mathcal{H}_{\BH, \ast}^r (M)$,
$\ast = \mathrm{h}, \mathrm{e}^+$, and small $\varepsilon>0$, consider its perturbation $f_\varepsilon\in \mathcal{H}_{\BH, \ast}^r (M)$ 
and the
heteroclinic points $X_{1, \ve}$ and $X_{2, \ve}$ given by Proposition~\ref{p.Lnbis}. Take disjoint neighbourhoods $U_{1, \ve}$ and $U_{2,\ve}$ of 
 $X_{1, \ve}$ and $X_{2,\ve}$ contained in $U_X$ 
 where  the transition map
 $\mathfrak{T}_1$ in \eqref{e.transition1}
  is defined. By shrinking these neighbourhoods, we can take a small neighbourhood
 $U_{0,\ve}\subset U_X$
 of $X_{0,\ve}=X$ disjoint from $U_{1, \ve}$ and $U_{2,\ve}$.
 We write
\begin{equation}\label{e.xipoint}
X_{i,\ve}\eqdef
X+Z_{i,\ve}, \quad
Z_{i,\ve}\eqdef (x_{i,\ve},y_{i,\ve},z_{i,\ve}), \quad i= 0,1,2.
\end{equation}
 Note that $Z_{1,\ve},Z_{2,\ve}\to \mathbf{0}$ as $\varepsilon\to 0$.

 Denote by $\mathfrak{T}_{1,i,\ve}$ the restriction of $\mathfrak{T}_1$ to  $U_{i,\ve}$, 
  \begin{equation}\label{e.mailen}
\mathfrak{T}_{1,i,\ve}: U_{i,\ve}\to U_{\widetilde{X}},  \quad 
\mathfrak{T}_{1,i,\ve}(Z)= f^{N_1}_\ve (Z)=f^{N_1}(Z), \quad
 i=0,1,2.
\end{equation}
 Hence, recalling \eqref{e.transition1},
$$
\widetilde{X}_{i,\ve}
\eqdef \mathfrak{T}_{1,i,\ve}({X}_{i,\ve})=\widetilde{X}+A(Z_{i,\ve})+\widetilde{H}(Z_{i,\ve}),
$$
Using equation \eqref{e.xipoint} and that  ${X}_{i, \ve} \in   f_{\ve} ^{-N_1}\big(W^{\mathrm s}_{\loc} (P,f_{\varepsilon})\big)\cap U_X$
we get
\begin{equation}\label{e.wastedyears}
\widetilde{X}_{i,\ve}\eqdef 
\widetilde{X}+\widetilde{Z}_{i,\ve}\in W^{\mathrm{s}}_{\mathrm{loc}}(P,f_\ve),
\quad 
\mbox{where}
\quad
\widetilde{Z}_{i,\ve} =A(Z_{i,\ve})+\widetilde{H}(Z_{i,\ve}).
\end{equation}
Note that 
$\widetilde{X}, \widetilde{X}_{i,\ve} \in W^{\mathrm{s}}_{\mathrm{loc}}(P,f_\ve)$ and that 
(in local coordinates) $W^{\mathrm{s}}_{\mathrm{loc}}(P,f_\ve)$ is contained in $\{ (x,0,0)\} \subset U_P$.
Hence
\begin{equation} \label{e.simeone}
\widetilde{Z}_{i,\ve}=(\tilde{x}_{i,\ve},0,0).
\end{equation}

Then  the map $\mathfrak{T}_{1,i,\ve}$ can be written as follows: for $X_{i,\ve}+W\in U_{i,\ve}$ we have
\begin{equation}\label{e.newtransition}
\mathfrak{T}_{1,i,\ve}(X_{i,\ve}+W)=\widetilde{X}_{i,\ve}+A(W)+\widetilde{H}_\ve^{i}(W),
\end{equation}
where
$\widetilde{H}^{i}_\ve \colon \mathbb{R}^3\to \mathbb{R}^3$
is defined by 
\begin{equation}
\label{e.Ee}
\widetilde{H}^{i}_\ve (W)\eqdef
\widetilde{H}(Z_{i,\ve}+W)-\widetilde{H}(Z_{i,\ve}).
\end{equation}
\begin{remark}
{\em{
\label{r.regular}
Let  $\widetilde{H}^{i}_\ve=(\widetilde{H}^{i}_{\ve,1}, \widetilde{H}^{i}_{\ve,2}, \widetilde{H}^{i}_{\ve,3})$.
Then
$$
\widetilde{H}^{i}_{\ve,1}(\textbf{0})=
\widetilde{H}^{i}_{\ve,2}(\textbf{0})=
\widetilde{H}^{i}_{\ve,3}(\textbf{0})=0.
$$
Although   
the maps $\widetilde{H}^{i}_{\ve,j}$ do not satisfy the same ``flat conditions" at $\mathbf{0}$  satisfied by 
the  terms $\widetilde{H}_i$ of $\mathfrak{T}_1$, see~\eqref{e.transition1}, 
the following convergence property  holds: 
\begin{equation*}
\frac{\partial}{\partial x}\widetilde{H}^{i}_{\ve,k}(\textbf{0})\to 0, \quad 
\frac{\partial}{\partial y}\widetilde{H}^{i}_{\ve,k}(\textbf{0})\to 0, \quad
\frac{\partial}{\partial z}\widetilde{H}^{i}_{\ve,k}(\textbf{0})\to 0,\quad
 \ve\to 0, \quad k=1,2,3.
\end{equation*}
 }}
\end{remark}

\subsection{Parameterisations of unstable discs throughout the points $X_{i,\ve}$}\label{ss.unstable}
Take an unitary vector
$\bv_{i,\ve}\in T_{X_{i,\ve}}W^{\mathrm{u}}(Q, f_\ve)$. For small $\delta>0$, 
consider the parameterised  segment of the local unstable manifold
of $Q$
 containing $X_{i,\ve}$ 
in  $U_{i,\ve}$ obtained considering its Taylor expansion,
\begin{equation}\label{e.1seg}
L_{i,\ve}^\mathrm{u}(\delta)\eqdef
\big\{ X_{i,\ve}+t\, \bv_{i,\ve}+\widetilde{\rho}_{i,\ve}(t): |t|<\delta
\big\}\subset W^{\mathrm u}(Q,f_\ve),
\end{equation}
here $\widetilde\rho_{i,\ve}$ is an $C^r$ map
satisfying
\begin{equation}
\label{e.componentes}
 \widetilde{\rho}_{i,\ve}(0)=\frac{d}{dt}\widetilde{\rho}_{i,\ve}(0)=\textbf{0}. 
\end{equation}
\begin{remark}\label{r.llema}
{\em{ By the $\lambda$-lemma 
 we have that
 $\bv_{i,\ve}\to \textbf{e}_2=(0,1,0)$
 and
$\Vert \widetilde{\rho}_{i,\ve}\Vert_r\to 0$ as $\ve\to 0$. 
By a $C^r$ perturbation of $f_{\ve}$, we can assume that $\bv_{i,\ve}=\textbf{e}_2$. To see this, let $\pi_{i,\ve}$ be the plane generated by $ \bv_{i,\ve}$ and $\textbf{e}_2$ in $T_{X_{i,\ve}}M$ and $\alpha_{i,\ve}$ the  smallest angle (modulus $2\pi$) of  the rotation  
map taking
$ \bv_{i,\ve}$ into $\textbf{e}_2$. Note that $\alpha_{i,\ve} \to 0$ as $\ve\to 0$. Performing a rotation-like $C^r$ perturbation  as in Section~\ref{ss.rotationlike}
 at $X_{i,\ve}$ 
 around of the orthogonal direction to $\Pi_{i,\ve}$,
  we get a diffeomorphism $O(\alpha_{i,\ve})$ $C^r$ close to $f_{\ve}$ 
 (that we continue to call $f_\ve$) such that
 \begin{equation}\label{e.1seg1}
L_{i,\ve}^\mathrm{u}(\delta)=
\big\{ X_{i,\ve}+t\,\textbf{e}_2+\widetilde{\rho}_{i,\ve}(t): |t| <\delta
\big\} \subset W^{\mathrm u}(Q,f_\ve), \quad i=1,2.
\end{equation}
With a slight abuse of notation, the higher order terms $\widetilde{\rho}_{i,\ve}$ 
in \eqref{e.1seg1} are  denoted as the ones in \eqref{e.1seg}. As the latter  
are obtained as ``small rotations''  of the terms in \eqref{e.1seg},
they satisfy the flat conditions in~\eqref{e.componentes}.
 }}
\end{remark}

\section{Blender-horseshoes and center-unstable H\'enon-like families}\label{s.BH} 
In this section, we introduce  
blender-horseshoes and their main properties
(Section~\ref{ss.blender-horseshoessubsection}) and
 explain how they may lead to robust tangencies (Section~\ref{ss.blendertubes}).
We also
state
their occurrence in center-unstable H\'enon-like families (Section~\ref{ss.blender-horseshoesreno}).
Finally, we study the geometry of the unstable manifolds
of these blenders  
(Lemma~\ref{l.lem}). 
All blenders considered in this paper are blender-horseshoes, thus if there is no misunderstanding in some cases we will refer 
to them simply as blenders.

\subsection{Blender-horseshoes}\label{ss.blender-horseshoessubsection}
We refrain to give a precise definition of a blender-horseshoe (for details see \cite{BonDia:12,DiaGelSan:19}), instead we will focus on their relevant properties.
We also restrict our discussion to our three  dimensional context. A \textit{blender-horseshoe}  is a locally maximal hyperbolic set  $\Gamma_f$ of a diffeomorphism $f: M \to M$ that is conjugate to the complete full shift on two symbols and satisfies a geometrical
condition stated in Lemmas \ref{l.opensuperposition} and \ref{l.superpositionregion}.
We now go to the details.

There is an open neighbourhood $\Delta$ of $\Gamma$ such that 
$$
\Gamma_f = \bigcap_{i\in \mathbb{Z}} f^i(\,\overline{\Delta}\,) \subset \Delta
$$
The set $\Gamma_f$ is also partially hyperbolic: there is a dominated splitting
with one-dimensional bundles 
$E^\rs\oplus E^\mathrm{cu} \oplus E^{\ru\ru}$ of $T_{\Gamma_f} M$ such that
  $E^\ru\eqdef E^\mathrm{cu} \oplus E^{\ru\ru}$ and $E^\rs$ are the unstable and stable bundles of $\Gamma$, respectively.
  The bundle $E^{\ru\ru}$ is the strong unstable direction.
 We consider a $Df$-invariant cone fields $\mathcal{C}^{\ru\ru}$ and $C^{\mathrm{u}}$ around $E^{\ru\ru}$ and $E^{\mathrm{u}}$, 
 a $Df^{-1}$-invariant cone field $\mathcal{C}^{\rs\rs}$ around $E^{\rss}$, and a center unstable cone field $C^{\mathrm{cu}}$ around $E^{\mathrm{cu}}$.
 The latter is not $Df$ invariant, but the norm of the vectors in $C^{\mathrm{cu}}$    are uniformly expanded by $Df$.

As a hyperbolic set, the blender-horseshoe $\Gamma_f$ has a continuation $\Gamma_g$ for
every $g$ sufficiently close. The important fact is that
for diffeomorphisms nearby these continuations are also blender-horseshoes.
Blender-horseshoes can be also defined for endomorphisms, see \cite[Definition 2.7]{DiaPer:19b}, 
with
the following reformulation of the continuation property:
every map (diffeomorphism or endomorphism) close to an endomorphism with a blender-horseshoe also has a blender-horseshoe with the same reference domain.

 A  key ingredient of a blender-horseshoe
 is its 
{\em{superposition region.}} To describe it  define 
the local stable manifold of $\Gamma_f$ by
\begin{equation}\label{e.localstable}
W^\rs_{\mathrm{loc}} (\Gamma_f, f)\eqdef 
\big\{\,x \in M \, \colon \,f^i(x) \in \overline{\Delta} \,\,\,\, \mbox{for every $\,\,  i\geqslant 0$}\,\big\} 
\end{equation}
and 
observe that 
the set $\Gamma_f$ has two fixed points $P^+$ and $P^-$, called the {\emph{reference fixed points
of the blender}}.  One defines ``large'' 
one-dimensional $\ruu$-discs (contained in $U$) tangent to $\mathcal{C}^{\ru\ru}$  {\em{at the right and at
the left of $W^\rs_{\mathrm{loc}} (P^\pm,f)$.}}  The 
set of such discs 
{{at the right of $W^\rs_{\mathrm{loc}} (P^-,f)$ and at the left of $W^\rs_{\mathrm{loc}} (P^+,f)$}} form the 
\textit{superposition region} denoted by $\mathcal{D}_f$. 
We say that these  $\ruu$-discs are {\em{in-between}}. 

The next two lemmas (see for instance  \cite[Lemma 3.13]{BocBonDia:16} and \cite[Lemma 2.5]{DiaPer:19b}) state an intersection property for discs in the superposition region of the blender that will play a key role to get
robust heterodimensional cycles.

\begin{lemma}[The superposition region]
\label{l.opensuperposition}
Let $\Gamma_f$ be a blender-horseshoe of $f$ and $D$ compact disc whose  interior contains a disc in the superposition region of $\Gamma_f$.
Then for every $C^1$-neighbourhood $\mathcal{U}$  of $D$ there is a   $C^1$-neighbourhood\footnote{Let $r>1$ and $f\in \mathrm{Diff}^r(M)$. Any $C^1$-neighbourhood
of $f$ contains a  $C^r$ neighbourhood of $f$.} $\mathcal{V}$ of $f$ such that every
compact disc in   $\mathcal{U}$ contains a disc in the superposition region of $\Gamma_g$ for every $g\in \mathcal{V}$. 
\end{lemma}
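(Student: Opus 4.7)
The plan is to combine the continuation property of blender-horseshoes under $C^1$ perturbation with the openness of the three conditions defining an in-between $\ruu$-disc, applied simultaneously to the diffeomorphism $f$ and to the disc $D$.

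First, since being a blender-horseshoe is a $C^1$-open property, there is a $C^1$-neighbourhood $\mathcal{V}_0$ of $f$ such that, for every $g\in \mathcal{V}_0$, the continuation $\Gamma_g$ is again a blender-horseshoe with the same reference domain $\Delta$, reference fixed points $P_g^\pm$ close in $C^1$ to $P_f^\pm$, invariant strong unstable cone field $\mathcal{C}^{\ruu}_g$ close in the $C^0$ sense to $\mathcal{C}^{\ruu}_f$, and local stable manifolds $W^{\rs}_{\mathrm{loc}}(P_g^\pm,g)$ depending continuously on $g$ in the $C^1$ topology. All the ingredients that go into the definition of the superposition region $\mathcal{D}_g$ therefore vary continuously with $g$.

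Next, by hypothesis the interior of $D$ contains an in-between $\ruu$-disc $D_0$ of $\Gamma_f$: a compact one-dimensional disc tangent to $\mathcal{C}^{\ruu}_f$, of the required length, strictly at the right of $W^{\rs}_{\mathrm{loc}}(P_f^-,f)$ and at the left of $W^{\rs}_{\mathrm{loc}}(P_f^+,f)$, and with a positive distance to the boundary of $D$. Each of these defining conditions is expressed in terms of an open cone field, of strict positional separation from two $C^1$ surfaces, and of containment in an open set, and is thus $C^1$-open in the pair (disc, diffeomorphism). The proof then proceeds by choosing, for the prescribed $C^1$-neighbourhood $\mathcal{U}$ of $D$, a $C^1$-neighbourhood $\mathcal{V}\subset \mathcal{V}_0$ of $f$ so small that, for every $g\in \mathcal{V}$ and every compact disc $D'\in \mathcal{U}$, the restriction of $D'$ to a tubular neighbourhood of $D_0$ in $D$ remains (i) tangent to $\mathcal{C}^{\ruu}_g$, (ii) contained in the reference domain $\Delta$ of $\Gamma_g$, and (iii) strictly in-between $W^{\rs}_{\mathrm{loc}}(P_g^\pm,g)$, while retaining the required length. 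This subdisc then qualifies as an in-between $\ruu$-disc for $\Gamma_g$, hence belongs to $\mathcal{D}_g$.

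The delicate point, and the main obstacle, is the quantitative matching of three distinct perturbations: the tangency condition is controlled by the $C^1$ closeness of $D'$ to $D$ together with the $C^0$ closeness of the cone fields; the positional in-between condition is controlled by the $C^0$ closeness of $D'$ and the $C^1$ closeness of the two local stable manifolds; and the minimum length demanded of an in-between $\ruu$-disc must survive through both. These must be calibrated in the correct order, namely by first fixing $\mathcal{U}$, then selecting a subdisc $D_0$ with definite margin inside $D$, and finally choosing $\mathcal{V}$, making crucial use of the $C^1$ continuous dependence of $W^{\rs}_{\mathrm{loc}}(P_g^\pm,g)$ on $g$ provided by hyperbolicity of $\Gamma_f$. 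Once these calibrations are fixed, the conclusion follows from the openness of each condition in turn.
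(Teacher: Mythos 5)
Your argument is correct, and it matters here that the paper does not actually prove this lemma: it imports it from \cite[Lemma 3.13]{BocBonDia:16} and \cite[Lemma 2.5]{DiaPer:19b}, and only points to the robust covering property of Remark~\ref{r.diskstripsinbetween} (every in-between $\ruu$-disc has an image containing an in-between $\ruu$-disc, a condition built into the definition of a blender-horseshoe and itself $C^1$-robust) as the underlying mechanism. Your route is the standard direct argument behind those citations: continuation of all the blender data (reference domain, reference fixed points $P^\pm$, strong unstable cone field, local stable manifolds $W^{\rs}_{\mathrm{loc}}(P^\pm,g)$) in the $C^1$ topology, openness of the defining conditions of the superposition region in the pair (disc, diffeomorphism), and the calibration in the right order (first a subdisc $D_0$ with definite margin inside $D$, then $\mathcal{V}$). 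One point deserves a word of care: tangency to a \emph{closed} cone field is not literally a $C^1$-open condition on the disc, since the tangents of $D_0$ could touch the boundary of $\mathcal{C}^{\ruu}$. The standard fix, which should be made explicit in your step (i), is that the blender axioms persist after a slight enlargement of $\mathcal{C}^{\ruu}$ (strict $Dg$-invariance and expansion are open conditions), so for $g\in\mathcal{V}$ one takes this slightly larger cone field as the strong unstable cone field of $\Gamma_g$; then every disc $C^1$-close to $D_0$ is tangent to it, and Lemma~\ref{l.superpositionregion} still applies to the resulting in-between discs. With that proviso your calibration argument goes through and yields the lemma as stated.
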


For further explanation of this lemma see Remark~\ref{r.diskstripsinbetween}. See also  Lemma~\ref{l.iterationsofstrips} for a 
 ``quantitative version'' of this result.
 
\begin{lemma}
\label{l.superpositionregion} 
Let $\Gamma_f$ be a blender-horseshoe of $f$ and $D$ a compact disc containing a disc  the superposition region of $\Gamma_f$.
Then $W^\rs_{\mathrm{loc}} (\Gamma_f, f)\cap D\neq\emptyset$ and this intersection is quasi-transverse.
\end{lemma}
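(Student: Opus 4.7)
The plan is to exploit the self-similar Markov structure of the superposition region to extract, via a nested-interval argument, a point of $D$ whose forward orbit stays in $\overline{\Delta}$ forever. By hypothesis, $D$ contains a compact $\ruu$-subdisc $D_0$ that lies in $\mathcal{D}_f$, so it suffices to prove $W^\rs_{\loc}(\Gamma_f,f)\cap D_0\neq\emptyset$.

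The key inductive step is to show that the superposition region is \emph{invariant under forward iteration up to subdiscs in Markov strips}: given any $\ruu$-disc $D'\in\mathcal{D}_f$, there exists a sub-disc $\widetilde D\subset D'$ and a label $i\in\{1,2\}$ such that $f(\widetilde D)$ is contained in the Markov strip $B_i$ of $\Gamma_f$ and is itself a $\ruu$-disc in $\mathcal{D}_f$. This is essentially the defining geometric property of a blender-horseshoe: the two strips $B_1,B_2$ cover $\overline{\Delta}$ in such a way that any in-between $\ruu$-disc meets both of them, the intersection with each $B_i$ carries a sub-disc that remains tangent to $\mathcal{C}^\ruu$ (cone-invariance), and whose forward image is again positioned between $W^\rs_\loc(P^-,f)$ and $W^\rs_\loc(P^+,f)$ (this is precisely what the definition of ``in-between'' in Section~\ref{ss.blender-horseshoessubsection} guarantees, together with the fact that the $\ruu$-length is strictly expanded by $f$).

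Iterating this step, I obtain a nested sequence of compact subdiscs $D_0\supset D_1\supset D_2\supset\cdots$ and a sequence of symbols $(i_n)\in\{1,2\}^{\bN}$ such that $f^n(D_n)\subset B_{i_n}\subset\overline{\Delta}$ is a $\ruu$-disc in $\mathcal{D}_f$ for every $n\geqslant 0$. The intersection $\bigcap_{n\geqslant 0}D_n$ is then non-empty by compactness, and any point $x$ in it satisfies $f^n(x)\in\overline{\Delta}$ for all $n\geqslant 0$. By the characterisation~\eqref{e.localstable}, this means $x\in W^\rs_\loc(\Gamma_f,f)\cap D_0\subset W^\rs_\loc(\Gamma_f,f)\cap D$.

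For the quasi-transversality clause, note that $\dim D + \dim W^\rs_\loc(\Gamma_f,f)=1+1=2<3=\dim M$, so the sum $T_xD+T_xW^\rs_\loc(\Gamma_f,f)$ cannot span $T_xM$; it remains to rule out a non-trivial overlap. Since $D_0$ is tangent to $\mathcal{C}^\ruu$ and $W^\rs_\loc(\Gamma_f,f)$ is tangent to $\mathcal{C}^\rss$, and these two cone fields are disjoint (they are contained in the small cones around the transverse bundles $E^\ruu$ and $E^\rss$), the tangent lines at $x$ are distinct, so the sum is direct. The main obstacle in making the plan rigorous is the inductive geometric step: one must verify carefully that the $\ruu$-subdisc obtained from intersecting a disc in $\mathcal{D}_f$ with a Markov strip $B_i$ and then applying $f$ is again ``in-between'' in the precise sense defining the superposition region. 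This relies on the specific Markov geometry of the blender-horseshoe and is the content of the results cited from \cite{BocBonDia:16, DiaPer:19b}.
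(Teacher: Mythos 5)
Your argument is correct and follows essentially the same route as the paper: iterate the defining property that the image of a disc in the superposition region contains a disc in the superposition region (Remark~\ref{r.diskstripsinbetween}), extract a nested sequence of compact subdiscs whose intersection gives a point with forward orbit in $\overline{\Delta}$, invoke the characterisation \eqref{e.localstable}, and obtain quasi-transversality from the disjointness of the strong-unstable and stable cone fields. The only cosmetic slips (the Markov strips do not literally cover $\overline{\Delta}$, and $W^\rs_{\mathrm{loc}}(\Gamma_f,f)$ is a lamination rather than a $1$-disc) do not affect the argument.
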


This lemma follows from the fact that 
the image of any  disc $D$ in the superposition region contains a disc in the superposition region.
Arguing inductively, it follows that any disc in the superposition region contains a point whose forward orbit is contained in
$\Delta$. The lemma now follows from the characterisation of 
$W^\rs_{\mathrm{loc}} (\Gamma_f, f)$ in \eqref{e.localstable}. 
This construction also guarantees that  the obtained intersection
between $D$ and $W^\rs_{\mathrm{loc}} (\Gamma_f, f)$ is
quasi-transverse: the disks in the superposition region are tangent to a strong unstable cone field,
$W^\rs_{\mathrm{loc}} (\Gamma_f, f)$ is tangent to 
a stable cone field, and these cone fields have no common directions.  

\subsection{Blender-horseshoes: tubes and folding manifolds}
\label{ss.blendertubes}
We  now analise when the
local stable manifold $W^{\rs}_{\mathrm{loc}} (\Gamma_f,f)$   of a blender-horseshoe $\Gamma_f$ has a (robust) tangency with a surface
$S$
``passing throughout its domain $\Delta$".  In \cite{BonDia:12} it is proved that occurrence when  $S$ is a  {\em{folding manifold.}}
Motivated by this fact, we introduce the notion of {\em{$\ru$-tubes}} and prove that they generate folding manifolds
after iterations. In this way, Proposition~\ref{p.tang} provides a mechanism guaranteeing the robust tangencies in Theorem~\ref{t:1}.
We now go into the details of this construction that follows the ideas in \cite[Section 4]{BonDia:12}. 

\subsubsection{Folding manifolds and tubes}\label{d.tubos}
Throughout this section, we consider a diffeomorphism $f$  with a blender-horseshoe $\Gamma_f$ with domain $\Delta$ 
and reference fixed points $P^-$ and $P^+$. Next definition is an extension of \cite[Definition 4.2]{BonDia:12}.

\begin{defi}[Strips, tubes, and folding manifolds]
\label{d.stripstubesetc}
{\em{
Consider  a surface with boundary $S$ of the form
$$
S = \bigcup_{t\in [0,1]} D_t \subset \Delta,
$$
where
$(D_t)_{t\in [0,1]}$  is a family of $\ruu$-discs depending continuously on $t$.
We say that $S$ is a 
\begin{itemize}
\item
 {\em{$\ru$-strip}}   if 
the  family of discs $(D_{t})_{t\in [0,1]}$ is pairwise disjoint  and $S$ is tangent to the unstable cone field $\mathrm{C}^\ru$, 
 \item
  {\em{$\ru\ru$-tube}} if 
the  family of discs $(D_{t})_{t\in [0,1)}$ is pairwise disjoint and $D_0=D_1$.
\item
a {\em{folding surface}} if
the  family of discs $(D_{t})_{t\in [0,1]}$ is pairwise disjoint,
$D_0$ and $D_1$ both intersect $W^{\rs}_{\loc} (P^-,f)$
(or both intersect  $W^{\rs}_{\loc} (P^+,f)$), and $D_t$ is in-between $P^-$ and $P^+$ for every
$t\in (0,1)$.
\end{itemize}
A strip or a tube $S$  is {\em{in-between}} if every 
$D_t$  is in-between $P^+$ and $P^-$. Note that a folding surface cannot be in-between.
}}
\end{defi}

In what follows, we will use the letters $S$, $T$, and $F$ to denote strips, tubes, and folding manifolds, respectively.
The main result of this section is the following:

\begin{prop}\label{p.tang}
Let $\Gamma_f$ be a blender-horseshoe of a diffeomorphism $f$ and  $T$ a $\ruu$-tube in-between.
Then $W^{\rs}_{\loc}(\Gamma_f, f)$ and $T$ have a tangency point.
\end{prop}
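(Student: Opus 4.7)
The plan is a topological argument exploiting the closure $D_0 = D_1$ of the tube together with the superposition property of the blender (Lemma~\ref{l.superpositionregion}), eventually reducing to the tangency result for folding manifolds from \cite{BonDia:12}.

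I first introduce local coordinates $(x_s, x_c, x_u)$ near the blender domain $\Delta$ adapted to the partially hyperbolic splitting $E^\rs \oplus E^{\mathrm{cu}} \oplus E^{\ru\ru}$, so that the stable leaves of $W^\rs_{\loc}(\Gamma_f)$ are tangent to $\partial_{x_s}$ and the strong unstable foliation to $\partial_{x_u}$; the boundary stable manifolds $W^\rs_{\loc}(P^\pm)$ lie on planes $\{x_c = \pm a\}$, so being in-between means $|x_c| < a$. Parameterize $T$ as
\[
T = \{(f^s(t, u), f^c(t, u), u) : (t, u) \in S^1 \times [0, 1]\},
\]
with the $S^1$-periodicity in $t$ encoding $D_0 = D_1$; since each $D_t$ is tangent to $\mathcal{C}^{\ru\ru}$, the partial derivatives $\partial_u f^s, \partial_u f^c$ are small. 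A direct tangent-plane computation shows that $T$ is tangent to the stable lamination at the point $p = (f^s(t_0, u_0), f^c(t_0, u_0), u_0)$ exactly when both (i) $p \in W^\rs_{\loc}(\Gamma_f)$ and (ii) $\partial_t f^c(t_0, u_0) = 0$.

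Condition (ii) is provided by the closed-loop structure. For each fixed $u$, the function $t \mapsto f^c(t, u)$ is $S^1$-periodic, so it attains a maximum at some $t_0 = t_0(u)$, where $\partial_t f^c(t_0(u), u) = 0$. Thus the tangent plane of $T$ contains the stable direction $\partial_{x_s}$ along the critical curve $\{(t_0(u), u) : u \in [0, 1]\} \subset T$. What remains is to ensure that some point on this critical curve lies on $W^\rs_{\loc}(\Gamma_f)$, which is not automatic because the latter has Cantor transverse structure and a generic smooth curve need not meet it.

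To bridge this gap, iterate the tube forward. Under $f$, the strong expansion in the $\ru\ru$-direction lengthens each $f^n(D_t)$, while the $2$-to-$1$ blender coding splits $f^n(T)$ into many sub-families of in-between discs. Because $T$ is a closed loop, some branch of $f^N(T)$ for $N$ large enough (controlled via Lemma~\ref{l.iterationsofstrips}) must ``unwind'' in $x_c$ until a pair of its discs meets $W^\rs_{\loc}(P^-)$ while the intermediate discs remain in-between---precisely a folding manifold in the sense of Definition~\ref{d.stripstubesetc}. The tangency result for folding manifolds from \cite[Section 4]{BonDia:12} then yields a tangency between this folding sub-surface and $W^\rs_{\loc}(\Gamma_f)$; pulling back by $f^{-N}$ transports it to the desired tangency of $T$ with $W^\rs_{\loc}(\Gamma_f)$. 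The main obstacle will be the extraction of this folding sub-surface from $f^N(T)$: the iterated tube fragments into many branches and one must control which branch first breaks out of the in-between region in the proper configuration, combining the quantitative superposition controls (Lemmas~\ref{l.opensuperposition} and~\ref{l.iterationsofstrips}) with the closed-loop geometry of $T$.
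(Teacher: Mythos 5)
Your overall route is the paper's route: iterate the tube forward and reduce to the tangency criterion for folding manifolds of \cite[Proposition 4.4]{BonDia:12}. But the step you yourself flag as ``the main obstacle'' is precisely the heart of the paper's proof, and your proposal does not resolve it. Nothing in your argument rules out the scenario in which, at every iterate and for every branch, the selected discs $f^n(D_{t,\mathbb{A}})$ (or $f^n(D_{t,\mathbb{B}})$) simply remain strictly in-between forever, in which case your scheme never produces a folding manifold nor a tangency. The paper closes this gap quantitatively (Lemma~\ref{l.perfectamente}): it defines a central width $\mathrm{w}(T)$ for in-between tubes, observes that this width is uniformly bounded by a constant $\kappa=\kappa(\Gamma_f)$, and proves a trichotomy --- either $f(T)$ is tangent to $W^{\rs}_{\loc}(P^\pm,f)$, or $f(T)$ contains a folding manifold, or $f(T)$ contains an in-between tube $T'$ with $\mathrm{w}(T')\gesp\lambda\,\mathrm{w}(T)$ for a uniform $\lambda>1$ coming from the expansion of central curves (Remark~\ref{r.diskstripsinbetween}, Lemma~\ref{l.iterationsofstrips}). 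The exponential growth against the uniform bound $\kappa$ is what forces termination; your appeal to Lemma~\ref{l.iterationsofstrips} gestures at this but you never set up the width of a tube, its uniform bound, or the expansion estimate, so the ``unwinding'' claim is unsupported. Two smaller points: (i) when a branch does break out, the case analysis must also allow a \emph{direct} tangency of the image tube with $W^{\rs}_{\loc}(P^-,f)$ (the degenerate case $t_1=t_2$ in the paper), not only the extraction of a folding surface with a pair of discs meeting $W^{\rs}_{\loc}(P^-,f)$; this case still concludes, but your extraction argument as stated does not cover it; (ii) the pull-back by $f^{-N}$ needs the (easy, but worth saying) remark that the intermediate tubes are in-between, hence in $\Delta$, so the preimage of the tangency point still lies in $W^{\rs}_{\loc}(\Gamma_f,f)$ as defined in \eqref{e.localstable}.

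Separately, your opening computation --- the critical curve $\partial_t f^c(t_0(u),u)=0$ obtained from the $S^1$-periodicity of $t\mapsto f^c(t,u)$ --- only yields points where $T$ contains the stable \emph{direction} (and even that only in idealized coordinates where the stable lamination is straight; in general the leaves are merely tangent to a cone field, so ``$\partial_{x_s}\in T_pT$'' must be replaced by tangency to the actual leaf through $p$). Since $W^{\rs}_{\loc}(\Gamma_f,f)$ has Cantor transverse structure, this observation cannot by itself locate a tangency point, and indeed you abandon it; in the paper the closed-loop geometry of the tube enters only through the combinatorial dichotomy of Lemma~\ref{l.perfectamente} (using $D_0=D_1$ to produce the folding configuration), not through any critical-point argument.
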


An immediate consequence of this proposition is the following version of  \cite[Corollary 4.11]{BonDia:12} where
folding manifolds are replaced by $\ruu$-tubes:

\begin{coro}\label{c.tang}
Let $\Gamma_f$ be a blender-horseshoe of $f$ and  $T$ a $\ruu$-tube in-between
contained in the unstable manifold of a saddle $R_f$ of index two.
Then there is a $C^r$ neighbourhood $\mathcal{V}_f$ of $f$ such that
$W^{\rs}_{\loc}(\Gamma_g, g)$ and 
$W^{\ru}(R_g, g)$  have 
 a tangency point for every $g\in \mathcal{V}_f$.
\end{coro}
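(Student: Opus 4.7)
The plan is to deduce the corollary from Proposition~\ref{p.tang} by a continuation argument: the continuation $\Gamma_g$ of the blender-horseshoe $\Gamma_f$ remains a blender-horseshoe with the same reference domain, while the $\ruu$-tube $T\subset W^\ru(R_f,f)$ admits a $C^r$-close continuation $T_g\subset W^\ru(R_g,g)$ which is still a $\ruu$-tube in-between for $\Gamma_g$. Applying Proposition~\ref{p.tang} to $g$ and $T_g$ then gives a tangency point between $W^\rs_{\loc}(\Gamma_g,g)$ and $T_g\subset W^\ru(R_g,g)$, which is precisely the conclusion of the corollary.

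In more detail, I would first write $T=\bigcup_{t\in[0,1]}D_t$ as in Definition~\ref{d.stripstubesetc}, so that each $D_t$ is a $\ruu$-disc contained in $W^\ru(R_f,f)\cap\Delta$, the family is pairwise disjoint (except that $D_0=D_1$), and each $D_t$ is in-between the reference stable manifolds $W^\rs_{\loc}(P^-_f,f)$ and $W^\rs_{\loc}(P^+_f,f)$. Since $T$ is a compact subset of $W^\ru(R_f,f)$, standard $C^r$ continuation of unstable manifolds on compact pieces provides, for each $g$ in a suitable $C^r$ neighbourhood $\mathcal{V}_f$ of $f$, a compact piece $K_g\subset W^\ru(R_g,g)$ that depends continuously on $g$ in the $C^r$ topology and agrees with $K\supset T$ when $g=f$. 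Inside $K_g$ one then isolates the continuation $T_g=\bigcup_{t\in[0,1]}D^g_t$ of $T$.

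The main step, and where I expect the principal difficulty to lie, is verifying that $T_g$ is still an in-between $\ruu$-tube for the blender $\Gamma_g$. Tangency to the strong unstable cone field $\mathcal{C}^{\ruu}_g$, pairwise disjointness of the family $(D^g_t)_{t\in[0,1)}$, and the closing condition $D^g_0=D^g_1$ are $C^1$-open properties and transfer from $T$ at $g=f$ after possibly shrinking $\mathcal{V}_f$. The in-between property is more delicate, since the reference local stable manifolds $W^\rs_{\loc}(P^\pm_g,g)$ move with $g$ and the discs $D^g_t$ must remain on the correct side of both; here Lemma~\ref{l.opensuperposition} supplies exactly the needed robustness: any compact disc whose interior contains a disc in the superposition region of $\Gamma_f$ still contains a disc in the superposition region of $\Gamma_g$ for all $g\in\mathcal{V}_f$.

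With $T_g$ confirmed to be a $\ruu$-tube in-between for $\Gamma_g$, I would conclude by applying Proposition~\ref{p.tang} to the triple $(g,\Gamma_g,T_g)$, obtaining a tangency point between $W^\rs_{\loc}(\Gamma_g,g)$ and $T_g$. As $T_g\subset W^\ru(R_g,g)$, this is a tangency between $W^\rs_{\loc}(\Gamma_g,g)$ and $W^\ru(R_g,g)$ for every $g\in\mathcal{V}_f$, completing the argument. The only nontrivial ingredient beyond Proposition~\ref{p.tang} is therefore the robustness of the in-between condition, handled by Lemma~\ref{l.opensuperposition}.
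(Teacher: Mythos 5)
Your argument is correct and is essentially the paper's own: the corollary is stated there as an immediate consequence of Proposition~\ref{p.tang}, precisely because the continuation $T_g\subset W^{\ru}(R_g,g)$ of the compact tube $T$ remains a $\ruu$-tube in-between for $\Gamma_g$ (openness of the cone, disjointness, closing and in-between conditions, the last uniformly over the compact family $(D_t)$), so Proposition~\ref{p.tang} applied to $(g,\Gamma_g,T_g)$ yields the tangency. Your filling-in of these robustness details, including the appeal to Lemma~\ref{l.opensuperposition} for the in-between condition, matches the intended proof.
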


Proposition~\ref{p.tang} will follow from \cite[Proposition 4.4]{BonDia:12}: any folding manifold has a tangency
with $W^{\rs}_{\loc} (\Gamma_f,f)$. The main ingredient of the proof in \cite{BonDia:12} is the fact that
the image of
any folding manifold contains a folding manifold,
see \cite[Lemma 4.5]{BonDia:12}. We reformulate that lemma in our context. For that we need to 
analise 
 the central width of iterations of $\ru$-tubes and $\ru$-strips. We refer to see  \cite[Section 2]{DiaGelSan:19} for a detailed analysis
 of iterations of strips. We now go into the details.

First,  a {\em{central curve}} is a curve tangent to the center unstable cone field $\mathcal{C}^{\mathrm{cu}}$. 
The {\em{central  width}} of a $\ru$-strip $S = \cup_{t\in [0,1]} D_t$ is defined by
$$
\mathrm{w}(S) =\inf \{ \mathrm{length}(\ell) \colon \mbox{$\ell\subset S$ is a central curve joining $D_0$ and $D_1$}\}.
$$
Note that there is $\kappa= \kappa (\Gamma_f)>0$ such that $\mathrm{w} (S) \lesp \kappa$ for every $\ru$-strip in-between.

To define the {\em{central width of a $\ruu$-tube $T$}} in-between (denoted with a  slight abuse of notation also by $\mathrm{w} (T)$),
we note that $(\Delta\setminus T)$ has two connected components, one of them is disjoint from $P^-$ and $P^+$.
We denote this component by $\Delta_T$ and let
$$
\mathrm{w}(T) =\sup \{ \mathrm{w}(S) \colon \mbox{$S$ is a $\ru$-strip contained in $\Delta_T$}\}.
$$
Note that the width of any $\ruu$-tube in-between is bounded by the constant $\kappa$ above. 

\begin{lemma}\label{l.perfectamente}
Let $\Gamma_f$ be a blender-horseshoe. Then
there is $\lambda>1$ such that  for every $\ruu$-tube $T$ in-between
 one of the following possibilities holds true:
 \begin{itemize}
  \item [(a)]
 $f(T)$ has tangency with either $W^{\rs}_{\mathrm{loc}}(P^-,f)$ or with
  $W^{\rs}_{\mathrm{loc}}(P^+,f)$,
\item [(b)]
 $f(T)$ contains a folding manifold,
 \item[(c)]
 $f(T)$ contains
a $\ruu$-tube $T'$ in-between  with
$\mathrm{w} (T') \gesp \lambda\, \mathrm{w} (T)$.
 \end{itemize}
\end{lemma}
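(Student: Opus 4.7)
The plan is to analyse how the loop of image discs $t\mapsto f(D_t)$ sits with respect to the two local stable manifolds $W^\rs_\loc(P^-,f)$ and $W^\rs_\loc(P^+,f)$ of the reference fixed points of $\Gamma_f$, and to read off each of the three alternatives of the lemma from the corresponding geometric configuration.

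First, since $\mathcal{C}^\ruu$ is $Df$-invariant, every image $f(D_t)$ is again a $\ruu$-disc (tangent to $\mathcal{C}^\ruu$, though possibly exceeding $\Delta$). Because $D_0=D_1$, the parameterisation $t\mapsto f(D_t)$ descends to a continuous loop on the circle $\bS^1=[0,1]/(0\sim 1)$. I would then classify each piece $f(D_t)\cap\Delta$ into one of four regimes: entirely to the left of $W^\rs_\loc(P^-,f)$, entirely to the right of $W^\rs_\loc(P^+,f)$, entirely in-between, or meeting one of $W^\rs_\loc(P^\pm,f)$. If for some $t_0$ the intersection in the last regime is nontransverse, alternative (a) is achieved and we are done, so from now on I assume that every such intersection is transverse.

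Under this assumption I would split into two sub-cases. If every $f(D_t)$ lies in the in-between regime, then $f(T)$ itself is a $\ruu$-tube in-between, yielding alternative (c) (modulo the width estimate). Otherwise the loop visits the fourth regime, and since $T$ is in-between (so that the initial discs $D_t$, hence $f(D_t)$ for small perturbations of $t$, start away from $W^\rs_\loc(P^\pm,f)$), a standard continuity argument on $\bS^1$ produces a maximal sub-arc $[t_1,t_2]\subset\bS^1$ such that both $f(D_{t_1})$ and $f(D_{t_2})$ meet the same $W^\rs_\loc(P^\pm,f)$ while $f(D_t)$ is in-between for every $t\in(t_1,t_2)$. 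After reparameterisation this sub-arc satisfies Definition~\ref{d.stripstubesetc} and is a folding surface, giving (b).

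For the width estimate in (c), I would exploit the uniform expansion by $Df$ along the center-unstable direction: by partial hyperbolicity there exists $\lambda>1$, depending only on $\Gamma_f$, with $\Vert Df(v)\Vert\gesp \lambda\,\Vert v\Vert$ for every $v\in\mathcal{C}^{\mathrm{cu}}$. Given $\delta>0$, I would pick an $\ru$-strip $S\subset\Delta_T$ with $\mathrm{w}(S)>\mathrm{w}(T)-\delta$; since $P^\pm$ are fixed and $f(T)$ is in-between, $f(S)\cap\Delta$ is an $\ru$-strip contained in $\Delta_{f(T)}$, and the center-unstable expansion forces $\mathrm{w}(f(S)\cap\Delta)\gesp \lambda\,\mathrm{w}(S)$ by comparing the length of a minimising central curve in $f(S)\cap\Delta$ with the length of its $f^{-1}$-image in $S$. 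Passing to the supremum over $S$ and letting $\delta\to 0$ yields $\mathrm{w}(f(T))\gesp \lambda\,\mathrm{w}(T)$. The main obstacle I anticipate is the topological extraction of the folding sub-arc in alternative (b): the continuity argument must rule out pathological configurations where the loop drifts in and out of $\Delta$ without producing a clean folding piece, and here the hypotheses that $T$ is in-between and that in-between tubes have uniformly bounded central width (by $\kappa$) are essential.
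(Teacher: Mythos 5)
Your overall outline (all image discs in-between yields (c) via cu-expansion; otherwise extract a folding piece or a tangency) matches the paper's, but there is a genuine gap: you work with the full images $f(D_t)$, whereas the proof has to work with the Markov pieces $f(D_{t,\mathbb{A}})$ and $f(D_{t,\mathbb{B}})$, where $\Delta_{\mathbb{A}},\Delta_{\mathbb{B}}$ are the two rectangles of the Markov partition of the blender-horseshoe. This is not cosmetic. The image of a $\ruu$-disc crossing $\Delta$ leaves $\Delta$ and meets it in several components, so your four ``regimes'' for $f(D_t)\cap\Delta$ are not well defined without first choosing a branch to follow continuously in $t$; and, more importantly, the property you need for a folding surface --- that the two extreme discs of your sub-arc meet the \emph{same} manifold $W^{\rs}_{\loc}(P^-,f)$ or the same $W^{\rs}_{\loc}(P^+,f)$, as Definition~\ref{d.stripstubesetc} requires --- is nowhere justified in your argument: a maximal in-between sub-arc of unbranched images could terminate at $W^{\rs}_{\loc}(P^-,f)$ on one side and at $W^{\rs}_{\loc}(P^+,f)$ on the other, or by leaving $\Delta$ altogether. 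In the paper this is exactly what the branch analysis delivers: by Remark~\ref{r.diskstripsinbetween}, for every $t$ at least one of $f(D_{t,\mathbb{A}})$, $f(D_{t,\mathbb{B}})$ contains a $\ruu$-disc in-between; if one branch works for all $t$ one gets the tube of case (c) (the width expansion being Lemma~\ref{l.iterationsofstrips}, essentially your cu-cone computation, which is fine), and otherwise one follows, say, the $\mathbb{A}$-branch, whose in-betweenness can only fail at $W^{\rs}_{\loc}(P^-,f)$; the two failure parameters $t_1,t_2$ then give a folding surface when $t_1<t_2$ and a tangency when $t_1=t_2$.

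The second problem is your treatment of alternative (a). The tangency in (a) is between the two-dimensional surface $f(T)$ and the one-dimensional curve $W^{\rs}_{\loc}(P^\pm,f)$, and it cannot be detected on a single disc: a $\ruu$-disc is tangent to $\mathcal{C}^{\ruu}$ while $W^{\rs}_{\loc}(P^\pm,f)$ is tangent to the stable cone, so an individual intersection $f(D_{t_0})\cap W^{\rs}_{\loc}(P^\pm,f)$ is always quasi-transverse and never transverse (two curves in a $3$-manifold cannot meet transversally). Hence your reduction ``if some such intersection is nontransverse we are in (a), otherwise assume transversality'' is vacuous, and the configuration that actually produces (a) --- the one-parameter family of in-between discs touching $W^{\rs}_{\loc}(P^-,f)$ at an isolated parameter while staying on one side, i.e.\ the degenerate case $t_1=t_2$ above --- is precisely the one your continuity argument cannot convert into a folding surface (the two endpoint discs would have to coincide, violating the pairwise-disjointness in the definition). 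So as written your trichotomy does not close; you need the branch-by-branch analysis (or an equivalent substitute controlling at which stable manifold in-betweenness is lost) to separate the folding case from the tangency case.
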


Proposition~\ref{p.tang} easily follows from this lemma.
Observe that, by the comments above, in cases (a) and (b) we get a tangency between $T$ and
$W^{\rs}_{\loc} (\Gamma_f,f)$ and we are done. Otherwise, we let $T_0=T$ and get a new tube $T_1$ in-between 
such that $T_1 \subset f(T_0)$ and
$\mathrm{w} (T_1) 
\gesp \lambda\, \mathrm{w} (T_0)$.
We can now
apply  Lemma~\ref{l.perfectamente} to $T_1$ and argue recurrently.
But  case (c) cannot occur infinitely many consecutive times: 
in such a case we get a sequence of tubes $(T_n)$ in-between with
$$
T_n \subset f(T_{n-1}) \quad \mbox{and} \quad 
\mathrm{w} (T_n) \gesp \lambda\, \mathrm{w} (T_{n-1}) \gesp  \lambda^n\, \mathrm{w} (T_0).
$$
Since the widths of the tubes $T_n$ 
are bounded by $\kappa$ there is a first  step of the recurrent construction when we fall in cases (a) or (b). 
We now prove the lemma.

\subsubsection{Proof of Lemma~\ref{l.perfectamente}}

We have the following version of Lemma~\ref{l.perfectamente} for $\ru$-strips imported
from \cite[Lemma 4.5]{BonDia:12} (see also the proof of \cite[Proposition 2.3]{DiaGelSan:19}).

\begin{lemma}\label{l.iterationsofstrips}
Consider a blender-horseshoe $\Gamma_f$ of $f$.
Then
there is $\lambda>1$ such that
for every $\ru$-strip $S$ in-between
 then one of the two possibilities holds:
 \begin{itemize}
  \item[(i)]
 $f(S)$ intersects transversely either $W^{\rs}_{\mathrm{loc}}(P^-,f)$ or 
  $W^{\rs}_{\mathrm{loc}}(P^+,f)$,
\item[(ii)]
 $f(S)$ contains 
a $\ru$-strip $S'$ in-between with
$\mathrm{w} (S') \gesp \lambda\, \mathrm{w} (S)$.
\end{itemize}
\end{lemma}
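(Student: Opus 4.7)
The plan is to derive this statement by adapting the argument of \cite[Lemma 4.5]{BonDia:12} to the blender-horseshoe setting used here, exploiting invariance of the cone fields together with the uniform expansion in the center-unstable direction. First I would observe that $f(S)$ is again an $\ru$-strip: since $S=\bigcup_{t\in[0,1]}D_t$ is tangent to $\mathcal{C}^\ru$ and the cone fields $\mathcal{C}^{\ru}$ and $\mathcal{C}^{\ru\ru}$ are $Df$-invariant, each $f(D_t)$ remains a $\ruu$-disc and $f(S)=\bigcup_{t\in[0,1]}f(D_t)$ is tangent to $\mathcal{C}^\ru$. Because the family $(D_t)$ is pairwise disjoint and $f$ is a diffeomorphism, the family $(f(D_t))$ is also pairwise disjoint.

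Next I would set up the dichotomy. Fix a number $\lambda_{\mathrm{cu}}>1$ bounding from below the expansion of vectors in $\mathcal{C}^{\mathrm{cu}}$ by $Df$; this exists by partial hyperbolicity of $\Gamma_f$. The Markov structure of the blender-horseshoe says that $f(\overline{\Delta})\cap\overline{\Delta}$ consists of two $\ru$-sub-rectangles whose strong unstable boundaries are contained in the images of the strong unstable boundary of $\overline{\Delta}$. The key alternative is whether the image strip $f(S)$ can be truncated to a new in-between $\ru$-strip inside $\Delta$, or not. If some $\ruu$-disc $f(D_{t_0})$ in the family crosses $W^\rs_{\mathrm{loc}}(P^-,f)$ or $W^\rs_{\mathrm{loc}}(P^+,f)$, then by the cone separation between $\mathcal{C}^{\ru\ru}$ and the stable direction (which is tangent to these local stable manifolds) the intersection is transverse, so we are in case~(i).

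Otherwise, for every $t$ the disc $f(D_t)$ stays in-between. The blender-horseshoe property (Lemma~\ref{l.opensuperposition} and its underlying Markov selection) asserts that $f(D_t)$ contains a $\ruu$-sub-disc $D_t'$ in the superposition region; moreover this selection can be made to depend continuously on $t$ (restricting $t$ to a closed sub-interval $[a,b]\subset[0,1]$ if necessary to avoid boundary effects), producing an in-between $\ru$-sub-strip
\[
S'=\bigcup_{t\in[a,b]}D_t'\subset f(S).
\]
For the width estimate, any central curve $\ell'$ in $S'$ joining $D_a'$ and $D_b'$ pulls back under $f^{-1}$ to a central curve in $S$ (again by $Df$-invariance of $\mathcal{C}^{\mathrm{cu}}$) whose length is at most $\lambda_{\mathrm{cu}}^{-1}\,\mathrm{length}(\ell')$. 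Hence $\mathrm{w}(S')\geqslant\lambda_{\mathrm{cu}}\,\mathrm{w}(S)$, and we may take $\lambda=\lambda_{\mathrm{cu}}$ (or any $\lambda\in(1,\lambda_{\mathrm{cu}})$ to absorb a uniform loss in the continuous selection).

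The main obstacle is making the continuous selection of in-between sub-discs $D_t'$ consistent with the width bound, i.e.\ controlling how much of the central extension of $f(S)$ may be ``wasted'' when trimming near the boundary components of $\Delta$. This is handled exactly as in \cite[Lemma 4.5]{BonDia:12} and \cite[Proposition 2.3]{DiaGelSan:19}: the uniform geometry of the superposition region, together with the fact that the two Markov branches of $f(\overline{\Delta})\cap\overline{\Delta}$ sweep across the entire in-between region in the central direction, ensures that the loss is bounded and the net central expansion remains at least some $\lambda>1$ depending only on $\Gamma_f$.
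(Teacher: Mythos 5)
Your argument is essentially the route the paper itself relies on: the paper does not reprove this lemma but imports it from \cite[Lemma 4.5]{BonDia:12} and the proof of \cite[Proposition 2.3]{DiaGelSan:19}, pointing to Remark~\ref{r.diskstripsinbetween} (a Markov-branch selection for each $\ruu$-disc together with uniform expansion of central widths) as the main ingredient, and your sketch reconstructs precisely this mechanism while deferring the same technical core to the same sources. Two details should be tidied up: the dichotomy must be run on the branch images $f(D_{t,\mathbb{A}})$, $f(D_{t,\mathbb{B}})$ rather than on the whole discs $f(D_t)$ (which need not lie in $\Delta$) --- once case (i) is excluded, a connectedness argument in $t$ gives a single branch $\mathbb{E}$ working for every $t\in[0,1]$, and since the Markov cut is made in the strong-unstable direction one has $\mathrm{w}(S_{\mathbb{E}})\gesp \mathrm{w}(S)$, so no central width is lost and your trimming of the $t$-interval is unnecessary --- and the cone $\mathcal{C}^{\mathrm{cu}}$ is explicitly \emph{not} $Df$-invariant in this paper, so the pull-back step should instead be justified by the uniform expansion of vectors in $\mathcal{C}^{\mathrm{cu}}$ combined with the domination of $E^{\ruu}$ over $E^{\mathrm{cu}}$ inside surfaces tangent to $\mathcal{C}^{\ru}$.
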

The main ingredient of the proof of this lemma is in Remark~\ref{r.diskstripsinbetween}.

\begin{remark}[Iterations of $\ru$-strips in-between]\label{r.iterationsofstrips}{\em{Lemma~\ref{l.iterationsofstrips} 
implies that the orbit of any $\ru$-strip in-between transversely meets either
$W^s_{\loc} (P^-,f)$ or $W^s_{\loc} (P^+,f)$. More precisely, let $S_0=S$ and assume that
$f(S_0)$ intersects transversely neither $W^{\rs}_{\mathrm{loc}}(P^-,f)$ nor 
  $W^{\rs}_{\mathrm{loc}}(P^+,f)$. In that case,
we get a new strip $S_1$ in-between 
such that $S_1 \subset f(S_0)$ and
$\mathrm{w} (S_1) 
\gesp \lambda\, \mathrm{w} (S_0)$.
We can now
apply  Lemma~\ref{l.iterationsofstrips} to $S_1$ and argue recurrently. As above,
this possibility cannot occurs infinitely many consecutive times.}}
  \end{remark}

We are now ready to prove the lemma. Given any $\ve>0$, associated to the $\ruu$-tube $T_0=T$ there is an internal strip 
$S_0\subset \Delta_{T_0}$, with $\mathrm{w}(S_0)\gesp  \mathrm{w}(T_0)-\ve$. Note that this strip is in-between.
Consider now $f(T_0)$ and assume that cases (a) and (b) in Lemma~\ref{l.perfectamente} do not hold. This implies that the image of $f(S_0)$ 
satisfies (ii) in Lemma~\ref{l.iterationsofstrips}.  We now see  that and in that case there is $\ru$-tube $T_1\subset f(T_0)$ such that $S_1 \subset \Delta_{T_1}$.
Since this holds for every $\ve>0$ item (c) in the lemma follows.

We now explain how the tube $T_1$ is obtained. We will refer to \cite[Section 2]{DiaGelSan:19} for details. A blender-horseshoe has 
a Markov partition associated  to two disjoint ``subrectangles" $\Delta_{\mathbb{A}},\Delta_{\mathbb{B}}\subset \Delta$  such 
that 
$$
\Gamma_f= \bigcap_{i\in \mathbb{Z}} f^i(\Delta_{\mathbb{A}} \cup  \Delta_{\mathbb{B}} ).
$$ 
Moreover, the map that associates to each $x\in \Gamma_f$ the sequence $(\xi_i(x))_{\in \mathbb{Z}}\in 
\{\mathbb{A},\mathbb{B}\}^\mathbb{Z}$ defined by $f^i(x)\in \Delta_{\xi_i(x)}$ is a conjugation between $\Gamma_f$ and the complete shift on the symbols 
$\mathbb{A},\,\mathbb{B}$.
In particular, the reference fixed points of the blender satisfy
$P^-\in \Delta_{\mathbb{A}}$ and $P^+\in \Delta_{\mathbb{B}}$. In what follows, we denote by $f_{\mathbb{E}}$ 
the restriction of $f$ to ${\mathbb{E}}$, $\mathbb{E}\in\{\mathbb{A}, \mathbb{B}\}$. Given a set $X$ we let 
$X_{\mathbb{E}}\eqdef  X \cap \Delta_{\mathbb{E}}$.

Next remark explains the mechanism guaranteeing Lemma~\ref{l.opensuperposition} and is a consequence of the definition of a blender-horseshoe.

\begin{remark}[Iterations of $\ruu$-discs ans $\ru$-strip]\label{r.diskstripsinbetween}
{\em{There is $\lambda>1$ with the following property: 
Consider a $\ruu$-disc $D$ and a $\ru$-strip $S$ in-between.
\begin{itemize}
\item
 Then either $f(D_\mathbb{A})$ or $f(D_\mathbb{B})$ contains a $\ruu$-disc in-between.
\item
Suppose that $f(S_{\mathbb{E}})$ is a $\ru$-strip in-between,
then
$\mathrm{w}  f(S_{\mathbb{E}}) \gesp \lambda \mathrm{w}(S_{\mathbb{E}})$.
\end{itemize}
 }}
\end{remark}

By Remark~\ref{r.diskstripsinbetween},  for a given $t\in [0,1]$ either $f(D_{t, \mathbb{A}})$ or $f(D_{t, \mathbb{B}})$  is a $\ruu$-disc in-between.
We have the following three cases:
\begin{itemize}
\item[(A)]
 $f(D_{t, \mathbb{A}})$  is a $\ruu$-disc in-between for every $t\in [0,1]$, 
\item[(B)]
 $f(D_{t, \mathbb{B}})$  is a $\ruu$-disv in-between for every $t\in [0,1]$, 
\item[(C)]
Cases (A) and (B) do not hold.
\end{itemize}
In case (A), by Remark~\ref{r.diskstripsinbetween} we have  that $T_1=f(T_{t,\mathbb{A}})$ is a $\ruu$-tube such that $\mathrm{w} (T_1) \gesp \lambda \mathrm{w} (T)$.
Analogously, in case (B) $T_1= f(T_{t,\mathbb{B}})$ is a $\ruu$-tube satisfying  $\mathrm{w} (T_1) \gesp \lambda \mathrm{w} (T)$. 
In both cases,  item (3) in the lemma holds. Thus it remains to consider case (iii). 

In case (C),
there is $c\in [0,1)$ such that $f(D_{c, \mathbb{A}})$ is  a $\ruu$-disc in-between but  $f(D_{c, \mathbb{B}})$ is not a $\ruu$-disc in-between (this possibility includes the case where
$f(D_{c, \mathbb{B}})$ is not a $\ruu$-disc). 
After changing the parameterisation of the tube we can take  $c=0$.  Recall that $D_0=D_1$.
Let
\[
\begin{split}
t_1&\eqdef  \sup\{ t\in [0,1] \colon \mbox{$f(D_{s, \mathbb{A}})$ is in-between for all $s\in [0,t]$}\},\\
t_2&\eqdef  \inf\{ t\in [0,1] \colon \mbox{$f(D_{s, \mathbb{A}})$ is in-between for all $s\in [t,1]$}\}.
\end{split}
\]
Note that $t_1=1$ if and only if $t_2=0$.
Moreover, if this does not hold then $t_1\lesp t_2$.
 Thus, a priory, there are 
the following  cases, 
(i) $t_1=1$ and $t_2=0$,
(ii)
$t_1=t_2\in (0,1)$, and
(iii)
$0<t_1< t_2<1$.
Case (i) implies that we are in case (A) above, a contradiction. Thus it can be discarded.

Note that, by continuity,  if $t_1<1$ then
$f(D_{t_1, \mathbb{A}})\cap W^{s}_{\mathrm{loc}}(P^-,f)\ne \emptyset$. Similarly,
if $t_2>0$ then
$f(D_{t_2, \mathbb{A}})\cap W^{s}_{\mathrm{loc}}(P^-,f)\ne \emptyset$. 
Note that the local stable manifolds are tangent to the stable cone field and the $\ruu$-discs are tangent to the strong unstable cone field, hence
the previous intersections are quasi-transverse. 

In case (iii), consider the interval $[t_2-1,t_1]$ and let $\widehat D_t=D_t$ if $t\in [0,t_1]$ and 
$\widehat D_t=D_{t+1}$ if $t\in [t_2-1,1]$. By construction,
$$
S\eqdef \bigcup_{t\in [t_2-1,t_1]} f(\widehat D_{t,\mathbb{A}})
$$ is  a folding manifold, thus item (b) in the lemma holds.

Finally, in case (ii) we have that
$T_1=\bigcup_{t\in [0,1]} f(D_{t,\mathbb{A}})$ is tangent to $W^{s}_{\mathrm{loc}}(P^-,f)$ and we are in case (a) in the lemma. This completes the proof of the lemma.
\qed

\subsection{Blender-horseshoes in the center-unstable H\'enon-like family}
\label{ss.blender-horseshoesreno}
Let us start by defining the H\'enon-like families of endomorphisms that we will consider.

\subsubsection{Center-unstable H\'enon-like families of endomorphisms}
\label{sss.cuHL}
We consider the parameterised families of H\'enon-like endomorphisms
$
E_{\xi,\mu,\bar \varsigma}, \,  G_{\xi, \mu, \bar \eta} \colon\mathbb{R}^3\to \mathbb{R}^3,
$
defined by 
\begin{equation}\label{e.E-henonlike}
\begin{split}
E_{\xi,\mu,\bar \varsigma}(x,y,z)&\eqdef
(\xi\,x +\varsigma_1 \,y,\, \mu + \varsigma_2\,y^2 + 
\varsigma_3\,x^2 + \varsigma_4\,x\,y,\, \varsigma_5\,y), \\
G_{\xi, \mu, \bar \eta}(x,y,z) &\eqdef (y,\mu+y^2+\eta_1\,y\,z+\eta_2\,z^2,\xi\,z+y),
\end{split}
\end{equation}
where $\xi>1, \mu\in \mathbb{R}$,  $\bar \varsigma \eqdef (\varsigma_1,\varsigma_2,
\varsigma_3,\varsigma_4,\varsigma_5)\in \mathbb{R}^5$, and 
$\bar \eta = (\eta_1,\eta_2)\in \mathbb{R}^2$.
These families are called {\em{center-unstable H\'enon-like.}}

These families are conjugate, this  allows us to translate properties from one family to the other.
More precisely:
\begin{remark}
\label{r.conjugadosperonorevueltos}
{\em{ 
  Consider 
  the families of endomorphisms
$$
\widehat{E}_{\xi,\bar \varsigma},\,  \widehat G_{\xi, \bar \eta}\colon \mathbb{R}^4\to \mathbb{R}^4
$$
defined by
\begin{equation}
\label{e.E-Ggorro}
\begin{split}
(\mu,x,y,z) &\mapsto \widehat{E}_{\xi,\bar \varsigma} (\mu, x,y,x) \eqdef \big( \mu, E_{\xi,\mu,\bar \varsigma} (x,y,z) \big),\\
(\mu,x,y,z) & \mapsto
\widehat G_{\xi,\bar \eta}( \mu,x,y,z) \eqdef \big( \mu,  G_{\xi, \mu, \bar \eta}( x,y,z) \big).
\end{split}
\end{equation}
Consider the map
\begin{equation}
\label{e.bareta}
\bar\eta (\bar\varsigma) = (\eta_1 (\bar \varsigma), \eta_2 (\bar \varsigma))=(\varsigma_1^2\varsigma_3\varsigma_2^{-1},\varsigma_1\varsigma_4\varsigma_2^{-1}).
\end{equation}
Suppose that  $\bar\varsigma$ is such that
\[
\varsigma_1\varsigma_2\varsigma_5\neq 0,
\]
then
$\widehat E_{\xi, \bar\varsigma}$ and $\widehat G_{\xi,\bar\eta (\bar\varsigma)}$ are conjugate:
$$
\widehat \Theta^{-1}_{\bar \varsigma} \circ \widehat E_{\xi, \bar\varsigma} \circ \widehat\Theta_{\bar \varsigma}= \widehat G_{\xi,\bar\eta (\bar\varsigma)}.
$$
where 
\begin{equation}\label{e.todaslastetas}
\begin{split}
& \widehat\Theta_{\bar \varsigma}  \colon\mathbb{R}^4\to\mathbb{R}^4,\quad 
\widehat\Theta_{\bar \varsigma}(\mu,x,y,z) = \big(\varsigma_2^{-1} \mu, \Theta_{\bar \varsigma}  (x,y,x)\big),\\
&\Theta_{\bar \varsigma}  \colon\mathbb{R}^3\to\mathbb{R}^3,\quad 
\Theta_{\bar \varsigma}(x,y,z)=
(\varsigma_2^{-1}\varsigma_1z,
\varsigma_2^{-1}y,
\varsigma_2^{-1}\varsigma_5x).
\end{split}
\end{equation}
}}
\end{remark}

\subsubsection{Occurrence of blender-horseshoes}
\label{sss.blender-horseshoesreno}
Consider the set
 \begin{equation}
 \label{e.deltacube} 
 \Delta\eqdef [-4,4]^2\times[-40,22].
 \end{equation}

\begin{teo}[Theorem 1 in \cite{DiaPer:19b}]
\label{t.BH-DKS}
There is  $\varepsilon_{\mathrm{BH}}>0$ such that
for every 
$$
(\xi, \mu, \bar \eta) \in 
\mathcal{O}_{\mathrm{BH}}\eqdef (1.18,1.19)\times (-10,-9)\times (-\varepsilon_{\mathrm{BH}},\varepsilon_{\mathrm{BH}})^2
$$
the 
endomorphism $G_{\xi,\mu, \bar\eta}$ 
has a blender-horseshoe $\Lambda_{\xi, \mu, \bar\eta}$ with domain of reference $\Delta$
such that 
$$
\Lambda_{\xi, \mu, \bar\eta} = \bigcap_{i\in \mathbf{Z}}  G^i_{\xi, \mu, \bar \eta} (\Delta)
\subset \mathrm{interior} (\Delta).
$$
As a consequence, 
every diffeomorphism or endomorphism sufficiently $C^1$ close to $G_{\xi, \mu, \bar \eta}$ has a 
blender-horseshoe in $\Delta$.
\end{teo}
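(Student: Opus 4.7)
The plan is to reduce to the unperturbed case $\bar\eta=(0,0)$, where
\[ G_{\xi,\mu,0,0}(x,y,z)=(y,\mu+y^2,\xi z+y) \]
has a transparent skew-product form, and then appeal to the $C^1$-robustness of blender-horseshoes (which applies in the endomorphism setting, by the reformulation recalled in Section~\ref{ss.blender-horseshoessubsection}) to cover a whole neighbourhood $(-\varepsilon_{\mathrm{BH}},\varepsilon_{\mathrm{BH}})^2$ of perturbation parameters. This is the standard route employed in \cite{DiaPer:19b}, where the full calculations appear.

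For the unperturbed map, the differential $DG_{\xi,\mu,0,0}$ has identically zero first column, so the $x$-direction is annihilated at each iterate and serves as the strong stable bundle $E^{\mathrm{ss}}$. The remaining $(y,z)$-dynamics is given by $y'=\mu+y^2$ and $z'=\xi z+y$: the former is the quadratic endomorphism, whose full Smale horseshoe on two symbols lives inside $[-4,4]$ for $\mu\leqslant-9$ and whose derivative $2y$ satisfies $|2y|>\xi$ on the invariant set (since $\mu<-2$ forces the horseshoe to avoid a neighbourhood of $y=0$), while the latter is an affine expansion of rate $\xi\in(1.18,1.19)$ driven by $y$. These data yield a dominated splitting $E^{\mathrm{ss}}\oplus E^{\mathrm{cu}}\oplus E^{\mathrm{uu}}$ on the maximal invariant set $\Lambda_{\xi,\mu,0,0}=\bigcap_i G^i(\Delta)$, with $E^{\mathrm{uu}}$ close to $\partial_y$, $E^{\mathrm{cu}}$ close to $\partial_z$, and invariant cone fields $\mathcal{C}^{\mathrm{uu}},\mathcal{C}^{\mathrm{cu}},\mathcal{C}^{\mathrm{ss}}$ built explicitly from the above bounds. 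A standard Markov-partition argument in the $y$-coordinate identifies $\Lambda$ with the full two-shift, and the two reference fixed points $P^\pm$ correspond to the fixed points of $y\mapsto\mu+y^2$, lifted to $\mathbb{R}^3$ via $x=y$ and $z=y/(1-\xi)$.

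The delicate step, and the main obstacle, is the superposition property characterising a blender-horseshoe: every $\mathrm{uu}$-disc tangent to $\mathcal{C}^{\mathrm{uu}}$ placed in-between the local stable manifolds of $P^+$ and $P^-$ must meet $W^{\mathrm{s}}_{\mathrm{loc}}(\Lambda,G)$ under iteration. I would verify this by a quantitative dichotomy in the spirit of Lemma~\ref{l.iterationsofstrips}: letting $\Delta_{\mathbb{A}},\Delta_{\mathbb{B}}$ be the two Markov subrectangles corresponding to $y\geqslant 0$ and $y\leqslant 0$, one shows that for every in-between $\mathrm{uu}$-disc $D$, at least one of $G(D\cap\Delta_{\mathbb{A}})$ or $G(D\cap\Delta_{\mathbb{B}})$ is again an in-between $\mathrm{uu}$-disc whose length has grown by a factor $\geqslant\lambda>1$, with $\lambda$ coming from the uniform lower bound on $|2y|$. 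The precise numerical windows for $\xi$ and $\mu$ and the asymmetric vertical extent $[-40,22]$ of $\Delta$ are calibrated exactly so that the forward images $G(\Delta_{\mathbb{A}})$ and $G(\Delta_{\mathbb{B}})$ cover $\Delta$ in the markovian fashion this dichotomy requires, and so that the reference fixed points lie inside with sufficient room for the in-between region. Once the superposition property is verified at $\bar\eta=(0,0)$, the conclusion on the full open set $\mathcal{O}_{\mathrm{BH}}$ follows from $C^1$-persistence of blender-horseshoes, since $G_{\xi,\mu,\bar\eta}\to G_{\xi,\mu,0,0}$ in the $C^1$ topology on the compact set $\Delta$ as $\bar\eta\to 0$.
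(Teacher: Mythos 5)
You should first note that the paper does not prove Theorem~\ref{t.BH-DKS} at all: it is imported verbatim from \cite{DiaPer:19b}, so the only meaningful comparison is with that reference. Your structural analysis of the unperturbed map is accurate and consistent with what the paper records around the statement: the map is independent of $x$, so the $x$-direction is the (degenerate) strong stable direction; the $y$-dynamics is the quadratic map $y\mapsto \mu+y^2$, whose maximal invariant set for $\mu\in(-10,-9)$ lies in $|y|\in[\sqrt{|\mu|-\beta},\beta]\subset(2.3,3.71)$, so $|2y|\geqslant 4.6$ dominates $\xi$; the fixed points are indeed the lifts $x=y$, $z=y/(1-\xi)$ of the quadratic fixed points, matching \eqref{e.referencesaddle}; and the strong unstable cone you describe is \eqref{e.conodecone}. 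The reduction ``prove it at $\bar\eta=(0,0)$ and invoke $C^1$-persistence of blender-horseshoes for endomorphisms'' is also legitimate in principle, since $(0,0)$ belongs to the parameter square, the endomorphism version of the continuation property is exactly the one recalled in Section~\ref{ss.blender-horseshoessubsection}, and uniformity of the estimates over the compact $(\xi,\mu)$-window yields a uniform $\varepsilon_{\mathrm{BH}}$. (One small imprecision: the justification ``$\mu<-2$ forces the horseshoe to avoid a neighbourhood of $y=0$'' is not the relevant bound; for the cone \eqref{e.conodecone} with constant $\tfrac12$ you need the quantitative estimate $|y|\geqslant\sqrt{|\mu|-\beta}>\sqrt{2}$, which does hold on $(-10,-9)$, cf.\ the condition $|y|>\sqrt5$ quoted from \cite{DiaPer:19b}.)

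The genuine gap is that the distinctive content of the theorem --- the superposition property for the specific constants $\xi\in(1.18,1.19)$, $\mu\in(-10,-9)$ and the asymmetric reference cube $\Delta=[-4,4]^2\times[-40,22]$ --- is asserted (``calibrated exactly so that the dichotomy requires'') rather than verified. But the theorem \emph{is} precisely the claim that these constants work; a proof must actually establish: invariance and uniform expansion of the $\mathrm{uu}$-, $\mathrm{u}$- and $\mathrm{cu}$-cone fields on all of $\Delta$ (not only on the invariant set), the Markov covering of $\Delta$ by the images of the two subrectangles, and the in-between dichotomy for $\mathrm{uu}$-discs measured relative to the $\mathrm{uu}$-saturations of $W^{\rs}_{\loc}(P^\pm)$. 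The last point is delicate in exactly the way your sketch glosses over: a disc tangent to \eqref{e.conodecone} may drift by up to $4$ in the $z$-direction while crossing $\Delta$, and the image of its intersection with a Markov box has $z$-spread of order the $y$-width of that box, so the covering argument in the expanding central direction must be run with these spreads and with the true invariant manifolds of $P^\pm$, not with nominal $z$-positions; this is where the window for $\xi$ and the interval $[-40,22]$ enter, and it occupies the bulk of \cite{DiaPer:19b}. As written, your proposal is a correct plan that reduces the theorem to exactly the estimates proved in that reference; either carry them out quantitatively or cite \cite{DiaPer:19b}, as the present paper does.
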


\begin{remark}\label{r.sinnombre}
{\em{
By Remark~\ref{r.conjugadosperonorevueltos}, the map $E_{\xi,\mu, \bar\varsigma}$, with $\bar \varsigma=(\varsigma_1,\dots, \varsigma_5)$, has blender-horseshoes if $\varsigma_1\varsigma_2\varsigma_5\neq 0$ and 
$$
 (\xi,\mu, \varsigma_1^2\varsigma_3\varsigma_2^{-1},\varsigma_1\varsigma_4\varsigma_2^{-1}) \in 
(1.18,1.19)\times (-10,-9) \times   (-\ve_{\mathrm{BH}}, \ve_{\mathrm{BH}})^2.
$$ 
}}
\end{remark}

\medskip

Let us say a few words about the blenders in Theorem~\ref{t.BH-DKS}. Let $P^{\pm}_{\xi,\mu,\bar \eta}$ be the {\em{reference  fixed points}} of the blender $\Lambda_{\xi,\mu,\bar \eta}$ of $G_{\xi,\mu,\bar \eta}$ in in $\Delta$.
The fixed points of   $G_{\xi,\mu}\eqdef G_{\xi,\mu,\bar 0}$ \footnote{With the notation in 
\cite{DiaPer:19b}, say $P^-_{\xi,\mu}=P_{\xi,\mu}$ and $P^+_{\xi,\mu}=Q_{\xi,\mu}$} in $\Delta$, satisfy
\begin{equation}
\label{e.referencesaddle}
P^-_{\xi,\mu}=(p^-_{\xi,\mu}, p^-_{\xi,\mu}, \widetilde p^-_{\xi,\mu}), \qquad
P^+_{\xi,\mu}=(p^+_{\xi,\mu}, p^+_{\xi,\mu}, \widetilde p^+_{\xi,\mu})
\end{equation}
with
\[
\begin{split}
-2.7&<
p^-_{\xi,\mu}  <-2.5, 
\quad
13<
\widetilde p^-_{\xi,\mu}
< 15,
\\
3.5&<
p^+_{\xi,\mu}
< 3.71,
\quad
-20.6<
 \widetilde p^+_{\xi,\mu}
 < -18.4.
 \end{split}
\]
For the map $G_{\xi,\mu}$, the strong unstable cone field is given by
\begin{equation}
\label{e.conodecone}
\mathcal{C}^{\mathrm{uu}} (Z) \eqdef
\big\{\,(u, v, w)\in\mathbb{R}^3\,:\,
\sqrt{u^2 + w^2}<\tfrac{1}{2} |v|\,\big\},
\end{equation}
see \cite[Lemma 3.10]{DiaPer:19b}. We also have that
$$
W^\rs_{\mathrm{loc}}(P^\pm_{\xi, \mu},G_{\xi,\mu}) = \big\{ \,(p^\pm_{\xi,\mu}+t , p^\pm_{\xi,\mu}, \widetilde p^\pm_{\xi,\mu}) \,\colon 
\, -4-p^\pm_{\xi,\mu} \leqslant t   \leqslant 4-p^\pm_{\xi,\mu}\,\big\}.
$$
The superposition region of the blender $\Lambda_{\xi,\mu}\eqdef \Lambda_{\xi,\mu,\bar 0}$ consists of (large) discs tangent to the cone field  $\mathcal{C}^{\ru\ru}$
which are at the right of $W^\rs_{\mathrm{loc}}(P^-_{\xi,\mu},G_{\xi,\mu})$ and at the left
of $W^\rs_{\mathrm{loc}}(P^+_{\xi,\mu},G_{\xi,\mu})$. 
These  observations
 imply the following:

\begin{remark}[A disc in the superposition region] 
\label{r.thecurveL}
{\em{
Consider
 the disc
\begin{equation}\label{e.soloL}
L\eqdef
\big\{\,(0, y, 0) : |y| < 4\,\big\}\subset \Delta
\end{equation}
in the superposition region the blender $\Lambda_{\xi, \mu, \bar\eta}$ of $G_{\xi, \mu, \bar\eta}$
for every 
$(\xi, \mu, \bar \eta)\in 
\mathcal{O}_{{\mathrm{BH}}}$. 
Then,  by Lemma~\ref{l.opensuperposition},
for every diffeomorphism $F$ close enough to $G_{\xi, \mu, \bar \eta}$
every disc sufficiently close to $L$ is in
the superposition region of the blender $\Lambda_{F}$.}}
\end{remark}

\begin{lemma}\label{l.lem}
 The unstable manifold 
$W^{\mathrm u}\big(P^+_{\xi,\mu,\bar \eta},G_{\xi,\mu,\bar \eta}\big)$
is unbounded in the $y$- and $z$-directions.
\end{lemma}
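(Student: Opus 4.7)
My plan is to exhibit, explicitly, a two-dimensional subset of $W^{\mathrm u}(P^+_{\xi,\mu,\bar\eta}, G_{\xi,\mu,\bar\eta})$ that is unbounded in both the $y$- and the $z$-coordinate. I would first treat the reference case $\bar\eta = \bar 0$, where an explicit computation suffices, and then extend to general $\bar\eta$ by forward-iterating a local unstable disc and combining this with the cone/expansion estimates underlying Theorem~\ref{t.BH-DKS}.

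For $\bar\eta = \bar 0$ the map reads $G(x,y,z) = (y,\,\mu + y^2,\,\xi z + y)$, so the $y$-dynamics decouples as the quadratic map $\phi(y) = \mu + y^2$, whose fixed-point identities at $P^+$ give $\phi(p^+) = p^+$ and $\widetilde p^+ = p^+/(1-\xi)$. A direct computation of fibres shows that $(x_0,y_0,z_0)$ admits a $G$-preimage iff $y_0 = \mu + x_0^2$, and then the preimage satisfies $y_{-1}=x_0$, $z_{-1} = (z_0 - x_0)/\xi$, with $x_{-1}$ arbitrary. Iterating this identity reduces the existence of a backward orbit converging to $P^+$ to the existence of a $\phi$-backward orbit of $x_0$ converging to $p^+$; the positive inverse branch $y\mapsto\sqrt{y-\mu}$ has $p^+$ as an attracting fixed point (derivative $1/(2p^+)<1$) whose basin is all of $(\mu,+\infty)$, and the resulting linear recursion on the $z$-component then forces convergence to $\widetilde p^+$. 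Consequently
\[
W^{\mathrm u}(P^+_{\xi,\mu,\bar 0}, G_{\xi,\mu,\bar 0}) \supseteq \bigl\{(x,\,\mu+x^2,\,z)\,:\,x>\mu,\ z\in\mathbb{R}\bigr\},
\]
which is manifestly unbounded in $y$ (let $x\to+\infty$) and in $z$.

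For general $\bar\eta\in(-\varepsilon_{\mathrm{BH}},\varepsilon_{\mathrm{BH}})^2$ I would work with forward iterates of a local two-dimensional unstable disc at $P^+_{\xi,\mu,\bar\eta}$ obtained by the graph-transform construction adapted to the endomorphism $G_{\xi,\mu,\bar\eta}$, whose linear part at $P^+$ has eigenvalues $0$, $\xi + O(|\bar\eta|)$, and $2p^+ + O(|\bar\eta|)$. The strong-unstable cone $\mathcal{C}^{\mathrm{uu}}$ from \eqref{e.conodecone} around the $y$-axis is $DG$-invariant and uniformly expanded by a factor $\geqslant 2p^+ - O(|\bar\eta|) > 7$, so the strong-unstable curve $W^{\mathrm{uu}}(P^+)\subset W^{\mathrm u}(P^+)$ has tangent with dominant $y$-component at every point and is stretched without bound, giving unboundedness in $y$. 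In parallel, the one-dimensional weak-unstable curve at $P^+$ is a $C^r$ perturbation of the vertical $\ell = \{x=y=p^+\}$, and along it the recursion $z_{n+1} = \xi z_n + y_n$ with $y_n$ trapped near $p^+$ drives $|z_n|\to\infty$, giving unboundedness in $z$.

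The main obstacle I anticipate is controlling the above cone and expansion estimates \emph{outside} the reference cube $\Delta$, since Theorem~\ref{t.BH-DKS} only asserts them inside $\Delta$. This has to be handled by exploiting the polynomial form of $G_{\xi,\mu,\bar\eta}$, in particular that $\partial_x G \equiv 0$ and the $y$- and $z$-entries of $DG$ dominate for large $|y|$ or $|z|$; this keeps $\mathcal{C}^{\mathrm{uu}}$ invariant and preserves the $y$- and $z$-expansion rates on all of $\mathbb{R}^3\setminus\Delta$, so that the strong- and weak-unstable curves constructed above cannot fold back to bounded regions.
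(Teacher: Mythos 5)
Your computation for $\bar\eta=\bar 0$ is correct and identifies essentially the same object as the paper, but by a different route: the paper shows that $W^{\mathrm u}(P^+_{\xi,\mu},G_{\xi,\mu})$ contains $G_{\xi,\mu}(\Pi^+_{\xi,\mu})$, $\Pi^+_{\xi,\mu}=\{y\geqslant p^+_{\xi,\mu}\}$, i.e.\ the piece $\{(x,\mu+x^2,z):x\geqslant p^+_{\xi,\mu},\,z\in\mathbb{R}\}$ of the parabolic cylinder, using forward invariance of the half-space, the invariant line $\mathcal{L}_{\xi,\mu}$ (expanded by $\xi$), and the expanding cone field $\mathcal{C}^{\mathrm u}$ quoted from \cite{DiaPer:19b}; you instead construct the backward orbits explicitly from the contracting inverse branch $y\mapsto\sqrt{y-\mu}$ and the contracting affine recursion in $z$. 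Your version is more elementary and self-contained (no cone estimates needed), and it proves the containment for the slightly larger set $x>\mu$; the paper's version is shorter because it can quote \cite{DiaPer:19b} and sets up the cone language reused later.

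The gap is in your treatment of general $\bar\eta$. The claim that the polynomial form of $G_{\xi,\mu,\bar\eta}$ keeps $\mathcal{C}^{\mathrm{uu}}$ invariant and uniformly expanded on all of $\mathbb{R}^3\setminus\Delta$ is false: already for $\bar\eta=\bar 0$ the image of a cone vector $(u,v,w)$ at $(x,y,z)$ is $(v,\,2yv,\,v+\xi w)$, so the $y$-expansion is governed by $2|y|$ and the cone is neither preserved nor expanded at points with $|y|\leqslant 1$, no matter how large $|x|$ or $|z|$ is (such points are outside $\Delta$, so lying off $\Delta$ does not help). What your strong-unstable argument really needs is that the separatrix being iterated stays in a region like $\{y\geqslant p^+\}$ with $|z-\widetilde p^+|\leqslant\tfrac12|y-p^+|$ coming from the cone condition, so that the perturbation terms $\eta_1yz+\eta_2z^2$ are $O(\varepsilon_{\mathrm{BH}}\,y^2)$ and forward invariance and expansion survive; this is not automatic and is not argued. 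The same quadratic terms undercut your weak-unstable argument for $z$-unboundedness: once $|z|$ is of order $\varepsilon_{\mathrm{BH}}^{-1/2}$ the curve is no longer a small perturbation of $\{x=y=p^+\}$ and $y_n$ need not remain near $p^+$. A cleaner repair avoids the weak-unstable curve altogether: once the strong-unstable separatrix in $\{y\geqslant p^+\}$ is shown unbounded in $y$ (with $|z|$ controlled as above), one application of $G_{\xi,\mu,\bar\eta}$ makes the third coordinate $\xi z+y$ unbounded as well. To be fair, the paper itself dismisses the general case in one sentence (``by studying two-dimensional projections''), so your attempt is more explicit than the published argument, but as written it rests on an incorrect global cone claim and an unjustified trapping claim.
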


\begin{proof}
We will show that $W^{\mathrm u}\big(P^+_{\xi,\mu},G_{\xi,\mu}\big)$
contains the set 
$$
G_{\xi,\mu}(\Pi^+_{\xi,\mu}),
\quad \mbox{where}
\quad
\Pi^+_{\xi,\mu} \eqdef \big\{(\,x,y,z) \colon y \geqslant p^+_{\xi,\mu}\,\big\}\subset \mathbb{R}^3.
$$
The general case follows studying 2-dimensional projections of $W^{\mathrm u}\big(P^+_{\xi,\mu,\bar \eta},G_{\xi,\mu,\bar \eta}\big)$.

Consider  
the plane $\Pi_{\xi,\mu}\eqdef \{\,y=p^+_{\xi,\mu}\,\}\subset \mathbb{R}^3$ 
and note that
$$
G_{\xi,\mu}(\Pi_{\xi,\mu}) = \mathcal{L}_{\xi,\mu}\eqdef
\big\{\,(p^+_{\xi,\mu},p^+_{\xi,\mu},\widetilde{p}^+_{\xi,\mu}+t):t\in \mathbb{R}\,\big\}\subset \Pi_{\xi,\mu}.
$$
It is easy to see that $\mathcal{L}_{\xi,\mu}$ is $G_{\xi.\mu}$-invariant and that
the restriction of $G_{\xi.\mu}$ to $\mathcal{L}_{\xi,\mu}$ is an expanding linear map (with
expansion factor $\xi>1$).
Consider the cone field 
$$
\mathcal{C}^{\mathrm u}(Z) \eqdef
\big\{(\,u, v, w)\in\mathbb{R}^3:
|u|<\tfrac{1}{2}\sqrt{v^2 + w^2}\,\big\}.
$$
By \cite[Claims 3.12 and 3.14]{DiaPer:19b}, this cone field is $DG_{\xi,\mu}$-invariant and uniformly expanding
for every $Z=(x,y,z)$ with $|y|> \sqrt{5}$.
A simple calculation implies that for every $Z\in
G_{\xi,\mu} (\Pi^+_{\xi,\mu})$ the following holds
$$
G_{\xi,\mu} (\Pi^+_{\xi,\mu}) \subset \Pi^+_{\xi,\mu}
\qquad
\mbox{and}
\qquad
T_Z \big(G_{\xi,\mu} (\Pi^+_{\xi,\mu})\big) \subset
\mathcal{C}^{\mathrm u}(Z).
$$ 
These properties and the expanding property of  $\mathcal{C}^{\mathrm u}$ imply the lemma.
\end{proof}

\begin{notation*}
{\em{We will consider blenders in $\mathbb{R}^3$ and in the ambient manifold $M$. We will denote the first ones by $\Lambda$ and the second ones by
$\Upsilon$.}} 
\end{notation*}

\section{The renormalisation scheme}\label{s.ren}

In this section, we outline the renormalisation scheme in 
\cite{DiaPer:19}, see Proposition~\ref{p.diazperez}.
For that we embed  any $f\in  \mathcal{H}^r_{\BH}(M)$  
in a bifurcating eight-parameter family 
$$
\mathbb{R}^8\ni \bar\upsilon \, \to \, f_{\bar\upsilon}\in \mathrm{Diff}^r(M)\quad\mbox{with}\quad  f_\textbf{0}=f
$$
and construct a renormalisation scheme for $f$ consisting of:
\begin{itemize}
\item
a sequence of local charts $\Psi_k$ from $\mathbb{R}^3$ to $U_Q$,
\item
a sequence of reparameterisations  $\mathbb{R} \ni \mu \mapsto \bar\upsilon^\xi_k (\mu)\in \mathbb{R}^8 $ with $\xi>1$ and $\bar\upsilon^\xi_k(\mu) \to \textbf{0}$ 
on compact sets, 
\item
sequences
 $n_k, \, m_k\in \mathbb{N}$ such that
 the ``return maps'' $f^{N_2+m_k+N_1+n_k}_{\bar\upsilon_k^\xi(\mu)}$, defined on a neighbourhood of the heterodimensional tangency 
$\widetilde Y$, satisfies
$$
\Psi_k^{-1}  \circ f^{N_2+m_k+N_1+n_k}_{\bar\upsilon_k^\xi(\mu)} \circ \Psi_k \to 
E_{\xi,\mu,\bar \varsigma}
$$
where the convergence is $C^r$,
$E_{\xi,\mu,\bar \varsigma}$ is defined
\eqref{e.E-henonlike},
and 
$\bar \varsigma=\bar \varsigma(\xi,f)$ satisfies  \eqref{e.2.24}.
\end{itemize}

This section is organised as follows. 
We define the unfolding family $ 
(f_{\bar\upsilon})_{\bar\upsilon\in \mathbb{R}^8}$ in Section~\ref{ss.embeddingfamily}
and review the renormalisation scheme   in Section~\ref{ss.reno}. The convergence of the scheme is stated in
Section~\ref{ss.convergence}. 

\subsection{The unfolding family} \label{ss.embeddingfamily}
Given $f\in  \mathcal{H}^r_{\BH}(M)$
and
$$
\bar\upsilon=(\bar{\mu},\bar{\nu},\alpha,\beta)\in \mathbb{R}^3\times \mathbb{R}^3\times \mathbb{R}\times \mathbb{R}= \mathbb{R}^8, 
$$
we consider a (smooth) family 
$(f_{\bar\upsilon})_{\bar\upsilon\in\mathbb{R}^8}$ in $\mathrm{Diff}^r(M)$ 
with $f_{\textbf{0}}=f$, $f_{\bar\upsilon}(Q)=Q$, and 
$f_{\bar\upsilon}(P)=P$  
such that 
\begin{itemize}
\item the parameter $\bar\mu$ unfolds the heterodimensional tangency $\widetilde Y$,
\item  the parameter $\bar\nu$ unfolds the quasi-transverse intersection $\widetilde X$, and
 \item the parameters $\alpha$ and $\beta$ modify the arguments 
of $Df(P)$ and $Df (Q)$.
\end{itemize}

Recall the translation-like
perturbations $T_{W,\bar \omega, \varrho}$ 
and rotation-like perturbations $R^{\ast}_{\alpha, \theta, \kappa}$
in  \eqref{e.translation-perturbation} and \eqref{e.rotationfamily}, respectively.
For $\bar\upsilon= (\bar \mu,\bar \nu,{\alpha},\beta)$ and small $\rho>0$ 
consider the perturbation of the identity defined by
\[
\Omega_{\bar\upsilon,\rho}(Z)=
\Omega_{(\bar \mu,\bar \nu,{\alpha},\beta),\rho}(Z)=
\left\{
\begin{split}
&R^x_{\alpha, \theta_1, \kappa_1}(Z), \quad \mbox{if $Z\in U_P$}
,\\
&
T_{\widetilde{X},\bar \nu, \rho}(Z), \quad \mbox{if $Z\in V_{\widetilde{X}}$}
,\\
& R^y_{\beta, \theta_2, \kappa_2}(Z), \quad \mbox{if $Z\in U_Q$},\\
&T_{\widetilde{Y},\bar \mu, \rho}(Z), \quad \mbox{if $Z\in V_{\widetilde{Y}}$}
,\\
& \mathrm{id}(Z), \quad \mbox{\,\,\,if $Z\not\in V_P\cup V_Q\cup V_{\widetilde X} \cup 
V_{\widetilde Y}$},
\end{split}
\right.
\]
where $V_P,V_Q,V_{\widetilde X}$, and $V_{\widetilde Y}$ are small  neighbourhoods of 
$P,Q, \widetilde{X},$ and $\widetilde{Y}$ contained in 
$U_P,U_Q,U_{\widetilde X}$, and $U_{\widetilde Y}$, respectively, recall Section~\ref{s:bif}.
The numbers $\theta_1,\,\theta_2>1$, $\kappa_1,\,\kappa_2>0$ are chosen such that
\[
\begin{split}
&\widetilde{X}, Y \in [-\kappa_1^{-1}, \kappa_1^{-1}]^3,
\qquad 
\widetilde{Y}, X \in [-\kappa_2^{-1}, \kappa_2^{-1}]^3,\\
& [-\theta_1 \kappa_1^{-1}, \theta_1 \kappa_1^{-1}]^3 \subset U_P,
\qquad
 [-\theta_2\kappa_2^{-1}, \theta_2 \kappa_2^{-1}]^3 \subset U_Q.
\end{split}
\]
Finally, we let
\begin{equation}
 \label{e.thefamily}
f_{\bar\upsilon,\rho} = \Omega_{\bar \upsilon,\rho}\circ f.
\end{equation}

\begin{remark}[The parameter $\rho$] 
\label{r.romark}
{\em{Above we emphasise the role of the parameter $\rho$ related to the size of the
translation-like perturbation of  $f$.
}}
\end{remark}

\begin{remark}[Support of the rotation-like part of $\Omega_{\bar\upsilon,\rho}$]
\label{r.supUpsilonrot}
{\em{
Consider the  linear maps
\[
\begin{split}
&f_{\lambda, \sigma, \varphi} =\begin{pmatrix} \lambda & 0&  \\
0  & \sigma \sin 2\pi \varphi & \sigma \cos 2 \pi \varphi \\
0  & -\sigma \cos 2\pi \varphi & \sigma \sin 2 \pi \varphi
\end{pmatrix},\\
&
\widetilde 
f_{\lambda, \sigma, \varphi} =\begin{pmatrix} 
\sigma \sin 2\pi \varphi 
& 0&    \sigma \cos 2 \pi \varphi\\
0  & \lambda & 0 \\
 -\sigma \cos 2\pi \varphi & 0&  \sigma \sin 2 \pi \varphi
\end{pmatrix}.
\end{split}
\]
With this notation,
 the restriction of  
$f$ to  $U_P$ 
is the map $f_{\lambda_P, \sigma_P, \varphi_P}$, 
recall Section~\ref{ss.linear}.
 Note that if $Z\in f^{-1}\big([-\kappa_1,\kappa_1]^3\big)\cap[-\kappa_1,\kappa_1]^3$ then 
 $$
  \Omega_{\bar \upsilon, \rho} \circ f (Z)= R^x_{\alpha, \theta_1, \kappa_1}\circ f (Z) =
  R^x_{\alpha, \theta_1, \kappa_1} \circ
 f_{\lambda_P, \sigma_P, \varphi_P}=
 f_{\lambda_P, \sigma_P,\varphi_P+\alpha}(Z).
 $$
If $Z\in f^{-1}\big([-\theta_1 \kappa_1^{-1},\theta_1 \kappa_1^{-1}]^3\big)^c\cap U_P$ then
   $$
    \Omega_{\bar \upsilon, \rho} \circ f (Z)= R^x_{\alpha,\theta_1, \kappa_1}\circ f (Z)=f (Z).
   $$
  Similarly,
   the restriction of  
$f$ to  $U_Q$ 
is  $\widetilde f_{\lambda_Q, \sigma_Q, \varphi_Q}$
and analogous conditions  hold.}}
 \end{remark}

\begin{remark}[Support of the translation-like part of $\Omega_{\bar\upsilon, \rho}$]
\label{r.supUpsilontrans}
{\em{Note that
\begin{itemize}
\item
$   \Omega_{\bar \upsilon, \rho} \circ f (Z) =T_{\widetilde{X},\bar \nu, \rho}\circ f (Z)=f(Z)$ 
for every $Z\in M\setminus f^{-1} \big(B (\widetilde X, 2 \rho)\big)$,
\item
$  \Omega_{\bar \upsilon, \rho} \circ f (Z)= T_{\widetilde{X},\bar \nu, \rho}\circ f(f^{-1}(\widetilde{X}))=\widetilde{X}+\bar\nu$.
 \end{itemize}
 Analogously,  we have that
 \begin{itemize}
 \item
 $  \Omega_{\bar \upsilon, \rho} \circ f (Z) = T_{\widetilde{y},\bar \mu, \rho}\circ f (Z)=f(Z)$
 for every $Z\in M\setminus f^{-1} \big(B (\widetilde Y, 2 \rho)\big)$, 
 \item
$  \Omega_{\bar \upsilon,\rho} \circ f (Z)= T_{\widetilde{Y},\bar \mu, \rho}\circ f(f^{-1}(\widetilde{Y}))=\widetilde{Y}+\bar\mu$.
\end{itemize}
}}
\end{remark}

\subsection{The renormalisation scheme}\label{ss.reno} 
We now summarise the ingredients of the renormalisation scheme:
Sojourn times and adjusting arguments (Section~\ref{sss.sojourn}),
reparameterisations (Section~\ref{sss.repar}), 
and changes of coordinates (Section~\ref{sss.coordinate}).

\subsubsection{Sojourn times and adjusting arguments}\label{sss.sojourn}
Fix $\xi>1$ and consider
$$ 
\tau \eqdef \frac{\gamma_3(a_3- a_2)}{\sqrt{2}},
\quad
\mbox{where $\gamma_3$ is as in \eqref{e.Aisoftheform}  and $a_2,a_3$ are
as in \eqref{e.Bisoftheform}}.
$$
Note that $\tau>0$, see~\eqref{e.ctes1sem2}.
 By
 \cite[Lemma 6.1]{DiaPer:19}, associated to $\tau^{-1} \xi$,
 there is a residual subset $\mathcal{R}= \mathcal{R}_{\tau^{-1} \xi}$ of
 $(0,1)\times(1,\infty)$ consisting of pairs $(\sigma,\lambda)$ having 
 a sequence of {\em{sojourn times}}
 $\mathbf{s}_k =(m_k,n_k)$ in $\mathbb{N}^2$, 
 {\em{ adapted to $\tau^{-1}\xi$}} satisfying 
\begin{equation}
\label{e.limitofsojourn}
\lim_{k\to \infty} \sigma^{m_k}\lambda^{n_k}= \tau^{-1}\xi
\end{equation}
where $m_k$ and $n_k$ are related by the inequality 
$$
m_k<\eta\, n_k+\tilde{\eta}+1, \quad 
\eta\eqdef \frac{\log(\lambda^{-1})}{\log(\sigma)}  \quad
\mbox{and} \quad \tilde{\eta}\eqdef \frac{\log(\tau\,\xi^{-1})}{\log(\sigma)}.
$$

Our hypotheses allow us to consider  $(\sigma_P,\lambda_Q) \in \mathcal{R}$
having a sequence of sojourn times $\bfs_k=(m_k,n_k)$ adapted to
$\tau^{-1}\,\xi$, see  \cite[Lemma 5.1]{DiaKirShi:14} and \cite[Lemma 6.1]{DiaPer:19}.
The spectral condition in \eqref{e.espectralconditions} provides a constant $C> 0$ such that
\begin{equation}
\label{e.espectroparazero}
{\lambda_P}^{\frac{m_k}{2}}\sigma_P^{m_k}{{\sigma_Q}}^{n_k} <C\, 
\big(
({\lambda_P}^{\frac{1}{2}}\sigma_P)^{\eta}{{\sigma_Q}}\big)^{n_k}
\to 0 
\end{equation}

Associated to the sequence $(\bfs_k)$ there is a sequence 
$\Theta_{k}=(\zeta_{m_k}, \vartheta_{n_k})\in\mathbb{R}^2$, with  $\Theta_{k}\to (0,0)$,
of {\em{adjusting arguments}} 
leading to 
the following argument maps
\begin{equation}
\begin{split}
\label{e.alphakbetak}
&
\alpha_k (\theta)
\eqdef
\frac1{2\pi m_k}\left(\frac{\pi}{4}-2\pi m_k\theta+
2\pi [m_k \theta] + \zeta_{m_k}
\right);
\\
&
\beta_k(\omega)
\eqdef
\frac1{2\pi n_k}
\left(
\frac{\pi}{2}-
2\pi n_k\omega+
2\pi [n_k\omega]
+ \vartheta_{n_k}
\right),
\end{split}
\end{equation}
where  $[x]$ denotes the integer part
of $x\in \mathbb{R}$.  The sequence $(\vartheta_{n_k})$ is chosen such that  $\varphi_Q +
{\beta}_{k}$ is irrational (here $\varphi_Q$ is the argument in item {\bf{(A)}} in Section~\ref{ss.linear}). 
There are no further restrictions on the definition of
$(\zeta_{m_k})$.

\subsubsection{Reparameterisations}\label{sss.repar}
Associated to the sequences $(\bfs_k)$ and $(\Theta_k)$ 
we define 
the sequence of reparameterisations
$\bar \upsilon= \bar \upsilon^\xi_k$  of the family  $f_{\bar \upsilon,\rho}$ in \eqref{e.thefamily} by:
$$
\bar \upsilon^\xi_k\colon  \mathbb{R}\to 
\mathbb{R}^8,
\quad
\bar \upsilon^\xi_{k}(\mu)\eqdef
\big(\bar{\mu}^\xi_{k}(\mu),\bar\nu^\xi_{k},\alpha^\xi_{k}({\varphi_P}),
\beta^\xi_{k}(\varphi_Q)\big) \in \mathbb{R}^3\times \mathbb{R}^3\times \mathbb{R}\times
\mathbb{R},
$$
where (for simplicity, in what follows we eliminate the dependence\footnote{This dependence is given by the choice in \eqref{e.limitofsojourn}.} of the
coordinate maps of $\bar \upsilon^\xi_{k}$
on $\xi$):
\smallskip

\noindent
$\quad \bullet$
$\bar{\mu}_{k} : \mathbb{R} \rightarrow \mathbb{R}^3$   
is defined by
\begin{equation}
\label{e.mu}
\bar{\mu}_{k}(\mu)\eqdef(-\lambda_P^{m_k} a_1,\sigma_Q^{-n_k}+\sigma_Q^{-2n_k}
\sigma_P^{-2m_k}\mu -\lambda_P^{m_k} b_1, -\lambda_P^{m_k} c_1), 
\end{equation}
where $a_1,b_1,c_1$ are as 
in ~\eqref{e.Bisoftheform}. 
Note that 
$\bar{\mu}_{k}(\mu)\to (0,0,0)$ as $k \to \infty$.

\smallskip

\noindent
$\quad \bullet$
To define
$\bar \nu_{k} \in  \mathbb{R}^3$ consider first
$$
\varphi_{P,k}\eqdef
\varphi_P+{\alpha}_{k}(\varphi_P)\quad\mbox{and}\quad  
\varphi_{Q,k} \eqdef 
\varphi_Q+{\beta}_{k}(\varphi_Q)
$$
and the sequences
\begin{equation}
\label{e.fraksequences}
\begin{split}
\tilde{\mathfrak{c}}_k
&\eqdef\cos\big(
2\pi m_k (\varphi_{P,k})
\big),
\quad
\tilde{\mathfrak{s}}_k
\eqdef\sin\big(
2\pi m_k(\varphi_{P,k})
\big),
\\
\mathfrak{c}_k
&\eqdef\cos
\big(2\pi n_k(
\varphi_{Q,k}
)
\big),
\quad
\mathfrak{s}_k
\eqdef\sin\big(2\pi n_k
(\varphi_{Q,k})\big).
\end{split}
\end{equation}

\begin{remark}\label{r.convergenceofconvergents}{\em{
By the definition of $\alpha_k (\varphi_{P})$ and $\beta_k(\varphi_{Q})$ in \eqref{e.alphakbetak}, it follows that
${\mathfrak{c}}_k\to 0$, ${\mathfrak{s}}_k \to  1$, and
$\tilde{\mathfrak{c}}_k,\tilde{\mathfrak{s}}_k\to 1/\sqrt{2}$.}}
\end{remark}

Recalling the coordinated maps  $\widetilde{H}_2$ and $\widetilde{H}_3$ of $\widetilde{H}$ in \eqref{e.transition1},
we let
\begin{equation}
\label{e.removflatconditions}
\begin{split}
\tilde{\rho}_{2,k}\eqdef \frac1{2}\frac{\partial^2}{\partial x^2}\widetilde{H}_2(\textbf{0})(\mathfrak{c}_k-\mathfrak{s}_k)^2+\frac1{2}\frac{\partial^2}{\partial z^2}\widetilde{H}_2(\textbf{0})(\mathfrak{s}_k+\mathfrak{c}_k)^2,
\\
\tilde{\rho}_{3,k}\eqdef \frac1{2}\frac{\partial^2}{\partial x^2}\widetilde{H}_3(\textbf{0})(\mathfrak{s}_k-\mathfrak{c}_k)^2+\frac1{2}\frac{\partial^2}{\partial z^2}\widetilde{H}_3(\textbf{0})(\mathfrak{s}_k+\mathfrak{c}_k)^2.
\end{split}
\end{equation}

Finally, we let
\begin{equation}
\label{e.nu}
\begin{split}
\bar \nu_{k}\eqdef
\Big(
&-\lambda_Q^{n_k}\big(
\alpha_1\,(\mathfrak{c}_k-\mathfrak{s}_k)
+\alpha_3\,(\mathfrak{s}_k+\mathfrak{c}_k)\big),
\\
&
\sigma_P^{-m_k}(\tilde{\mathfrak{c}}_k+\tilde{\mathfrak{s}}_k)-{\lambda_Q}^{2n_k}
\tilde{\rho}_{2,k},\\
&
\sigma_P^{-m_k}(\tilde{\mathfrak{c}}_k-\tilde{\mathfrak{s}}_k)
-\lambda_Q^{n_k}\,\gamma_3\,(\mathfrak{c}_k+\mathfrak{s}_k)-\lambda_Q^{2n_k}
 \tilde{\rho}_{3,k}\Big),
\end{split}
\end{equation}
where $\alpha_1,\alpha_3,\gamma_3$ are as in 
~\eqref{e.Aisoftheform}.
Note that 
$\bar{\nu}_{k}\to \mathbf{0}$ as $k \to \infty$.
 
 \subsubsection{Change of coordinates}\label{sss.coordinate}
{{Using the local coordinates in $U_Q$, we consider the sequence of maps
$\Psi_k \colon U_k \to U_Q=[-a_Q, a_Q]^3$ defined by
 \begin{equation}\label{e.chart}
 \begin{split}
\Psi_{k}(x,y,z)&\eqdef 
(1+\sigma_P^{-m_k}
\sigma_Q^{-n_k}\,x , \\
& \quad\,\,\,\,\,\,\, \sigma_Q^{-n_k}+\sigma_P^{-2m_k}
{\sigma_Q}^{-2n_k}\,y ,1+ \sigma_P^{-m_k}
{\sigma_Q}^{-n_k}\,z ),
\end{split}
\end{equation}
where $U_k$ is the ``cube"  of $\mathbb{R}^3$ such that $\Psi_k (U_k)=U_Q$.}
Recall that $\widetilde{Y}=(1,0,1)$ and note that for any compact set $K\subset\mathbb{R}^3$ it holds $\Psi_k(K)\to \{\widetilde{Y}\}$ as $k\to\infty$.

 \subsection{Convergence of the renormalisation scheme}\label{ss.convergence}
 Fixed $\xi>1$, small $\rho>0$, and $f\in  \mathcal{H}^r_{\BH}(M)$,
consider the renormalisation scheme above and the sequence of one-parameter family of maps 
\begin{equation}\label{e.rrrr}
\begin{split}
&
\mathbb{R}\ni \mu \, \to \, \mathcal{R}_{\bar \upsilon^\xi_k (\mu),\rho}(f)\in \mathrm{Diff}^r(M)
\\
&\mathcal{R}_{\bar \upsilon^\xi_k (\mu), \rho}(f)
\eqdef
f_{\bar \upsilon^\xi_k(\mu), \rho}^{N_2}\circ 
f_{\bar \upsilon^\xi_k(\mu),\rho}^
{m_k}\circ f_{\bar \upsilon^\xi_k (\mu),\rho}^{N_1}\circ f_{\bar \upsilon^\xi_k (\mu),\rho}^{n_k},
\end{split}
\end{equation}
 called \textit{renormalised sequence of $f$}. Here we are emphasising the roles of $\xi$ and $\rho$.

\begin{prop}[Theorem 1,\cite{DiaPer:19}]
\label{p.diazperez}
 Fix $\xi>1$, small $\rho>0$, and $\mu\in \mathbb{R}$.
 Given any $f\in \mathcal{H}^r_{\BH}(M)$
  the sequence of maps 
  $$
   \Psi_k^{-1}\circ \mathcal{R}_{\bar \upsilon^\xi_k (\mu), \rho}(f) \circ \Psi_k \colon U_k \to \mathbb{R}^3, 
   \quad   k\in \mathbb{N},
  $$ 
  converges, on compact sets of $\mathbb{R}^3$ and in the $C^r$ topology, to the endomorphism $E_{\xi,\mu,\bar\varsigma}$ in~\eqref{e.E-henonlike}, where  $\bar \varsigma = \bar \varsigma (\xi,f)$ is as in \eqref{e.barsigma}.
\end{prop}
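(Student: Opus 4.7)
The plan is to decompose the renormalised map $\mathcal{R}_{\bar\upsilon_k^\xi(\mu),\rho}(f)$ into its four factors and to track their composition explicitly under the coordinate change $\Psi_k$. Since the translation-like and rotation-like supports of $\Omega_{\bar\upsilon,\rho}$ are disjoint and localised at $P$, $Q$, $\widetilde X$, $\widetilde Y$ (recall Remarks~\ref{r.supUpsilonrot} and~\ref{r.supUpsilontrans}), each factor has a clean description: the $n_k$-iterate near $Q$ acts as the linear saddle-focus map $\widetilde f^{n_k}_{\lambda_Q,\sigma_Q,\varphi_{Q,k}}$ on points whose orbit remains in $U_Q$, the $m_k$-iterate near $P$ acts as $f^{m_k}_{\lambda_P,\sigma_P,\varphi_{P,k}}$, and the transitions $f^{N_1}_{\bar\upsilon_k^\xi(\mu),\rho}$ and $f^{N_2}_{\bar\upsilon_k^\xi(\mu),\rho}$ are the original $\mathfrak{T}_1,\mathfrak{T}_2$ from \eqref{e.transition1}--\eqref{e.transition2} shifted by the translation vectors $\bar\nu_k$ and $\bar\mu_k$.

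Next, I would substitute the reparameterisation $\bar\upsilon=\bar\upsilon_k^\xi(\mu)$ and carry out the composition starting from a point $\Psi_k(x,y,z)$ with $(x,y,z)$ in a fixed compact set. The affine chart $\Psi_k$ in \eqref{e.chart} is anchored at $\widetilde Y$ with scalings $\sigma_P^{-m_k}\sigma_Q^{-n_k}$ in the $x,z$-directions and $\sigma_P^{-2m_k}\sigma_Q^{-2n_k}$ in the $y$-direction; this precisely matches the fact that only the second component of $B$ in \eqref{e.Bisoftheform} is quadratic. The role of $\bar\mu_k$ in \eqref{e.mu} is to cancel the zeroth-order image $B(0)=\mathbf{0}$ shifted by the linear terms $(\lambda_P^{m_k}a_1,\lambda_P^{m_k}b_1,\lambda_P^{m_k}c_1)$, leaving $\mu$ as the renormalised parameter; the role of $\bar\nu_k$ in \eqref{e.nu} is the analogous cancellation at the transition through $\widetilde X$, where the leading contribution of $A$ composed with $\widetilde f^{n_k}_{\lambda_Q,\sigma_Q,\varphi_{Q,k}}$ is subtracted off. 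The adjusting arguments in \eqref{e.alphakbetak} are tuned so that the total arguments $2\pi m_k\varphi_{P,k}$ and $2\pi n_k\varphi_{Q,k}$ converge to $\pi/4$ and $\pi/2$ respectively; this is exactly what produces the limit rotation factors $\tilde{\mathfrak{c}}_k,\tilde{\mathfrak{s}}_k\to 1/\sqrt 2$ and $\mathfrak{c}_k\to 0,\mathfrak{s}_k\to 1$ (Remark~\ref{r.convergenceofconvergents}), which in turn generate the factors $1/\sqrt 2$ and $1/2$ appearing in $\varsigma_1,\dots,\varsigma_5$ in \eqref{e.2.24}. The second-order counterterms $\tilde\rho_{2,k},\tilde\rho_{3,k}$ in \eqref{e.removflatconditions} are included in $\bar\nu_k$ to absorb the quadratic Taylor coefficients of $\widetilde H$ that would otherwise survive the rescaling.

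Once the substitutions are carried out, I would expand the composition as a sum of three kinds of contributions: (i) the leading-order terms that reassemble into $E_{\xi,\mu,\bar\varsigma}(x,y,z)$, where the factor $\xi$ in $\varsigma_3,\varsigma_4$ comes from the normalisation $\sigma_P^{m_k}\lambda_Q^{n_k}\to\tau^{-1}\xi$ of \eqref{e.limitofsojourn} together with the definition $\tau=\gamma_3(a_3-a_2)/\sqrt 2$; (ii) lower-order corrections controlled by positive powers of $\lambda_P^{m_k},\lambda_Q^{n_k},\sigma_P^{-m_k},\sigma_Q^{-n_k}$, all of which tend to zero by hyperbolicity and, for the mixed products, by the spectral bound \eqref{e.espectroparazero}; and (iii) higher-order remainders coming from $H,\widetilde H$, which by the flatness conditions imposed in Section~\ref{ss.semilocal} and Remark~\ref{r.regular} contribute terms whose Taylor coefficients are multiplied by at least one further factor of $\sigma_P^{-m_k}\sigma_Q^{-n_k}$ after conjugation by $\Psi_k$. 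On any fixed compact set in $\mathbb{R}^3$, this gives pointwise convergence to $E_{\xi,\mu,\bar\varsigma}$.

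The last step is to promote this pointwise convergence to $C^r$-convergence. I would proceed by induction on the order of derivative, using the chain rule on the composition and the scalings inherent in $\Psi_k$: each derivative either introduces further negative powers of $\sigma_P^{m_k}\sigma_Q^{n_k}$, or lands on a remainder $\widetilde H^i_{\ve}$ or $H$ whose partial derivatives still vanish at the origin to the required order. The main obstacle is precisely this bookkeeping: the chart $\Psi_k$ has domain $U_k$ whose size diverges as $k\to\infty$, so one must verify that each of the $r$ successive differentiations of the non-linear remainders is compensated by the exponential gain from the hyperbolic scalings, uniformly on the compact set under consideration. This is where the quadratic/flat structure of $\widetilde H,H$ and the spectral condition \eqref{e.espectralconditions} are used in an essential way.
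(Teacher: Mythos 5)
You should first note that this paper contains no proof of Proposition~\ref{p.diazperez}: it is imported from \cite{DiaPer:19} (Theorem~1 there), and Section~\ref{s.explicit1} only reproduces the outcome of that computation, namely the composed expression \eqref{e.k-coord} together with the Landau symbols of the higher-order terms. Measured against that computation, your outline is structurally faithful and the checkpoints are all correct: the factorisation of $\mathcal{R}_{\bar \upsilon^\xi_k (\mu), \rho}(f)$ as in \eqref{e.rrrr} into two linearised blocks and two transitions; the matching of the anisotropic scalings of $\Psi_k$ in \eqref{e.chart} with the fact that only the second component of $B$ is quadratic; the cancellation roles of $\bar\mu_k$ and $\bar\nu_k$ (including the counterterms $\tilde\rho_{2,k},\tilde\rho_{3,k}$); the limits $\tilde{\mathfrak{c}}_k,\tilde{\mathfrak{s}}_k\to 1/\sqrt{2}$, $\mathfrak{c}_k\to 0$, $\mathfrak{s}_k\to 1$ coming from the adjusting arguments, which generate the constants in \eqref{e.2.24}; and the appearance of $\xi$ through $\sigma_P^{m_k}\lambda_Q^{n_k}\to\tau^{-1}\xi$ from \eqref{e.limitofsojourn}, with the mixed products killed by \eqref{e.espectroparazero}. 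All of this is consistent with \eqref{e.k-coord}.

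The gap is that what you call the ``bookkeeping'' is not a final verification but the entire technical content of the cited proof, and your proposal stops exactly where that work begins. A proof must carry out the four-fold composition with the error terms of $\mathfrak{T}_1,\mathfrak{T}_2$ included, exhibit the remainders explicitly enough to read off their Landau symbols (items (i)--(iii) following \eqref{e.k-coord}), and then show that $\sigma_P^{m_k}\sigma_Q^{n_k}\,\mathrm{hot}^{x}$, $\sigma_P^{2m_k}\sigma_Q^{2n_k}\,\mathrm{hot}^{y}$ and $\sigma_P^{m_k}\sigma_Q^{n_k}\,\mathrm{hot}^{z}$ tend to $0$ in the $C^r$ topology uniformly on compacta; this is precisely \cite[Lemma~8.3]{DiaPer:19}, quoted in a variant as Remark~\ref{r.nomennescio}. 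In particular, the induction on the order of differentiation that you sketch is not automatic: the outer rescaling $\Psi_k^{-1}$ multiplies the remainders by $\sigma_P^{m_k}\sigma_Q^{n_k}$ or its square, so the gain from the inner contraction must be balanced term by term against the flatness of $H,\widetilde{H}$ and the spectral condition \eqref{e.espectralconditions}; asserting that this balance works is a restatement of the proposition, not a proof of it. In short: right approach, the same as the source, but the argument itself is not supplied.
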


\begin{notation}[The parameters $\rho$ and $\xi$] 
\label{n.romark}
{\em{When the role of $\rho$ is not relevant it will omitted,
writing $f_{\bar \upsilon}$
and $\mathcal{R}_{\bar \upsilon^\xi_k (\mu)}$
 instead of    $f_{\bar \upsilon, \rho}$ and $\mathcal{R}_{\bar \upsilon^\xi_k (\mu), \rho}$.
Similarly with $\xi>1$.
}}
\end{notation}

\begin{remark}{\em{
Recall the perturbation $f_\ve$ of $f$ in Proposition~\ref{p.Lnbis}.
The renormalisation scheme  of $f$ associated to 
$X$ and $Y$ can be applied to 
$f_\ve$ and it is preserved. In this way,
we get the one-parameter  family
of diffeomorphisms $f_{\ve,\bar \upsilon_k(\mu), \rho}$.  
}}\end{remark}

\section{Interplay between blenders and heteroclinic points}
\label{s.interplay}
In Section~\ref{ss.bh-renormalisation},  see Proposition~\ref{p.blendersparatodos}, we state the occurrence of blender-horseshoes in the renormalisation scheme
for diffeomorphisms $f \in  \mathcal{H}^r_{\BH}(M)$ and their perturbations $f_\ve \in  \mathcal{H}^r_{\BH}(M)$
 given by Proposition~\ref{p.Lnbis}.

 Note that $f_\ve$ has additional  heteroclinic points
$X_{1,\ve}, X_{2,\ve}$. In Section~\ref{ss.unfoldingnew}, we see how these intersections  are unfolded without modifying the blenders given by the renormalisation scheme.

Before going into the details, recall the definitions of the transitions $\mathfrak{T}_{1,i,\ve}$ associated to $X_{i,\ve}$ and their domains $U_{i,\ve}$ in \eqref{e.mailen} and
consider the neighbourhoods
\[
 \widetilde{U}_{i,\ve}\eqdef \mathfrak{T}_{1,i,\ve} (U_{i,\ve})\quad
 \mbox{of}\quad \widetilde{X}_{i,\ve}= \mathfrak{T}_{1,i,\ve}({X}_{i,\ve}),
 \quad i=0,1,2. 
\]
Take sufficiently small  
$\rho=\rho(\ve) >0$ such that
\begin{equation}\label{e.cornaca}
B(\widetilde{X},2\,\rho)\subset \widetilde{U}_{0,\ve},\quad B(\widetilde{X}_{1,\ve},2\,\rho)\subset \widetilde{U}_{1,\ve},
\quad B(\widetilde{X}_{2,\ve},2\,\rho)\subset \widetilde{U}_{2,\ve}.
\end{equation}
In particular, these three balls are disjoint. We can now consider the renormalisation scheme
 $\mathcal{R}_{\bar \upsilon^\xi_k (\mu),\rho}(f_\ve)$ and observe that by the choice of $\rho$ 
 the renormalisation preserves the heteroclinic points $X_{1,\ve}$ and $X_{2,\ve}$.
 Note that as the transitions of $f$ are not modified, recalling \eqref{e.barsigma}, we have
$$
 \bar \varsigma=\bar \varsigma(\xi,f) 
 = \bar \varsigma(\xi,f_\ve). 
$$ 

\subsection{Blenders in the renormalisation scheme}
\label{ss.bh-renormalisation}

 Recall the definitions of  $\Psi_k$  and of $U_k$ in  \eqref{e.chart} and  of
$\mathcal{R}_{\bar \upsilon^\xi_k (\mu),\rho}(f_\ve)$ in
\eqref{e.rrrr}.
 Define the maps
 $$
 \widehat{\Psi}_k \colon   \mathbb{R} \times U_k \to  \mathbb{R} \times U_Q,
 \qquad
 \mathcal{R}_{\bar \upsilon^\xi_k (\mu),\rho}(f_\ve)
 \colon 
\mathbb{R} \times M \to \mathbb{R} \times M
 $$
 by
\[
\begin{split}
\widehat{\Psi}_k(\mu,X)&\eqdef  \big(\,\mu, \Psi_k (X)\, \big),
\\
\widehat{\mathcal{R}}_{k,\rho}(f_\ve) (\mu,X) &\eqdef \big(\, \mu, 
\mathcal{R}_{\bar \upsilon^\xi_k (\mu),\rho}(f_\ve)
 (X)\, \big)
\end{split}
\]
and consider (with slight abuse of notation on the domain of definitions) the maps  $\Phi_k$ and $\widehat \Phi_k$ defined by
\begin{equation}
\label{e.kcor}
\begin{split}
X\in \mathbb{R}^3\,\,&\mapsto\,\, \Phi_k(X)\eqdef  {\Psi}_k\circ \Theta_{\bar \varsigma} (X)\in U_Q,\\
(\mu,X)\in \mathbb{R}\times \mathbb{R}^3\,\,&\mapsto\,\, \widehat{\Phi}_k(\mu,X)\eqdef  
\widehat{\Psi}_k\circ \widehat\Theta_{\bar \varsigma} (\mu,X) \\
& \hspace{2.27cm}=\big(\varsigma_2^{-1}\mu, \Phi_k (X)\big)\in \mathbb{R}\times U_Q,
\end{split}
\end{equation}
where $\Theta_{\bar \varsigma}$ and $\widehat \Theta_{\bar \varsigma}$  are the conjugations in~\eqref{e.todaslastetas}. 

The following explicit form of the maps $\Phi_k^{-1}$ will be used in Section~\ref{ss.auxiliarydiagonal}.

\begin{remark}\label{r.coortilde}
{\em{Note that
for $(1+x,y,1+z)\in U_Q$ close to $\widetilde{Y}=(1,0,1)$ we have that
 \[
\label{e.coortilde}
 (\tilde{x},\tilde{y},\tilde{z})\eqdef \Phi_k^{-1}(1+x,y,1+z),
\quad 
\left\{
\begin{split}
\tilde{x}&=
\sigma_Q^{n_k}\,\sigma^{m_k}_P\,\varsigma_2\,\varsigma_5^{-1}\,z,\\
\tilde{y}&=\varsigma_2\,\sigma_Q^{2n_k}\,\sigma^{2m_k}_P\,(y-\sigma_Q^{-n_k}),\\
 \tilde{z}&=\sigma_Q^{n_k}\,\sigma^{m_k}_P\,\varsigma_2\,\varsigma_1^{-1}\,x.
 \end{split}
 \right.
  \]
  }}
  \end{remark}

Finally, consider the sequence of maps $\mathfrak{R}_{k,\rho} (f_\ve) \colon   \mathbb{R}\times\mathbb{R}^3 \to \mathbb{R} \times \mathbb{R}^3$
\[
(\mu,X)\,\,\mapsto\,\,
{\mathfrak{R}}_{k,\rho} (f_\ve) (\mu, X)\eqdef \widehat\Phi_k^{-1}\circ  \widehat{\mathcal{R}}_{k,\rho}(f_\ve)\circ {\widehat\Phi_k} (\mu, X).
\]
Finally, note that for each fixed $\mu$ the projection of 
${\mathfrak{R}}_{k,\rho} (f_\ve) (\mu, \cdot)$  in the ``second coordinate'' $\mathbb{R}^3$ is exactly the map $\mathcal{R}_{\bar \upsilon^\xi_k (\mu),\rho}(f_\ve)$.

Recalling the definition of $\bar\eta (\bar\varsigma)$ in \eqref{e.bareta} and of $\bar\varsigma (\xi,f)=\bar\varsigma (\xi,f_\ve)$ in \eqref{e.barsigma}
we define (with slight abuse of notation) the map
\[
 \bar\eta (\xi, f_\ve) \eqdef \bar \eta ( \bar\varsigma (\xi, f_\ve))=\bar\eta (\xi, f).
\]

Recalling Proposition~\ref{p.diazperez}, Theorem~\ref{t.BH-DKS} (and the definition of the set $\mathcal{O}_{\mathrm{BH}}$ there), 
 Remark~\ref{r.conjugadosperonorevueltos}, 
and the  definition of  $\widehat{G}_{\xi,\bar \eta}$ in \eqref{e.E-Ggorro} 
we get the following:

\begin{prop}
\label{p.blendersparatodos} 
Consider $f\in \mathcal{H}^r_{\BH}(M)$, $\xi>1$,
small $\ve>0$, and  $f_\ve$ as in Proposition~\ref{p.Lnbis}.
There is $\rho(\ve)$ such that the sequence  
of maps
$({\mathfrak{R}}_{k, \rho(\ve)}(f_\ve) )$
converges in the $C^r$ topology and 
on compact sets of $\mathbb{R}^4$ to $\widehat{G}_{\xi,\bar \eta (\xi,f)}$. 

As a consequence, 
for every $k$ large enough 
the map $\mathcal{R}_{\bar \upsilon^\xi_k (\mu),\rho(\ve)}(f_\ve)$
has a blender-horseshoe ${\Lambda}_{{\bar \upsilon^\xi_k (\mu),\rho(\ve)}
}$
 in
$\Delta=[-4,4]^2\times [-40,22]$.
\end{prop}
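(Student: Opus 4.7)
The strategy is to apply Proposition~\ref{p.diazperez} to $f_\ve$ and then transport the conclusion via the conjugacy in Remark~\ref{r.conjugadosperonorevueltos} to obtain an $\widehat{G}$-limit, at which point Theorem~\ref{t.BH-DKS} provides the blender-horseshoes. The crucial preliminary observation is that $f_\ve$ shares all of the renormalisation data of $f$. Indeed, by item~(1) of Proposition~\ref{p.Lnbis}, the maps $f$ and $f_\ve$ agree on $\Gamma_{P,Q,X,Y}(f)$ and on the iterates $\bigcup_{i=0}^{N_1-1} f^i(U_X)$ and $\bigcup_{i=0}^{N_2-1} f^i(U_{Y,\ve})$, so the local linearisations at $P$ and $Q$ and the transition maps $\mathfrak{T}_1$ at $X$ and $\mathfrak{T}_2$ at $Y$ are identical for both diffeomorphisms. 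In particular $f_\ve\in\mathcal{H}^r_{\BH}(M)$ with $\bar\varsigma(\xi,f_\ve)=\bar\varsigma(\xi,f)$, and the choice of $\rho(\ve)$ in~\eqref{e.cornaca} places the supports of the translation-like perturbations $T_{\widetilde X,\bar\nu,\rho(\ve)}$ and $T_{\widetilde Y,\bar\mu,\rho(\ve)}$ inside the transition neighbourhoods at $\widetilde X$ and $\widetilde Y$ and disjoint from the extra heteroclinic points $\widetilde X_{1,\ve},\widetilde X_{2,\ve}$; hence the unfolding family $f_{\ve,\bar\upsilon,\rho(\ve)}$ is built exactly as for $f$ and the entire renormalisation scheme applies without change.

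Consequently, Proposition~\ref{p.diazperez} applies verbatim to $f_\ve$ and gives
\[
\Psi_k^{-1}\circ \mathcal{R}_{\bar\upsilon^\xi_k(\mu),\rho(\ve)}(f_\ve)\circ \Psi_k \longrightarrow E_{\xi,\mu,\bar\varsigma(\xi,f)}
\]
as $k\to\infty$, in $C^r$ on compact sets of $\mathbb{R}^3$ and for each fixed $\mu$. Since neither $\widehat{\mathcal{R}}_{k,\rho(\ve)}(f_\ve)$ nor $\widehat{\Psi}_k$ acts on the $\mu$-coordinate, this immediately lifts to
\[
\widehat\Psi_k^{-1}\circ \widehat{\mathcal{R}}_{k,\rho(\ve)}(f_\ve)\circ \widehat\Psi_k \longrightarrow \widehat E_{\xi,\bar\varsigma(\xi,f)}
\]
in $C^r$ on compact sets of $\mathbb{R}^4$. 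Since $\varsigma_1\varsigma_2\varsigma_5\neq 0$ by Remark~\ref{rk:constants1}, Remark~\ref{r.conjugadosperonorevueltos} supplies the smooth $k$-independent conjugacy $\widehat\Theta_{\bar\varsigma}^{-1}\circ \widehat E_{\xi,\bar\varsigma}\circ \widehat\Theta_{\bar\varsigma}=\widehat G_{\xi,\bar\eta(\bar\varsigma)}$; conjugating the previous limit by $\widehat\Theta_{\bar\varsigma}$ and recalling $\widehat\Phi_k=\widehat\Psi_k\circ\widehat\Theta_{\bar\varsigma}$ yields $\mathfrak{R}_{k,\rho(\ve)}(f_\ve)\to \widehat G_{\xi,\bar\eta(\xi,f)}$ in $C^r$ on compact sets of $\mathbb{R}^4$, which is the first assertion.

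For the existence of blenders, Definition~\ref{d.HB} places $(\xi,\bar\eta(\xi,f))$ in $(1.18,1.19)\times(-\ve_{\mathrm{BH}},\ve_{\mathrm{BH}})^2$, so for any $\mu\in(-10,-9)$ the triple $(\xi,\mu,\bar\eta(\xi,f))$ lies in $\mathcal{O}_{\mathrm{BH}}$ and Theorem~\ref{t.BH-DKS} produces a blender-horseshoe of $G_{\xi,\mu,\bar\eta(\xi,f)}$ contained in the interior of $\Delta$. Restricting the $C^r$-convergence above to the slice $\{\mu\}\times\Delta$ shows that $\Phi_k^{-1}\circ\mathcal{R}_{\bar\upsilon^\xi_k(\mu),\rho(\ve)}(f_\ve)\circ\Phi_k$ is $C^1$-close (in fact $C^r$-close) to $G_{\xi,\mu,\bar\eta(\xi,f)}$ on $\Delta$ for all $k$ sufficiently large. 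The continuation statement in Theorem~\ref{t.BH-DKS}, valid for endomorphisms and diffeomorphisms alike, then provides a blender-horseshoe of this conjugated map with the same reference domain $\Delta$, which transports back through $\Phi_k$ to the blender $\Lambda_{\bar\upsilon^\xi_k(\mu),\rho(\ve)}$ of $\mathcal{R}_{\bar\upsilon^\xi_k(\mu),\rho(\ve)}(f_\ve)$. The only delicate point in the whole argument is the first one: verifying that $\rho(\ve)$ can be chosen small enough so that the extra heteroclinic orbits generated in Proposition~\ref{p.Lnbis} are invisible to the renormalisation data of $f$, which is precisely what the choice~\eqref{e.cornaca} encodes.
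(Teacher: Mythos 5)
Your proposal is correct and follows essentially the same route as the paper, which obtains this proposition precisely by applying Proposition~\ref{p.diazperez} to $f_\ve$ (noting, via the choice of $\rho(\ve)$ in \eqref{e.cornaca} and item~(1) of Proposition~\ref{p.Lnbis}, that $\bar\varsigma(\xi,f_\ve)=\bar\varsigma(\xi,f)$), conjugating through Remark~\ref{r.conjugadosperonorevueltos}, and invoking Definition~\ref{d.HB} together with Theorem~\ref{t.BH-DKS} and its $C^1$-continuation statement. The one place where you are slightly glib---upgrading the fixed-$\mu$ convergence to $C^r$ convergence on compact sets of $\mathbb{R}^4$, which in principle requires controlling $\mu$-derivatives through the reparameterisation $\bar\upsilon^\xi_k(\mu)$---is asserted at the same level of detail in the paper itself.
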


We now describe more precisely the blenders ${\Lambda}_{{\bar \upsilon^\xi_k (\mu),\rho(\ve)}}$.
For  $\mu\in (-10,-9)$, large $k$, and small $\ve$, consider the diffeomorphism
\[
f_{\ve, \bar \upsilon_k(\mu), \rho(\ve)}\eqdef
 \Omega_{\bar \upsilon_k(\mu), \rho(\ve)}\circ f_\ve,
\]
where  $\Omega_{\bar \upsilon_k(\mu), \rho(\ve)}$ is defined as in  \eqref{e.thefamily}.
Proposition~\ref{p.blendersparatodos} implies that $f_{\ve, \bar \upsilon_k(\mu), \rho(\ve)}$ has a blender-horseshoe defined as follows.
 Let
\begin{equation}
 \label{e.DeltaKDelta}
 \begin{split}
  \Delta(\mu) &\eqdef \{\mu\} \times \Delta, \\
 \Delta_{k}(\mu)&\eqdef  \Delta_{k} = \Phi_k (\Delta),\\ 
\widehat \Delta_{k}(\mu)&\eqdef 
\widehat \Phi_{k}\big(\Delta(\mu)\big)
=\big(\varsigma_2^{-1}\,\mu,\Phi_{k}\big(\Delta \big)\big)= \big(\varsigma_2^{-1}\,\mu,\Delta_k\big).
\end{split}
 \end{equation} 

Recalling that
$\mathcal{R}_{\bar \upsilon^\xi_k (\mu),\rho(\ve)}(f_\ve)=
f_{\ve,\bar \upsilon_k(\mu), \rho(\ve)}^{N_2+m_k+N_1+n_k}$,  see~\eqref{e.rrrr},
 we have that
 \begin{equation}
\label{e.ijk-blender}
\begin{split}
\Upsilon_{f_{\ve,\bar \upsilon_k(\mu), \rho(\ve)}}&\eqdef
\bigcap_{\ell \in\mathbb{Z}}\big(
\mathcal{R}_{\bar \upsilon^\xi_k (\mu),\rho(\ve)}(f_\ve)
\big)^\ell (\Delta_k (\mu))\\
&=
\bigcap_{\ell \in\mathbb{Z}}
f_{\ve, \bar \upsilon_k(\mu), \rho(\ve)}^{\,\,(N_2+m_k+N_1+n_k)\,\ell}
(\Delta_{k}).
\end{split}
\end{equation}
is a blender-horseshoe of $f_{\ve, \bar \upsilon_k(\mu),\rho(\ve)}^{N_2+m_k+N_1+n_k}$. 
Note that, by construction,
\begin{equation}\label{e.ijk-Grandeblender}
 {\Lambda}_{\bar \upsilon^\xi_k (\mu),\rho(\ve)} =
\Phi_k^{-1} \big(\Upsilon_{f_{\ve,\bar \upsilon_k (\mu),\rho(\ve)}} \big).
\end{equation}

\begin{notation}
\label{n.referencesaddles}
{\em{The reference saddles  $P^\pm_{\bar \upsilon^\xi_k (\mu),\rho(\ve)}$ of  ${\Lambda}_{\bar \upsilon^\xi_k (\mu),\rho(\ve)}$ are the continuations of the saddles
 $P^\pm_{\xi,\mu,\bar \eta}$ of the blender of  $G_{\xi,\mu, \bar\eta}$ in \eqref{e.referencesaddle}.
 }}
\end{notation}

\begin{remark}\label{r.opa}
{\em{Consider 
$\big(\xi, \mu,   \bar\eta (\xi, f)\big) \in \mathcal{O}_\vpp
$ and write  $\bar \eta = \bar\eta (\xi,f)$. Recall the definition of the disc $L\subset \mathbb{R}^3$ in the superposition region of the blender
of ${G}_{\xi,\mu,\bar\eta}$, see Remark~\ref{r.thecurveL}.
The second part of that remark and
the $C^r$ convergence 
$$
\Phi_{k}^{-1}\circ \mathcal{R}_{\bar \upsilon^\xi_k (\mu),\rho(\ve)}(f_\ve) \circ \Phi_{k}\to
{G}_{\xi,\mu, \bar\eta }
$$
on compact subsets of $\mathbb{R}^3$ 
imply that for every large
$k$ the set
$$
 \Phi_{k}^{-1}\circ \mathcal{R}_{\bar \upsilon^\xi_k (\mu),\rho(\ve)}(f_\ve) \circ \Phi_{k} (L)
$$
contains a disc in the superposition region 
of the blender ${\Lambda}_{\bar \upsilon^\xi_k (\mu),\rho(\ve)}$.}}
\end{remark}

\subsection{Unfolding the heteroclinic points $X_{i,\ve}$}
\label{ss.unfoldingnew}

By construction, we have that  $X_{1,\ve}$ and $X_{2,\ve}$ are  
quasi-transverse heteroclinic points  
of $f_{\ve, \bar \upsilon_k(\mu),\rho(\ve)}$ and (recalling the definitions in
\eqref{e.wastedyears})
$$
\widetilde{X}_{i,\ve}\in 
W^{\mathrm{s}}_{\mathrm{loc}}(P,f_{\ve, \bar \upsilon_k(\mu), \rho(\ve)})\cap  
W^{\mathrm{u}} (Q,f_{\ve, \bar \upsilon_k(\mu), \rho(\ve)}).
$$ 
\begin{remark}\label{r.l.tenemosunblender}{\em{
The choices of $\ve$ and $\rho(\ve)$ imply that the closure of the orbits of $\widetilde{X}_{i,\ve}$ and the orbit of the blender
$\Upsilon_{f_{\ve,\bar \upsilon_k(\mu), \rho(\ve)}}$ are disjoint.  Moreover, the orbit of the blender is also disjoint from
the neighbourhoods  ${U}_{i,\ve}$ of $X_{i,\ve}$ (and thus from the neighbourhoods  $\widetilde{U}_{i,\ve}$ of $\widetilde{X}_{i,\ve}$).}}
\end{remark}

We now consider a
``local unfolding of the heteroclinic point
$\widetilde{X}_{1,\ve}$  independent of the renormalisation process'':  this unfolding is given by a perturbation whose support is disjoint from 
$B\big(\widetilde{X},\rho(\ve)\big)$ and $B\big(\widetilde{Y},\rho(\ve)\big)$.
 For that, consider a family of local perturbations 
  of $f_{\ve}$ given by
 \begin{equation}
 \label{e.gnk}
g_{\ve, \bar {\upsilon}_{k}(\mu), \rho (\ve)}= \theta_{\ve, k} \circ f_{\ve, \bar {\upsilon}_{k}(\mu), \rho (\ve)}, 
 \end{equation}
 where
$\theta_{\ve, k}$ is a $C^r$ perturbation of identity  supported 
on $B\big(\widetilde{X}_{1,\ve},2\rho(\ve)\big)\subset  \widetilde{U}_{1,\ve}$ 
satisfying $\lim_{k\to\infty}d_r (\theta_{\ve,k}, \mathrm{id})=0$. 
To define $\theta_{\ve, k}$, recall the definitions of the bump function $\Pi_{\delta}$ in \eqref{e.bump} and the sequences  
$m_k,\, n_k$ in Section~\ref{sss.sojourn},
$\tilde{\mathfrak{c}}_k$ and $\tilde{\mathfrak{s}}_k$ in
\eqref{e.fraksequences},
 and 
$\bar \nu_k$ in ~\eqref{e.nu}, and
 consider the sequence of vectors
\begin{equation}
\label{e.deftau}
\bar{\tau}_k\eqdef \big(0,\sigma^{-m_k}_P(\tilde{\mathfrak{c}}_k+\tilde{\mathfrak{s}}_k),
\sigma^{-m_k}_P(\tilde{\mathfrak{c}}_k-\tilde{\mathfrak{s}}_k)
\big)\in \mathbb{R}^3, \qquad
 \bar{\tau}_k \to \mathbf{0}.
\end{equation}

The map
$\theta_{\ve,k} \colon M \rightarrow M$ is defined by:
\begin{equation}
\label{e.dtildetheta}
\begin{split}
\theta_{\ve,k} \big(Z\big)
&\eqdef Z+
{\Pi}_{\rho(\ve)}(W)\,\bar{\tau}_k, \quad \mbox{if} \quad
Z=\widetilde{X}_{1,\ve}+W\in 
B\big(\widetilde{X}_{1,\ve},\,2\rho(\ve)\big),\\ 
\theta_{\ve,k}(Z )&\eqdef Z, \quad \mbox{if} \quad 
Z\notin 
B\big(\widetilde{X}_{1,\ve},\,2\rho(\ve)\big). 
\end{split}
\end{equation}

Recalling that   $\Vert \Pi_{\rho(\ve)} \Vert_{r} \leqslant 
(\Vert b \Vert_r)^3 \,\rho(\ve)^{-r}$, see
 \eqref{e.Pinorm}, and that $|\tilde{\mathfrak{c}}_k\pm \tilde{\mathfrak{s}}_k |\leqslant 2$,  see
\eqref{e.fraksequences}, we get
\[
d_r (
{\theta}_{\ve,k}, \mathrm{id} ) \leqslant 2\, (\Vert b \Vert_r)^3 \,\rho(\ve)^{-1}\,
\sigma_P^{-m_k}
\to 0, \qquad k\to\infty.
\]

\begin{remark}\label{r.bychoiceseblenderparatodos}{\em{By definition of $\theta_{\ve,k}$,
for every  $W\not\in f_\ve^{-1} \big(B( \widetilde X_{1,\ve},2 \rho(\ve)\big)$
it holds that
$$
g_{\ve, \bar {\upsilon}_{k}(\mu), \rho (\ve)}(W)= \theta_{\ve,k} \circ f_{\ve, \bar\upsilon_k(\mu),\rho(\ve)}(W) 
= f_{\ve, \bar\upsilon_k(\mu), \rho(\ve)}(W).
$$
As a consequence, if $W\not\in f_\ve^{-1} \big(B (\widetilde X_{1,\ve},2 \delta)\big)$ then
$$
\mathcal{R}_{\bar \upsilon^\xi_k (\mu),\rho(\ve)}(f_\ve) (W) =
\mathcal{R}_{\bar \upsilon^\xi_k (\mu),\rho(\ve)} (g_{\ve, \bar {\upsilon}_{k}(\mu), \rho (\ve)}) (W).
$$
Hence, by Remark~\ref{r.l.tenemosunblender}, the maps 
$g_{\ve, \bar {\upsilon}_{k}(\mu), \rho (\ve)}$ and $f_{\ve, \bar\upsilon_k(\mu), \rho(\ve)}$ have the common  
blender
$\Upsilon_{f_{\ve,\bar \upsilon_k(\mu), \rho(\ve)}}$
 defined in ~\eqref{e.ijk-blender}.

Moreover,  by construction, the curve $L^{\mathrm{u}}_{2,\ve}(\rho(\ve))$ in \eqref{e.1seg1} containing $X_{2,\ve}$
is contained in $W^{\mathrm u}(Q, g_{\ve, \bar {\upsilon}_{k}(\mu), \rho (\ve)})$.
}}
\end{remark}

\section{Orbits and itineraries  associated to the renormalisation} \label{s.orbitsanditineraries}
This is a preparatory section to the proof of Theorem~\ref{t:1}. We study admissible points and returns: we select a set of points whose itineraries for the diffeomorphisms $g_{\ve, \bar \upsilon_k(\mu), \rho (\ve)}$ 
in \eqref{e.gnk}
are associated to 
the renormalisation scheme, see \eqref{e.baltazar}.

Recall the charts 
$\Phi_k =\Phi_{k,\bar \varsigma}=\Psi_k\circ \Theta_{\bar \varsigma} \colon U_k\subset \mathbb{R}^3\to U_Q$
 in~\eqref{e.kcor} and that for every compact set $K\subset \mathbb{R}^3$ it holds $\Phi_k(K)\to \widetilde{Y}$ as $k\to\infty$. 

 Recall  the definitions
 of the neighbourhoods $U_X$,  $U_{\widetilde{X}}$,   $U_{Y}$, and  $U_{\widetilde{Y}}$ of the heteroclinic 
 points of the cycle in Section~\ref{ss.semilocal}, and of the balls $B(\widetilde X, 2\rho(\ve))\subset U_{\widetilde X}$ and
 $B(\widetilde Y, 2\rho(\ve))\subset U_{\widetilde Y}$ in \eqref{e.cornaca}.

  Consider the subset $U_{\ve,\bar {\upsilon}_{k}(\mu), \rho(\ve)}$
of $B(\widetilde Y, 2\rho(\ve))\subset \Phi_k (U_k)=U_Q$ of points having the following itinerary for 
$f_{\ve,\bar {\upsilon}_{k}(\mu), \rho(\ve)}$: a point $w\in U_{\ve, \bar {\upsilon}_{k}(\mu), \rho(\ve)}$ if
$$
w\in f_{\ve,\bar {\upsilon}_{k}(\mu), \rho(\ve)}^
{-(N_2+m_k+N_1+n_k)}\big(  B(\widetilde Y, 2\rho(\ve))\big)\cap   B(\widetilde Y, 2\rho(\ve))
$$
and it satisfies (see Figure~\ref{fig:eshoy}):  
\begin{equation}\label{e.baltazar}
\begin{split}
f_{\ve, \bar {\upsilon}_{k}(\mu), \rho (\ve)}^{i}(w)\, &\in \, U_Q, \quad  \mbox{for every}\quad  0\leqslant i \leqslant n_k,\\
  f_{\ve, \bar {\upsilon}_{k}(\mu), \rho (\ve)}^{n_k} (w)\, &\in \, U_X,\\
f_{\ve, \bar {\upsilon}_{k}(\mu), \rho (\ve)}^{N_1+n_k}
(w) \, &\in \, B(\widetilde X, 2\rho(\ve)),\\
f_{\ve, \bar {\upsilon}_{k}(\mu), \rho (\ve)}^{j+ N_1+n_k}
(w) \, &\in \, U_P, \quad \mbox{for every}\quad 0\leqslant j \leqslant m_k, \\
 f_{\ve, \bar {\upsilon}_{k}(\mu), \rho (\ve)}^
{m_k+N_1+n_k}(w)\, &\in \, U_Y,  \, \mbox{and}
\\
f_{\ve, \bar {\upsilon}_{k}(\mu), \rho (\ve)}^
{N_2+m_k+N_1+n_k}(w) \, &\in \, B(\widetilde Y, 2\rho(\ve)).
\end{split}
\end{equation}
We say that the points in $U_{\ve, \bar {\upsilon}_{k}(\mu), \rho (\ve)}$ are {\em{$({\ve, \bar {\upsilon}_{k}(\mu), \rho (\ve)})$-admissible}} and that
$n_k+N_1+m_k+N_2$ is the
{\em{$({\ve, \bar {\upsilon}_{k}(\mu), \rho (\ve)})$-admissible return.}} 
We now define the
maps
\[
\begin{split}
F_{\ve, \bar {\upsilon}_{k}(\mu), \rho(\ve)} &\colon \Phi_k^{-1} (U_{\ve, \bar {\upsilon}_{k}(\mu),\rho(\ve)}) \to \mathbb{R}^3,\\
F_{\ve, \bar {\upsilon}_{k}(\mu),\rho(\ve)} &\eqdef  
\Phi_k^{-1}\circ
f_{\ve, \bar {\upsilon}_{k}(\mu), \rho (\ve)}^
{N_2+m_k+N_1+n_k}\circ \Phi_k.
\end{split}
\]

\begin{figure}
\centering
\begin{overpic}[scale=0.13,
]{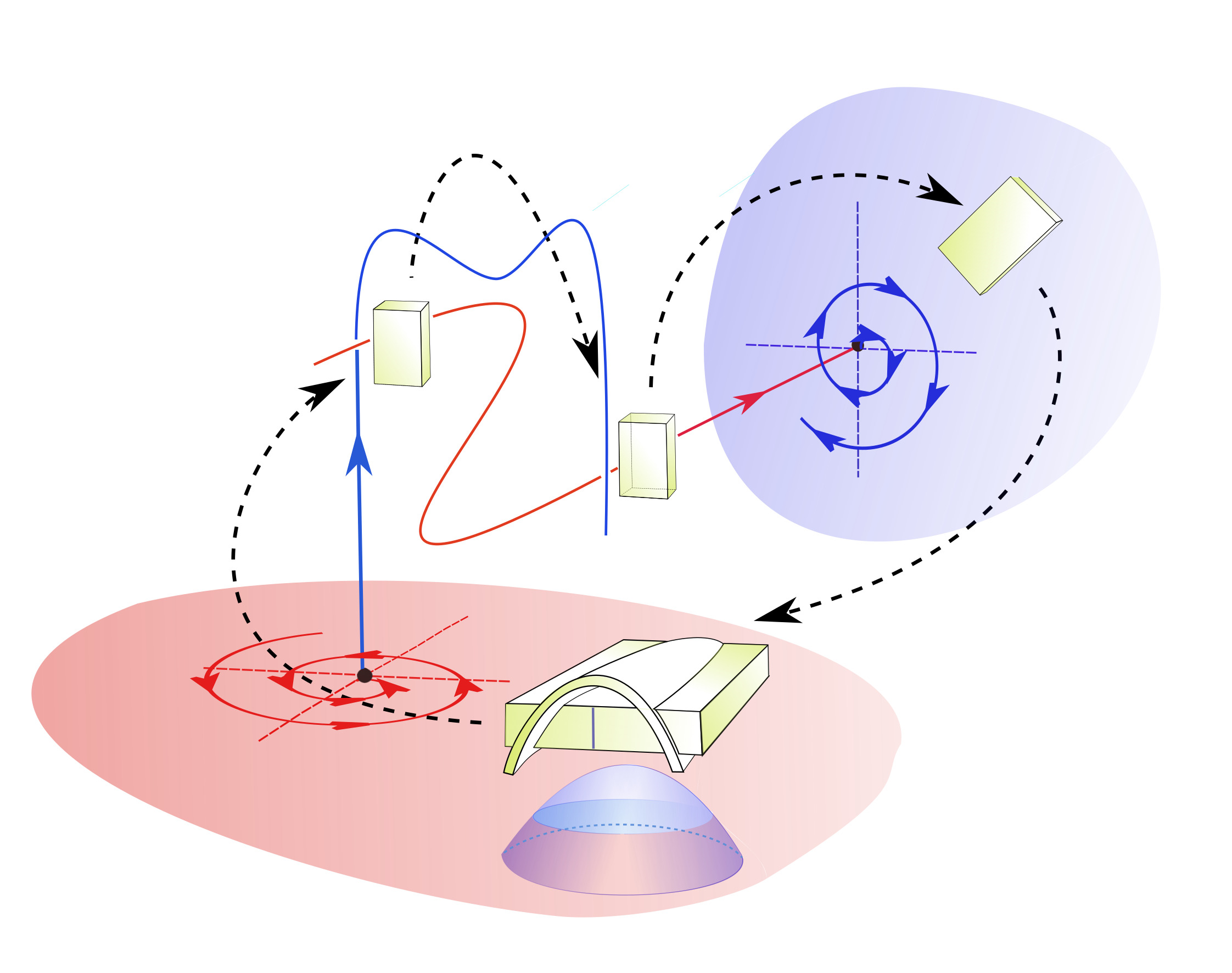}
  \put(95,80){\Large {$Q$}}  
        \put(25,105){\Large $n_k$}     
            \put(92, 209){\Large $N_1$}  
                      \put(160,203){\Large $m_k$}  
\put(235,87){\Large $N_2$}       
     \put(180,150){\Large {$P$}}                
 \end{overpic}
\caption{The points in the blender are admissible points} 
\label{fig:eshoy}
\end{figure}

Recall that $g_{\ve, \bar {\upsilon}_{k}(\mu), \rho (\ve)}= \theta_{\ve, k} \circ f_{\ve, \bar {\upsilon}_{k}(\mu), \rho (\ve)}$ and that for the points in 
$U_{\ve, \bar {\upsilon}_{k}(\mu), \rho (\ve)}$ the map $\theta_{\ve, k}$ is the identity, so $g_{\ve, \bar {\upsilon}_{k}(\mu), \rho (\ve)}= f_{\ve, \bar {\upsilon}_{k}(\mu), \rho (\ve)}$ for points in $U_{\ve, \bar {\upsilon}_{k}(\mu), \rho (\ve)}$. 

\begin{remark}\label{r.admissiblender}{\em{
 Every point of the blender is
$(\ve, \bar {\upsilon}_{k}(\mu), \rho(\ve))$-admissible:
\[
\begin{split}
\Upsilon_{\ve,\bar {\upsilon}_{k}(\mu),\rho(\ve)} & \subset \bigcap_{j\in \mathbb{Z}} f_{\ve,\bar {\upsilon}_{k}(\mu), \rho(\ve)}^
{j \,(N_2+m_k+N_1+n_k)} (U_{\ve, \bar {\upsilon}_{k}(\mu), \rho(\ve)})\\
&= \bigcap_{j\in \mathbb{Z}} g_{\ve,\bar {\upsilon}_{k}(\mu), \rho(\ve)}^
{j \,(N_2+m_k+N_1+n_k)} (U_{\ve, \bar {\upsilon}_{k}(\mu), \rho(\ve)}).
\end{split}
\]}}
\end{remark}

\begin{notation}{\em{In what follows, if there is no possibility of  misunderstanding, we will simply write:
\begin{itemize}
\item
$f_{\ve,k,\mu}$ and $g_{\ve,k, \mu}$ in the places of
$f_{\ve, \bar {\upsilon}_{k}(\mu), \rho (\ve)}$ and $g_{\ve, \bar {\upsilon}_{k}(\mu), \rho (\ve)}$,
\item 
$\Lambda_{\ve, k ,\mu}$ and $\Upsilon_{\ve, k ,\mu}$ 
 in the place of  ${\Lambda}_{\bar \upsilon_k (\mu),\rho(\ve)}$ and $\Upsilon_{f_{\ve,\bar \upsilon_k (\mu),\rho(\ve)}}$,
\item
$ \mathcal{R}_{\ve, k,\mu}$ in the place of $\mathcal{R}_{\bar \upsilon_k (\mu),\rho(\ve)}$.
\end{itemize}
 }}
\end{notation}

\section{Proof of Theorem~\ref{t:1}: Intersections between the two-dimensional manifolds}
\label{s.proofof2}

Throughout this and the next section, we will assume that
$f\in \mathcal{H}^r_{\BH}(M)$, $r \geqslant 2$, and consider the perturbations
$g_{\ve,k,\mu}$  of $f$. By Proposition~\ref{p.blendersparatodos} the set $\Upsilon_{\ve,k,\mu}$ in Remark~\ref{r.admissiblender}
 is a blender of 
$g_{\ve,k,\mu}$. 

We prove that the unstable manifolds of the
blenders  $\Upsilon_{\ve,k,\mu}$ of 
$g_{\ve,k,\mu}$ transversely intersects the stable manifold of the saddle $Q$.

\begin{prop}\label{p.why}
For every small $\ve>0$,   large $k$, and $\mu \in (-10,-9)$ it holds
$$
W^\mathrm{u}
\big(
{\Upsilon}_{g_{\ve,k,\mu}},g_{\ve,k,\mu}
\big) \pitchfork
 W^\mathrm{s}\big(Q_{g_{\ve,k,\mu}},g_{\ve,k,\mu}\big)\neq\emptyset.
$$
\end{prop}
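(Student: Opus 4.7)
The plan is to exploit the unboundedness of $W^{\mathrm u}(P^+_{\xi,\mu,\bar\eta},G_{\xi,\mu,\bar\eta})$ given by Lemma~\ref{l.lem} and transfer it, via the $C^r$-convergence of the renormalisation scheme (Proposition~\ref{p.blendersparatodos}), to an intersection with $W^{\mathrm s}(Q)$ in the ambient dynamics. First I would identify, in $\mathbb{R}^3$, the target set $\Phi_k^{-1}\big(W^{\mathrm s}_{\mathrm{loc}}(Q,g_{\varepsilon,k,\mu})\cap U_Q\big)$: using the explicit form of $\Phi_k^{-1}$ in Remark~\ref{r.coortilde}, this target is the affine plane $\{y=y_k^*\}$ with $y_k^* = -\varsigma_2\,\sigma_P^{2m_k}\,\sigma_Q^{n_k}$. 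From the choice of sojourn times (\eqref{e.limitofsojourn}) one has $|y_k^*|\sim (|\sigma_Q|/\lambda_Q^{2})^{n_k}$, which diverges exponentially in $n_k$. So the blender itself, which lives inside the fixed compact box $\Delta$ in rescaled coordinates, cannot reach the target without iteration; the unstable manifold of the reference saddle must.

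Next I would take the reference fixed point $P^+_{\varepsilon,k,\mu}$ of the blender $\Upsilon_{\varepsilon,k,\mu}$ (the continuation of $P^+_{\xi,\mu,\bar\eta}$) and a fixed-size local unstable disc $D_k\subset W^{\mathrm u}_{\mathrm{loc}}(P^+_{\varepsilon,k,\mu})$. By hyperbolic continuity, $\Phi_k^{-1}(D_k)$ is $C^r$-close to a limit local unstable disc $D$ of $P^+_{\xi,\mu,\bar\eta}$ for $G=G_{\xi,\mu,\bar\eta}$. Since $G$ is quadratic in $y$, its forward iterates of $D$ grow super-exponentially in the $y$-direction, so by Lemma~\ref{l.lem} there is $N(L)=O(\log\log L)$ such that $G^{N(L)}(D)$ contains a point with $y$-coordinate equal to $\pm L$ (the sign determined so as to match $y_k^*$; otherwise one uses instead the analogous unstable branch, since the projection of $G^{N}(D)$ to the $y$-axis covers a growing interval). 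Taking $L_k=|y_k^*|$, the required number of iterates $N_k=N(L_k)=O(\log n_k)$ grows much more slowly than the sojourn time $n_k$. Since the $C^r$-convergence $\mathfrak{R}_{k,\rho(\varepsilon)}(f_\varepsilon)\to \widehat G_{\xi,\bar\eta(\xi,f)}$ in Proposition~\ref{p.blendersparatodos} is uniform on compact sets, the iterated image $\mathcal{R}^{N_k}_{\varepsilon,k,\mu}(D_k)$ (transported to $\mathbb{R}^3$ via $\Phi_k^{-1}$) will be $C^r$-close to $G^{N_k}(D)$ on a compact set containing the plane $\{y=y_k^*\}$, and hence by the intermediate value theorem along the $\ruu$-fibre will contain a point $w_k$ with $y_{\mathbb{R}^3}(w_k)=y_k^*$. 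The corresponding point $\Phi_k(w_k)\in U_Q$ lies on $W^{\mathrm s}(Q,g_{\varepsilon,k,\mu})$ and also belongs to $W^{\mathrm u}(P^+_{\varepsilon,k,\mu})\subset W^{\mathrm u}(\Upsilon_{g_{\varepsilon,k,\mu}})$.

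To upgrade the intersection to a transverse one I would use that $T_{w_k}W^{\mathrm u}$ lies in the invariant unstable cone field $\mathcal{C}^{\mathrm u}$ of the blender in $\mathbb{R}^3$ (recall \eqref{e.conodecone} and Claim~3.12 in \cite{DiaPer:19b}), which has a dominant $y_{\mathbb{R}^3}$-component. The differential $d\Phi_k$ sends the $y_{\mathbb{R}^3}$-direction to the $y_{U_Q}$-direction; since $T W^{\mathrm s}(Q)$ is the $xz$-plane in $U_Q$, the images of two independent cone vectors span a plane that is not contained in the $xz$-plane, and the intersection is transverse. Thus $W^{\mathrm u}(\Upsilon_{g_{\varepsilon,k,\mu}},g_{\varepsilon,k,\mu})\pitchfork W^{\mathrm s}(Q_{g_{\varepsilon,k,\mu}},g_{\varepsilon,k,\mu})\neq\emptyset$.

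The hard part, inspired by \cite[Section 6.4]{PalTak:93}, is the scale-matching in the previous step: the number of iterates $N_k\to\infty$ and the compact set $K_k\subset\mathbb{R}^3$ needed to house $G^{N_k}(D)$ both grow with $k$, while the $C^r$-convergence from Proposition~\ref{p.blendersparatodos} is only guaranteed on each fixed compact set. The key quantitative point is that the required $N_k$ grows only like $\log n_k$ (a consequence of the quadratic nature of $G$), while the Lipschitz constant of $G$ on $K_k$ grows only polynomially in $|y_k^*|$; a careful bookkeeping of $\|\mathcal{R}^{N_k}_{\varepsilon,k,\mu}-G^{N_k}\|_{C^r(K_k)}$ against these polynomial rates is what allows us to pass from the convergence on fixed compacts to a convergence strong enough to preserve the existence of the hitting point $w_k$. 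This is the three-dimensional heterodimensional counterpart of the argument used in \cite{PalTak:93} to show that thick horseshoes arising from a renormalised tangency are homoclinically related to the initial saddle.
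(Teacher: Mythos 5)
There is a genuine gap at the step you yourself flag as ``the hard part'', and it is not a question of bookkeeping. The renormalised return is the map $f^{N_2+m_k+N_1+n_k}_{\ve,\bar\upsilon_k(\mu),\rho(\ve)}$ with a \emph{fixed} itinerary ($n_k$ iterates in $U_Q$, transition, $m_k$ iterates in $U_P$, transition, cf.\ \eqref{e.baltazar}), and its closeness to $G_{\xi,\mu,\bar\eta}$ is established only on fixed compact sets (Propositions~\ref{p.diazperez} and~\ref{p.blendersparatodos}); the higher-order terms in Section~\ref{s.explicit1} are controlled only on bounded regions, with no rate in the size of the compact set. Your target plane $\{y=y_k^*\}$, $|y_k^*|\sim\sigma_P^{2m_k}|\sigma_Q|^{n_k}$, corresponds back in $U_Q$ to $y_{U_Q}=0$, i.e.\ to $W^{\rs}_{\loc}(Q)$ itself, and a rescaled neighbourhood of it corresponds to $y_{U_Q}\in\big(0,(1-\delta)\sigma_Q^{-n_k}\big)$: such points need \emph{more} than $n_k$ iterates to cross $U_Q$ and reach $U_X$, so they are not admissible in the sense of \eqref{e.baltazar}. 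Near your target the return map with this itinerary is therefore not even defined, let alone modelled by $G$; the region where the renormalisation picture is meaningful is the admissible band $|\tilde y|\lesssim\delta\,\sigma_P^{2m_k}|\sigma_Q|^{n_k}$, which stops strictly short of $\Phi_k^{-1}(W^{\rs}_{\loc}(Q))$. For the same reason, replacing $\mathcal{R}^{N_k}_{\ve,k,\mu}$ by $G^{N_k}$ on compacta of exponentially growing diameter is not justified by the quoted convergence, so the hitting point $w_k$ cannot be produced this way.

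This is exactly why the paper's proof takes another route, even though it shares your starting intuition (the role of Lemma~\ref{l.lem} as the analogue of the infinite separatrix in Palis--Takens, see Section~\ref{ss.comparison}). The paper proves expansion and angle estimates \emph{directly for the true diffeomorphism} along admissible returns (Lemmas~\ref{l.maiNiam} and~\ref{l.newmaiNiam}, combined with the parabola/separatrix geometry of Lemmas~\ref{l.hankook} and~\ref{l.pirelli}), so no comparison with $G$ on large sets is needed; it grows the strong unstable separatrix of $P^+_{\bar\upsilon_k(\mu)}$ only up to the disc $S_{\bar\upsilon_k(\mu)}(\delta)$, the pullback of the disc $\{y=1-\delta\}$ near $X$, i.e.\ the edge of the admissible band (Lemma~\ref{l.thefinalintersection}); and it then pushes the resulting curve forward by $f^{n_k}_{\bar\upsilon_k(\mu)}$, so that it accumulates on the segment $\{0\}\times[1-\delta,1]\times\{0\}\subset W^{\ru}_{\loc}(Q)$, where it must transversally cross the uniformly sized stable disc through the auxiliary transverse homoclinic point $Z^-_\ve$ of $Q$ constructed in Proposition~\ref{p.Lnbis}(3). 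These extra homoclinic points are the ingredient missing from your argument: they bring a transverse piece of $W^{\rs}(Q)$ into the region that the unstable manifold of the blender can actually be shown to reach, instead of asking that manifold to travel, in rescaled coordinates, all the way to $W^{\rs}_{\loc}(Q)$, which the admissibility constraint forbids.
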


The proof  of this proposition is inspired by  \cite[Proposition 1, Chapter 6.4]{PalTak:93}. Here there are additional difficulties
due to the
 heterodimensional nature of the bifurcation.
 A comparison between the two settings is done in Section~\ref{ss.comparison}.
   
\begin{notation}
{\em{Recall definitions of $f_\ast$, 
 $F_\ast$, $\Upsilon_\ast$, and $U_\ast$, with
$\ast = (\ve,\bar \upsilon_k(\mu),\rho(\ve))$, 
in Section~\ref{s.orbitsanditineraries}. If there is no possible misunderstanding, 
 in what follows $\ve$ and $\rho(\ve)$ will be omitted,
simply writing $f_{\bar \upsilon_k(\mu)}$,  $F_{\bar \upsilon_k(\mu)}$,
$\Upsilon_{\bar \upsilon_k(\mu)}$, and $U_{\bar \upsilon_k(\mu)}$.

By Proposition~\ref{p.blendersparatodos},
the sequence of maps
$F_{\bar \upsilon_k(\mu)}=F_{\ve,\bar {\upsilon}_{k}(\mu), \rho(\ve)}$ converges (on compacta) to the family of endomorphisms
$G_{\xi, \mu, \bar \eta}$
 for some fixed $\xi$ and  $\bar \eta$. 
 Hence we write $G_{\mu}$ in the place of $G_{\xi, \mu, \bar \eta}$.
}}
\end{notation}

This section is organised as follows. In Section~\ref{ss.auxiliarydiagonal}, we introduce an  auxiliary  one-dimensional foliation  $\mathcal{D}$ in $U_P$ which subfoliates the reference domain of the blenders. 
The main property of this foliation is
 that the strong unstable foliation of the blender approaches to $\mathcal{D}$.
We translate  the foliation $\mathcal{D}$ to $U_{\widetilde Y}$  by $f^{N_2}_{\bar {\upsilon}_{k}(\mu)}$
and thereafter by $\Phi_k^{-1}$ to $\mathbb{R}^3$, obtaining in this way a foliation $\widetilde{\mathcal{D}}_{\bar {\upsilon}_k(\mu)}$. 
In Lemma~\ref{l.hankook}, we see that  the leaves of $\widetilde{\mathcal{D}}_{\bar {\upsilon}_k(\mu)}$ 
 converge to parabolas when $k\to \infty$. 
   In Section~\ref{ss.anglesexpansionmanifold}, using the foliation $\mathcal{D}$, for the admissible points and returns in Section~\ref{s.orbitsanditineraries},
we study the expansion of vectors and how the angles change, see Lemma~\ref{l.maiNiam}. In 
Section~\ref{ss.anglesexpansioninR},
we translate these estimates for the map $F_{\bar {\upsilon}_{k}(\mu)}$. 
In Section~\ref{ss.separatrices}, we study the separatrices of the saddles of  the blenders nearby heterodimensional tangencies.
Finally, in Section~\ref{ss.S2} we conclude the proof of  Proposition~\ref{p.why}.}

\subsection{The auxiliary one dimensional foliation}
\label{ss.auxiliarydiagonal}
We start with some preliminary constructions. We consider first auxiliary foliations $\mathcal{F}^{\ru}_{R, \bar\upsilon_k (\mu)}$,
$R=P,Q$,
defined on the neighbourhoods $U_P$ and $U_Q$ as the natural extensions of the invariant local manifolds of the saddles in the cycle.
The leaf  $\mathcal{F}^{\ru}_{P, \bar\upsilon_k (\mu)} (A)$ of the point $A\in U_P$
of the
foliation  $\mathcal{F}^{\ru}_{P, \bar\upsilon_k (\mu)}$ is the intersection 
of the set $U_P$
and the plane
parallel to the coordinate plane $yz$ containing $A$.
Similarly, the leaf
 $\mathcal{F}^{\rs}_{P, \bar\upsilon_k (\mu)} (A)$
of   $\mathcal{F}^{\rs}_{P, \bar\upsilon_k (\mu)}$ is the intersection of  $U_P$
and the straight line parallel to the axis
 $x$ containing $A$. 
 Thus these leaves are ``parallel''  to $W^\ru_{\mathrm{loc}} (P)$ and 
 $W^\rs_{\mathrm{loc}} (P)$, respectively.
 The leaves  of the foliations $\mathcal{F}^{\ru}_{Q, \bar\upsilon_k (\mu)}$
 and 
$\mathcal{F}^{\rs}_{Q, \bar\upsilon_k (\mu)}$ are defined similarly and  are ``parallel"
to $W^\ru_{\mathrm{loc}} (Q)$ and 
 $W^\rs_{\mathrm{loc}} (Q)$. Note that the foliations $\mathcal{F}^{\ast}_{P,Q, \bar\upsilon_k (\mu)}$, $\ast=\rs, \ru$, do not depend on $\bar\upsilon_k (\mu)$.
 
 For $\ast=\rs, \ru$, we ``transport"  the foliations   $\mathcal{F}^{\ast}_{Q, \bar\upsilon_k (\mu)}$
 from $U_X\subset U_Q$ to
 $U_{\widetilde{X}}\subset U_P$ by the transition $f^{N_1}_{\bar {\upsilon}_k(\mu)}$ and
 continue denoting the resulting foliations by $\mathcal{F}^{\ast}_{Q, \bar\upsilon_k (\mu)}$.
 Similarly, 
 we ``transport"  the foliations   $\mathcal{F}^{\ast}_{P, \bar\upsilon_k (\mu)}$
 from  $U_Y\subset U_P$ to 
 $U_{\widetilde{Y}}\subset U_Q$ by  $f^{N_2}_{\bar {\upsilon}_k(\mu)}$ and
 continue denoting these foliations by $\mathcal{F}^{\ast}_{P, \bar\upsilon_k (\mu)}$.
 Note that these extensions do depend on 
$ \bar\upsilon_k (\mu)$.
 
We now consider an auxiliary one-dimensional  foliation $\mathcal{D}$
in $U_P$. For that consider the family of curves
$$
\ell_{(s,a)} \eqdef \big\{\,(s,a,-a)+(0,t,t)\,:\, t \in \mathbb{R}\,\big\} \cap U_P, \quad s,a\in \mathbb{R}.
$$
and define the {\em{diagonal foliation}}  of $U_P=[-a_P,a_P]^3$ by 
\begin{equation}
\label{e.Ele}
\mathcal{D}\eqdef \big\{\,\ell_{(s,a)}\,\colon\,  a,s \in [-a_P,a_P]\,\big\}.
\end{equation}
Note that $\mathcal{D}$ ``subfoliates" the leaves
of  $\mathcal{F}^{\ru}_{P, \bar\upsilon_k (\mu)}$  in $U_P$, see Figure~\ref{fig:diagonalf}.

Consider the domain 
$\Delta_k (\mu)$   of the blender $\Upsilon_{\bar {\upsilon}_k(\mu)}$ 
 in \eqref{e.DeltaKDelta}.
By  \cite[Lemma 3]{DiaPer:19}, for every large $k$, 
the coordinates $(x,1+y,1+z)	\in U_P$ of the points in
$f^{m_k+N_1 +n_k}_{\bar {\upsilon}_k(\mu)}\big(\Delta_k (\mu)\big)$ are close to $Y=(0,1,1)$. Moreover, they have Landau symbols
$$
x=O({\lambda_P}^{m_k}), \quad
y=O(\sigma_P^{-m_k}\,{\sigma_Q}^{-n_k}), \quad 
{z}=O(\sigma_P^{-m_k}\,{\sigma_Q}^{-n_k}).
$$
These conditions and  equation  \eqref{e.espectroparazero} imply that
\begin{equation}
\label{e.containedcontained}
f^{m_k+N_1 +n_k}_{\bar {\upsilon}_k(\mu)}(\Delta_k (\mu))\subset 
\bigcup_{s\in J_k}  \big\{\,\ell_{(s,a)}
\, : \,  a \in [-a_P,a_P]\,\big\}
\end{equation}
where
\begin{equation}\label{e.intervalJk}
J_k \eqdef  [\,- \sigma_Q^{-2n_k}\,\sigma_P^{-2m_k} a_P\, ,\, \sigma_Q^{-2n_k}\,\sigma_P^{-2m_k} a_P\,].
\end{equation}

As above, we consider the intersection of the leaves of 
$\mathcal{D}$ with $U_Y$ and ``transport''  them
by $f^{N_2}_{\bar\upsilon_k (\mu)}$, obtaining the following foliation of  $U_{\widetilde Y}$
(see Figure~\ref{fig:diagonalf}):
\begin{equation}
\label{e.curvelsa}
\mathcal{D}_{\bar\upsilon_k (\mu)}
\eqdef \big\{ \, \ell_{(s,a,\bar\upsilon_k (\mu))} \eqdef f^{N_2}_{\bar\upsilon_k (\mu)} (\ell_{(s,a)} \cap U_Y)\, \, \colon \, a,s \in [-a_P,a_P]\, \big\}.
\end{equation}
Similarly, we let
 \begin{equation}
 \label{e.curvasltilde} 
 \widetilde{\mathcal{D}}_{\bar {\upsilon}_k(\mu)} \eqdef \big\{\,
 \widetilde \ell_{(s,a,\bar\upsilon_k (\mu))}\eqdef \Phi_k^{-1} (\ell_{(s,a,\bar\upsilon_k (\mu))} )  \,\, \colon \, a,s \in [-a_P,a_P] \,\big\}.
\end{equation}

\subsubsection{Comparison of the homoclinic and heteroclinic settings}
\label{ss.comparison}
Our heterodimensional analysis is inspired by the one in \cite[Chapter 6.4]{PalTak:93} for homoclinic tangencies. Let us 
highlight some key  differences and similarities. For that recall that
 \cite{PalTak:93} considers a surface diffeomorphism with saddle $R$ having a homoclinic tangency $Z$. There are associated auxiliary
local stable and unstable foliations $\mathcal{W}^{\ast}_{\mu}$, $\ast=\rs,\ru$,
 defined on a neighbourhood of $Z$, here $\mu$ refers to a ``renormalisation'' parameter unfolding the tangency.
In  \cite{PalTak:93} the renormalisation scheme converges to the quadratic family 
$\varphi_\mu (x,y) = (y, y^2+\mu)$.

The construction in \cite{PalTak:93} implicitly uses the fact that (for suitable parameters) the family $\varphi_\mu$ has a fixed point whose unstable manifold has 
an ``infinite'' separatrix. Here we have a property with the same flavour stated in  Lemma~\ref{l.lem}.

The foliations  $\mathcal{W}^{\rs}_{\mu}$ and  $\mathcal{W}^{\ru}_{\mu}$ converge (in the charts of the corresponding renormalisation scheme) to foliations whose leaves
 are horizontal lines and parabolas of the form $(x, x^2+\mu)$, respectively. Here, the foliation  $\mathcal{D}_{\bar {\upsilon}_k(\mu)}$
 plays the role of the unstable foliation  $\mathcal{W}^{\ru}_{\mu}$.  Lemma~\ref{l.hankook} states a  convergence property of the foliation $\mathcal{D}_{\bar {\upsilon}_k(\mu)}$ 
 (involving some projections). The stable foliation
  $\mathcal{F}^{\rs}_{Q, \bar\upsilon_k (\mu)}$ defined above is similar to 
  $\mathcal{W}^{\rs}_{\mu}$.

Both foliations $\mathcal{W}^{\rs}_{\mu}$ and $\mathcal{W}^{\ru}_{\mu}$ foliate
 a neighbourhood of the tangency $Z$. Here, we have that
 the  foliation $\mathcal{D}_{\bar {\upsilon}_k(\mu)}$  
covers a neighbourhood of the heterodimensional tangency $\widetilde Y$ and therefore
of 
the reference domain  $\Delta_k(\mu)$ of the blender  $\Upsilon_{\bar {\upsilon}_k(\mu)}$, see
\eqref{e.containedcontained}. There is a similar assertion for 
  $\mathcal{F}^{\rs}_{Q, \bar\upsilon_k (\mu)}$.

In  \cite{PalTak:93}, the unstable leaves of the limit thick horseshoes approach to parabola of the limit unstable foliation
and  its projection along stable leaves ``covers'' several fundamental domains of the local unstable manifold of the saddle $R$.
Here, we have a similar property:
the  leaves of the strong unstable foliation
of the blenders $\Upsilon_{\bar {\upsilon}_k(\mu)}$ 
are close to the leaves of $\mathcal{D}_{\bar {\upsilon}_k(\mu)}$.  Due to the lack of domination of our setting, it is not possible to get 
a similar covering property. Instead, we prove that ``projections'' of the leaves of  $\mathcal{D}_{\bar {\upsilon}_k(\mu)}$ covers a fixed proportion of
a fixed fundamental domain of $W^{\ru}_{\mathrm{loc}} (Q)$. This will be enough to see that the blenders are involved in the robust cycles 
with the saddle $Q$ and 
are homoclinically related to the saddle $P$.

\begin{figure}
\centering
\begin{overpic}[scale=0.1,
]{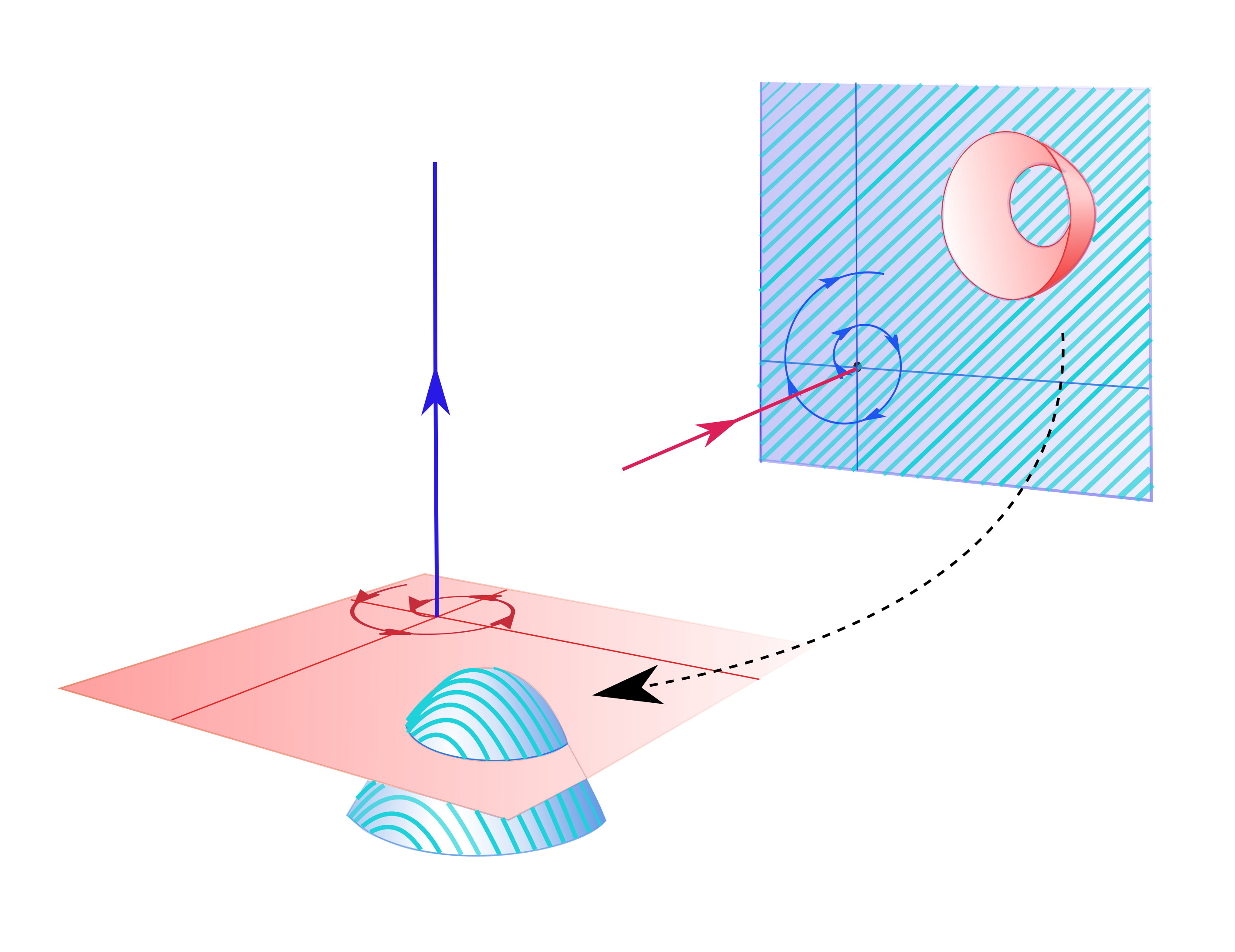}
 \put(78,92){\Large{$Q$}}
     \put(98,75){{$\bullet$}}
  \put(177,140){\Large{$P$}}
     \put(195,132){{$\bullet$}}
        \put(210,60){\Large{$f^{N_2}_{\bar {\upsilon}_k(\mu)}$}}
              \put(185,210){{$\{\ell_{(0,a)}: a\in \mathbb{R}\}$}}
\end{overpic}
\caption{The diagonal foliation on the leaf $\mathcal{F}^{\ru}(P)$.
}\label{fig:diagonalf}
\end{figure}

\subsubsection{Convergence to parabolas}
We now go to the details of our construction. 
Consider:
\begin{itemize}
\item 
the projection
$\pi_{1,2}\colon \mathbb{R}^3\to \mathbb{R}^2$ given by $\pi_{1,2}(x,y,z)=(x,y)$;
\item
the maps $\alpha,\,\beta:\mathbb{R}\to \mathbb{R}$ defined by
\begin{equation}
\label{e.alphabeta}
\begin{split}
\alpha(a)&\eqdef \sqrt{2}\,\beta_2\,(b_2-b_3)\,a,
\\
\beta(\mu,s,a)&\eqdef
\mu 
+
b_1\,\varsigma_2\,s
+(b_2+b_3-b_4)\, \varsigma_2\,a^2, 
\end{split}
\end{equation}
where $b_1,b_2,b_3,b_4$ are parameters associated to the heterodimensional tangency in \eqref{e.Bisoftheform}
and $\varsigma_2$ is  as in the definition of 
the H\'enon-like maps in \eqref{e.E-henonlike};

\item
the family of  curves
\begin{equation}\label{e.pesadez}
\overline 
\ell_{(s,a, \bar\upsilon_k (\mu))} (t)\eqdef
\pi_{1,2}\Big(\widetilde \ell_{(s,a,{\bar{\upsilon}_k(\mu)})}(t)\Big) =
\pi_{1,2} \Big(
\Phi_k^{-1}\big(\ell_{(s,a,{\bar{\upsilon}_k(\mu)})}(t)\big)\Big);
\end{equation}
\item
the re-scaling maps
$\widehat{s}_k, \widehat{a}_k, \widehat{t}_k\colon [-a_P, a_P]\to \mathbb{R}$,
given by
\begin{equation}\label{e.rescaling}
\begin{split}
\widehat{s}_k(s)&\eqdef \sigma_Q^{-2n_k}\,\sigma_P^{-2m_k} s,
\\
\widehat{a}_k(a)&\eqdef   \sigma_Q^{-n_k}\,\sigma_P^{-m_k} a,\\
\widehat{t}_k(t)&\eqdef  \frac{\sqrt{2}}{\beta_2(b_2+b_3+b_4)}\, \sigma_Q^{-2n_k}\,\sigma_P^{-2m_k} t.
\end{split}
\end{equation}
\end{itemize}
Noting that
$\widehat{s}_k(s)\in J_k$ for 
every $s\in [-a_P,a_P]$, we  
can define
the curves
$$
\widehat \ell_{(s,a,{\bar{\upsilon}_k(\mu)})}  (t)\eqdef 
\overline\ell_{(\,\widehat{s}_k(s)\, ,\, \widehat{a}_k(a)\, ,\, \bar{\upsilon}_k(\mu)\,)} (\,\widehat{t}_k (t)\,), \quad 
{(s,a) \in [-a_P, a_P]}
$$
and the sets
$$ 
\widehat L_{(s,a,{\bar{\upsilon}_k(\mu)})}\eqdef  \big\{\,\widehat \ell_{(s,a,{\bar{\upsilon}_k(\mu)})}  (t)\, : \, t\in [-a_P, a_P]\,\big\}.
$$

\begin{lemma}\label{l.hankook} 
The sequence of sets $ \widehat L_{(s,a,{\bar{\upsilon}_k(\mu)})}$
converges 
 to the parabola
  \begin{equation}
\label{e.familiadeparaolas}
 y= x^2+\alpha(a)\, x+\beta(a,s,\mu),
\end{equation}
when $k\to \infty$.
The convergence is $C^r$ uniform  
on compact sets of $\mathbb{R}^2$.
\end{lemma}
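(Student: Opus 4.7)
The approach is a direct first-principles computation. Using the quadratic transition $\mathfrak{T}_2$ in \eqref{e.transition2}--\eqref{e.Bisoftheform}, the parameter displacement $\bar\mu_k(\mu)$ in \eqref{e.mu}, and the inverse-chart formula in Remark~\ref{r.coortilde}, I would write down componentwise the image $\Phi_k^{-1}\circ f^{N_2}_{\bar{\upsilon}_k(\mu)}(\ell_{(s,a)}(t))$. The key preliminary simplifications are the elementary identities $(a+\tau)^2+(-a+\tau)^2=2\tau^2+2a^2$ and $(a+\tau)(-a+\tau)=\tau^2-a^2$ (with $\tau$ the shifted line parameter) inside the quadratic part $B$, together with the hypothesis $c_2=c_3$ from \eqref{e.ctes1sem2}, which collapses the third row of $B$ to $c_1 s+2c_2\tau$. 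These simplifications isolate how $\tau$, $a$, $s$ and $\mu$ enter each coordinate after the transition.

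Next I would substitute the rescalings $s\mapsto\widehat s_k(s)$, $a\mapsto\widehat a_k(a)$, $t\mapsto\widehat t_k(t)$ from \eqref{e.rescaling}. These are designed precisely to balance the prefactors coming from $\Phi_k^{-1}$. In the first coordinate, $\Phi_k^{-1}$ multiplies by $\sigma_Q^{n_k}\sigma_P^{m_k}\varsigma_2\varsigma_5^{-1}$; combined with the $2c_2\tau$ contribution and the normalising constant in $\widehat t_k$ (chosen using $\varsigma_2=\beta_2^2(b_2+b_3+b_4)/2$ and $\varsigma_5=\sqrt{2}\beta_2 c_2$ from \eqref{e.2.24}), this gives $x=t$ in the limit. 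In the second coordinate, $\Phi_k^{-1}$ multiplies $y-\sigma_Q^{-n_k}$ by $\varsigma_2\sigma_Q^{2n_k}\sigma_P^{2m_k}$; the $\sigma_Q^{-n_k}$ inside $\bar\mu_k(\mu)$ exactly cancels this shift, the term $\sigma_Q^{-2n_k}\sigma_P^{-2m_k}\mu$ contributes the $\mu$-part of $\beta$, the quadratic piece $(b_2+b_3+b_4)\tau^2$ becomes $x^2$ after the $\widehat t_k$-rescaling, the cross term $2(b_2-b_3)a\tau$ becomes $\alpha(a)\,x$ with $\alpha$ as in \eqref{e.alphabeta}, and $b_1 s$ and $(b_2+b_3-b_4)a^2$ contribute the remaining summands of $\beta(\mu,s,a)$ after the $\widehat s_k$, $\widehat a_k$-rescalings.

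The main obstacle is showing that every other term is negligible in the $C^r$ topology on compact sets, and two separate mechanisms are in play. First, the remaining contributions from $\bar\mu_k(\mu)$ carry a factor $\lambda_P^{m_k}$; after multiplication by the prefactor $\varsigma_2\sigma_Q^{2n_k}\sigma_P^{2m_k}$ from $\Phi_k^{-1}$ this produces a quantity of order $\sigma_Q^{2n_k}\sigma_P^{2m_k}\lambda_P^{m_k}$, which is the square of the quantity in \eqref{e.espectroparazero} and therefore tends to zero by the spectral condition \eqref{e.espectralconditions}. Second, the flat remainder $H$ in \eqref{e.transition2} vanishes at $\mathbf{0}$ together with its first derivatives, and (for its second component) with its pure second $y$- and $z$-derivatives, so after rescaling the contributions $\varsigma_2\sigma_Q^{2n_k}\sigma_P^{2m_k}H_2$ and the analogous expressions from $H_1,H_3$ decay to zero uniformly on any compact set together with all derivatives of order $\lesp r$; this is the standard mechanism by which the limit parabola depends only on the quadratic part of $B$. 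Collecting the surviving terms against \eqref{e.alphabeta} and \eqref{e.2.24} produces the claimed parabola.
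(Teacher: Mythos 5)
Your proposal is correct and takes essentially the same route as the paper's proof: an explicit componentwise computation of $\Phi_k^{-1}\circ f^{N_2}_{\bar{\upsilon}_k(\mu)}$ on the diagonal leaves using \eqref{e.transition2}, \eqref{e.mu} and Remark~\ref{r.coortilde}, followed by substitution of the rescalings \eqref{e.rescaling} and identification of the surviving terms against \eqref{e.alphabeta}. The only difference is one of emphasis: you spell out why the error terms vanish (the $\lambda_P^{m_k}\sigma_P^{2m_k}\sigma_Q^{2n_k}$ terms via \eqref{e.espectroparazero} and the flat remainder $H$ via its vanishing derivatives at $\mathbf{0}$), which the paper leaves inside its ``straightforward calculation.''
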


Next remark  will be used in Section~\ref{ss.separatrices}. It
relates the  parabolas in Lemma~\ref{l.hankook},
hence the foliation $\widetilde{\mathcal{D}}_{\bar {\upsilon}_k(\mu)}$,
 and
the H\'enon-like maps. It will play a key role in the arguments for controlling the size of unstable sets of blenders.
\begin{remark}
\label{r.bigangles}
{\em{Recall the reference domain $\Delta = [-4,4]^2 \times [-40,22]$ in \eqref{e.deltacube} of the blender of $G_\mu$ and
that
$$
\pi_{1,2}( G_\mu (x,y,z))=
(y,\mu+y^2+\kappa\,y\,z+\eta\,z^2).
$$
Hence 
for each fixed $z_0$
we have that
$\pi_{1,2}( G_\mu (x,y,z_0))$ is a curve of the family in \eqref{e.familiadeparaolas}.  
Therefore  given any  $K>0$ there is $y(K)$ such that for every $z_0\in [-40,22]$ 
and every $y \geqslant y(K)$ 
the angle between the curve 
$\pi_{1,2}( G_\mu (x,y,z_0))$ and the parallel to the $x$-axis at the point $(0,y)$ are  strictly bigger than $2\arctan (K)$, see Figure~\ref{fig:parabolasandangles}.}}
\end{remark}

\begin{figure}
\centering
\begin{overpic}[scale=0.032,
]{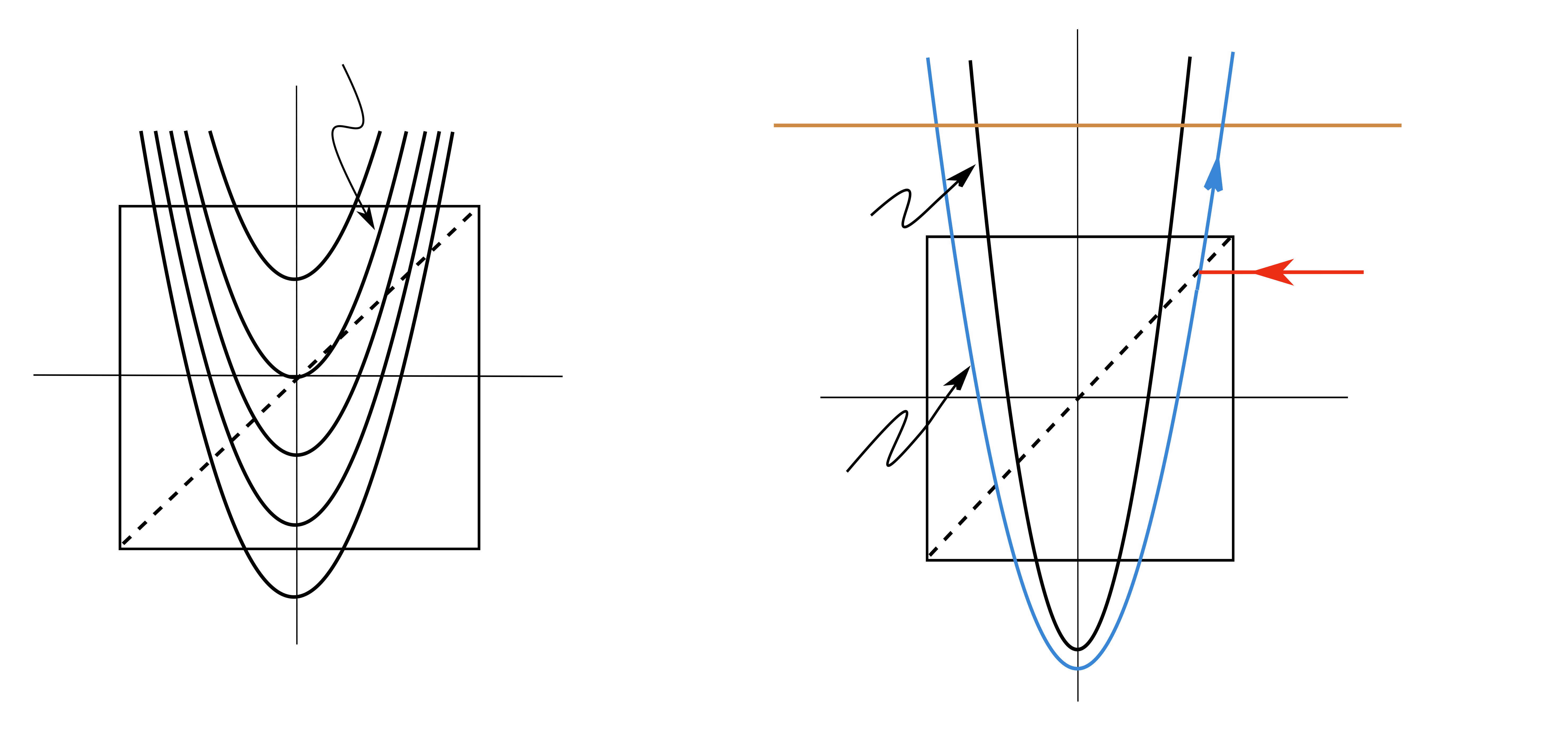}
  \put(60,170){$(x,x^2)$}  
     \put(141,49){$\pi_{1,2}\big(G_\mu(x,y,z_0)\big)$}  
          \put(165,110){$(y,\mu+y^2)$}  
                 \put(273,20){$\mu$}  
            \put(300,154){$2\arctan(K)$}  
                   \put(267,135){\small{$y(K)$}}
 \put(258,130){\Huge{$\cdot$}}  
    \put(257.5,15){\Huge{$\cdot$}}  
        \put(303,87){{$4$}}          
 \end{overpic}
\caption{The limit family of parabolas of  the sets $\widehat{L}_{(s,0,\bar \mu_k(\mu))}$ (left). Angles between parabolas and horizontal lines (right)} 
\label{fig:parabolasandangles}
\end{figure}

\begin{proof}[Proof of Lemma~\ref{l.hankook}]
We first give an explicit calculation of the coordinates of the curves 
$ \ell_{(s,a,\bar\upsilon_k (\mu))}$ in \eqref{e.curvelsa}.
  Consider the parameterisation of $ \ell_{(s,a,\bar\upsilon_k (\mu))} \subset  U_{\widetilde{Y}}$ given by
\[
\begin{split}
\ell_{(s,a,\bar\upsilon_k (\mu))}(t)&\eqdef
f^{N_2}_{\bar{\upsilon}_k(\mu)}\big(s,1+t+a,1+t-a\big)\\
&\eqdef
\Big(\ell^1_{(s,a,{\bar{\upsilon}_k(\mu)})}(t),\ell^2_{(s,a,{\bar{\upsilon}_k(\mu)})}(t),\ell^3_{(s,a,{\bar{\upsilon}_k(\mu)})}(t)\Big).
\end{split}
\]
Recalling the expression 
$f^{N_2}_{\bar{\upsilon}_k(\mu)}$
 in~\eqref{e.transition2}, Remark~\ref{r.supUpsilontrans}, and \eqref{e.mu}, 
we get
\[
\begin{split}
\ell^1_{(s,a,{\bar{\upsilon}_k(\mu)})}(t)=\,\,&1+a_1 s  + 
(a_2-a_3) a +(a_2 + a_3)t\\
&+H_1(s,t+a,t-a)-\lambda^{m_k}_P\,a_1,
\\
\ell^2_{(s,a,{\bar{\upsilon}_k(\mu)})}(t)=\,\,&b_1 s+(b_2+b_3-b_4)\, a^2+(b_2+b_3+b_4)\,t^2 
+ 2(b_2-b_3)\,a\,t
\\
&+ H_2(s,t+a,t-a)
+\sigma_Q^{-n_k}
+
\sigma_Q^{-2n_k}\,\sigma_P^{-2m_k}\,\mu
-\lambda^{m_k}_P\,b_1,
\\
\ell^3_{(s,a,{\bar{\upsilon}_k(\mu)})}(t)=\,\,
&1+c_1 s +(c_2 + c_3)t+H_3(s,t+a,t-a)-\lambda^{m_k}_P\,c_1.
\end{split}
\]
Write
\[
\overline 
\ell_{(s,a, \bar\upsilon_k (\mu))} (t)\eqdef
\Big(\overline 
\ell^1_{(s,a, \bar\upsilon_k (\mu))}
(t),\overline {\ell}^2_{(s,a,{\bar{\upsilon}_k(\mu)})}(t)\Big).
\]
From  the definitions of
$\overline 
\ell_{(s,a, \bar\upsilon_k (\mu))} (t)$ in
\eqref{e.pesadez}
and
of $\Phi_k^{-1}$ in Remark~\ref{r.coortilde},
we get 
\begin{equation*}
\begin{split}
\overline{\ell}^1_{(s,a,{\bar{\upsilon}_k(\mu)})}(t)=\,\,&c_1\,\varsigma_2\,\varsigma^{-1}_5\,\sigma_Q^{n_k}\,\sigma_P^{m_k} s  
+\frac{\beta_2(b_2+b_3+b_4)}{\sqrt{2}}\,\sigma_Q^{n_k}\,\sigma_P^{m_k}\, t
\\
&+
\,\varsigma_2\,\varsigma^{-1}_5\,\sigma_Q^{n_k}\,\sigma_P^{m_k} H_3(s,t+a,t-a)
-\lambda^{m_k}_P\,\sigma_Q^{n_k}\,\sigma_P^{m_k}\,\varsigma_2\,\varsigma^{-1}_5\,c_1,
\\
\overline{\ell}^2_{(s,a,{\bar{\upsilon}_k(\mu)})}(t)=\,\,&
b_1\,\varsigma_2\, 
\sigma_Q^{2n_k}\,\sigma_P^{2m_k}
s
+
\varsigma_2\,(b_2+b_3-b_4)\,\sigma_Q^{2n_k}\,\sigma_P^{2m_k} a^2\\
&+
\frac{\beta^2_2(b_2+b_3+b_4)^2}{2}\sigma_Q^{2n_k}\,\sigma_P^{2m_k}\,t^2 
+ 
2(b_2-b_3)\,\varsigma_2\,\sigma_Q^{2n_k}\,\sigma_P^{2m_k}\,a\,t\\
&
+\varsigma_2\,
\sigma_Q^{2n_k}\,\sigma_P^{2m_k}\,H_2(s,t+a,t-a)
+\mu -\lambda^{m_k}_P\,\sigma_Q^{2n_k}\,\sigma_P^{2m_k}\,\varsigma_2\,b_1.
\end{split}
\end{equation*}

Recalling the re-scaling maps
$\widehat{s}_k, \widehat{a}_k, \widehat{t}_k$,
in
\eqref{e.rescaling}
and performing the corresponding substitutions, a straightforward calculation implies that
\[
\widehat \ell_{(s,a,{\bar{\upsilon}_k(\mu)})} (t)\ \to 
(t, t^2+\alpha ({a}) \,t+\beta (\mu , {s},{a})),
\]
where  $\alpha$ and $\beta$ are as in \eqref{e.alphabeta}. This  ends the proof of the lemma.
\end{proof}

\subsection{Estimates of angles and expansion for admissible iterations}
\label{ss.anglesexpansionmanifold} 
We prove the lemma below, which is
 version of   \cite[Claim 1 in Chapter 6.4]{PalTak:93} in our context. 
 To state this lemma
 consider the projection
$\pi_{2,3}\colon \mathbb{R}^3 \to \mathbb{R}^2$ defined by $\pi_{2,3}(x,y,z)=(y,z)$ and
recall the definitions of the foliation $\mathcal{D}_{\bar {\upsilon}_{k}(\mu)}$ of $U_{\widetilde{Y}}$ in \eqref{e.curvelsa}
and of the set $U_{\bar {\upsilon}_{k}(\mu)}$ of points with  an admissible itinerary as in
\eqref{e.baltazar}.

  \begin{lemma}\label{l.maiNiam} 
  There is a constant $C>0$ such that for every $\mu \in (-10,-9)$ and every sufficiently large $k$ 
 the following holds:
 
Consider $K>0$, a $\bar {\upsilon}_{k}(\mu)$-admissible point  $w\in U_{\bar {\upsilon}_{k}(\mu)}$,  and a vector 
$v
\in T_{w}M
$ 
such that  
\begin{equation}
\label{e.angle}
\mathrm{angle}\big(v,\mathcal{F}^{\rs}_{Q, \bar {\upsilon}_{k}(\mu)} \big)\geqslant K \, \sigma_Q^{-n_k}\sigma_P^{-m_k}.
\end{equation}
Then
\begin{enumerate}
\item
$\Vert  \pi_{2,3} \big( D_w(f_{\bar {\upsilon}_{k}(\mu)}^{N_2+m_k+N_1+n_k}) (v) \big)\Vert \geqslant C\, K\, \Vert v \Vert$,
\item
$\mathrm{angle}\big(D_w(f_{\bar {\upsilon}_{k}(\mu)}^
{N_2+m_k+N_1+n_k})(v) ,\mathcal{F}^{\ru}_{P,\bar {\upsilon}_{k}(\mu)} \big) = O(\lambda_P^{m_k}\sigma_Q^{n_k})\to 0$ as $k\to \infty$, and
\item
$\mathrm{angle} \big(D_w(f_{\bar {\upsilon}_{k}(\mu)}^
{N_2+m_k+N_1+n_k})(v),\mathcal{D}_{\bar {\upsilon}_{k}(\mu)} \big)\to 0, \quad
\mbox{as $k\to\infty$}.
$
\end{enumerate}
 \end{lemma}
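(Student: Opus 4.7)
The plan is to follow the tangent vector $v$ along the four segments of the $(\ve,\bar\upsilon_k(\mu),\rho(\ve))$-admissible itinerary in \eqref{e.baltazar}---the $n_k$ iterates in $U_Q$, the linear transition $A$ of \eqref{e.Aisoftheform}, the $m_k$ iterates in $U_P$, and the nonlinear transition through $B$ of \eqref{e.Bisoftheform}---and to extract the three estimates by combining the linearising normal forms with the arithmetic choice of the adjusting arguments $\alpha_k(\varphi_P)$ and $\beta_k(\varphi_Q)$ in \eqref{e.alphakbetak}. In the chart at $Q$ the unstable direction is $\textbf{e}_2$ and the complex-conjugate stable eigenspace is the $xz$-plane, so the leaves of $\mathcal{F}^{\rs}_{Q,\bar\upsilon_k(\mu)}$ are the planes parallel to $xz$ and hypothesis \eqref{e.angle} translates into $|v_y|\gtrsim K\sigma_Q^{-n_k}\sigma_P^{-m_k}\|v\|$.

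First I would iterate $v$ inside $U_Q$: after $n_k$ steps the $y$-component grows by $\sigma_Q^{n_k}$ while the $xz$-components contract by $\lambda_Q^{n_k}$. Applying $A$ (whose action on $\textbf{e}_2$ is $(\alpha_2,\beta_2,0)$ by \eqref{e.Aisoftheform}) produces, at $\widetilde X$, a vector whose component in the unstable plane $yz$ at $P$ has size $\gtrsim K\beta_2\sigma_P^{-m_k}\|v\|$, with $x$- and $z$-components of orders $\alpha_2\sigma_Q^{n_k}\|v\|$ and $\gamma_3\lambda_Q^{n_k}\|v\|$, respectively. The next $m_k$ iterates in $U_P$ multiply the $x$-component by $\lambda_P^{m_k}$ and the $yz$-component by $\sigma_P^{m_k}$ composed with a rotation of angle $2\pi m_k\varphi_{P,k}$; the definition of $\alpha_k(\varphi_P)$ is tailored precisely so that this rotation tends to $\pi/4$ modulo $2\pi$, whence at $f^{m_k+N_1+n_k}_{\bar\upsilon_k(\mu)}(w)\in U_Y$ the dominant part of the vector is asymptotic to $K\beta_2\|v\|\cdot\tfrac{1}{\sqrt2}(\textbf{e}_2+\textbf{e}_3)$, while its $x$-component has size $O(\lambda_P^{m_k}\sigma_Q^{n_k})\|v\|$. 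Since $\mathcal{F}^{\ru}_{P,\bar\upsilon_k(\mu)}$ at $U_Y$ is the foliation by planes parallel to $yz$ and at $U_{\widetilde Y}$ it is its push-forward by $Df^{N_2}_{\bar\upsilon_k(\mu)}$, conclusion (2) follows once we apply this bounded linear factor.

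For the last step I would apply $Df^{N_2}_{\bar\upsilon_k(\mu)}$, which is essentially $DB(Y)$ plus a small perturbation. Using $c_2=c_3$ from \eqref{e.ctes1sem2}, a direct computation gives
\[
DB(Y)(\textbf{e}_2+\textbf{e}_3)=\bigl(a_2+a_3,\,2(b_2+b_3+b_4),\,2c_2\bigr),
\]
whose $\pi_{2,3}$-projection has modulus bounded below by an explicit positive constant via \eqref{e.ctesand2}, giving (1) with a constant $C>0$ depending only on $\beta_2$ and the relevant coefficients of $B$. For (3), I would observe that $\tfrac{1}{\sqrt2}(\textbf{e}_2+\textbf{e}_3)$ is precisely the tangent direction to the leaves $\ell_{(s,a)}$ of $\mathcal{D}$ in \eqref{e.Ele} at $Y$, so its image under $Df^{N_2}_{\bar\upsilon_k(\mu)}$ is tangent, at the endpoint of the itinerary, to the leaf of $\mathcal{D}_{\bar\upsilon_k(\mu)}$ defined in \eqref{e.curvelsa} through that point. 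The discrepancy with the actual vector is controlled by the errors from the two rotations, by Remark~\ref{r.convergenceofconvergents}, and by the sojourn bound \eqref{e.limitofsojourn}, and tends to $0$ as $k\to\infty$.

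The hard part will be the bookkeeping in the third step, namely showing that the contributions of the initial $xz$-components of $v$ (which contract only by $\lambda_Q^{n_k}$ and can be rotated into any direction by the $yz$-rotation at $P$) are indeed dominated by the main term aligned with $\tfrac1{\sqrt2}(\textbf{e}_2+\textbf{e}_3)$. Without the arithmetic choices in \eqref{e.alphakbetak} the main term could cancel, so the proof ultimately rests on the synchronisation of the two rotations together with the bound \eqref{e.espectroparazero} and the sojourn relation $\sigma_P^{m_k}\lambda_Q^{n_k}\to\tau^{-1}\xi$. These estimates are morally the tangent-space versions of those carried out on the return maps themselves in the proof of Proposition~\ref{p.diazperez}, and I would import them from the explicit computations collected in the appendix.
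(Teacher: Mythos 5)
Your overall strategy coincides with the paper's proof: push the tangent vector through the four legs of the admissible itinerary using the linearisations and the derivatives of the transitions (the paper's \eqref{e.DTrans12}, \eqref{e.unaformula}, \eqref{e.componentesvuelta}), translate hypothesis \eqref{e.angle} into $|v_y|\gtrsim K\sigma_Q^{-n_k}\sigma_P^{-m_k}\Vert v\Vert$, use the adjusting arguments so that the rotation in $U_P$ sends the expanded $y_Q$-direction onto the diagonal $\tfrac1{\sqrt2}(\textbf{e}_2+\textbf{e}_3)$, and get (2) and (3) by push-forward of the foliations under $f^{N_2}_{\bar\upsilon_k(\mu)}$ together with \eqref{e.espectroparazero} and $\sigma_P^{m_k}\lambda_Q^{n_k}\to\tau^{-1}\xi$. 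The $O(1/K)$ bookkeeping you flag for the residual $xz$-contributions is exactly what the paper handles in \eqref{e.angulosdiagonales}.

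There is, however, a concrete error in the last leg. The transition is $\mathfrak{T}_2(Y+W)=\widetilde Y+B(W)+H(W)$, see \eqref{e.transition2}, so the relevant Jacobian is $DB$ evaluated at the displacement $W$ from $Y$; for an admissible orbit this displacement is $O(\lambda_P^{m_k})+O(\sigma_P^{-m_k}\sigma_Q^{-n_k})\to\mathbf 0$. It is \emph{not} the Jacobian of the formula \eqref{e.Bisoftheform} evaluated at the point $(0,1,1)$, which is what your identity $DB(Y)(\textbf{e}_2+\textbf{e}_3)=\big(a_2+a_3,\,2(b_2+b_3+b_4),\,2c_2\big)$ amounts to. At $W=\mathbf 0$ the second row of $DB$ is $(b_1,0,0)$: the derivative of $\mathfrak{T}_2$ at $Y$ maps the unstable plane $\mathrm{span}(\textbf{e}_2,\textbf{e}_3)$ of $P$ into the $x_Qz_Q$-plane $T_{\widetilde Y}W^{\rs}_{\mathrm{loc}}(Q,f)$ — this is precisely the heterodimensional tangency — so the correct image of the diagonal direction is $(a_2+a_3,\,0,\,c_2+c_3)$ up to terms that vanish with $k$; the coefficients $b_2,b_3,b_4$ enter only at second order (they produce the parabolas of Lemma~\ref{l.hankook} and the quadratic term of the limit family, not the first derivative at the tangency). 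Consequently your justification of item (1) via \eqref{e.ctesand2} fails: with the correct derivative the $y_Q$-component of the image vector is only $O(\Vert v\Vert)$, and the lower bound for $\pi_{2,3}$ must come from the $z_Q$-component, i.e.\ from $c_2+c_3\neq 0$, which follows from $c_2=c_3$ in \eqref{e.ctes1sem2} together with $b_1(a_2c_3-a_3c_2)\neq0$; this is how the paper obtains the constant $C=\sqrt2\,|\beta_2|\,|c_2+c_3|$ in \eqref{e.otramas}--\eqref{e.item1}. The fix is local and the rest of your outline then goes through as in the paper.
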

 
We begin with some preliminary estimates.
For $R=P,Q$, consider the  coordinate vector fields   
 $
\Big\{\frac{\partial}{\partial\,x_R},\quad 
\frac{\partial}{\partial\,y_R},\quad 
\frac{\partial}{\partial\,z_R}\Big\},
$
defined on the neighbourhood $U_R$ and tangent to the corresponding foliations 
$\mathcal{F}^{\ast}_{R, \bar\upsilon_k (\mu)}$, $\ast=\ru,\rs$. The derivatives of the transition maps
 $\mathfrak{T}_1= f^{N_1}$ 
in  \eqref{e.Aisoftheform} and 
 $\mathfrak{T}_2= f^{N_2}$ 
in
 \eqref{e.Bisoftheform} satisfy
 \begin{equation}
\label{e.DTrans12}
\begin{split}
D \mathfrak{T}_1 \Big(\frac{\partial}{\partial\,x_Q}\Big)
&=
\alpha_1 \frac{ \partial}{\partial\,x_P},\\
D \mathfrak{T}_1  \Big(\frac{\partial}{\partial\,y_Q}\Big)&=
\alpha_2\frac{\partial}{\partial\,x_P}+
\beta_2\frac{\partial}{\partial\,y_P},
\\
D \mathfrak{T}_1  \Big(\frac{\partial}{\partial\,z_Q}\Big)&=
\alpha_3\frac{\partial}{\partial\,x_P}+
\gamma_3\frac{\partial}{\partial\,z_P},
\\
D \mathfrak{T}_2 \Big(\frac{\partial}{\partial\,x_P}\Big)
&=
a_1\frac{\partial}{\partial\,x_Q}+
b_1\frac{\partial}{\partial\,y_Q}+
c_1\frac{\partial}{\partial\,z_Q},
\\
D \mathfrak{T}_2 \Big(\frac{\partial}{\partial\,y_P}\Big)&=
a_2\frac{\partial}{\partial\,x_Q}+
c_2\frac{\partial}{\partial\,z_Q},
\\
D \mathfrak{T}_2 \Big(\frac{\partial}{\partial\,z_P}\Big)&=
a_3\frac{\partial}{\partial\,x_Q}+
c_3\frac{\partial}{\partial\,z_Q}.
\end{split}
\end{equation}

Given a point  $w\in U_{\bar {\upsilon}_{k}(\mu)}$ and
a
vector
\begin{equation}
\label{e.vasin}
v=v_x\,\frac{\partial}{\partial\,x_Q}+
v_y\,\frac{\partial}{\partial\,y_Q}+
v_z\,\frac{\partial}{\partial\,z_Q},
\end{equation}
we give the explicit expression of $D_w(f_{\bar {\upsilon}_{k}(\mu)}^{N_2+m_k+N_1+n_k})(v)$. It follows the sketch of the step by step calculations of this derivative,  for details  see~\cite[Section 7]{DiaPer:19}.

Recalling the linearising coordinates of $f$ at $Q$ in \eqref{e.linear-local} and
the definitions of  $\mathfrak{c}_k, \mathfrak{s}_k$ in
\eqref{e.fraksequences}, we have
$$
D_w(f_{\bar {\upsilon}_{k}(\mu)}^{n_k})(v)=\lambda_Q^{n_k}(\mathfrak{c}_k-\mathfrak{s}_k)\, v_x\, \frac{\partial}{\partial x_Q}
+
\sigma_Q^{n_k}\, v_y \, \frac{\partial}{\partial y_Q}
+
\lambda_Q^{n_k}(\mathfrak{c}_k+\mathfrak{s}_k)\, v_z\, \frac{\partial}{\partial z_Q}.
$$
Write
$$
D_w(f_{\bar {\upsilon}_{k}(\mu)}^{N_1+n_k})(v)  =
v_{x, {\bar {\upsilon}_{k}(\mu)}} \, \frac{\partial}{\partial x_P}
+v_{x,{\bar {\upsilon}_{k}(\mu)}}\, \frac{\partial}{\partial y_P}
+v_{z,{\bar {\upsilon}_{k}(\mu)}} \, \frac{\partial}{\partial z_P}.
$$
Using \eqref{e.DTrans12} we have that
\begin{equation}
\label{e.unaformula}
\begin{split}
v_{x, {\bar {\upsilon}_{k}(\mu)}} &=\lambda_Q^{n_k}(\mathfrak{c}_k-\mathfrak{s}_k)\, v_x\, \alpha_1
+
\sigma_Q^{n_k}\, v_y\, \alpha_2
+
\lambda_Q^{n_k}(\mathfrak{c}_k-\mathfrak{s}_k)\, v_z\,\alpha_3
,\\
v_{y, {\bar {\upsilon}_{k}(\mu)}} &=  \sigma_Q^{n_k}\, v_y \, \beta_ 2,\\
v_{z, {\bar {\upsilon}_{k}(\mu)}} &= \lambda_Q^{n_k}(\mathfrak{c}_k+\mathfrak{s}_k)\, v_z\,\gamma_3.
\end{split}
\end{equation}
Write
$$
D_w(f_{\bar {\upsilon}_{k}(\mu)}^{m_k+N_1+n_k})(v)  =
\widetilde v_{x, {\bar {\upsilon}_{k}(\mu)}} \, \frac{\partial}{\partial x_P}
+\widetilde v_{y,{\bar {\upsilon}_{k}(\mu)}}\, \frac{\partial}{\partial y_P}
+\widetilde v_{z,{\bar {\upsilon}_{k}(\mu)}} \, \frac{\partial}{\partial z_P}.
$$
Using \eqref{e.unaformula}, the linearising coordinates of $f$ at $P$ in \eqref{e.linear-local}, and
the definitions of  $\tilde{\mathfrak{c}}_k, \tilde{\mathfrak{s}}_k$ in
\eqref{e.fraksequences}, we have that 
\begin{equation}
\label{e.componentesvuelta}
\begin{split}
\widetilde v_{x,{\bar {\upsilon}_{k}(\mu)}}
&=\lambda_P^{m_k}
\big(
\lambda_Q^{n_k}(\mathfrak{c}_k-\mathfrak{s}_k)\, v_x\, \alpha_1
+
\sigma_Q^{n_k}\, v_y\, \alpha_2
+
\lambda_Q^{n_k}(\mathfrak{c}_k-\mathfrak{s}_k)\, v_z\,\alpha_3
\big),\\
\widetilde v_{y,{\bar {\upsilon}_{k}(\mu)}}&=
\sigma_P^{m_k}\, \sigma_Q^{n_k}\,
\tilde{\mathfrak{c}}_k\, v_y\,\beta_2  
-
\sigma_P^{m_k}\, \lambda_Q^{n_k}\, 
\tilde{\mathfrak{s}}_k\, 
(\mathfrak{c}_k+\mathfrak{s}_k)\,
v_z\, \gamma_3,\\
\widetilde v_{z,{\bar {\upsilon}_{k}(\mu)}}&=
\sigma_P^{m_k}\, \sigma_Q^{n_k}\,
\tilde{\mathfrak{s}}_k\, v_y\,\beta_2
+
\sigma_P^{m_k}\,\lambda_Q^{n_k}\,
\tilde{\mathfrak{c}}_k\, 
(\mathfrak{c}_k+\mathfrak{s}_k)\,
v_z\, \gamma_3.
\end{split}
\end{equation}
Finally, we write
$$
D_w(f_{\bar {\upsilon}_{k}(\mu)}^{N_2+m_k+N_1+n_k})(v)  =
\widehat v_{x, {\bar {\upsilon}_{k}(\mu)}} \, \frac{\partial}{\partial x_Q}
+\widehat v_{x,{\bar {\upsilon}_{k}(\mu)}}\, \frac{\partial}{\partial y_Q}
+\widehat v_{z,{\bar {\upsilon}_{k}(\mu)}} \, \frac{\partial}{\partial z_Q}.
$$
Using \eqref{e.DTrans12} and \eqref{e.componentesvuelta}
we get
\begin{equation}
\label{e.componentesvuelta1}
\begin{split}
\widehat v_{x,{\bar {\upsilon}_{k}(\mu)}}
&=
a_1\,\widetilde v_{x,{\bar {\upsilon}_{k}(\mu)}}
+
a_2\,\widetilde v_{y,{\bar {\upsilon}_{k}(\mu)}}
+
a_3\,\widetilde v_{z,{\bar {\upsilon}_{k}(\mu)}},
\\
\widehat v_{y,{\bar {\upsilon}_{k}(\mu)}}&=
b_1\, \widetilde v_{x,{\bar {\upsilon}_{k}(\mu)}},
\\
\widehat v_{z,{\bar {\upsilon}_{k}(\mu)}}&=
c_1\,\widetilde v_{x,{\bar {\upsilon}_{k}(\mu)}}
+
c_2\,\widetilde v_{y,{\bar {\upsilon}_{k}(\mu)}}
+
c_3\,\widetilde v_{z,{\bar {\upsilon}_{k}(\mu)}}.
\end{split}
\end{equation}

 \begin{proof}[Proof of Lemma~\ref{l.maiNiam}] 
Take an unitary vector $v$, write it as in \eqref{e.vasin}, and
 let $\alpha_{k,\mu} \eqdef
\mathrm{angle}\big(v,\mathcal{F}^{\rs}_{Q, \bar {\upsilon}_{k}(\mu)} \big)$. Note that by hypothesis \eqref{e.angle}
\begin{equation}
\label{e.coordinatevy}
|v_y|=\sin(\alpha_{k,\mu})\geqslant \sin(K\, \sigma_Q^{-n_k}\, \sigma_P^{-m_k}) \approx K\, \sigma_Q^{-n_k}\sigma_P^{-m_k}.
\end{equation}

To prove item (1), recall 
the definitions of
$\widehat v_{z,{\bar {\upsilon}_{k}(\mu)}}$ in  \eqref{e.componentesvuelta1} and 
 of $\widetilde v_{x,{\bar {\upsilon}_{k}(\mu)}}$, $\widetilde v_{y,{\bar {\upsilon}_{k}(\mu)}}$, and  $\widetilde v_{z,{\bar {\upsilon}_{k}(\mu)}}$ 
in  \eqref{e.componentesvuelta} and note that
$$
\dfrac{\widehat v_{z,{\bar {\upsilon}_{k}(\mu)}}}
{ \sigma_P^{m_k} \, \sigma_{Q}^{n_k} }=
\dfrac{c_1\,\widetilde v_{x,{\bar {\upsilon}_{k}(\mu)}}}
{ \sigma_P^{m_k} \, \sigma_{Q}^{n_k} }+
\dfrac{c_2\,\widetilde v_{y,{\bar {\upsilon}_{k}(\mu)}}}
{ \sigma_P^{m_k} \, \sigma_{Q}^{n_k} }+
\dfrac{c_3\,\tilde v_{z,{\bar {\upsilon}_{k}(\mu)}}}
{ \sigma_P^{m_k} \, \sigma_{Q}^{n_k} }.
$$
Recalling that $\sigma_P,\sigma_Q>1$,
$\sigma_P^{m_k}\,\lambda_Q^{n_k}\to \tau^{-1} \xi$ (see \eqref{e.limitofsojourn}) and 
$\tilde{\mathfrak{c}}_k,\,\tilde{\mathfrak{s}}_k\to 1/\sqrt{2}$, 
$\mathfrak{c}_k\to 0,\,\mathfrak{s}_k\to 1$
(see Remark~\ref{r.convergenceofconvergents}), we get
$$
\dfrac{\,\widetilde v_{x,{\bar {\upsilon}_{k}(\mu)}}}
{ \sigma_P^{m_k} \, \sigma_{Q}^{n_k} }\to 0,
\qquad
\dfrac{\,\widetilde v_{y,{\bar {\upsilon}_{k}(\mu)}}}
{ \sigma_P^{m_k} \, \sigma_{Q}^{n_k} }\to \frac{\beta_2\,v_y}{\sqrt{2}},
\qquad
\dfrac{\,\widetilde v_{z,{\bar {\upsilon}_{k}(\mu)}}
}
{ \sigma_P^{m_k} \, \sigma_{Q}^{n_k}}\to \frac{\beta_2\,v_y}{\sqrt{2}}.
$$
Thus, for every large enough $k$, we get
\begin{equation}
\label{e.otramas}
|\widehat v_{z,{\bar {\upsilon}_{k}(\mu)}}
| \approx \sqrt{2}\,\sigma_P^{m_k} \, \sigma_{Q}^{n_k}\, | c_2+c_3|\,|\beta_2|\,|v_y|.
\end{equation}
Note that  $c_2+c_3\neq 0$, see \eqref{e.ctes1sem2} and
\eqref{e.Bisoftheform}, and $\beta_2 \ne 0$, see \eqref{e.Aisoftheform}.
Finally, from~\eqref{e.coordinatevy} and \eqref{e.otramas} it follows
\begin{equation}
\label{e.item1}
\begin{split}
\Vert \pi_{2,3}(D_w(f_{\bar {\upsilon}_{k}(\mu)}^{N_2+m_k+N_1+n_k})(v))
\Vert 
&\geqslant
|\widehat v_{z,{\bar {\upsilon}_{k}(\mu)}}|
\\
&\approx \sqrt{2}\,\sigma_P^{m_k} \, \sigma_{Q}^{n_k}\, | c_2+c_3|\,|\beta_2|\,|v_y|
\\
&\geqslant C \, K,
\end{split}
\end{equation}
 where $C=\sqrt{2}\,|\beta_2|\, | c_2+c_3|$. This proves the first item in the lemma.

To prove item (2), 
let $\beta_{k,\mu}$ be the angle between the vector 
$D_w(f_{\bar {\upsilon}_{k}(\mu)}^{m_k+N_1+n_k})(v)$ and the unstable foliation 
$\mathcal{F}^u_{P,\bar {\upsilon}_{k}(\mu)}$ (tangent to $\frac{\partial}{\partial y_P}$ and $\frac{\partial}{\partial z_P}$). Then
$$
\sin(\beta_{k,\mu})=
\frac{
| 
\widetilde v_{x,{\bar {\upsilon}_{k}(\mu)}}
|}
{
\Vert D_w(f_{\bar {\upsilon}_{k}(\mu)}^{m_k+N_1+n_k})(v) \Vert }
\leqslant 
\frac{
| 
\widetilde v_{x,{\bar {\upsilon}_{k}(\mu)}}
|}
{
\Vert \pi_{2,3}(D_w(f_{\bar {\upsilon}_{k}(\mu)}^{m_k+N_1+n_k})(v)) \Vert }
\lesssim
\frac{
 \lambda_P^{m_k} 
\sigma_Q^{n_k}
}
{C \, K},
$$
where in the last inequality we use \eqref{e.item1} and that $\widetilde v_{x,{\bar {\upsilon}_{k}(\mu)}}= {O} ( \lambda_P^{m_k} 
\sigma_Q^{n_k})$, see \eqref{e.componentesvuelta}.
Note that by  \eqref{e.espectroparazero}
$$
\lambda_P^{m_k}
\sigma_Q^{n_k}\leqslant \lambda_P^{m_k}
\sigma_Q^{2n_k}\sigma_P^{2m_k}\to 0.
$$
Therefore,
$$
\beta_{k,\mu} \approx \sin(\beta_{k,\mu})=O(\lambda_P^{m_k} 
\sigma_Q^{n_k})\to 0,
$$ 
proving the second item in the lemma.

To prove the last item in the lemma, recall again the definitions of $\widetilde v_{y,{\bar {\upsilon}_{k}(\mu)}}$ and  $\widetilde v_{z,{\bar {\upsilon}_{k}(\mu)}}$ 
in  \eqref{e.componentesvuelta} and
 note that
\begin{equation}
\label{e.angulosdiagonales}
\begin{split}
\frac{ |\widetilde v_{y, \bar{\upsilon}_{k}(\mu)}|}{|\widetilde v_{z,\bar{\upsilon}_{k}(\mu)}|}
&=\frac{|\sigma_P^{m_k}\, \sigma_Q^{n_k}\,
\tilde{\mathfrak{c}}_k\, v_y\,\beta_2  
-
\sigma_P^{m_k}\, \lambda_Q^{n_k}\, 
\tilde{\mathfrak{s}}_k\, 
(\mathfrak{c}_k+\mathfrak{s}_k)\,
v_z\, \gamma_3|}{|
\sigma_P^{m_k}\, \sigma_Q^{n_k}\,
\tilde{\mathfrak{s}}_k\, v_y\,\beta_2
+
\sigma_P^{m_k}\,\lambda_Q^{n_k}\,
\tilde{\mathfrak{c}}_k\, 
(\mathfrak{c}_k+\mathfrak{s}_k)\,
v_z\, \gamma_3|}
\\
&=
\frac{\Big|
\tilde{\mathfrak{c}}_k\, v_y\,\beta_2  
-
\frac{
\sigma_P^{m_k}\, \lambda_Q^{n_k}\, 
\tilde{\mathfrak{s}}_k\, 
(\mathfrak{c}_k+\mathfrak{s}_k)\,
v_z\, \gamma_3}{\sigma_P^{m_k}\, \sigma_Q^{n_k}}\Big|}
{
\Big|
\tilde{\mathfrak{s}}_k\, v_y\,\beta_2
+
\frac{
\sigma_P^{m_k}\,\lambda_Q^{n_k}\,
\tilde{\mathfrak{c}}_k\, 
(\mathfrak{c}_k+\mathfrak{s}_k)\,
v_z\, \gamma_3}{\sigma_P^{m_k}\, \sigma_Q^{n_k}}\Big|}
\to 1,
\end{split}
\end{equation}
where for the limit we use again that 
$\tilde{\mathfrak{c}}_k,\,\tilde{\mathfrak{s}}_k\to 1/\sqrt{2}$ and 
$\mathfrak{c}_k\to 0,\,\mathfrak{s}_k\to 1$.

 This implies that  the angle between $D_w(f_{\bar {\upsilon}_{k}(\mu)}^{m_k+N_1+n_k})(v)$ and 
 the diagonal foliation $\mathcal{D}$ in 
 \eqref{e.Ele} tends to $0$ as $k$ goes to infinity.  Hence,
 by the definition of $\mathcal{D}_{\bar {\upsilon}_{k}(\mu)}$, 
  the angle between $D_w(f_{\bar {\upsilon}_{k}(\mu)}^{N_2+ m_k+N_1+n_k})(v)$ 
 and
 $\mathcal{D}_{\bar {\upsilon}_{k}(\mu)}$  also tends to $0$ as $k$ goes to infinity. This ends the proof of the lemma.
\end{proof}

\subsection{Separatrices of the saddles of  blenders nearby heterodimensional tangencies}
\label{ss.separatrices}
We now study the stable and strong unstable separatrices of the  reference saddle $P^+_{\bar {\upsilon}_k(\mu)}$ of the blenders
$\Lambda_{\bar {\upsilon}_k(\mu)}$
of $F_{\bar {\upsilon}_k(\mu)}$
 in $\mathbb{R}^3$, recall Notation~\ref{n.referencesaddles}. 
Our goal are the angular estimates in Lemma~\ref{l.pirelli}.

Let us go to the details.
Consider first 
 blenders for the
endomorphisms $G_\mu$.
Take $\mu \in (-10,-9)$ and the blender $\Lambda_{\mu}=\Lambda_{\xi, \mu, \bar \eta}$ of 
$G_{\mu}$ and its reference fixed point $P^+_{\mu}=(p_\mu^+, p_\mu^+, \widetilde p_\mu^+)$ in  \eqref{e.referencesaddle}.

Denote by $\sigma_{\mu}^{\ru\ru}$  the separatrix\footnote{That is, one of the connected components of  
$W^{\ru\ru}(P^+_{\mu}) \setminus \{P^+_{\mu}\}$} of
$W^{\ru\ru}(P^+_{\mu})$
contained in $\{y\geqslant p_\mu^+\}$ and by 
 $\sigma_{\mu}^{\rs}$  the separatrix of
$W^{\rs}(P^+_{\mu})$
contained in $\{x\geqslant p_\mu^+, \, y= p_\mu^+, \, z= \widetilde p_\mu^+ \}$.
 We consider the 
curve $\widetilde \sigma_{\mu}^{\rs}\eqdef \{P^+_{\mu}\} \cup \sigma_{\mu}^{\rs}$. We now introduce the  ingredients of our construction which are depicted in Figure~\ref{fig:separatrices}.

\begin{figure}
\centering
\begin{overpic}[scale=0.08,
]{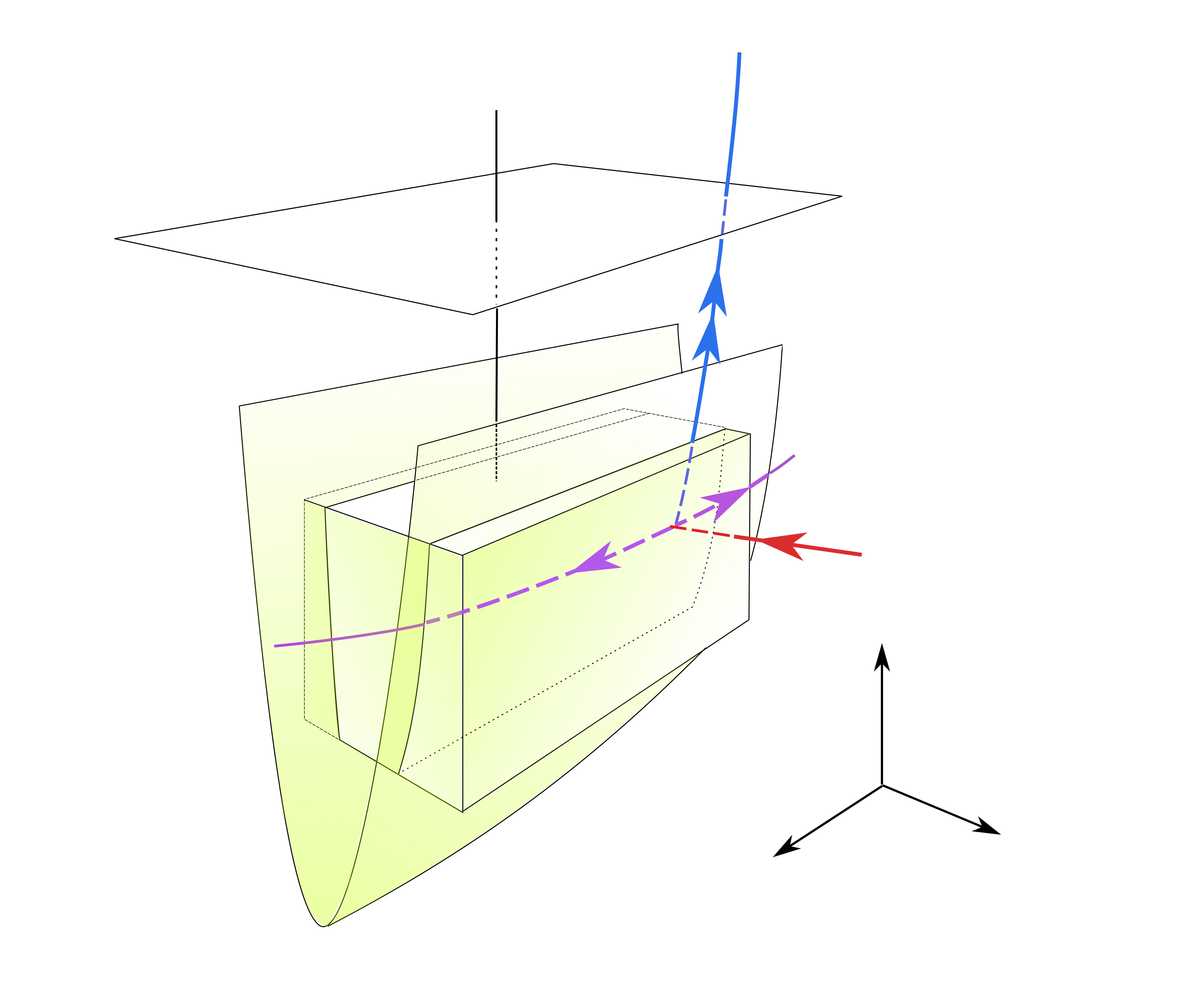}
  \put(156,95){\Large{$P^+_\mu$}}  
        \put(210,117){\Large $\sigma_{\mu}^{\rs\rs}$}     
       \put(162,115){$\bullet$}      
     \put(240,52){\Large $x$}  
            \put(220,83){\Large $y$}  
                     \put(185,47.5){\Large{$z$}}  
          \put(117,188){$\bullet$}  
                  \put(127,190){$y(K)$} 
               \put(173,191){$\bullet$}  
                         \put(185,206){$y^\star_{\mu}(K)$}     
         \put(152,225){\Large $\sigma_{\mu}^{\ru\ru}$}                           
 \end{overpic}
\caption{Separatrices of the  reference saddles of the blender} 
\label{fig:separatrices}
\end{figure}

\medskip

\noindent{\em{$\bullet$ Strong unstable separatrices.}}
Given $K>0$ define $y(K)$  as in Remark~\ref{r.bigangles} and consider  the point 
 $$
 y^\star_{\mu} (K) \eqdef \sigma_{\mu}^{\ru\ru}\cap \{ y=y(K)\}.
 $$ 
 Let 
$\sigma_{K, \mu}^{\ru\ru}$  be the segment of  $\sigma_{\mu}^{\ru\ru}$ bounded by  $y^\star_{\mu} (K)$  and  $P^+_{\mu}$
and consider its fundamental domain  
$$
\widetilde \sigma_{K, \mu}^{\ru\ru}\eqdef  \sigma_{K, \mu}^{\ru\ru} \setminus G^{-1}_{\mu}(\sigma_{K, \mu}^{\ru\ru}).
$$
After increasing $K$, if necessary, we can assume that
the angles between the curve 
$\pi_{1,2}( \widetilde \sigma_{K, \mu}^{\ru\ru} )$ and the lines parallel to the $x$-axis are  strictly bigger than $\arctan (K)$.

For  $\mu \in (-10,-9)$ and  large $k$,  we consider the continuations of the objects defined above for $G_\mu$:  
\begin{itemize}
\item
the separatrices
$\sigma^{\ru\ru}_{\bar \upsilon_k (\mu)}$ of $W^{\ru\ru} (P^+_{\bar \upsilon_k (\mu)})$, 
\item 
the points 
 $
 y^\star_{\bar \upsilon_k (\mu)} (K)\eqdef  
 \sigma^{\ru\ru}_{\bar \upsilon_k (\mu)} \cap
 \{ y=y(K)\},
 $
 \item 
 the curves  $\sigma_{K,\bar \upsilon_k (\mu)}^{\ru\ru}$ 
 of  $\sigma_{\bar \upsilon_k (\mu)}^{\ru\ru}$ 
 bounded by  $y^\star_{{\bar \upsilon_k (\mu)}} (K)$  and  $P^+_{{\bar \upsilon_k (\mu)}}$, 
 and 
 \item
 the fundamental domain 
  $$
  \widetilde \sigma_{K, \bar \upsilon_k (\mu)}^{\ru\ru}=
   \sigma_{K, \bar \upsilon_k (\mu)}^{\ru\ru} \setminus  F_{\bar {\upsilon}_{k}(\mu)}^{-1} ( \sigma_{K, \bar \upsilon_k (\mu)}^{\ru\ru}).
   $$ 
  \end{itemize} 
 By Remark~\ref{r.bigangles} and  continuity, for every $\mu\in (-10,-9)$ and large $k$, 
 the angles between the curve 
$\pi_{1,2}( \widetilde \sigma_{K,\bar \upsilon_k (\mu)}^{\ru\ru})$ and the lines parallel to the $x$-axis  are strictly bigger than $\arctan (K)$.  

\medskip

\noindent{\em{$\bullet$ Stable separatrices.}}
 For large $k$,  define the continuations 
 $\widetilde \sigma_{\bar {\upsilon}_{k}(\mu)}^{\rs}$
 of
$\widetilde \sigma_{\mu}^{\rs}$ for 
$F_{\bar {\upsilon}_{k}(\mu)}$ contained in a separatrix of 
$W^{\rs}(P^+_{\bar {\upsilon}_{k}(\mu)})$
and whose boundary contains
$P^+_{\bar {\upsilon}_{k}(\mu)}$.

\medskip

\noindent{\em{$\bullet$ Angles between the separatrices.}}
Note that, by equation~\eqref{e.conodecone}, the tangent space of $W^{\ru\ru} (P^+_\mu)$ at $P^+_\mu$ is contained in a cone field transverse to the horizontal line
(parallel to the $x$-axis).  In particular, equation \eqref{e.conodecone} implies that for every $\mu \in (-10,-9)$
the angle between $\sigma^{\ru\ru}_\mu$ and $\widetilde \sigma^{\rs}_\mu$ at
$P^+_\mu$ is bounded from below by  $\pi/7$. 
Thus, for every $k$ large enough,
the angle between 
 $\sigma_{\bar {\upsilon}_{k}(\mu)}^{\ru\ru}$ and  $ \widetilde \sigma_{\bar {\upsilon}_{k}(\mu)}^{\rs}$ at $P_{\bar {\upsilon}_{k}(\mu)}^+$ is bigger than $\pi/8$.

For the next lemma recall the definition of the curves
$\widetilde \ell_{(s,a,\bar\upsilon_k (\mu))}$ in \eqref{e.curvasltilde} and of the parameter interval
$J_k$ in \eqref{e.intervalJk}.

\begin{lemma}\label{l.pirelli}
For every $K>0$
there is $k_0=k_0(K)\geqslant 1$ such that for every $\mu \in (-10,-9)$ and every $k\geqslant k_0$ 
the angles between
\begin{itemize}
\item 
the lines parallel to the $x$-axis in $\mathbb{R}^2$ and the curves
$\pi_{1,2} (\widetilde \sigma^{\ru\ru}_{K, \bar {\upsilon}_k(\mu)})$
 are at least $\arctan(K)$,
\item
the curves
$\big( \widetilde \ell_{(s,a,\bar\upsilon_k (\mu))}\big)_{a\in [-a_P,a_P], \, s \in J_k}$
 and
$\pi_{1,2} (\widetilde \sigma^{\rs}_{\bar \upsilon_k (\mu)})$ are at least $\frac{\pi}{8}$,
\item 
the lines parallel to the $x$-axis in $\mathbb{R}^2$ and $\pi_{1,2} (
\widetilde \sigma^{\rs}_{\bar \upsilon_k (\mu)})$ are at most $K^{-1}$,
\item 
the curves
$\big( \widetilde \ell_{(s,a,\bar\upsilon_k (\mu))} \big)_{a\in [-a_P,a_P],\, s \in J_k}$ and
the curves $\pi_{1,2} (\widetilde \sigma^{\ru\ru}_{K, \bar {\upsilon}_k(\mu)})$ are at most $K^{-1}$.
\end{itemize}
\end{lemma}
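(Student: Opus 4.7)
The plan is to establish each of the four bounds first for the limit endomorphism $G_\mu=G_{\xi,\mu,\bar\eta(\xi,f)}$ acting on the blender's reference domain $\Delta$, and then to transfer them to $F_{\bar\upsilon_k(\mu)}$ for large $k$ via the $C^r$ convergence $F_{\bar\upsilon_k(\mu)}\to G_\mu$ on compacta (Proposition~\ref{p.blendersparatodos}) together with the $C^r$ convergence of the reparameterised curves $\widehat L_{(s,a,\bar\upsilon_k(\mu))}$ to the family of parabolas $y=x^2+\alpha(a)\,x+\beta(\mu,s,a)$ from Lemma~\ref{l.hankook}. Throughout one uses that $\beta(\mu,s,a)\to\mu\in(-10,-9)$ as $(s,a)\to\mathbf{0}$ and that $p^+_\mu>3.5$.

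Items (1) and (3) involve only the separatrices of $P^+$. For $G_\mu$, direct inspection of $DG_\mu(P^+_\mu)$ shows that its stable eigenvector is $(1,0,0)$, so $\pi_{1,2}(\widetilde\sigma^\rs_\mu)$ lies in a horizontal line; item (3) is then immediate in the limit, and $C^r$ continuity of the local stable manifold yields the bound $K^{-1}$ for $k$ large. Item (1) is essentially the restatement of the observation preceding the lemma: Remark~\ref{r.bigangles} applied along the forward-invariant portion $\{y\geqslant y(K)\}$ of $\sigma^{\ru\ru}_\mu$ already gives the limit bound $2\arctan(K)$, and continuity transfers this to $\arctan(K)$ for $F_{\bar\upsilon_k(\mu)}$.

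For item (2), any intersection $(x_0,p^+_\mu)$ of a limit parabola with the horizontal line $\{y=p^+_\mu\}$ (which contains $\pi_{1,2}(\widetilde\sigma^\rs_\mu)$) satisfies $x_0^2+\alpha\,x_0+(\beta-p^+_\mu)=0$. Vieta's formulas give $|2x_0+\alpha|=\sqrt{\alpha^2+4(p^+_\mu-\beta)}>\sqrt{4\cdot 12}>6$, which is an angle well above $\pi/8$ with the horizontal. Continuity of both the stable separatrix and the $\widetilde\ell$-curves then yields the claim for $F_{\bar\upsilon_k(\mu)}$.

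Item (4) is the most delicate, and is the expected main obstacle: both $\pi_{1,2}(\widetilde\sigma^{\ru\ru}_{K,\bar\upsilon_k(\mu)})$ and the limit parabolas have slope $\sim 2\sqrt{y(K)}\sim K$ at the relevant heights, so no uniform transversality argument is available. Instead, I would compute that at height $y(K)$ the parabola has slope $\approx 2\sqrt{y(K)-\beta}$, while Remark~\ref{r.bigangles} gives slope $\approx 2\sqrt{y(K)-\mu}+O(\varepsilon_{\mathrm{BH}})$ for $\pi_{1,2}(\sigma^{\ru\ru}_\mu)$ at the same height; the resulting slope difference is of order $O(1/\sqrt{y(K)})=O(1/K)$. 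The angle between two lines of slopes $m_1,m_2\sim K$ with $|m_1-m_2|=O(1/K)$ is bounded by $|m_1-m_2|/(1+m_1m_2)=O(K^{-3})$, well below $K^{-1}$. Continuity then transfers this bound to $F_{\bar\upsilon_k(\mu)}$ and completes the proof.
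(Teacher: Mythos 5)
Your overall route is the same as the paper's: the paper disposes of the first three items by the discussion preceding the lemma and of the last one by Lemma~\ref{l.hankook} together with Remark~\ref{r.bigangles}, and you use exactly these ingredients plus $C^r$ convergence to transfer to $F_{\bar\upsilon_k(\mu)}$. Your treatment of items (2) and (3) is fine and in fact more explicit than the paper's (the Vieta computation quantifies why the parabolas cross the almost horizontal stable separatrix, which sits at height $p^+_\mu>3.5$ while $\beta\approx\mu\in(-10,-9)$, steeply).

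There is, however, a concrete gap in items (1) and (4): you verify the angle bounds only on $\{y\geqslant y(K)\}$ (item (1)) or at the single height $y(K)$ (item (4)), but the curve the lemma speaks about is the fundamental domain $\widetilde\sigma^{\ru\ru}_{K,\bar\upsilon_k(\mu)}=\sigma^{\ru\ru}_{K,\bar\upsilon_k(\mu)}\setminus F_{\bar\upsilon_k(\mu)}^{-1}(\sigma^{\ru\ru}_{K,\bar\upsilon_k(\mu)})$, and this set is \emph{not} contained in $\{y\geqslant y(K)\}$: since the $y$-coordinate roughly squares under one iteration along the separatrix, the fundamental domain stretches from height $\approx\sqrt{y(K)}$ up to $y(K)$. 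At its lower end the projected slope is only of order $2\,(y(K))^{1/4}$, which is smaller than $K$ if $y(K)$ is merely the threshold furnished by Remark~\ref{r.bigangles}; so item (1) is not yet proved there, and your item (4) estimate also degenerates there, because the slope discrepancy no longer decays like $O(1/K)$ --- it contains the non-small linear coefficients $\alpha(a)$ of the limit parabolas and the $\eta$-corrections along the separatrix (where $z$ is unbounded, $z\sim\sqrt{y}$, so the correction to the slope is of order $\ve_{\BH}\,y^{1/4}$ rather than $O(\ve_{\BH})$), while the denominator $1+m_1m_2$ is only of order $\sqrt{y(K)}$ rather than $K^2$. The missing step is the one the paper hides in its ``after increasing $K$, if necessary'': one must choose $y(K)$ so large (e.g.\ apply Remark~\ref{r.bigangles} with a much larger constant before forming the fundamental domain) that the slope bound $\geqslant K$ already holds at heights of order $\sqrt{y(K)}$; with that choice all points of $\widetilde\sigma^{\ru\ru}_{K,\cdot}$ have slopes $\gtrsim K$, item (1) follows, and in item (4) the factor $1+m_1m_2\gtrsim K^2$ absorbs the bounded and slowly growing parts of the slope discrepancy, giving the stated $K^{-1}$ on the whole curve. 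Without this adjustment your argument only covers part of $\widetilde\sigma^{\ru\ru}_{K,\bar\upsilon_k(\mu)}$.
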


\begin{proof}
The first three items of the lemma follow from the discussion before the lemma.
The last item follows from Lemma~\ref{l.hankook} and  Remark~\ref{r.bigangles}.
\end{proof}

\subsection{Estimates of angles and expansion for iterations of $F_{{\bar {\upsilon}_{k}(\mu)}}$}
\label{ss.anglesexpansioninR}
By the formula of $\Phi_k$ in Remark~\ref{r.coortilde}, we have that $D_w\Phi_k$ is a diagonal matrix that does not depend on the point $w$.
Hence we will omit this dependence.
 Thus, after identifying the tangent spaces in $U_{\widetilde Y}$ 
 with $\mathbb{R}^3$, we have that
\[
\begin{split}
||D_w F_{\bar {\upsilon}_{k}(\mu)}(v)|| &= || D\Phi_k^{-1} 
 \big(D_{\Phi_k(w)} f^{N_2+m_k+N_1+n_k}_{\bar {\upsilon}_{k}(\mu)}(D {\Phi_k)}(v)  \big)||\\
&= || D_{\Phi_k(w)} f^{N_2+m_k+N_1+n_k}_{\bar {\upsilon}_{k}(\mu)}(v) ||.
\end{split}
\]
Observe also  that the angles in Lemma~\ref{l.pirelli}  are taken with respect the coordinates in the $xy$-plane in $\mathbb{R}^3$.
To get these angles in $U_{\widetilde Y}$ we need to replace each angle $\alpha$ by 
$\arctan ( \sigma_P^{-m_k}\sigma_Q^{-n_k} \tan (\alpha))$.  

We have the following consequence of Lemma~\ref{l.maiNiam}. For that recall also the definition of the foliation  
$\widetilde{\mathcal{D}}_{\bar {\upsilon}_{k}(\mu)}$ in \ref{e.curvasltilde}:

  \begin{lemma}\label{l.newmaiNiam} 
  There is a constant $C>0$ such that for every $\mu \in (-10,-9)$ and every sufficiently large $k$ 
 the following holds:
 
Take $K>0$,   a point $w \in \Phi_k^{-1} (U_{{\bar {\upsilon}_{k}(\mu)}} )$,
 and 
a  unitary vector 
$v=(v_1, v_2, v_3)\in \mathbb{R}^3
$ 
such that  
\begin{equation}
\label{e.newangle}
|v_2| \geqslant \arctan (K).
\end{equation}
Then
\begin{enumerate}
\item
$\Vert  \pi_{1,2} \big( D_w F_{\bar {\upsilon}_{k}(\mu)} (v) \big)\Vert \geqslant C\, K\, \Vert v \Vert$,
\item
$\mathrm{angle}\big(D_w F_{\bar {\upsilon}_{k}(\mu)}(v) ,\Phi_k^{-1} (\mathcal{F}^{\ru}_{P,\bar {\upsilon}_{k}(\mu)}) \big) = 
\arctan \big(O(\lambda_P^{m_k}\sigma_Q^{2n_k} \sigma_P^{m_k})\big)\to 0$ as $k\to \infty$, and
\item
$\mathrm{angle} \big(D_w F_{\bar {\upsilon}_{k}(\mu)} (v) , \widetilde{\mathcal{D}}_{\bar {\upsilon}_{k}(\mu)} \big)\to 0, \quad
\mbox{as $k\to\infty$}.
$
\end{enumerate}
 \end{lemma}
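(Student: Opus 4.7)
The plan is to deduce Lemma~\ref{l.newmaiNiam} from Lemma~\ref{l.maiNiam} by transporting the statement through the coordinate change $\Phi_k$. Since $F_{\bar\upsilon_k(\mu)}=\Phi_k^{-1}\circ f^{N_2+m_k+N_1+n_k}_{\bar\upsilon_k(\mu)}\circ\Phi_k$, I set $\tilde w=\Phi_k(w)\in U_{\bar\upsilon_k(\mu)}$ and $\tilde v=D\Phi_k\cdot v\in T_{\tilde w}M$, apply Lemma~\ref{l.maiNiam} at $\tilde w$ to $\tilde v$, and transport the conclusions back via $D\Phi_k^{-1}$. By Remark~\ref{r.coortilde}, $D\Phi_k$ is the constant linear map
\[
(v_1,v_2,v_3)\mapsto\bigl(\sigma_P^{-m_k}\sigma_Q^{-n_k}\varsigma_2^{-1}\varsigma_1\,v_3,\; \sigma_P^{-2m_k}\sigma_Q^{-2n_k}\varsigma_2^{-1}\,v_2,\; \sigma_P^{-m_k}\sigma_Q^{-n_k}\varsigma_2^{-1}\varsigma_5\,v_1\bigr)
\]
written in the $(x_Q,y_Q,z_Q)$ coordinates. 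Its key feature is that the $y$-direction is contracted by an extra factor $\sigma_P^{-m_k}\sigma_Q^{-n_k}$ relative to the $x$- and $z$-directions, so angles with the $xz$-plane shrink by exactly this factor upon applying $D\Phi_k$.

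This anisotropic scaling is precisely what matches the hypothesis of Lemma~\ref{l.maiNiam}: $\mathcal{F}^\rs_{Q,\bar\upsilon_k(\mu)}$ is tangent to the $xz$-plane at $Q$, and the assumption $|v_2|\geq\arctan K$ (which bounds $v_1^2+v_3^2$ away from $0$) gives $\|\tilde v\|\asymp\sigma_P^{-m_k}\sigma_Q^{-n_k}$ and
\[
\sin\,\mathrm{angle}(\tilde v,\mathcal{F}^\rs_{Q,\bar\upsilon_k(\mu)})=\frac{|\tilde v_y|}{\|\tilde v\|}\asymp\sigma_P^{-m_k}\sigma_Q^{-n_k}\,|v_2|\gtrsim K'\,\sigma_P^{-m_k}\sigma_Q^{-n_k},
\]
with $K'\gtrsim K$, verifying \eqref{e.angle}. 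Writing $\hat v\eqdef D_{\tilde w}f^{N_2+m_k+N_1+n_k}_{\bar\upsilon_k(\mu)}(\tilde v)$, the three conclusions of Lemma~\ref{l.maiNiam} apply to $\hat v$, and $D_wF_{\bar\upsilon_k(\mu)}(v)=D\Phi_k^{-1}(\hat v)$ has components
\[
\bigl(\sigma_P^{m_k}\sigma_Q^{n_k}\varsigma_2\varsigma_5^{-1}\hat v_z,\; \sigma_P^{2m_k}\sigma_Q^{2n_k}\varsigma_2\,\hat v_y,\; \sigma_P^{m_k}\sigma_Q^{n_k}\varsigma_2\varsigma_1^{-1}\hat v_x\bigr).
\]

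Items (1) and (2) then reduce to direct calculations. For (1), the explicit estimate \eqref{e.otramas} gives $|\hat v_z|\asymp\sigma_P^{m_k}\sigma_Q^{n_k}|\tilde v_y|\asymp\sigma_P^{-m_k}\sigma_Q^{-n_k}|v_2|$, so $|(D_wF(v))_1|\asymp|v_2|\gtrsim K$. For (2), \eqref{e.DTrans12} shows that $f^{N_2}$ sends $\partial/\partial y_P,\partial/\partial z_P$ into the $xz$-plane of $U_Q$, so the transported $\mathcal{F}^\ru_{P,\bar\upsilon_k(\mu)}$ is tangent to the $xz$-plane in $U_Q$ and its pullback under $\Phi_k^{-1}$ is tangent to the $xz$-plane in $\mathbb{R}^3$; combining \eqref{e.componentesvuelta}--\eqref{e.componentesvuelta1} yields $|\hat v_y|=|b_1\widetilde v_{x,\bar\upsilon_k(\mu)}|=O(\lambda_P^{m_k}\sigma_P^{-2m_k}\sigma_Q^{-n_k})$, so
\[
\tan\,\mathrm{angle}\bigl(D_wF(v),\Phi_k^{-1}(\mathcal{F}^\ru_{P,\bar\upsilon_k(\mu)})\bigr)=\frac{|(D_wF(v))_2|}{\|\pi_{1,3}(D_wF(v))\|}=O\bigl(\lambda_P^{m_k}\sigma_P^{m_k}\sigma_Q^{2n_k}\bigr),
\]
which tends to $0$ by \eqref{e.espectroparazero}. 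The most delicate step is item (3): by Lemma~\ref{l.maiNiam}(3), $\hat v$ and the tangent to $\mathcal{D}_{\bar\upsilon_k(\mu)}$ at the image point are asymptotically both proportional to $(a_2+a_3)\partial/\partial x_Q+(c_2+c_3)\partial/\partial z_Q$, with tiny $y$-components coming from error terms. Since $D\Phi_k^{-1}$ amplifies the $y$-direction by the extra factor $\sigma_P^{m_k}\sigma_Q^{n_k}$, one must check that this amplification does not destroy the alignment. Using \eqref{e.angulosdiagonales} together with the quantitative estimates above, the $y$-to-$(x,z)$ ratios of both $D_wF(v)$ and the tangent to $\widetilde{\mathcal{D}}_{\bar\upsilon_k(\mu)}$ remain of order $\lambda_P^{m_k}\sigma_P^{m_k}\sigma_Q^{2n_k}$, and hence both tend to zero by \eqref{e.espectroparazero}. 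This anisotropy of $D\Phi_k^{-1}$ is the main technical subtlety throughout and is exactly what accounts for the extra factor $\sigma_P^{m_k}\sigma_Q^{n_k}$ appearing in item (2) as compared with Lemma~\ref{l.maiNiam}(2).
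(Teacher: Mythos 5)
Your overall route is exactly the paper's: the paper proves this lemma as a ``direct translation'' of Lemma~\ref{l.maiNiam} through the conjugating charts $\Phi_k$, using that $D\Phi_k$ is a constant anisotropic linear map, together with \eqref{e.espectroparazero} for item (2) and \eqref{e.angulosdiagonales} for item (3); your treatment of items (1) and (2) fills in these details correctly (your use of \eqref{e.otramas} instead of the packaged conclusion of Lemma~\ref{l.maiNiam}(1) is in fact the right move, since the hypothesis only transports with a bounded constant). Two caveats, one minor and one substantive. The minor one: from $|v_2|\geqslant \arctan(K)$ with $\Vert v\Vert=1$ you can only conclude $|v_2|\gtrsim \arctan(K)$, not $|v_2|\gtrsim K$; this is harmless only because the hypothesis \eqref{e.newangle} is vacuous once $\arctan(K)>1$, so that in the nonvacuous regime $K$ is bounded and $\arctan(K)\asymp K$ — a point you should state rather than silently absorb into ``$\gtrsim$''.

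The substantive problem is your justification of item (3). You claim that the tangents to the leaves of $\widetilde{\mathcal{D}}_{\bar {\upsilon}_k(\mu)}$ have $y$-to-$(x,z)$ ratio of order $\lambda_P^{m_k}\sigma_P^{m_k}\sigma_Q^{2n_k}$. That is false at typical points of $\Delta$: by Lemma~\ref{l.hankook} and Remark~\ref{r.bigangles}, the leaves of $\widetilde{\mathcal{D}}_{\bar {\upsilon}_k(\mu)}$ are, in the $\pi_{1,2}$-projection and after the rescalings, close to the parabolas $y=x^2+\alpha(a)x+\beta$, whose slopes are of order one away from their vertices — indeed this order-one slope is precisely what the paper exploits later (the angle $\geqslant\arctan(K)$ in Lemma~\ref{l.pirelli}). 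So the premise of your argument fails for the foliation; and even if both $y$-to-$(x,z)$ ratios were small, that alone would not force the angle between the two directions to vanish, since they could still diverge inside the $xz$-plane. You correctly isolated the real difficulty — the condition number of $D\Phi_k^{-1}$ grows like $\sigma_P^{m_k}\sigma_Q^{n_k}$, so the alignment produced by \eqref{e.angulosdiagonales} in the $U_P$/$U_Q$ coordinates is not automatically preserved after renormalisation — but your resolution of it does not work as written. What is needed (and what the paper, tersely, intends by ``\eqref{e.angulosdiagonales} and the definition of $\Phi_k$'') is a quantitative comparison of $D_wF_{\bar {\upsilon}_{k}(\mu)}(v)$ with the tangent of the leaf of $\widetilde{\mathcal{D}}_{\bar {\upsilon}_k(\mu)}$ through the actual image point, keeping track of how the $O(\sigma_P^{-m_k}\sigma_Q^{-n_k})$ discrepancies in the $U_Q$-coordinates (coming from the error term in \eqref{e.angulosdiagonales} and from the $y_Q$-components created by the transition $\mathfrak{T}_2$ at points with $t,a=O(\sigma_P^{-m_k}\sigma_Q^{-n_k})$) are amplified by the extra factor $\sigma_P^{m_k}\sigma_Q^{n_k}$ in the $y$-direction of $\Phi_k^{-1}$; your present argument skips exactly this bookkeeping.
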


\begin{proof}
The first two items of the lemma are direct translations of the corresponding items of Lemma~\ref{l.maiNiam}. For the convergence to zero in the second item we use  \eqref{e.espectroparazero}.
 The third item follows from  \eqref{e.angulosdiagonales} and the definition of $\Phi_k$.
 \end{proof}

\subsection{End of the proof of Proposition~\ref{p.why}}\label{ss.S2}
Using the local coordinates in $U_Q$, fixed small $\delta>0$ consider  the neighbourhood 
$$
U_X (\delta) \eqdef [-\delta, \delta] \times [1-\delta, 1+\delta] \times [-\delta, \delta] 
$$
 of
the heteroclinic point $X=(0,1,0)$ 
and the two-disc (see Figure~\ref{fig:twodimcon})
$$
S(\delta) \eqdef  [-\delta, \delta] \times \{ {1-\delta} \} \times [-\delta, \delta] \subset  \partial U_X (\delta).
$$
For large $k>0$ and $\mu \in (-10,-9)$ define the set
$$
\widehat S_{\bar {\upsilon}_{k}(\mu)} (\delta)\eqdef  C\Big( f^{-n_k}_{\bar {\upsilon}_{k}(\mu)} (0,1-\delta,0),  f^{-n_k}_{\bar {\upsilon}_{k}(\mu)} (S(\delta)) \cap U_Q\Big),
$$
recall that $C(x,A)$ is the connected component of the set $A$ containing the point $x$.
We finally let 
$$
\widetilde S_{\bar {\upsilon}_{k}(\mu)} (\delta) \eqdef \widehat S_{\bar {\upsilon}_{k}(\mu)} (\delta) \cap U_{\widetilde Y}
\quad \mbox{and} \quad 
S_{\bar {\upsilon}_{k}(\mu)} (\delta)\eqdef  \Phi_k^{-1} (\widetilde S_{\bar {\upsilon}_{k}(\mu)} (\delta))\subset \mathbb{R}^3.
$$

\begin{lemma}
\label{l.thefinalintersection}
For every $\delta>0$ 
there is $k_0$ such that for every $k\geqslant k_0$ and every $\mu\in (-10,-9)$ it holds
$$
W^{\ru\ru}(P_{\bar {\upsilon}_{k}(\mu)}^+, F_{\bar {\upsilon}_{k}(\mu)} ) \pitchfork S_{\bar {\upsilon}_{k}(\mu)} (\delta)
\ne \emptyset.
$$
\end{lemma}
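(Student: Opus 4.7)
The strategy adapts the surface-diffeomorphism proof of \cite[Chapter~6.4, Proposition~1]{PalTak:93} to our 3D blender setting, replacing the thick horseshoes and the quadratic limit family by the blender $\Lambda_{\bar\upsilon_k(\mu)}$ and the center-unstable H\'enon family $G_\mu$. Three ingredients already at our disposal enter the argument: the angular bounds for the fundamental domain $\widetilde\sigma^{\ru\ru}_{K,\bar\upsilon_k(\mu)}$ of the strong unstable separatrix of $P^+_{\bar\upsilon_k(\mu)}$ (Lemma~\ref{l.pirelli}); the one-step expansion-and-alignment estimates of Lemma~\ref{l.newmaiNiam} (the $\mathbb{R}^3$-version of Lemma~\ref{l.maiNiam}); and the $C^r$-convergence of the transported diagonal foliation $\widetilde{\mathcal{D}}_{\bar\upsilon_k(\mu)}$ to the family of parabolas $y = x^2+\alpha(a)\,x+\beta(\mu,s,a)$ (Lemma~\ref{l.hankook}).

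First I would fix $K$ large enough, to be tuned in the course of the argument, and consider the fundamental domain $\widetilde\sigma^{\ru\ru}_{K,\bar\upsilon_k(\mu)}$. Because this curve is tangent to the strong-unstable cone $\mathcal{C}^{\ru\ru}$, its unit tangent vectors satisfy $|v_2|\gesp 2/\sqrt{5}$, so the hypothesis \eqref{e.newangle} of Lemma~\ref{l.newmaiNiam} is fulfilled. For $k$ large the fundamental domain lies in the admissibility region $\Phi_k^{-1}(U_{\bar\upsilon_k(\mu)})$, and Lemma~\ref{l.newmaiNiam} yields that the image $F_{\bar\upsilon_k(\mu)}(\widetilde\sigma^{\ru\ru}_{K,\bar\upsilon_k(\mu)})$ is a $C^r$-arc whose $\pi_{1,2}$-projection has length of order $K$ and whose tangent direction becomes almost parallel to the leaves of $\widetilde{\mathcal{D}}_{\bar\upsilon_k(\mu)}$. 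Combined with Lemma~\ref{l.hankook}, this arc is $C^r$-close on compact subsets of $\mathbb{R}^2$ to a parabola of the limit family, which is unbounded in the $y$-direction. The plan is then to iterate $F_{\bar\upsilon_k(\mu)}$ inductively: at each step the images remain tangent to $\mathcal{C}^{\ru\ru}$ thanks to the $DF$-invariance of the strong-unstable cone inherited from the blender structure of Theorem~\ref{t.BH-DKS}, so Lemma~\ref{l.newmaiNiam} keeps applying and the length of the image in $\pi_{1,2}$ grows geometrically.

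A direct computation via Remark~\ref{r.coortilde} locates $S_{\bar\upsilon_k(\mu)}(\delta)$ as a 2-disc at renormalised height $\tilde y \approx -\varsigma_2\delta\,\sigma_Q^{n_k}\sigma_P^{2m_k}$, whose tangent planes are $C^r$-close to the $\Phi_k^{-1}$-image of $W^{\rs}_{\loc}(Q,f)$ and hence nearly parallel to the $\tilde x\tilde z$-plane. After a finite (but $k$-dependent) number of iterations, the strong-unstable arc constructed above crosses this $\tilde y$-level, giving a geometric intersection; transversality follows because the arc's tangent direction, aligned with parabolas opening in the $\tilde y$-direction, complements the nearly horizontal tangent plane of $S_{\bar\upsilon_k(\mu)}(\delta)$ in $\mathbb{R}^3$. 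The hard part, exactly as in the surface case of \cite{PalTak:93}, will be propagating the angular alignment of Lemma~\ref{l.newmaiNiam} through the entire chain of iterations while simultaneously preserving admissibility: the number of iterations needed depends on $k$ via the renormalisation scales, and the control hinges on the spectral condition~\eqref{e.espectroparazero} together with the uniform cone-invariance from Theorem~\ref{t.BH-DKS}.
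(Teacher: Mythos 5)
Your overall strategy is the paper's: apply Lemma~\ref{l.newmaiNiam} to the outer fundamental domain $\widetilde\sigma^{\ru\ru}_{K,\bar\upsilon_k(\mu)}$, iterate to get geometric growth of length inside the separatrix $\sigma^{\ru\ru}_{\bar\upsilon_k(\mu)}$, and conclude that the separatrix must cross the far-away disc $S_{\bar\upsilon_k(\mu)}(\delta)$ (your computation of its renormalised height $\tilde y\approx -\varsigma_2\,\delta\,\sigma_Q^{n_k}\sigma_P^{2m_k}$ via Remark~\ref{r.coortilde} is consistent with the construction). However, there is a genuine gap exactly at the point you flag as ``the hard part'': the verification of the angular hypothesis \eqref{e.newangle}. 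You claim it follows from tangency to the cone $\mathcal{C}^{\ru\ru}$, which only gives $|v_2|\gesp 2/\sqrt{5}$, i.e.\ a \emph{fixed} angle (essentially $\arctan 2$). This is literally incompatible with \eqref{e.newangle} once $K>\tan(2/\sqrt{5})$, and, more importantly, it destroys the mechanism of the proof: the expansion furnished by item (1) of Lemma~\ref{l.newmaiNiam} is by the factor $C\,K$ with $C=\sqrt{2}\,|\beta_2|\,|c_2+c_3|$ fixed by the cycle and possibly very small, so one must be free to take $K$ with $C\,K=\tau>1$. The cone field can never supply that $K$; it is supplied by Lemma~\ref{l.pirelli}, which you list as an ingredient but never invoke where it matters. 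The whole point of choosing the fundamental domain adjacent to $y^\star_{\bar\upsilon_k(\mu)}(K)$ (Section~\ref{ss.separatrices}, Remark~\ref{r.bigangles}) is that there the separatrix tracks a parabola of the limit family far from its vertex, hence its $\pi_{1,2}$-projection is steeper than $\arctan(K)$ for the prescribed, arbitrarily large $K$ -- a slope the cone alone cannot see.

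The same issue reappears in your inductive step: ``$DF$-invariance of the strong-unstable cone'' only returns the weak angle $\arctan 2$, so it does not allow you to reapply Lemma~\ref{l.newmaiNiam} with the large $K$ needed to keep the expansion factor $\tau>1$. The paper instead propagates the strong angular condition by construction: item (3) of Lemma~\ref{l.newmaiNiam} aligns the image curve with the transported diagonal foliation $\widetilde{\mathcal D}_{\bar\upsilon_k(\mu)}$, whose leaves (close to the parabolas of Lemma~\ref{l.hankook}) are steeper than $\arctan(K)$ in the region beyond $y(K)$ where the iterates remain; this, together with the image staying inside $\sigma^{\ru\ru}_{\bar\upsilon_k(\mu)}$, restores hypothesis \eqref{e.newangle} at every step and yields the exponential growth of length that forces the transverse intersection with $S_{\bar\upsilon_k(\mu)}(\delta)$. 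Without replacing your cone argument by this use of Lemma~\ref{l.pirelli}/Remark~\ref{r.bigangles}, both the initial expansion estimate and its propagation fail, so the proof as proposed does not close.
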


\begin{proof}
Let $C>0$ be the constant in Lemma~\ref{l.newmaiNiam}  and take large  $K>0$ with $C\,K= \tau >1$.
Applying Lemma~\ref{l.pirelli} to $K$, we get $k_0$ such that
the angles
between the lines parallel to the $x$-axis in $\mathbb{R}^2$ and $\pi_{1,2} (\widetilde \sigma^{\ru\ru}_{K, \bar {\upsilon}_k(\mu)})$
 are at least $\arctan(K)$. Thus, after increasing $k_0$, we get  the angular condition \eqref{e.newangle} in Lemma~\ref{l.newmaiNiam}.
 Hence  
 $$
 \mathrm{lenght} \big( F_{\bar {\upsilon}_{k}(\mu)} (\widetilde \sigma^{\ru\ru}_{K, \bar {\upsilon}_k(\mu)} )\big)   \geqslant \tau\,  
  \mathrm{lenght} \big(
 \widetilde \sigma^{\ru\ru}_{K, \bar {\upsilon}_k(\mu)} \big).
$$
The curve $F_{\bar {\upsilon}_{k}(\mu)} (\widetilde \sigma^{\ru\ru}_{K, \bar {\upsilon}_k(\mu)})$ is contained in the strong unstable separatrix 
$\sigma_{\bar \upsilon_k (\mu)}^{\ru\ru}$ 
of  $W^{\ru\ru} (P^+_{{\bar \upsilon_k (\mu)}})$. By construction, the curve
$F_{\bar {\upsilon}_{k}(\mu)} (\widetilde \sigma^{\ru\ru}_{K, \bar {\upsilon}_k(\mu)})$
 also satisfies the angular condition  \eqref{e.newangle}.
The proof now follows inductively:  the curves $F^n_k (\widetilde \sigma^{\ru\ru}_{K, \bar {\upsilon}_k(\mu)})$ are
contained in $\sigma_{\bar \upsilon_k (\mu)}^{\ru\ru}$  and their lengths
grow exponentially. This implies that this separatrix transversally intersects the two-disc $S_{\bar {\upsilon}_{k}(\mu)} (\delta)$.
\end{proof}

Note that,   for every sufficiently large $k$, there are defined
the continuations $Z^\pm_{\bar\upsilon_k(\mu)}=Z^\pm_{\ve, \bar\upsilon_k(\mu)}$  of the transverse homoclinic point $Z^\pm_{\ve}$ of $Q$ in
Proposition~\ref{p.Lnbis}. These points are transverse homoclinic points of
$Q$ for $f_ {\bar\upsilon_k(\mu)}$ and (in the coordinates in $U_Q$) are of the form 
$$
Z^\pm_{\bar\upsilon_k(\mu)}=(0, 1\pm \zeta^\pm_{\bar {\upsilon}_k(\mu)}, 0), \qquad
\zeta^\pm_{\bar {\upsilon}_k(\mu)} \in  (0 ,\delta).
$$
For each $k$ and $\mu$, we can also consider a disc $W^\pm_{\bar\upsilon_k(\mu)}$ in $W^\rs(Q, f_{\bar\upsilon_k(\mu)})$
centred at $Z^\pm_{\bar\upsilon_k(\mu)}$ which has uniform size and is uniformly  
transverse to 
$W^\ru_{\mathrm{loc}}(Q, f_{\bar\upsilon_k(\mu)})$, meaning that the angles between 
$W^\pm_{\bar\upsilon_k(\mu)}$ and $W^\ru_{\mathrm{loc}}(Q, f_{\bar\upsilon_k(\mu)})$
at
$Z^\pm_{\bar\upsilon_k(\mu)}$ are uniformly bounded from below.

\begin{figure}
\centering
\begin{overpic}[scale=0.1,
]{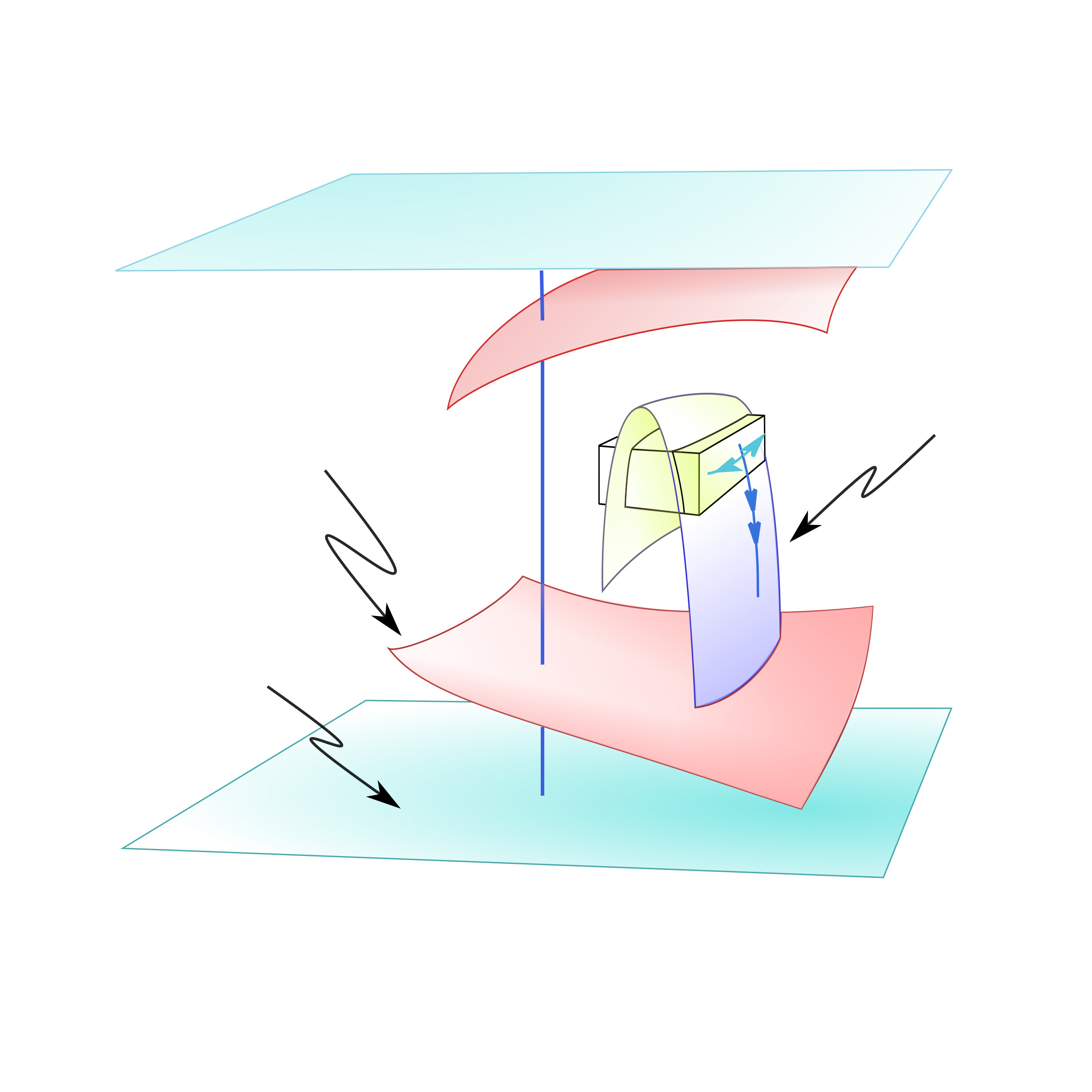}
  \put(210,163){\Large{$ \widetilde \gamma^{\ru\ru}_{\bar {\upsilon}_k(\mu)}$}}  
      \put(42,153){\Large{$W^{\rs}(Q)$}}  
        \put(121.5,95){$\bullet$}  
               \put(125,85){\Large $Z^-_{\bar\upsilon_k(\mu)}$} 
              \put(121,136){$\bullet$}  
                   \put(105,132){\Large $X$}  
             \put(32,103){\Large $S(\delta)$}           
 \end{overpic}
\caption{Two-dimensional connection between the blender and the saddle $Q$.} 
\label{fig:twodimcon}
\end{figure}

We are now ready to conclude the proof 
of Proposition~\ref{p.why}. Let $\gamma^{\ru\ru}_{\bar {\upsilon}_k(\mu)}$ be the segment of $\sigma^{\ru\ru}_{\bar {\upsilon}_k(\mu)}$
joining $P_{\bar {\upsilon}_{k}(\mu)}^+$ and $S_{\bar {\upsilon}_{k}(\mu)} (\delta)$ and consider  (see Figure~\ref{fig:twodimcon})
$$
 \widetilde \gamma^{\ru\ru}_{\bar {\upsilon}_k(\mu)} \eqdef
f^{n_k}_{\bar {\upsilon}_{k}(\mu)} (\Phi_k (\gamma^{\ru\ru}_{\bar {\upsilon}_k(\mu)})).
$$

Consider the domain 
$\Delta_k (\mu)$   of the blender $\Upsilon_{\bar {\upsilon}_k(\mu)}$ 
 in \eqref{e.DeltaKDelta}.
The calculations in  
\cite[Step A, eq. (35)]{DiaPer:19} imply that for every large $k$
the coordinates $(x,1+y,z)\in U_Q$ of the points in
$f^{n_k}_{\bar {\upsilon}_k(\mu)}(\Delta_k (\mu))$ are close to 
$X=(0,1,0)$ and they
have Landau symbols
$$
x=O({\lambda_Q}^{n_k}), \quad
y=O(\sigma_P^{-2m_k}\,{\sigma_Q}^{-n_k}), \quad 
{z}=O({\lambda_Q}^{n_k}).
$$
Hence $f^{n_k}_{\bar {\upsilon}_{k}(\mu)}  (P^+_{\bar {\upsilon}_{k}(\mu)})$ 
converges to $X$ as $k \to  \infty$.
It follows now from Lemma~\ref{l.thefinalintersection}  that
the curves $\widetilde \gamma^{\ru\ru}_{\bar {\upsilon}_k(\mu)}$ accumulate to the disc 
$$
\{0\} \times [1-\delta,1] \times \{0\} \subset W^{\ru}_{\loc} (Q, f^{n_k}_{\bar {\upsilon}_{k}(\mu)} ).
$$
This implies that for sufficiently large $k$
the curve $\widetilde \gamma^{\ru\ru}_{\bar {\upsilon}_k(\mu)}$ transversely intersects the disc $W^-_{\ve, \bar\upsilon_k(\mu)}\subset
W^{\rs}_{\loc} (Q, f_{\bar {\upsilon}_k(\mu)})$.  As the curve $\widetilde \gamma^{\ru\ru}_{\bar {\upsilon}_k(\mu)}$ is contained in the unstable manifold of the blender this ends the proof of Proposition~\ref{p.why}.

\section{Proof of Theorem~\ref{t:1}: Intersections between one-dimensional manifolds}
\label{s.1-conecc}

We now prove that the stable manifold the blender
 $\Upsilon_{\ve,k,\mu}$ of $g_{\ve,k,\mu}$   robustly intersects the unstable manifold of the saddle $Q$.

\begin{prop}\label{p.1dim} 
For every small $\ve>0$,   large $k$, and $\mu \in (-10,-9)$,
there is a $C^r$ neighbourhood $\mathcal{V}_{\ve,k,\mu}$ of 
$ g_{\ve,k,\mu}$  consisting 
of diffeomorphisms $g$
such that 
$$
W^\mathrm{s}
\big(
{\Upsilon}_g,g
\big) \cap
 W^\mathrm{u}\big(Q_g,g\big)\neq\emptyset,
$$
where
$\Upsilon_g$ and $Q_g$ are the continuations of
 $\Upsilon_{\ve,k,\mu}$ and $Q$. Moreover, this intersection can be chosen quasi-transverse.
\end{prop}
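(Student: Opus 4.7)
The strategy is to exhibit a one-dimensional arc inside $W^\ru(Q,g_{\ve,k,\mu})$ whose forward image, after applying $\Phi_k^{-1}$, is $C^r$-close to the disc
$L=\{(0,y,0):|y|<4\}$
of the superposition region of the blender of $G_{\xi,\mu,\bar\eta}$ (Remark~\ref{r.thecurveL}). Once this is done, Lemma~\ref{l.opensuperposition} puts a sub-disc of this image in the superposition region of the blender $\Lambda_{\bar\upsilon_k(\mu),\rho(\ve)}$ for every large $k$, and Lemma~\ref{l.superpositionregion} provides a quasi-transverse intersection with $W^\rs_{\loc}(\Lambda_{\bar\upsilon_k(\mu),\rho(\ve)})$. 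Translating back by $\Phi_k$ gives the desired intersection $W^\ru(Q,g_{\ve,k,\mu})\cap W^\rs(\Upsilon_{\ve,k,\mu},g_{\ve,k,\mu})\neq\emptyset$. The open neighbourhood $\mathcal{V}_{\ve,k,\mu}$ is then produced by a direct application of Lemma~\ref{l.opensuperposition}: any disc $C^1$-close to the image curve still lies in the superposition region of the continuation of the blender, and so it meets the corresponding local stable manifold quasi-transversely.

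The candidate arc will be the segment $L^{\ru}_{1,\ve}(\delta)\subset W^{\ru}(Q,f_\ve)$ through $X_{1,\ve}$ produced by Proposition~\ref{p.Lnbis} (see \eqref{e.1seg1}). By Remark~\ref{r.bychoiceseblenderparatodos} this segment is also contained in $W^{\ru}(Q,g_{\ve,k,\mu})$: the translation $T_{\widetilde X,\bar\nu_k,\rho(\ve)}$ in $\Omega_{\bar\upsilon_k(\mu),\rho(\ve)}$ is supported in $B(\widetilde X,2\rho(\ve))$, which is disjoint from $\widetilde X_{1,\ve}$ by \eqref{e.cornaca}, while the supplementary perturbation $\theta_{\ve,k}$ is supported in $B(\widetilde X_{1,\ve},2\rho(\ve))$ and so it does not touch the arc itself inside $U_X\subset U_Q$.

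The next step is to follow $L^{\ru}_{1,\ve}(\delta)$ under $g_{\ve,k,\mu}^{N_1+m_k+N_2}$. After the transition $\mathfrak{T}_{1,1,\ve}$ of \eqref{e.newtransition} the image is a curve through $\widetilde X_{1,\ve}$ whose tangent, by \eqref{e.Aisoftheform} and Remark~\ref{r.regular}, is essentially $\alpha_2\mathbf e_1+\beta_2\mathbf e_2$; the perturbation $\theta_{\ve,k}$ then shifts this curve by the vector $\bar\tau_k$ of \eqref{e.deftau}. The crucial point is that the second and third components of $\bar\tau_k$ reproduce precisely the leading (non-decaying) terms of the parameter $\bar\nu_k$ defined in \eqref{e.nu} which is used to unfold $\widetilde X$ in the renormalisation scheme. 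Consequently, the shifted image of $L^{\ru}_{1,\ve}(\delta)$ near $\widetilde X$ now plays the role that the stable separatrix at $\widetilde X$ plays in the unperturbed renormalisation scheme. We can then repeat the explicit computations of \cite[Section~7]{DiaPer:19} for this alternative heteroclinic, combining them with the rotation adjustments $\alpha_k(\varphi_P)$, $\beta_k(\varphi_Q)$ of \eqref{e.alphakbetak} and the transition $\mathfrak{T}_2$ (plus $T_{\widetilde Y,\bar\mu_k(\mu),\rho(\ve)}$); the outcome is that
\[
\Phi_k^{-1}\circ g_{\ve,k,\mu}^{\,N_1+m_k+N_2}\big(L^{\ru}_{1,\ve}(\delta)\big)
\]
converges $C^r$-uniformly on compact sets, as $k\to\infty$, to a curve of the form $\{(0,y,0):y\in I\}$ for a fixed interval $I\supsetneq[-4,4]$. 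Thus for every sufficiently large $k$ it contains a $\ruu$-disc close to $L$, completing the plan.

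The main obstacle is the convergence claim of the previous paragraph. It is quantitative in nature and combines the finitary renormalisation calculations of Proposition~\ref{p.diazperez} with the additional displacement produced by $\theta_{\ve,k}$; one must in particular check that the contributions of the nonlinear terms $\widetilde H^{1}_\ve$ of the transition $\mathfrak{T}_{1,1,\ve}$—which by Remark~\ref{r.regular} lose the flatness at the origin enjoyed by $\widetilde H$—remain of higher order with respect to $\sigma_P^{-m_k}\sigma_Q^{-n_k}$ after the rescaling by $\Phi_k^{-1}$, and that the spectral condition~\eqref{e.espectralconditions}, via \eqref{e.espectroparazero}, controls the error terms produced by the $\lambda_P^{m_k}\sigma_Q^{n_k}$ contributions. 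This is the analogue, in our non-dominated saddle-focus setting, of the computation performed in \cite[Theorem~1.4]{DiaKirShi:14} for real-multiplier saddles.
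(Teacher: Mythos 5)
Your plan follows the paper's own route almost step by step: the same segment $L^{\ru}_{1,\ve}(\delta)$ through the auxiliary heteroclinic point $X_{1,\ve}$, the same reading of $\theta_{\ve,k}$ (the vector $\bar\tau_k$ placing the curve at distance $\sim\sigma_P^{-m_k}$ from $W^{\rs}_{\loc}(P)$ so that $m_k$ iterates bring it to $U_Y$), the same reliance on the computations of \cite[Section 7]{DiaPer:19}, and the same final appeal to Lemmas~\ref{l.opensuperposition} and \ref{l.superpositionregion} for robustness and quasi-transversality. However, the central quantitative claim is wrong, on two counts. First, the scale: the image of the \emph{whole} $\delta$-segment does not converge on compact sets; only the subdisc of $L^{\ru}_{1,\ve}$ whose parameter is rescaled by $\sigma_P^{-2m_k}\sigma_Q^{-n_k}\varsigma_2^{-1}$ (the paper's $\widehat J^{\ru}_{\ve,k}$) lands, after $g_{\ve,k,\mu}^{N_1+m_k+N_2}$ and $\Phi_k^{-1}$, inside a fixed compact region. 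Second, and more importantly, the limit of that renormalised image is \emph{not} a vertical segment $\{(0,y,0):y\in I\}$ with $I\supsetneq[-4,4]$: running the very computation you propose, the first and third renormalised coordinates carry nonvanishing linear terms with limiting coefficients $\varsigma_1\varsigma_2^{-1}$ and $\varsigma_5\varsigma_2^{-1}$ (both nonzero by Remark~\ref{rk:constants1}), while the second coordinate has the shape $\varsigma_2^{-1}(\mu+t^2)+\cdots$; in the $G$-coordinates the limit curve is $\{(t,\mu+t^2+\cdots,t)\}=G_{\xi,\mu,\bar\eta}(L)$, i.e.\ the image of $L$ under the limiting H\'enon-like map, not $L$ itself. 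So the image contains no $\ruu$-disc close to $L$, and your plan as stated does not close.

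The repair is exactly what the paper does in Lemma~\ref{l.adiskinthesuperpositionregion}: instead of aiming at $L$, compare $\Phi_k^{-1}\circ g_{\ve,k,\mu}^{N_2+m_k}(\widehat J^{\ru}_{\ve,k})$ with $\Phi_k^{-1}\circ\mathcal{R}_{\ve,k,\mu}(g_{\ve,k,\mu})\circ\Phi_k(L)$ and show their $C^r$ distance tends to zero (this is where the flatness loss of $\widetilde H^1_\ve$, the bound \eqref{e.espectroparazero}, and the high-order-term estimates enter, as you anticipate). The conclusion then needs one further ingredient you did not state: by Remark~\ref{r.opa}, the image of $L$ under the renormalised return map — not $L$ — contains a disc in the superposition region of $\Lambda_{\ve,k,\mu}$, because the blender's superposition region is preserved under the return map; closeness to that image then yields the required $\ruu$-disc, and Lemmas~\ref{l.opensuperposition} and \ref{l.superpositionregion} finish the argument as you describe. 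A minor additional imprecision: $\bar\tau_k$ does not reproduce ``the leading terms of $\bar\nu_k$'' — the discarded terms of order $\lambda_Q^{n_k}$ in \eqref{e.nu} are of the same order as $\sigma_P^{-m_k}$ (since $\sigma_P^{m_k}\lambda_Q^{n_k}\to\tau^{-1}\xi$); they are absent from $\bar\tau_k$ because the new curve starts exactly on $W^{\rs}_{\loc}(P)$ and has no $n_k$-block near $Q$ to compensate, not because they are of lower order.
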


By Lemmas~\ref{l.opensuperposition} and \ref{l.superpositionregion} this proposition follows from the next result:

\begin{lemma} \label{l.adiskinthesuperpositionregion} 
For every  small $\ve>0$,   large $k$, and $\mu \in (-10,-9)$, the unstable manifold
 $W^\mathrm{u}(Q,g_{\ve,k,\mu})$ contains a $\ru\ru$-disc in the  superposition region 
 of  $\Upsilon_{\ve,k,\mu}$.  
 \end{lemma}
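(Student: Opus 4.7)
The plan is to produce a $\ru\ru$-disc inside $W^\ru(Q,g_{\ve,k,\mu})$ whose image under $\Phi_k^{-1}$ is $C^r$-close, on a fixed compact set of $\mathbb R^3$, to the curve $G_{\xi,\mu,\bar\eta}(L)\subset\Delta$, where $L$ is the disc from Remark~\ref{r.thecurveL}. By Remark~\ref{r.opa} the set $\Phi_k^{-1}\circ \mathcal R_{\ve,k,\mu}\circ \Phi_k(L)$ already contains a disc in the superposition region of $\Lambda_{\ve,k,\mu}$, and by the $C^1$-openness part of Lemma~\ref{l.opensuperposition} the same is then true for any $C^r$-close candidate; this will yield the desired disc in the superposition region of $\Upsilon_{\ve,k,\mu}=\Phi_k(\Lambda_{\ve,k,\mu})$, and the quasi-transversality asserted in Proposition~\ref{p.1dim} will follow from Lemma~\ref{l.superpositionregion}.

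The candidate is $D_{k,\mu}\eqdef g_{\ve,k,\mu}^{N_2+m_k+N_1}(L^{\ru}_{1,\ve})$, where $L^{\ru}_{1,\ve}\subset W^{\ru}(Q,g_{\ve,k,\mu})$ is the unstable segment through the new heteroclinic point $X_{1,\ve}$ of Proposition~\ref{p.Lnbis}; see \eqref{e.1seg1}. Its orbit passes through three stages. After $N_1$ iterations the transition $\mathfrak T_{1,1,\ve}$ of \eqref{e.newtransition} brings it to a segment based at $\widetilde X_{1,\ve}$, which the perturbation $\theta_{\ve,k}$ of \eqref{e.dtildetheta} then shifts by $\bar\tau_k$ off $W^{\rs}_{\mathrm{loc}}(P,g_{\ve,k,\mu})$ by an amount of order $\sigma_P^{-m_k}$ in the direction set by the convergents $\tilde{\mathfrak c}_k,\tilde{\mathfrak s}_k$ of \eqref{e.fraksequences}. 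The $m_k$ iterations near $P$ act by the rotation-scaling $f_{\lambda_P,\sigma_P,\varphi_{P,k}}^{m_k}$, and using $\tilde{\mathfrak c}_k,\tilde{\mathfrak s}_k\to 1/\sqrt 2$ one computes that the base point converges to $Y=(0,1,1)$ while the tangent along the $t$-parameterisation becomes $(O(\lambda_P^{m_k}),\sigma_P^{m_k}\beta_2\tilde{\mathfrak c}_k,\sigma_P^{m_k}\beta_2\tilde{\mathfrak s}_k)$. The final $N_2$ iterations via $\mathfrak T_2$ (with translation $\bar\mu_k(\mu)$ of \eqref{e.mu}) transport the curve to near $\widetilde Y$, and crucial cancellations between the linear part $B(W)$, the nonlinear part $H(W)$, and $\bar\mu_k(\mu)$ eliminate the lowest-order drift.

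Rescaling by $\Phi_k^{-1}$ and reparameterising by $s=\sigma_P^{2m_k}\sigma_Q^{n_k}t$, the curve $\Phi_k^{-1}(D_{k,\mu})$ converges in $C^r$ on compacta to the parabola $s\mapsto\varsigma_2(s,\mu+\varsigma_2 s^2,s)$, which agrees up to the rescaling coming from the conjugation $\widehat\Theta_{\bar\varsigma}$ of Remark~\ref{r.conjugadosperonorevueltos} with $G_{\xi,\mu,\bar\eta}(L)$. The main technical obstacle is to keep all error sources under control despite the large magnification factor $\sigma_P^{2m_k}\sigma_Q^{2n_k}$ of $D\Phi_k^{-1}$ in the $y$-direction: the higher-order remainder $\tilde\rho_{1,\ve}$ of the unstable parameterisation, whose $C^r$ norm tends to zero by Remark~\ref{r.llema}; the flat nonlinear remainders $\widetilde H^{1}_\ve$ and $H$ of the transitions, handled via Remark~\ref{r.regular} and the flatness of $H$ at $Y$ guaranteed by condition \textbf{(C)} of Section~\ref{ss.semilocal}; and the dependence of $\tilde{\mathfrak c}_k,\tilde{\mathfrak s}_k$ on $k$, controlled by the spectral condition \eqref{e.espectroparazero}. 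Once these are accounted for, by a computation in the spirit of the proof of Proposition~\ref{p.diazperez}, a suitable subsegment of $\Phi_k^{-1}(D_{k,\mu})$ has tangent dominated by the $\tilde y$-component, hence lies in the strong unstable cone $\mathcal C^{\ru\ru}$ of \eqref{e.conodecone}, lies in-between the reference saddles $P^{\pm}_{\bar\upsilon_k^\xi(\mu),\rho(\ve)}$, and thereby furnishes the required $\ru\ru$-disc in the superposition region of $\Upsilon_{\ve,k,\mu}$.
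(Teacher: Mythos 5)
Your proposal follows essentially the same route as the paper: the candidate disc is the image under $g_{\ve,k,\mu}^{N_2+m_k+N_1}$ of (a suitably rescaled subsegment of) the unstable segment through the new heteroclinic point $X_{1,\ve}$, the calibrated shift $\bar\tau_k$ in $\theta_{\ve,k}$ plays exactly the role it plays in the paper's parameterisation $\gamma_{\ve,k}$, and the conclusion is obtained, as in the paper, by a $C^r$-comparison with the renormalised image of $L$ together with Remark~\ref{r.opa} and the openness of the superposition region (Lemma~\ref{l.opensuperposition}). The only deviations are cosmetic: the paper compares with $\Phi_{k}^{-1}\circ \mathcal{R}_{\ve, k,\mu}(g_{\ve,k,\mu})\circ \Phi_k(L)$ at each $k$ rather than with the limit parabola (whose normalising constants you state somewhat loosely, hedged by the conjugation $\Theta_{\bar\varsigma}$), and your closing direct verification of the cone and in-between conditions is redundant once the openness lemma is invoked.
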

 
  \subsection{Proof of Lemma~\ref{l.adiskinthesuperpositionregion} }
We import some ingredients from Section~\ref{ss.blender-horseshoesreno}. 
Consider the disc $L$ in \eqref{e.soloL}. By Remark~\ref{r.opa}, for sufficiently large $k$
the set
\[
\Phi_{k}^{-1}\circ \mathcal{R}_{\ve, k,\mu}(g_{\ve,k,\mu} )\circ\Phi_{k} 
\big(
L \big) \subset \mathbb{R}^3
\]
 contains a disc 
 in the superposition region of the blender 
 $\Lambda_{\ve,k,\mu}$
 in \eqref{e.ijk-Grandeblender}.

Take small $\delta=\delta(\ve)>0$ and
the segment 
$L^{\mathrm{u}}_{1,\ve}\eqdef L^{\mathrm{u}}_{1,\ve}(\delta)\subset W^{\mathrm u}(Q,f_\ve)$ in \eqref{e.1seg1}.
 By the definition of $g_{\ve,k,\mu}$, this disc is contained in 
$W^{\mathrm u}(Q,g_{\ve,k,\mu})$. Thus
\begin{equation}
\label{e.Juexplicit}
J^{\mathrm{u}}_{\ve,k,\mu}\eqdef 
g_{\ve,k, \mu}^{N_1}
 \big(L^{\mathrm{u}}_{1,\ve}\big) \subset 
 W^{\mathrm u}(Q,g_{\ve,k,\mu}).
\end{equation}

Note that the transition $g_{\ve,k, \mu}^{N_1}$ does not depend on $\mu$, thus in what follows 
we will omit the dependence on $\mu$ of the sets $J^{\mathrm{u}}_{\ve,k,\mu}$ writing just $J^{\mathrm{u}}_{\ve,k}$.
We will prove that 
there is a compact subdisc $\widehat{J}^{\mathrm{u}}_{\ve,k}$ of 
$J^{\mathrm{u}}_{\ve,k}$ such
that the $C^r$ distance between the discs
$$
 \Phi_{k}^{-1} \circ g_{\ve,k,\mu}^{N_2+m_k} (\widehat{J}^{\mathrm{u}}_{\ve,k}) \quad
 \mbox{and}
 \quad
 \Phi_{k}^{-1} \circ \mathcal{R}_{\ve, k,\mu}(g_{\ve,k,\mu})\circ \Phi_k(L)  
 $$ 
 goes to zero as $k\to \infty$. 
 As the latter set contains a disc in the superposition region of the blender,
 this implies the lemma (see Figure~\ref{fig:1-D}).
 We now go to the details of the proof.

\begin{figure}
\centering
\begin{overpic}[scale=0.1,
]{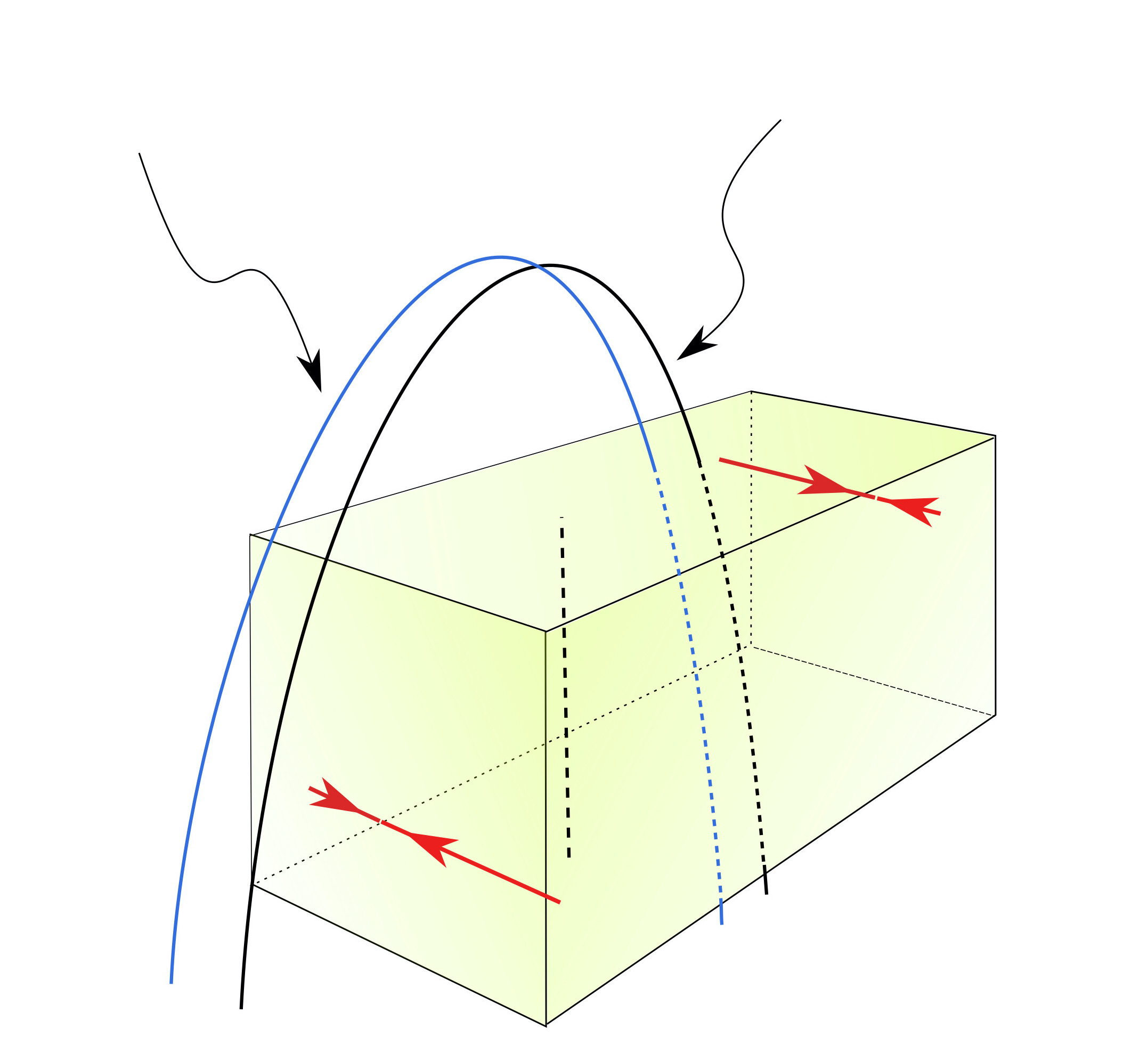}
  \put(171,73){\Large{$\Delta$}}  
  \put(100,105){\Large{$L$}}  
        \put(70,40){$\bullet$}  
             \put(160,103){$\bullet$}            
        \put(110,183){\large  $\Phi_{k}^{-1}\circ \mathcal{R}_{\ve, k,\mu}(g_{\ve,k,\mu})\circ \Phi_k(L)$}     
 \put(-10,180){\large        
        $\Phi_{k}^{-1} \circ g_{\ve,k,\mu}^{N_2+m_k} (\widehat{J}^{\mathrm{u}}_{\ve,k})  
        $}       
 \end{overpic}
\caption{One-dimensional connection between the blender and the saddle $Q$.} 
\label{fig:1-D}
\end{figure}

 Recall the definition of $\Phi_{k}=
\Psi_{k}\circ\Theta_{\bar \varsigma}$  in~\eqref{e.kcor} and consider the parameterisation of $\Phi_k(L)$
given by
\[
\gamma_k:[-4,4]\to M,\quad
\gamma_k(t)\eqdef \Psi_k \circ \Theta_{\bar \varsigma} (0,t,0)= \Psi_k (0, \varsigma_2^{-1} t, 0).
\]

We will provide a  parameterisation
$\gamma_{\ve,k}:[-4,4]\to M$
 of   $\widehat  J_{\ve,k}^\ru$ 
such that 
\begin{equation}
\label{e.horadealmuerzo}
\lim_{k\to \infty} \,
\big\Vert 
\big(
\Phi_{k}^{-1}\circ g_{\ve,k,\mu}
^{N_2+m_k+N_1+n_k}\circ\gamma_k
\,-\,\Phi_{k}^{-1}\circ g_{\ve,k,\mu}
^{N_2+m_k}\circ\gamma_{\ve,k}\big)|_{[-4,4]}
\big\Vert_{r}= 0.
\end{equation}
As the set $\Psi_{k}^{-1}\circ g_{\ve,k,\mu}
^{N_2+m_k+N_1+n_k}\circ \gamma_k([-4,4])$ contains a disc in the superposition region of the blenders (see Remark~\ref{r.opa}) this proves Lemma~\ref{l.adiskinthesuperpositionregion}.

 To get equation \eqref{e.horadealmuerzo} we observe that
\begin{equation*}
\begin{split}
\big\Vert \,
&\Phi_{k}^{-1}\circ g_{\ve,k,\mu}
^{N_2+m_k+N_1+n_k}\circ\gamma_k
\,-\,\Phi_{k}^{-1}\circ g_{\ve,k,\mu}
^{N_2+m_k}
\circ
{\gamma}_{\ve,k}\,
\big\Vert_{r}=
\\
&\,\,\big\Vert  \,
{\Theta}_{\bar \varsigma}^{-1} \circ
\Psi_{k}^{-1}\circ g_{\ve,k,\mu}
^{N_2+m_k+N_1+n_k}\circ \gamma_k
\,-\, {\Theta}_{\bar \varsigma}^{-1} \circ \Psi_{k}^{-1}\circ g_{\ve,k,\mu}
^{N_2+m_k}\circ\gamma_{\ve,k}
\,
\big\Vert_{r} \leqslant \\
& \,\, \,C \, \big\Vert  
\,
\Psi_{k}^{-1}\circ g_{\ve,k,\mu}
^{N_2+m_k+N_1+n_k} \circ \gamma_k
\,-\,\Psi_{k}^{-1}\circ g_{\ve,k,\mu}
^{N_2+m_k}\circ\gamma_{\ve,k}
\,
\big\Vert_{r},
\end{split}
\end{equation*}
where $C$ is an upper bound of the $C^r$ norm of ${\Theta}_{\bar \varsigma}^{-1}$ in the cube $\Delta$.
Thus to conclude the proof of the proposition  it is enough to prove the following:

\begin{lemma} \label{lsl.casicasi}
There is a parameterisation ${\gamma}_{\ve,k}:[-4,4]\to M$ 
 of   $\widehat  J_{\ve,k}^\ru$ such that 
$$
\lim_{k\to \infty}\,
\big\Vert  
\big (
\Psi_{k}^{-1}\circ g_{\ve,k,\mu}
^{N_2+m_k+N_1+n_k} 
\circ
\gamma_k
\,-\,
\Psi_{k}^{-1}\circ g_{\ve,k,\mu}
^{N_2+m_k}
\circ
\gamma_{\ve,k}
\big)|_{[-4,4]}
\big\Vert_{r} 
=0.
$$

\end{lemma}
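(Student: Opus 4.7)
The plan, which adapts the strategy of~\cite[Theorem~1.4]{DiaKirShi:14} to the saddle-focus context, is to define $\gamma_{\ve,k}$ as a reparameterisation of a subdisc $\widehat J^{\mathrm u}_{\ve,k}\subset J^{\mathrm u}_{\ve,k}$ whose image under $g^{N_2+m_k}_{\ve,k,\mu}$ tends, in the $\Psi_k^{-1}$-rescaled coordinates, to the same limit as the image of $\gamma_k$ under the full renormalisation cycle. Since the latter limit is identified explicitly by Proposition~\ref{p.diazperez} as the parameterised curve $t\mapsto E_{\xi,\mu,\bar\varsigma}(0,\varsigma_2^{-1}t,0)$, the task reduces to producing a curve on $W^{\mathrm u}(Q,g_{\ve,k,\mu})$ whose image under $g^{N_2+m_k}$, after rescaling, converges in $C^r$ to the same explicit curve. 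The two quantities will then have $C^r$-distance tending to zero because both converge, in $C^r$, to the same smooth limit.

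Concretely, I would first compute $A_k(t)\eqdef g^{N_1+n_k}_{\ve,k,\mu}(\gamma_k(t))$ using the linearised dynamics at $Q$ (whose complex rotation is controlled via the adjusting angles~\eqref{e.alphakbetak}, giving $\mathfrak{c}_k\to 0$, $\mathfrak{s}_k\to 1$) followed by the transition $\mathfrak{T}_1$ in~\eqref{e.transition1}. In the coordinates at $P$, $A_k(t)$ lies near $\widetilde X$ with $y$-coordinate of order $\beta_2\sigma_P^{-2m_k}\sigma_Q^{-n_k}\varsigma_2^{-1}t$, $z$-coordinate of order $\gamma_3\lambda_Q^{n_k}(\mathfrak{c}_k-\mathfrak{s}_k)$, and $x$-coordinate close to $\widetilde X_x$. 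Using the parameterisation \eqref{e.1seg1} of $L^{\mathrm u}_{1,\ve}$ pushed through $g^{N_1}$ as in \eqref{e.newtransition}, the curve $J^{\mathrm u}_{\ve,k}$ is tangent at $\widetilde X_{1,\ve}$ to $A\cdot\mathbf{e}_2=(\alpha_2,\beta_2,0)$, so the implicit function theorem, valid for large $k$, defines $\gamma_{\ve,k}(t)\in J^{\mathrm u}_{\ve,k}$ by matching the $y$-coordinate with that of $A_k(t)$. This is the natural analogue of the ``shadowing'' choice used in the real-eigenvalue case.

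I would then track the error $E_k(t)\eqdef A_k(t)-\gamma_{\ve,k}(t)$ through $g^{N_2+m_k}$ composed with $\Psi_k^{-1}$, using the linearised dynamics at $P$, the linear-quadratic form of $\mathfrak{T}_2$ in~\eqref{e.Bisoftheform}, and the explicit polynomial form of $\Psi_k^{\pm 1}$. The $x$-component of $E_k$ contracts by $\lambda_P^{m_k}$ under $g^{m_k}$, is transported through the linear part of $\mathfrak{T}_2$, and is amplified by $\sigma_P^{m_k}\sigma_Q^{n_k}$ in the $x,z$-directions of $\Psi_k^{-1}$ (and by $\sigma_P^{2m_k}\sigma_Q^{2n_k}$ in the $y$-direction); the net contribution is controlled by $\lambda_P^{m_k}\sigma_P^{2m_k}\sigma_Q^{2n_k}$, which tends to zero by the spectral condition~\eqref{e.espectralconditions} via~\eqref{e.espectroparazero}. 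The $z$-component of $E_k$, of size $\lambda_Q^{n_k}$, becomes after $g^{m_k}$ a rotated displacement of size $\sigma_P^{m_k}\lambda_Q^{n_k}\asymp\tau^{-1}\xi$ in the $(y,z)$-plane near $Y$; this $O(1)$ contribution must be shown to pass through $\mathfrak{T}_2$ and the rescaling so as to yield precisely the same limit parameterised curve as the corresponding $(y,z)$-contribution arising from the $g^{N_1+n_k}$-iterates in the orbit of $\gamma_k(t)$. Differentiating in $t$ (only $\gamma_k(t)$ carries $t$-dependence, with explicit derivatives of size $\sigma_P^{-2m_k}\sigma_Q^{-2n_k}$) and applying the bounded $C^r$-norms of the smooth maps involved extends the estimate to all derivatives up to order $r$.

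The main obstacle is precisely this $z$-direction issue: unlike in the real-eigenvalue setting of~\cite{DiaKirShi:14}, where the analogous residual errors contract outright, here the complex eigenvalues at both $P$ and $Q$ produce residues whose size is merely \emph{bounded} after the iterates near $P$, so one cannot conclude by contraction alone. The resolution is to verify that this bounded contribution is exactly the one that the renormalisation scheme of~\cite[Section~7]{DiaPer:19} identifies as a nontrivial piece of the limit $E_{\xi,\mu,\bar\varsigma}$, and that it therefore cancels in the difference between the two terms of the lemma. This verification relies critically on the precise choice of adjusting angles in~\eqref{e.alphakbetak} (which align $\tilde{\mathfrak{c}}_k,\tilde{\mathfrak{s}}_k\to 1/\sqrt{2}$ and $\mathfrak{c}_k\to 0$, $\mathfrak{s}_k\to 1$) and on the symmetry $c_2=c_3$ in~\eqref{e.ctes1sem2}, which together ensure that the rotated $z$-residue from $Q$ interacts with $\mathfrak{T}_2$ exactly as prescribed by the target limit family.
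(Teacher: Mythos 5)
Your outline differs from the paper's proof mainly in how $\gamma_{\ve,k}$ is produced: the paper does not use any implicit-function/shadowing matching, but writes $\widehat J^{\ru}_{\ve,k}$ and $\gamma_{\ve,k}$ in closed form (the image of $L^{\ru}_{1,\ve}$ under $\mathfrak{T}_{1,1,\ve}$, shifted by $\bar\tau_k$ and with the parameter rescaled by $\sigma_P^{-2m_k}\sigma_Q^{-n_k}\varsigma_2^{-1}$), then computes \emph{both} compositions explicitly — the full return via the renormalisation formulas of \cite{DiaPer:19} recalled in Section~\ref{s.explicit1}, and $\Psi_k^{-1}\circ g^{N_2+m_k}_{\ve,k,\mu}\circ\gamma_{\ve,k}$ via the linear dynamics at $P$ and the form of $\mathfrak{T}_2$ — and checks term by term that the difference tends to $0$, using \eqref{e.espectroparazero} and the higher-order-term estimates (Remark~\ref{r.nomennescio}, Claim~\ref{cl.inappendix}). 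A shadowing-type definition is not in itself disqualifying, but your argument breaks down exactly at the point you flag as the ``main obstacle''.

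The gap is the treatment of the $z$-direction. First, with the cancellations already built into the scheme, the $z$-mismatch between $g^{N_1+n_k}_{\ve,k,\mu}(\gamma_k(t))$ and a point of $J^{\ru}_{\ve,k}$ is \emph{not} of size $\lambda_Q^{n_k}$: the third component of $\bar\nu_k$ in \eqref{e.nu} contains $-\lambda_Q^{n_k}\gamma_3(\mathfrak{c}_k+\mathfrak{s}_k)$ precisely to kill the $O(\lambda_Q^{n_k})$ $z$-coordinate left by the $n_k$ iterates at $Q$, while the translation $\bar\tau_k$ in \eqref{e.deftau}--\eqref{e.dtildetheta} gives the curve through $\widetilde X_{1,\ve}$ the same leading offsets $\sigma_P^{-m_k}(\tilde{\mathfrak{c}}_k\pm\tilde{\mathfrak{s}}_k)$; none of this is invoked in your error bookkeeping, which is why you end up with a spurious $O(\lambda_Q^{n_k})$ residue. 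Second, and more seriously, your proposed resolution — that the resulting $O(1)$ displacement near $Y$ ``cancels in the difference between the two terms of the lemma'' — cannot work, because the difference of the two terms \emph{is} the image of the error $E_k(t)$ under $\Psi_k^{-1}\circ g^{N_2+m_k}_{\ve,k,\mu}$; there is nothing left for it to cancel against. Since $\Psi_k^{-1}$ magnifies displacements near $\widetilde Y$ by $\sigma_P^{m_k}\sigma_Q^{n_k}$ in the $x,z$-coordinates and by $\sigma_P^{2m_k}\sigma_Q^{2n_k}$ in the $y$-coordinate, the two curves must agree near $Y$ to precision $o(\sigma_P^{-2m_k}\sigma_Q^{-2n_k})$ for the $C^r$ difference to vanish; an $O(1)$ (or even merely bounded) discrepancy at that stage makes the limit diverge. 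Your proposal never verifies this level of agreement in the $x$- and $z$-components (matching only the $y$-coordinate by the implicit function theorem does not give it for free), whereas verifying it is exactly the content of the paper's explicit computation of $\delta^x,\delta^y,\delta^z$ and of the higher-order terms in Section~\ref{ss.highorderterms}.
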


We begin by giving the explicit
parameterisation of  $\gamma_{\ve,k}$  of   $\widehat  J_{\ve,k}^\ru$. 

\subsubsection{The parameterisations
$\gamma_{\ve,k}$} \label{sss.parameter}
 We first write the segment 
$J^\mathrm{u}_{\ve,k}$ in \eqref{e.Juexplicit} in local coordinates.
Recalling the definitions of $L^\mathrm{u}_{1,\ve}$ in  \eqref{e.1seg},
$\mathfrak{T}_{1,1,\ve}=f_{\ve}^{N_1}|_{U_{1,\ve}}$ in    \eqref{e.newtransition}, and 
$\theta_{\ve,k}$ in \eqref{e.dtildetheta}
(where $\bar\tau_k$ is defined),  we have 
\begin{equation*}
\begin{split}
&J^\mathrm{u}_{\ve,k}=
\big\{
\widetilde{X}_{1,\ve}
+A\big(t\,\textbf{e}_2+\bar{\rho}_{1,\ve}(t)\big)+\widetilde{H}^{1}_{\ve} \big(t\,\textbf{e}_2+\bar{\rho}_{1,\ve}(t)\big)
+\\
&\qquad 
+\Pi_{\delta}\big(\, A\big(t\, \textbf{e}_2+\bar{\rho}_{1,\ve}(t)\big)+\widetilde{H}^{1}_\ve\big(t\,\textbf{e}_2+\bar{\rho}_{1,\ve}(t)\big)\,\big)
\bar{\tau}_k
: |t|<\delta=\delta (\ve)
\big\}.
\end{split}
\end{equation*} 
Using \eqref{e.componentes}, \eqref{e.Ee}, $\bar{\rho}_{1,\ve}(0)=0$, and Remark~\ref{r.regular},
we get  $\widehat{\delta}\in (0,\delta)$ such that for every $|t|<\widehat \delta$ it holds
\[
A\big(t\,\textbf{e}_2+\bar{\rho}_{1,\ve}(t)\big)+\widetilde{H}^{1}_\ve \big(t\,\textbf{e}_2+\bar{\rho}_{1,\ve}(t))
\in B (\mathbf{0},\delta).
\]
As the map $\Pi_{\delta}$
 is igual to $1$ in  $B (\mathbf{0},\delta)$ (recall \eqref{e.bump}), we can consider the subdisc
\begin{equation*}
\widetilde J^\mathrm{u}_{\ve,k} \eqdef
\big\{
\widetilde{X}_{1,\ve}
+A\big(t\,\textbf{e}_2+\bar{\rho}_{1,\ve}(t)\big)+\widetilde{H}^1_\ve\big(t\,\textbf{e}_2+\bar{\rho}_{1,\ve}(t)\big)
+
\bar{\tau}_k
: |t| \leqslant  \widehat \delta\,
\big\} \subset J^\mathrm{u}_{\ve,k}.
\end{equation*}
For convenience, we write $\widetilde J^\mathrm{u}_{\ve,k}$ in the following compact form:
\[
\widetilde J^\mathrm{u}_{\ve,k}=\big\{ \widetilde{X}_{1,\ve}+t\,A(\bv_{1,\ve})
+\widetilde{\rho}_{1,\ve}(t)+
\bar{\tau}_k: |t| \leqslant  \widehat\delta\,
\big\},
\] where
\begin{equation*}
\begin{split}
\bv_{1,\ve}&\eqdef
\textbf{e}_2+A^{-1}D\widetilde{H}^1_\ve(\textbf{0})\,\textbf{e}_2,
\\
\widetilde{\rho}_{1,\ve}(t)&\eqdef A\big(\bar{\rho}_{1,\ve}(t)\big)+\widetilde{H}^1_\ve\big(t\,\textbf{e}_2+\bar{\rho}_{1,\ve}(t)\big)-t\,D\widetilde{H}^1_\ve(\textbf{0})\,\textbf{e}_2.
\end{split}
\end{equation*}
Note that, by \eqref{e.componentes}, we have that 
\[
\frac{d}{dt}(\widetilde{\rho}_{1,\ve})(0)
=\widetilde{\rho}_{1,\ve}(0)=\textbf{0}.
\]

We now consider the subdisc $\widehat{J}^\mathrm{u}_{\ve,k}$ of $\widetilde J^\mathrm{u}_{\ve,k}$
obtained by rescaling the parameter $t$ by the factor $\sigma_P^{-2m_k}{\sigma_Q}^{-n_k}\,|\,\varsigma_2^{-1}| \ll \frac{
\widehat \delta}{4}
<1$
as follows:
\[
\widehat{J}^{\mathrm u}_{\ve,k}\eqdef \big\{
\widetilde{X}_{1,\ve}+
\sigma_P^{-2m_k}{\sigma_Q}^{-n_k}\,\varsigma_2^{-1}\,t\,A(\bv_{1,\ve})
+ \widehat \rho_{1,\ve,k}(t) +
\bar{\tau}_k
: |t| \leqslant
4
\big\},
\]
where 
$$
\widehat{\rho}_{1,\ve,k}(t)\eqdef  \widetilde{\rho}_{1,\ve}
\big(\sigma_P^{-2m_k}{\sigma_Q}^{-n_k}\,\varsigma_2^{-1}\,t
\big).
$$
To rewrite the set $\widehat{J}^{\mathrm u}_{\ve,k}$ in a compact form, let
\begin{equation}\label{e.?}
\begin{split}
(\widetilde{w}^{1,\ve}_1,\widetilde{w}^{1,\ve}_2,\widetilde{w}^{1,\ve}_3)&\eqdef D\widetilde{H}^{1}_\ve(\textbf{0})\,\textbf{e}_2,
\\
\widehat{\rho}^{\ell}_{1,\ve,k}(t) &\eqdef  \widetilde{\rho}^\ell_{1,\ve}
\big(\sigma_P^{-2m_k}{\sigma_Q}^{-n_k}\,\varsigma_2^{-1}\,t
\big),\qquad \ell=1,2,3,
\end{split}
\end{equation}
where $\widetilde{\rho}^\ell_{1,\ve}$ is the $\ell$-th coordinate of $\widetilde{\rho}_{1,\ve}$.

\begin{remark}\label{r.convergencew}{\em{
Remark~\ref{r.regular} implies that
$(\widetilde{w}^{1,\ve}_1,\widetilde{w}^{1,\ve}_2,\widetilde{w}^{1,\ve}_3)\to (0,0,0)$ as $\ve\to 0$.}}
\end{remark}

Recalling  that 
$\widetilde{X}_{1,\ve}=(1+\widetilde{x}_{1,\ve},0,0)$, see
\eqref{e.wastedyears} and \eqref{e.simeone}, and
the definitions of $A$ in \eqref{e.Aisoftheform} and of $\bar \tau_k$ in \eqref{e.deftau},
we can write
\[
\widehat {J}^{\mathrm u}_{\ve,k}
=\big\{
\big(1+\widetilde{x}_{1,\ve}+{x}_{\ve,k}(t),{y}_{\ve,k}(t),{z}_{\ve,k}(t)\big): |t|
\leqslant
4
\big\},
\]
where 
\begin{equation}
\label{e.coordellk}
\begin{split}
{x}_{\ve,k}(t)&
\eqdef
 \sigma_P^{-2m_k}{\sigma_Q}^{-n_k}\,\big(\alpha_2+\widetilde{w}^{1,\ve}_1)\,\varsigma_2^{-1}\,t
+\widehat{\rho}^{1}_{1,\ve,k}(t),
\\
{y}_{\ve,k}(t)&
 \eqdef
\sigma_P^{-2m_k}{\sigma_Q}^{-n_k}\,\big(\beta_2+\widetilde{w}^{1,\ve}_2\big)\,\varsigma_2^{-1}\,t
+\widehat{\rho}^{2}_{1,\ve,k}(t)\\
&\,\,\,\,\,\,\,\,+\sigma^{-m_k}_P(\tilde{\mathfrak{c}}_k+\tilde{\mathfrak{s}}_k),
\\
{z}_{\ve,k}(t)&
\eqdef
\sigma_P^{-2m_k}{\sigma_Q}^{-n_k}\,
\widetilde{w}^{1,\ve}_3\,\varsigma_2^{-1}\,t
+\widehat{\rho}^{3}_{1,\ve,k}(t)
+\sigma^{-m_k}_P(\tilde{\mathfrak{c}}_k-\tilde{\mathfrak{s}}_k).
\end{split}
\end{equation}
The announced  parameterisation of $\widehat  J_{\ve,k}^\ru$ is given by
\begin{equation}
\label{e.gammaepsilon}
\gamma_{\ve,k}:[-4,4]\to M,\quad
\gamma_{\ve,k}(t)= (1+ \widetilde x_{1,\ve} + x_{\ve,k}(t), y_{\ve,k}(t), z_{\ve,k}(t)).
\end{equation}

\subsubsection{End of the proof of Lemma~\ref{lsl.casicasi} (thus of Proposition~\ref{p.1dim})}
We calculate separately the two terms in the lemma. This involves some explicit calculations in the renormalisation scheme borrowed from~\cite{DiaPer:19} which are stated in
Section~\ref{s.explicit1}.

\medskip
\noindent{\em{$\bullet$ The term $\Psi_{k}^{-1}\circ g_{\ve,k,\mu}
^{N_2+m_k+N_1+n_k}\big( \gamma_{k}(t) \big)$}.} 
Write
\begin{equation}\label{e.15}
\begin{split}
(\overline x_{\ve, k,\mu}(t), \overline y_{\ve, k,\mu}(t), \overline z_{\ve, k,\mu}(t))&\eqdef
\Psi_{k}^{-1}\circ g_{\ve,k,\mu}^{N_2+m_k+N_1+n_k}\big( \gamma_{k}(t) \big)
\\
&\hspace{-1cm}
 =\Psi^{-1}
_{k}\circ g_{\ve,k,\mu}
^{N_2+m_k+N_1+n_k}\circ \Psi_{k} ( \Theta_{\bar \varsigma} (0, t, 0)).
\end{split}
\end{equation}
Using  the formula in \eqref{e.borrowedfrom},
replacing $f_{ \bar \upsilon_k(\mu), \rho}$ by $g_{\ve,k, \mu}=\theta_{\ve, k} \circ f_{\ve, \bar\upsilon_k(\mu), \rho(\ve)}$ 
 (this leads to a dependence on $\ve$ of the next expressions),
and considering the curve
  $\Theta_{\bar \varsigma}(0,t,0)=(0, \varsigma_2^{-1} t, 0)$ (see
  \eqref{e.todaslastetas}),  from equation~\eqref{e.k-coord} we get
\begin{equation*}
\begin{split}
\overline{x}_{\ve,k,\mu}(t)&=\Big(
a_1\,\lambda_P^{m_k}\,
\sigma_P^{-m_k}\,\alpha_2
+
\big(\tilde{\mathfrak{c}}_k\,a_2+\tilde{\mathfrak{s}}_k\,a_3\big)\,\beta_2
\Big)\varsigma_2^{-1}\,t+
{\sigma_P}^{m_k}\,\sigma_Q^{n_k}\,\mathrm{hot}^x_{\ve, k,\mu}
(t),
\\
\overline{y}_{\ve, k,\mu}(t)
&=
\varsigma_2^{-1}\,\mu +b_1\,\lambda_P^{m_k}
\,{\sigma_Q}^{n_k}\,\alpha_2\,\varsigma_2^{-1}\,t+
\Big(\tilde{\mathfrak{c}}_k^2\,b_2+\tilde{\mathfrak{s}}_k^2\,b_3+
\tilde{\mathfrak{c}}_k\,\tilde{\mathfrak{s}}_k
\,b_4\Big)\,\beta_2^2\,\varsigma_2^{-2}\,t^2 +\\
&\qquad 
+{\sigma_P}^{2m_k}\,\sigma_Q^{2n_k}\,\mathrm{hot}^{y}_{\ve, k,\mu}(t),
\\
\overline{z}_{\ve,k,\mu}(t)&=\Big(
c_1\,\lambda_P^{m_k}\,
\sigma_P^{-m_k}\,\alpha_2
+
\big(\tilde{\mathfrak{c}}_k\,c_2+\tilde{\mathfrak{s}}_k\,c_3\big)\,\beta_2\Big)\,\varsigma_2^{-1}\,t+
{\sigma_P}^{m_k}\,\sigma_Q^{n_k}\,\mathrm{hot}^{z}_{\ve, k,\mu} (t),
\end{split}
\end{equation*}
where $\mathrm{hot}^{\ast}_{\ve, k,\mu} (t)$, $\ast=x,y,z$, are high order terms\footnote{In \cite{DiaPer:19} these high order terms are denoted by
  $\mathrm{h.o.t.}^{*}$,  $\mathrm{h.o.t.}^{**}$,  $\mathrm{h.o.t.}^{***}$.}.

\begin{remark}[Lemma 8.3 in \cite{DiaPer:19}]
\label{r.nomennescio}
{\em{The terms
$$
{\sigma_P}^{m_k}\,\sigma_Q^{n_k}
\,\mathrm{hot}^{x}_{\ve, k,\mu}(t),
\quad
{\sigma_P}^{2m_k}\,\sigma_Q^{2n_k}
\,\mathrm{hot}^{y}_{\ve, k,\mu}(t), 
\quad
{\sigma_P}^{m_k}\,\sigma_Q^{n_k}
\,\mathrm{hot}^{z}_{\ve, k,\mu}(t),
$$
go to zero in the $C^r$ topology as $k$ goes to infinity.
}}
\end{remark}

\medskip
\noindent{\em{$\bullet$ The term  
$\Psi_{k}^{-1}\circ g_{\ve,k,\mu}
^{N_2+m_k}\big({\gamma}_{\ve,k}(t)\big)$.}}

Recalling the parameterisation of ${\gamma}_{\ve,k}(t)$ in \eqref{e.gammaepsilon}, write
\begin{equation}
\label{e.tildegammacoord}
 \begin{split}
 &(\widetilde x_{\ve,k,\mu}(t),\widetilde y_{\ve,k,\mu}(t),\widetilde z_{\ve,k,\mu}(t))
\eqdef 
\Psi_{k}^{-1}\circ g_{\ve,k,\mu}
^{N_2+m_k}\big({\gamma}_{\ve,k}(t)\big)\\
&\quad\quad\quad=
\Psi^{-1}
_{k}\circ g_{
\ve,k,\mu}
^{N_2+m_k}\big(1+\widetilde x_{1,\ve}+{x}_{\ve,k}(t),{y}_{\ve,k}(t),{z}_{\ve,k}(t)\big).
\end{split}
\end{equation}
Using equation \eqref{e.coordellk}  and
 the linearity of $g_{\ve,k,\mu}$ in $U_P$, we get 
$$
g_{
\ve,k,\mu}
^{m_k}\big(1+\widetilde x_{1,\ve}+{x}_{\ve,k}(t),{y}_{\ve,k}(t),{z}_{\ve,k}(t)\big)
\eqdef \big(\widehat{x}_{\ve,k}(t),1
+\widehat{y}_{\ve,k}(t),
1+\widehat{z}_{\ve,k}(t)\big),
$$
where
\begin{equation*}
\begin{split}
\widehat{x}_{\ve,k}(t)&=
{\lambda_P}^{m_k}\,(1+\widetilde{x}_{1,\ve})+
{\lambda_P}^{m_k}\sigma_P^{-2m_k}
{\sigma_Q}^{-n_k}\,\big(\alpha_2+\widetilde{w}^{1,\ve}_1\big)\,\varsigma_2^{-1}\,t
\\ 
&\,\,\,\,\,\,\,\,+{\lambda_P}^{m_k}\widehat{\rho}^{1}_{1,\ve,k}(t),
\\
\widehat{y}_{\ve,k}(t)&=
\big(
\tilde{\mathfrak{c}}_k(\beta_2+\widetilde{w}^{1,\ve}_2)
-\widetilde{w}^{1,\ve}_3\,\tilde{\mathfrak{s}}_k
\big)\,
\sigma_P^{-m_k}
\,{\sigma_Q}^{-n_k}
\,\varsigma_2^{-1}\,t
+{\sigma_P}^{m_k}\,
u_{\ve,k}(t),
\\
\widehat{z}_{\ve,k}(t)&=
\big(
\tilde{\mathfrak{s}}_k(\beta_2+\widetilde{w}^{1,\ve}_2)
+\widetilde{w}^{1,\ve}_3\,\tilde{\mathfrak{c}}_k
\big)\,
\sigma_P^{-m_k}
\,{\sigma_Q}^{-n_k}
\,\varsigma_2^{-1}\,t
+{\sigma_P}^{m_k}\,
v_{\ve,k}(t),
\end{split}
\end{equation*}
with  
 \begin{equation}
 \label{e.sinnombre}
 \begin{split}
u_{\ve, k}(t)&\eqdef 
\tilde{\mathfrak{c}}_k\,\widehat{\rho}^{2}_{1,\ve,k}(t)-
\tilde{\mathfrak{s}}_k\,
\widehat{\rho}^{3}_{1,\ve,k}(t),\\
v_{\ve,k}(t)&\eqdef
\tilde{\mathfrak{s}}_k\,
\widehat{\rho}^{2}_{1,\ve,k}(t)+
\tilde{\mathfrak{c}}_k\,
\widehat{\rho}^{3}_{1,\ve,k}(t).
\end{split}
\end{equation}
Recalling 
 the expressions of $\widehat{\rho}^{2}_{1,\ve,k}(t)$, $\widehat{\rho}^{3}_{1,\ve,k}(t)$ in ~\eqref{e.?}, for $i=2,3$ we get
  \begin{equation}
  \label{e.Landaus}
  O(u_{\varepsilon,k}(t))
=  O(v_{\varepsilon,k}(t))=  
  O( \widehat{\rho}^{i}_{1,\ve,k}(t))= 
 O(\sigma_P^{-4m}
{\sigma_Q}^{-2n_k}).
\end{equation}
Thus the Landau symbols of $\widehat{x}_{\ve,k}(t),
\widehat{y}_{\ve,k}(t)$, and $\widehat{z}_{\ve,k}(t)$ are of the form
\begin{equation*}\label{e.siM}
\widehat{x}_{\ve,k}(t)
=O({\lambda_P}^{m_k}),\qquad  \widehat{y}_{\ve,k}(t)
=O(\sigma_P^{-m_k}{\sigma_Q}^{-n_k})
=
\widehat{z}_{\ve,k}(t).
\end{equation*}
Note that
\[
(\widetilde x_{\ve,k,\mu}(t),\widetilde y_{\ve,k,\mu}(t),\widetilde z_{\ve,k,\mu}(t))
=
\Psi_{k}^{-1}\circ g^{N_2}_{\ve,k,\mu}  \big(\widehat{x}_{\ve,k}(t),1+\widehat{y}_{\ve,k}(t),
1+\widehat{z}_{\ve,k}(t)\big).
\]
Recalling the definitions of $\Psi_{k}$ in \eqref{e.chart} and
of $f_{\bar \upsilon_k(\mu)}^{N_2}$ 
(see Remark~\ref{r.supUpsilontrans}) and that
$g_{\ve,k, \mu}^{N_2}=f_{\ve, \bar \upsilon_k(\mu), \rho(\ve)}^{N_2}=f_{\bar \upsilon_k(\mu)}^{N_2}$
 in the neighbourhood $f_{\bar \upsilon_k(\mu)}^{-N_2} (B(\widetilde Y, \rho (\ve)))$ of $Y$ 
that we
 are considering, 
we get
\begin{equation*}
\begin{split}
\widetilde x_{\ve,k,\mu}(t)&=
a_1\,{\lambda_P}^{m_k}\sigma_P^{m_k}{{\sigma_Q}}^{n_k}\,\widetilde x_{1,\ve}
\\
&
\,\,\,\,\,\,
+\Big(
a_1\,{\lambda_P}^{m_k}{\sigma_P}^{-m_k}\,(\alpha_2+\widetilde{w}^{1,\ve}_1)+a_2\big
(
\tilde{\mathfrak{c}}_k\,(\beta_2+ \widetilde{w}^{1,\ve}_2)-\widetilde{w}^{1,\ve}_3\,\tilde{\mathfrak{s}}_k
\big)
\\
&
\,\,\,\,\,\,
+
a_3
\big(
\tilde{\mathfrak{s}}_k\,(\beta_2+ \widetilde{w}^{1,\ve}_2)+\widetilde{w}^{1,\ve}_3\,\tilde{\mathfrak{c}}_k
\big)
\Big)\,
\,\varsigma_2^{-1}\,t+
{\mathrm{Hot}}^{x}_{\ve,k,\mu}(t);
\\
\widetilde y_{\ve,k,\mu}(t)&=\varsigma_2^{-1}\,\mu+ 
b_1\,{\lambda_P}^{m_k}\sigma_P^{2m_k}{{\sigma_Q}}^{2n_k}\,\widetilde x_{1,\ve}+
b_1\,{\lambda_P}^{m_k}\sigma_Q^{n_k}\,(\alpha_2+\widetilde{w}^{1,\ve}_1)\,\varsigma_2^{-1}\,t
\\
&
\,\,\,\,\,\,
+
\Big(
b_2\,\big(\tilde{\mathfrak{c}}_k(\beta_2+\widetilde{w}^{1,\ve}_2)-\widetilde{w}^{1,\ve}_3\,\tilde{\mathfrak{s}}_k\big)^2+
b_3\,\big(\tilde{\mathfrak{s}}_k(\beta_2+\widetilde{w}^{1,\ve}_2)+\widetilde{w}^{1,\ve}_3\,\tilde{\mathfrak{c}}_k\big)^2\\
&
\,\,\,\,\,\,
+
b_4\,\big(\tilde{\mathfrak{c}}_k(\beta_2+\widetilde{w}^{1,\ve}_2)-\widetilde{w}^{1,\ve}_3\,\tilde{\mathfrak{s}}_k\big)
\big(\tilde{\mathfrak{s}}_k(\beta_2+\widetilde{w}^{1,\ve}_2)+\widetilde{w}^{1,\ve}_3\,\tilde{\mathfrak{c}}_k\big)
\Big)
\,\varsigma^{-2}_2\,t^2\\
&
\,\,\,\,\,\,
+
{\mathrm{Hot}}^{y}_{\ve,k,\mu}(t);
\\
\widetilde z_{\ve,k,\mu}(t)&=
c_1\,{\lambda_P}^{m_k}\sigma_P^{m_k}{{\sigma_Q}}^{n_k}\,\tilde x_{1,\ve}
\\
&
\,\,\,\,\,\,
+\Big(
c_1\,{\lambda_P}^{m_k}{\sigma_P}^{-m_k}\,(\alpha_2+\widetilde{w}^{1,\ve}_1)+c_2\big
(
\tilde{\mathfrak{c}}_k\,(\beta_2+ \widetilde{w}^{1,\ve}_2)-\widetilde{w}^{1,\ve}_3\,\tilde{\mathfrak{s}}_k
\big)
\\
&
\,\,\,\,\,\,
+
c_3
\big(
\tilde{\mathfrak{s}}_k\,(\beta_2+ \widetilde{w}^{1,\ve}_2)+\widetilde{w}^{1,\ve}_3\,\tilde{\mathfrak{c}}_k
\big)
\Big)\,
\,\varsigma_2^{-1}\,t+
\mathrm{Hot}^{z}_{\ve,k,\mu}(t),
\end{split}
\end{equation*}
where $\mathrm{Hot}^{\ast}_{\ve, k,\mu} (t)$, $\ast=x,y,z$, are high order terms.
Their explicit expressions  can be found in Section~\ref{ss.highorderterms}.

\medskip
\noindent{\em{$\bullet$ Comparing the terms in Lemma~\ref{lsl.casicasi}}}
We are now ready to estimate the difference between the coordinates of the points in~\eqref{e.15} and~\eqref{e.tildegammacoord}. 
For that writing
$$
 \delta^w_{\ve,k,\mu}(t)\eqdef
\widetilde w_{\ve,k,\mu}(t)- \overline{w}_{\ve,k,\mu}(t),
\quad  w=x,y,z,
$$
we obtain 
\begin{equation*}
\begin{split}
 \delta^x_{\ve,k,\mu}(t)&=
a_1\,{\lambda_P}^{m_k}\sigma_P^{m_k}{{\sigma_Q}}^{n_k}\,\widetilde x_{1,\ve}
+\Big(
a_1\,{\lambda_P}^{m_k}{\sigma_P}^{-m_k}\,\widetilde{w}^{1,\ve}_1+a_2\big
(
\tilde{\mathfrak{c}}_k\, \widetilde{w}^{1,\ve}_2-\tilde{\mathfrak{s}}_k\,\widetilde{w}^{1,\ve}_3
\big)
\\
&+
a_3
\big(
\tilde{\mathfrak{s}}_k\, \widetilde{w}^{1,\ve}_2+\tilde{\mathfrak{c}}_k\,\widetilde{w}^{1,\ve}_3
\big)
\Big)\,
\,\varsigma_2^{-1}\,t+
{\mathrm{Hot}}^{x}_{\ve,k,\mu}(t)-\sigma^{m_k}_{P}\,\sigma^{n_k}_{Q}{\mathrm{hot}}^{x}_k
(t);
\end{split}
\end{equation*}
\begin{equation*}
\begin{split}
 \delta^y_{\ve,k,\mu}(t)&=
b_1\,{\lambda_P}^{m_k}\sigma_P^{2m_k}{{\sigma_Q}}^{2n_k}\,\widetilde x_{1,\ve}+
b_1\,{\lambda_P}^{m_k}\sigma_Q^{n_k}\,\widetilde{w}^{1,\ve}_1\,\varsigma_2^{-1}\,t
\\
&+
\Big(
b_2\,
\big[
2\,\beta_2\,\widetilde{w}^{1,\ve}_2\,\tilde{\mathfrak{c}}_k^2+(\widetilde{w}^{1,\ve}_2)^2\,\tilde{\mathfrak{c}}_k^2
-2\,\tilde{\mathfrak{c}}_k\,\tilde{\mathfrak{s}}_k\,\widetilde{w}^{1,\ve}_3(\beta_2+\widetilde{w}^{1,\ve}_2)
+(\widetilde{w}^{1,\ve}_3)^2\,\tilde{\mathfrak{s}}_k^2
\big]
\\
&+
b_3\,\big[
2\,\beta_2\,\widetilde{w}^{1,\ve}_2\,\tilde{\mathfrak{s}}_k^2+(\widetilde{w}^{1,\ve}_2)^2\,\tilde{\mathfrak{s}}_k^2
+2\,\tilde{\mathfrak{c}}_k\,\tilde{\mathfrak{s}}_k\,\widetilde{w}^{1,\ve}_3(\beta_2+\widetilde{w}^{1,\ve}_2)
+(\widetilde{w}^{1,\ve}_3)^2\,\tilde{\mathfrak{c}}_k^2
\big]\\
&
+
b_4\,\big[
2\,\beta_2\,\widetilde{w}^{1,\ve}_2\,\tilde{\mathfrak{s}}_k\,\tilde{\mathfrak{c}}_k+(\widetilde{w}^{1,\ve}_2)^2\,\tilde{\mathfrak{s}}_k\,\tilde{\mathfrak{c}}_k
\\
&+
(\tilde{\mathfrak{c}}_k^2-\tilde{\mathfrak{s}}_k^2)\,\widetilde{w}^{1,\ve}_3(\beta_2+\widetilde{w}^{1,\ve}_2)
-(\widetilde{w}^{1,\ve}_3)^2\tilde{\mathfrak{s}}_k\,\tilde{\mathfrak{c}}_k
\big]
\Big)
\,\varsigma^{-2}_2\,t^2
\\
&
+
{\mathrm{Hot}}^{y}_{\ve,k,\mu}(t)-
\sigma_P^{2m_k}{{\sigma_Q}}^{2n_k}\,
{\mathrm{hot}}^{y}_{k}(t);
\end{split}
\end{equation*}
\begin{equation*}
\begin{split}
 \delta^z_{\ve,k,\mu}(t)&=
c_1\,{\lambda_P}^{m_k}\sigma_P^{m_k}{{\sigma_Q}}^{n_k}\,\widetilde x_{1,\ve}
+\Big(
c_1\,{\lambda_P}^{m_k}{\sigma_P}^{-m_k}\,\widetilde{w}^{1,\ve}_1+c_2\big
(
\tilde{\mathfrak{c}}_k\, \widetilde{w}^{1,\ve}_2-\tilde{\mathfrak{s}}_k\,\widetilde{w}^{1,\ve}_3
\big)
\\
&+
c_3
\big(
\tilde{\mathfrak{s}}_k\, \widetilde{w}^{1,\ve}_2+\tilde{\mathfrak{c}}_k\,\widetilde{w}^{1,\ve}_3
\big)
\Big)\,
\,\varsigma_2^{-1}\,t+
{\mathrm{Hot}}^{z}_{\ve,k,\mu}(t)-\sigma^{m_k}_{P}\,\sigma^{n_k}_{Q}{\mathrm{hot}}^{z}_k (t).
\end{split}
\end{equation*}

We now prove that  
 $\big\Vert \delta^x_{\ve,k,\mu}\,|_{[-4,4]} \big\Vert_{r} \to 0$.
The proofs of 
 $\big\Vert \delta^\ast_{\ve,k,\mu}\,|_{[-4,4]}\big\Vert_{r} \to 0$, $\ast=y,z$,
 are 
similar and hence omitted. 
For that write
$$
\delta^x_{\ve,k,\mu}(t)=\mathcal{A}_{\ve,k,\mu}(t) +
 {\mathrm{Hot}}^{x}_{\ve,k,\mu}(t)-\sigma^{m_k}_{P}\,\sigma^{n_k}_{Q}{\mathrm{hot}}^{x}_k (t),
$$
where  $\mathcal{A}_{\ve,k,\mu}(t)$ denotes the affine part of
$\delta^x_{\ve,k,\mu}(t)$.

\begin{cl}\label{cl.inappendix}
$\,$
\begin{enumerate}
\item
$\lim_{k\to \infty} \, \big\Vert\ \mathcal{A}_{\ve,k,\mu}\,|_{[-4,4]} 
\big\Vert_{r} = 0$,
\item
$\lim_{k\to \infty} \, \big\Vert\sigma^{m_k}_{P}\,\sigma^{n_k}_{Q}\,{\mathrm{hot}}^{x}_{\ve,k,\mu} \,|_{[-4,4]}
\big\Vert_{r}= 0,
$
\item
$\lim_{k\to \infty} \,  \big\Vert
{\mathrm{Hot}}^{x}_{\ve,k,\mu}\,|_{[-4,4]}\big\Vert_{r} = 0$.
\end{enumerate}
\end{cl}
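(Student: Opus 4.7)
The three items share a structural feature: each is (to leading order) a polynomial of bounded degree in $t\in[-4,4]$, so controlling the $C^r$ norm reduces to bounding a finite collection of coefficients together with the $r$-th derivatives of the higher-order remainders. The plan is therefore to treat each coefficient using four ingredients already assembled: the spectral estimate \eqref{e.espectroparazero}, the convergence $\widetilde{w}^{1,\ve}_i\to 0$ as $\ve\to 0$ from Remark~\ref{r.convergencew}, the limits $\tilde{\mathfrak{c}}_k,\tilde{\mathfrak{s}}_k\to 1/\sqrt{2}$ from Remark~\ref{r.convergenceofconvergents}, and the flatness estimate \eqref{e.Landaus} for $\widehat{\rho}^i_{1,\ve,k}$. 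Item (2) is immediate: it is precisely the content of Remark~\ref{r.nomennescio} (Lemma~8.3 of \cite{DiaPer:19}), which applies without modification to $g_{\ve,k,\mu}$ because the perturbation $\theta_{\ve,k}$ is the identity along the iterates of $\gamma_k$ relevant here (these iterates stay in the admissible set $U_{\ve,\bar\upsilon_k(\mu),\rho(\ve)}$, disjoint from $\mathrm{supp}(\theta_{\ve,k})$ by Remark~\ref{r.l.tenemosunblender}).

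For item (1), write $\mathcal{A}_{\ve,k,\mu}(t)=c_{\ve,k}+d_{\ve,k}\,t$. The constant term $c_{\ve,k}=a_1\,\lambda_P^{m_k}\sigma_P^{m_k}\sigma_Q^{n_k}\,\widetilde{x}_{1,\ve}$ tends to $0$ in $k$ via the factorisation $\lambda_P^{m_k}\sigma_P^{m_k}\sigma_Q^{n_k}=\lambda_P^{m_k/2}\cdot(\lambda_P^{m_k/2}\sigma_P^{m_k}\sigma_Q^{n_k})$, whose second factor is bounded by \eqref{e.espectroparazero}. The coefficient $d_{\ve,k}$ decomposes into the summand $a_1\lambda_P^{m_k}\sigma_P^{-m_k}\widetilde{w}^{1,\ve}_1$, which decays exponentially in $k$, and two summands $(a_2\tilde{\mathfrak{c}}_k+a_3\tilde{\mathfrak{s}}_k)\widetilde{w}^{1,\ve}_2$ and $(a_3\tilde{\mathfrak{c}}_k-a_2\tilde{\mathfrak{s}}_k)\widetilde{w}^{1,\ve}_3$, which are bounded in $k$ but arbitrarily small in $\ve$ by Remark~\ref{r.convergencew}. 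The limit statement in the claim is to be read in the standard sense of this paper: for each $\eta>0$, choose $\ve$ small and then $k$ large to make $\|\mathcal{A}_{\ve,k,\mu}\|_r<\eta$. This suffices for Lemma~\ref{lsl.casicasi} because the latter enters the proof of Lemma~\ref{l.adiskinthesuperpositionregion} only through the $C^1$-open condition ``contains a disc in the superposition region of $\Lambda_{\ve,k,\mu}$'', invoked via Lemma~\ref{l.opensuperposition}.

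Item (3) is the main technical burden. The high order terms $\mathrm{Hot}^x_{\ve,k,\mu}(t)$, whose explicit expression is collected in Section~\ref{ss.highorderterms}, arise from expanding $f^{N_2}_{\bar{\upsilon}_k(\mu)}\circ f^{m_k}_{\bar{\upsilon}_k(\mu)}$ along the disc $\widehat{J}^{\mathrm{u}}_{\ve,k}$. The strategy is to isolate each monomial: every one of them carries either a positive power of at least one of $\lambda_P^{m_k}$, $\sigma_P^{-m_k}$, $\sigma_Q^{-n_k}$ inherited from the linearised local dynamics, or a factor of $\widehat{\rho}^i_{1,\ve,k}(t)$. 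For the latter, the chain rule applied to the rescaling $t\mapsto\sigma_P^{-2m_k}\sigma_Q^{-n_k}\varsigma_2^{-1}t$, combined with $\widetilde{\rho}_{1,\ve}(0)=\tfrac{d}{dt}\widetilde{\rho}_{1,\ve}(0)=\mathbf{0}$, yields the $C^r$ size bound \eqref{e.Landaus} on $[-4,4]$, which dominates even the worst growing prefactor $\sigma_P^{2m_k}\sigma_Q^{2n_k}$ appearing in the composition. The real bookkeeping difficulty is that the translation part $\bar\tau_k=O(\sigma_P^{-m_k})$ introduced by $\theta_{\ve,k}$ must be propagated through every layer of the composition; the point is that $\sigma_P^{-m_k}$ decays exponentially and, when multiplied against the linear-part factors, remains compatible with \eqref{e.espectroparazero}, so that no term escapes the required $C^r$ decay. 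Assembling the three items yields \eqref{e.horadealmuerzo} and completes Lemma~\ref{lsl.casicasi}, hence Proposition~\ref{p.1dim}.
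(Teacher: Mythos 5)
Your items (1) and (2) are correct and follow the paper's own route: (1) is exactly the combination of \eqref{e.espectroparazero} (which kills the constant term $a_1\lambda_P^{m_k}\sigma_P^{m_k}\sigma_Q^{n_k}\widetilde x_{1,\ve}$ and the summand with $\lambda_P^{m_k}\sigma_P^{-m_k}$) with the smallness of $(\widetilde{w}^{1,\ve}_1,\widetilde{w}^{1,\ve}_2,\widetilde{w}^{1,\ve}_3)$ from Remark~\ref{r.convergencew}, and your reading of the limit as ``first $\ve$ small, then $k$ large'', used only through the $C^1$-open superposition condition, is a sensible (indeed more careful) rendering of what the paper does; (2) is, as you say, Remark~\ref{r.nomennescio}.

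Item (3), however, has a genuine gap. Your dichotomy --- every monomial of $\mathrm{Hot}^{x}_{\ve,k,\mu}$ carries either a decaying factor $\lambda_P^{m_k}$, $\sigma_P^{-m_k}$, $\sigma_Q^{-n_k}$ or a factor $\widehat{\rho}^{i}_{1,\ve,k}$ --- fails for the last summand in the explicit expression of Section~\ref{ss.highorderterms}, namely $\sigma_P^{m_k}\sigma_Q^{n_k}\,H_1\big(\widehat{\textbf{w}}_{\ve,k,\mu}(t)\big)$: its prefactor grows, and $H_1\circ\widehat{\textbf{w}}_{\ve,k,\mu}$ is not one of the rescaled remainders covered by \eqref{e.Landaus}. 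Smallness of the argument alone is not enough: since $\widehat{\textbf{w}}_{\ve,k,\mu}(t)$ has coordinates of order $O(\lambda_P^{m_k})$ and $O(\sigma_P^{-m_k}\sigma_Q^{-n_k})$, a merely Lipschitz bound on $H_1$ would give $\sigma_P^{m_k}\sigma_Q^{n_k}\cdot O(\sigma_P^{-m_k}\sigma_Q^{-n_k})=O(1)$, which does not decay. One must use that $H_1(\textbf{0})=\textbf{0}$ and $DH(\textbf{0})=0$ (condition ({\bf C})), so that $H_1(\widehat{\textbf{w}}_{\ve,k,\mu})$ is quadratically small, and then combine this with \eqref{e.espectroparazero} and a chain-rule bookkeeping of all $t$-derivatives up to order $r$ to get the $C^r$ decay. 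This is precisely the term the paper isolates in Section~\ref{sss.claim}: it first shows $\Vert(\mathrm{Hot}^{x}_{\ve,k,\mu}-\sigma_P^{m_k}\sigma_Q^{n_k}H_1\circ\widehat{\textbf{w}}_{\ve,k,\mu})|_{[-4,4]}\Vert_r\to 0$ using \eqref{e.sinnombre} and \eqref{e.Landaus} (the part your argument does cover), and then disposes of the remaining term by the argument of \cite[Claim 8.4]{DiaPer:19}. Your proposal never mentions $H_1$ or its flatness at the origin, so as written the decisive term is unaccounted for; to close the gap you should either reproduce that estimate or cite it explicitly. (Two minor slips, not affecting the above: for the $x$-component the worst prefactors are $\sigma_P^{2m_k}\sigma_Q^{n_k}$ on $u_{\ve,k},v_{\ve,k}$, not $\sigma_P^{2m_k}\sigma_Q^{2n_k}$; and the propagation of $\bar\tau_k$ is not the real difficulty --- it only shifts the starting point by $O(\sigma_P^{-m_k})$ and is absorbed in the linear part of the local dynamics.)
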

Clearly, this claim implies Lemma~\ref{lsl.casicasi}. 

\begin{proof}[Proof of Claim~\ref{cl.inappendix}]
For the  first item recall that by \eqref{e.espectroparazero} we get
$\lambda_P^{m_k}\sigma_P^{2m_k}{{\sigma_Q}}^{2n_k}\to 0$ and
that by Remark~\ref{r.convergencew} the norm of
 $(\widetilde{w}^{1,\ve}_1,\widetilde{w}^{1,\ve}_2,\widetilde{w}^{1,\ve}_3)$ is small.
The second item
was stated in Remark~\ref{r.nomennescio}.
The proof of  last item 
is postponed to
Section~\ref{sss.claim}.
\end{proof}
The proof of Proposition~\ref{p.1dim} is now complete.

\section{Proof of Theorem~\ref{t:1}: homoclinic relations}
\label{s.homocliinic relation}
We now prove that blender
 $\Upsilon_{\ve,k,\mu}$ and the saddle $P$ are homoclinically related.

\begin{prop}\label{p.homoclinicallyrelated}
For every small $\ve>0$,   large $k$, and $\mu \in (-10,-9)$ there is $g$ arbitrarily $C^r$ close to $g_{\ve,k,\mu}$ such that
$$
W^\mathrm{u}
\big(
{\Upsilon}_{g},g
\big) \pitchfork
 W^\mathrm{s}\big(P,g\big)\neq\emptyset \quad \mbox{and} \quad
 W^\mathrm{s}
\big(
{\Upsilon}_{g},g
\big) \pitchfork
 W^\mathrm{u}\big(P,g\big)\neq\emptyset.
 $$
\end{prop}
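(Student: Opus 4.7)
The plan is to derive both transverse intersections required by the proposition from the heterodimensional connections between $\Upsilon$ and $Q$ already established by Propositions~\ref{p.why} and~\ref{p.1dim}, combined with the transverse homoclinic orbits $Z^\pm$ of $Q$ produced by Proposition~\ref{p.Lnbis} and the heteroclinic orbit of the initial cycle through $X,\widetilde X$. The key auxiliary tool throughout will be an inclination-lemma argument around the saddle-focus $Q$.

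First I would address $W^{\mathrm u}(\Upsilon_g,g)\pitchfork W^{\mathrm s}(P,g)\neq\emptyset$. The transverse homoclinic points $Z^\pm$ embed $Q$ into a nontrivial transitive basic set $\Sigma_Q$ by Smale--Birkhoff, and $W^{\mathrm s}(Q)\subset W^{\mathrm s}(\Sigma_Q)$. Proposition~\ref{p.why} gives a $2$-disc $D\subset W^{\mathrm u}(\Upsilon)$ transverse to $W^{\mathrm s}(Q)$; a $1$-dimensional subcurve $\gamma\subset D$ transverse to $W^{\mathrm s}(Q)$ in $M$ has, by the $\lambda$-lemma, forward iterates $f^n(\gamma)\subset W^{\mathrm u}(\Upsilon)$ accumulating $C^1$-close to the global continuation of $W^{\mathrm u}(Q)$, hence passing arbitrarily close to the heteroclinic point $\widetilde X\in W^{\mathrm u}(Q)\cap W^{\mathrm s}_{\mathrm{loc}}(P)$. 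The ambient $2$-disc $f^n(D)\subset W^{\mathrm u}(\Upsilon)$ has, near $\widetilde X$, tangent plane spanned by a direction close to $T_{\widetilde X}W^{\mathrm u}(Q)=(\alpha_2,\beta_2,0)$ and a second direction $v_n$ coming from the contraction-with-rotation of the saddle-focus $Q$ on its $2$-dimensional stable eigenspace. Because the freedom in choosing the sequence $(\vartheta_{n_k})$ in Section~\ref{sss.sojourn} allows us to take $\varphi_Q+\beta_k(\varphi_Q)$ irrational, the direction $v_n$ sweeps densely the unit circle in the stable eigenspace; transport by $A$ in~\eqref{e.Aisoftheform}, together with $\gamma_3\neq 0$, guarantees that $v_n$ has nonzero $z$-component for infinitely many $n$. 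Consequently the three vectors $T_{\widetilde X}W^{\mathrm u}(Q)$, $v_n$, and $T_{\widetilde X}W^{\mathrm s}(P)=\mathbb{R}\,e_1$ become linearly independent, and $f^{n+N_1}(D)$ crosses $W^{\mathrm s}(P)$ transversely near $\widetilde X$. If necessary, a translation-like $C^r$ perturbation of arbitrarily small size, supported in a small ball around $\widetilde X$ and disjoint from the blender's reference domain $\Delta_k(\mu)$ and from $B(\widetilde X_{i,\ve},2\rho(\ve))$, $B(\widetilde Y,2\rho(\ve))$, realises the actual intersection.

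For the second intersection $W^{\mathrm s}(\Upsilon_g,g)\pitchfork W^{\mathrm u}(P,g)\neq\emptyset$, I start from the robust quasi-transverse intersection point $w\in W^{\mathrm s}(\Upsilon)\cap W^{\mathrm u}(Q)$ provided by Proposition~\ref{p.1dim} via the superposition region of the blender. The heterodimensional tangency $Y\in W^{\mathrm u}(P)\cap W^{\mathrm s}(Q)$ is first unfolded by a small $C^r$ perturbation of $g_{\ve,k,\mu}$ supported in a neighbourhood of the orbit of $Y$, chosen disjoint from the blender's domain and from the supports in the renormalisation scheme of Section~\ref{ss.embeddingfamily}, to produce a transverse crossing $W^{\mathrm u}(P)\pitchfork W^{\mathrm s}(Q)$. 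Taking a $1$-dimensional subcurve $\eta\subset W^{\mathrm u}(P)$ transverse to $W^{\mathrm s}(Q)$, the $\lambda$-lemma for $Q$ gives $f^n(\eta)\subset W^{\mathrm u}(P)$ accumulating $C^1$-close to $W^{\mathrm u}(Q)$, hence passing arbitrarily close to $w$. Persistence of the quasi-transverse crossing at $w$ yields intersection points $x\in f^n(\eta)\cap W^{\mathrm s}(\Upsilon)$ close to $w$; at such $x$, the tangent plane $T_x W^{\mathrm u}(P)$ contains the direction $\approx T_w W^{\mathrm u}(Q)$ plus a second direction arising from the $2$-dimensional structure of $W^{\mathrm u}(P)$, and the same irrational-rotation argument arranges that this plane does not contain the $1$-dimensional $T_w W^{\mathrm s}(\Upsilon)$, which is already linearly independent of $T_w W^{\mathrm u}(Q)$ by quasi-transversality at $w$. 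Hence $T_x W^{\mathrm s}(\Upsilon)+T_x W^{\mathrm u}(P)=T_x M$, as required.

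The principal obstacle is to upgrade merely nonempty or quasi-transverse intersections to genuinely transverse ones; both parts rely on the saddle-focus rotation of $Q$, with irrational argument, to drive the second tangent direction of the iterated $2$-dimensional discs through a dense set of orientations and thereby realise $3$-dimensional linear independence for suitable iterates. A secondary technical point is to ensure that the small $C^r$ perturbations used in Part~II, especially the unfolding of $Y$, do not destroy the robust connections of $\Upsilon$ and $Q$ from Propositions~\ref{p.why} and~\ref{p.1dim}; this is handled by keeping the supports of all perturbations disjoint from the blender's reference domain and from the neighbourhoods $B(\widetilde X_{i,\ve},2\rho(\ve))$, $B(\widetilde Y,2\rho(\ve))$ involved in the construction of $g_{\ve,k,\mu}$.
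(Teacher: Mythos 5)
Your strategy (inclination lemma plus the irrational rotation of the saddle-foci, fed by the connections from Propositions~\ref{p.why} and \ref{p.1dim}) is in the spirit of the paper, but you iterate the two-dimensional manifolds forward towards $W^{\mathrm u}(Q)$, whereas the paper iterates the one-dimensional objects backward: a curve of $W^{\mathrm s}(P)$ through the quasi-transverse point $X_{2,\ve}$ (for the first intersection) and a curve $D\subset W^{\mathrm s}(\Upsilon_g)$ through the point $Z$ of Proposition~\ref{p.1dim} (for the second), and applies Lemma~\ref{l.closure} at $Q$ for $g^{-1}$: these backward iterates transversally cross any disc of $W^{\mathrm u}(\Upsilon_g)$, respectively of $W^{\mathrm u}(P,g)$, meeting $W^{\mathrm s}_{\mathrm{loc}}(Q,g)$ along a curve with nontrivial radial projection. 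That direction forces actual, transverse intersections with no auxiliary translations. Your Part I can be repaired along your lines (the forward image of the disc near $\widetilde X$ is an exponentially thin strip, so the extra translation is genuinely needed and is admissible since the statement allows a further perturbation), but your safety argument is off: the support of a translation near $\widetilde X$ cannot be disjoint from the orbit of the blender, because admissible orbits pass through $B(\widetilde X,2\rho(\ve))$, see \eqref{e.baltazar}; what saves the construction is the $C^1$-robustness of the blender-horseshoe and the openness/robustness of the intersections of Propositions~\ref{p.why} and~\ref{p.1dim}, not disjointness of supports. Likewise, unfolding the tangency at $Y$ is unnecessary: $g_{\ve,k,\mu}$ already has transverse intersections of $W^{\mathrm u}(P)$ with $W^{\mathrm s}_{\mathrm{loc}}(Q)$ along curves with nontrivial radial projection (the surfaces $S_\pm$ and curves $\gamma_\pm$ built in the proof of Proposition~\ref{p.Lnbis}), which is exactly what the paper uses.

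The genuine gap is in Part II. First, ``persistence of the quasi-transverse crossing at $w$'' is not a valid principle: a quasi-transverse intersection of two curves in a three-manifold is destroyed by an arbitrarily small displacement of one of them. The step only works through the blender: by Lemma~\ref{l.adiskinthesuperpositionregion} the relevant piece of $W^{\mathrm u}(Q)$ is a $\ruu$-disc in the superposition region, and since $g^{n}(\eta)$ is $C^1$-close to it, Lemma~\ref{l.opensuperposition} gives that $g^{n}(\eta)$ itself contains a disc in the superposition region, whence Lemma~\ref{l.superpositionregion} yields the intersection with $W^{\mathrm s}_{\mathrm{loc}}(\Upsilon)$; you must argue this way, not by persistence of quasi-transversality. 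Second, your transversality claim at $x$ is unsupported: the second tangent direction of $W^{\mathrm u}(P)$ at $x$ is the image of a stable vector under the whole excursion ($f^{n_0}$ in $U_Q$, then $\mathfrak{T}_1$, then $f^{m_k}$ in $U_P$ with its own rotation $\varphi_P$, then $\mathfrak{T}_2$), not just under the rotation at $Q$, and arranging that the resulting plane misses the strong-stable direction of the blender at $x$ requires either a computation of the type of Lemma~\ref{l.maiNiam} or a further controlled perturbation, neither of which you supply. The paper's backward-iteration route avoids both problems, since Lemma~\ref{l.closure}(b) produces the intersection of the iterated stable leaf with the disc $S\subset W^{\mathrm u}(P)$ already transversally.
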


\begin{figure}
\centering
\begin{overpic}[scale=0.07,
]{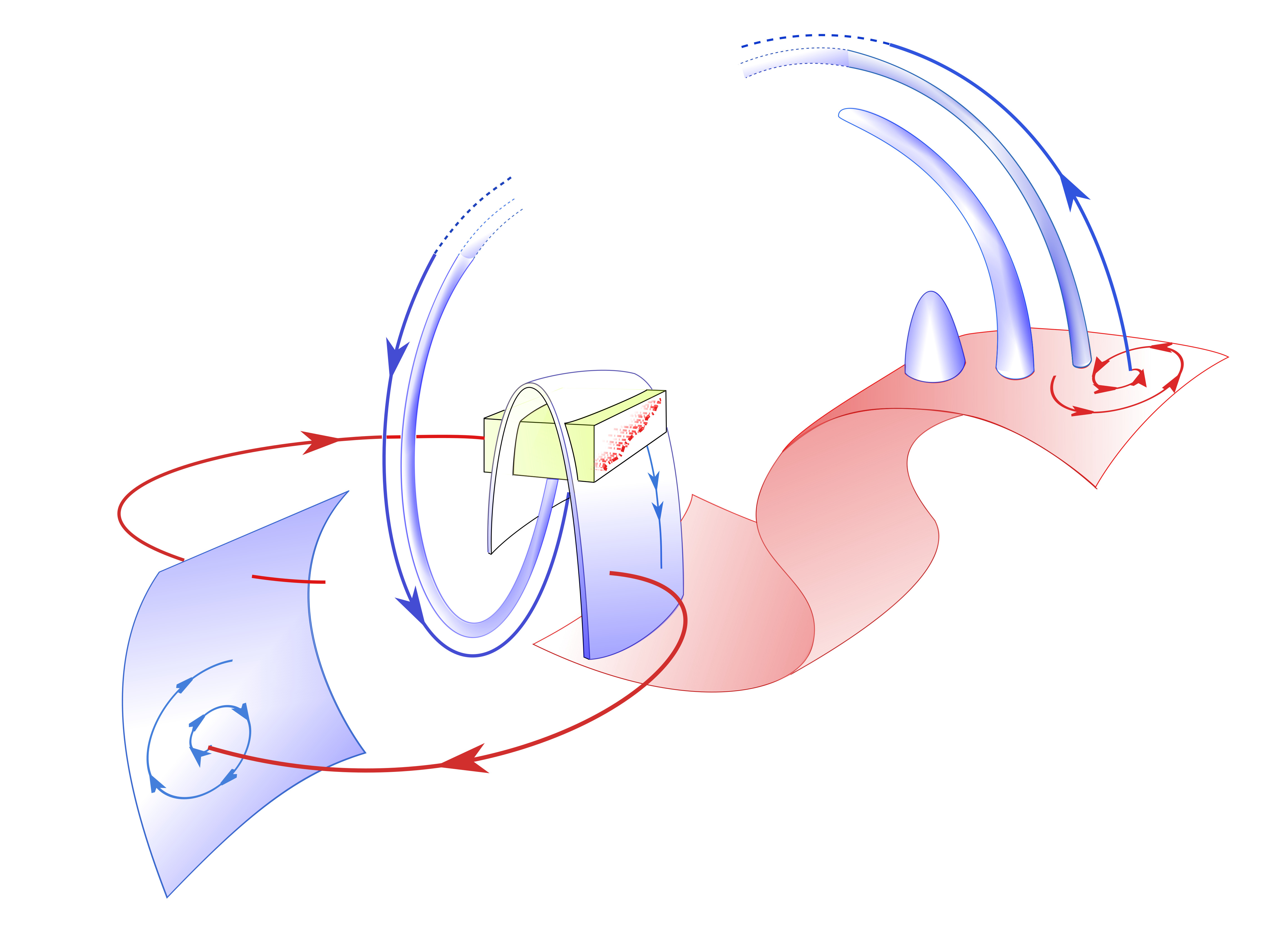}
  \put(65,53){\Large $P$}  
           \put(50,49){$\bullet$} 
     \put(162,152){\Large$\Upsilon_g$}  
  \put(290,165){\Large $Q$}  
           \put(283,146){$\bullet$}         
 \end{overpic}
\caption{Robust homoclinic tangencies} 
\label{fig:tatudo}
\end{figure}

\begin{proof}
Note that Propositions~\ref{p.why} and \ref{p.1dim} hold for perturbations $g$ of  $g_{\ve,k,\mu}$.
Recall the definition of the quasi-transverse heteroclinic point $X_{2,\ve}$ in Proposition~\ref{p.Lnbis} and observe that 
this intersection point is preserved by the local perturbations we have considered.
Therefore $X_{2,\ve}$ is also a heteroclinic point of $g_{\ve,k,\mu}$. Note also that after a perturbation
$g$ of  $g_{\ve,k,\mu}$ (that does not destroy the heteroclinic points) we can assume that the argument of  $D g (Q)$ is irrational. Thus the hypotheses of Lemma~\ref{l.closure} hold.

By  Proposition~\ref{p.why},
$
W^\mathrm{u}
\big(
{\Upsilon}_g,g
\big) \pitchfork
 W^\mathrm{s}\big(Q,g\big)\neq\emptyset.
$ Thus, after a perturbation, that we continue denoting by $g$ and does not change the argument of the complex eigenvalue of $Dg(Q)$, we can assume that
there is a
disc $S\subset W^\mathrm{u}(
{\Upsilon}_{g},g)$ that transversely intersects $W^\mathrm{s}_{\mathrm{loc}} (Q, g)$ along a curve $\gamma$ with nontrivial radial projection.
Lemma~\ref{l.closure} now implies that $W^{\mathrm{s}} (P, g)$ transversally intersects $S\subset W^\mathrm{u} (\Upsilon_{g} , g)$, proving the first item of the proposition. Note also that $g$ has a robust cycle associated to $Q$ and $\Upsilon_g$.

To prove the second  intersection, note that $W^\mathrm{u} (P,g)$ and $W^\mathrm{s}_{\mathrm{loc}} (Q,g)$ interesects transversely along a curve
 with a nontrivial radial projection.
Note also that by Proposition~\ref{p.1dim},
$W^\mathrm{s}(
{\Upsilon}_g,g)$ and 
$W^\mathrm{u}\big(Q,g\big)$ intersects quasi-transversely at some point $Z$. Consider a curve
$D\subset W^\mathrm{s}(
{\Upsilon}_g,g)$ and containing the point $Z$ in its interior.  Arguing as above and applying Lemma~\ref{l.closure}, we have that the negative iterates of $D$
(contained in $W^\mathrm{s}(
{\Upsilon}_g,g)$ transversally intersects $W^\mathrm{s} (P,g)$. This completes the proof of the proposition.
\end{proof}

\section{Proof of Theorem~\ref{t:1}: homoclinic tangencies}
\label{s.homocliinictangencies}

In this section, we consider perturbations $g_{\ve,k,\mu}$ of the diffeomorphisms $f$ in  
$\mathcal{H}^r_{\BH,\mathrm{e}^+}(M)$. Next proposition implies the part of Theorem~\ref{t:1} about homoclinic tangencies.

\begin{prop}\label{p.tagnecies}
For every small $\ve>0$,   large $k$, and $\mu \in (-10,-9)$ there is $g$ arbitrarily $C^r$ close to $g_{\ve,k,\mu}$ with a $C^r$ robust homoclinic tangency associated to blender-horseshoe ${\Upsilon}_g$.
\end{prop}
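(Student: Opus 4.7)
The strategy is to verify the hypothesis of Corollary~\ref{c.tang} by producing a $\ruu$-tube in-between inside $W^{\ru}(P_{g},g)$, and then to upgrade the resulting tangency to a $C^r$ robust homoclinic tangency for the blender-horseshoe $\Upsilon_g$ using the homoclinic relation with $P_g$ furnished by Proposition~\ref{p.homoclinicallyrelated}. The elliptic $\mathrm{e}^+$ geometry is used only to fix the correct side/orientation, so that the produced tube is truly \emph{in-between} the reference fixed points $P^{\pm}_{\bar\upsilon_k^{\xi}(\mu),\rho(\ve)}$ of $\Upsilon_g$.

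First, I would exploit the elliptic $\mathrm{e}^+$ condition at $Y$ to extract a distinguished piece $\mathcal{P}_Y \subset W^{\ru}(P,f) \cap U_P$ near $Y$: because $W^{\ru}(P,f)$ is $2$-dimensional and tangent to $W^{\rs}(Q,f)$ at $Y$ in an elliptic fashion, and because the transition $\mathfrak{T}_2$ in~\eqref{e.transition2}--\eqref{e.Bisoftheform} is a diffeomorphism whose quadratic component is governed by $b_2,b_3,b_4$, the image $\mathfrak{T}_2(\mathcal{P}_Y) \subset W^{\ru}(P,f) \cap U_Q$ near $\widetilde Y$ has a parabolic shape in the $(y,z)$-plane of $Q$, with its convexity determined by the sign of $(b_2+b_3+b_4)\beta_2^2$, i.e.\ by $\varsigma_2$. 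The $\mathrm{e}^+$ hypothesis then fixes on which side of $W^{\ru}_{\loc}(P,f)$ the corresponding fold of $\mathcal{P}_Y$ lies relative to $\widetilde X$, which after transporting through $\mathfrak{T}_1$ and the hyperbolic dynamics at $P$ determines the eventual side in the reference domain $\Delta$ of the blender.

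Next I would apply the renormalisation scheme of Section~\ref{s.ren} directly to the surface $\mathfrak{T}_2(\mathcal{P}_Y)$, following the same strategy used in Lemma~\ref{l.hankook} for the diagonal foliation. More precisely, let $\widetilde{\mathcal{P}}_{k,\mu}\eqdef \Phi_k^{-1}(\mathfrak{T}_2(\mathcal{P}_Y))$; by Proposition~\ref{p.diazperez}, Proposition~\ref{p.blendersparatodos}, and the explicit rescaling of Section~\ref{sss.parameter} (with the parameter choices from~\eqref{e.rescaling}), the surface $\widetilde{\mathcal{P}}_{k,\mu}$ converges in $C^r$ on compact sets to a limit surface $\mathcal{P}_\infty \subset \mathbb{R}^3$ of the form
\[
\mathcal{P}_\infty=\big\{(x,\,x^2+\alpha(a)\,x+\beta(\mu,s_0,a),\,z_0)\,:\,(x,a)\in K\big\}
\]
for some fixed $s_0,z_0$ and compact $K\subset\mathbb{R}^2$, parallel to the parabolic family of~\eqref{e.familiadeparaolas}. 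For $\mu\in(-10,-9)$ and $\bar\eta(\xi,f)$ small the vertex of these parabolas lies below the reference fixed point $P^{+}_{\xi,\mu,\bar\eta}$ and above $P^{-}_{\xi,\mu,\bar\eta}$, as can be read off from the coordinates~\eqref{e.referencesaddle} and the convexity analysis in Remark~\ref{r.bigangles}. A suitable family of $\ruu$-discs slicing $\mathcal{P}_\infty$ transverse to $\mathcal{C}^{\ruu}$ then produces, in the limit, a surface with two boundary $\ruu$-discs which are identified (the two ends of the parabola as the transverse parameter is moved), i.e.\ a $\ruu$-tube $T_\infty$ in the sense of Definition~\ref{d.stripstubesetc}, and the $\mathrm{e}^+$ convention guarantees that $T_\infty$ is in-between rather than a folding surface on the outside of the reference saddles.

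Combining these ingredients: by $C^r$ convergence, for every sufficiently large $k$ the surface $\mathcal{P}_{k,\mu}\eqdef\Phi_k(\widetilde{\mathcal{P}}_{k,\mu})\subset W^{\ru}(P,g_{\ve,k,\mu})$, viewed in the rescaled chart, contains a genuine $\ruu$-tube $T_{\ve,k,\mu}$ in-between the reference saddles of $\Upsilon_{g_{\ve,k,\mu}}$. Thus $T_{\ve,k,\mu}\subset W^{\ru}(P_{g_{\ve,k,\mu}},g_{\ve,k,\mu})$ satisfies the hypotheses of Corollary~\ref{c.tang}, yielding a $C^r$ neighbourhood $\mathcal{V}$ of $g_{\ve,k,\mu}$ such that for every $g\in\mathcal{V}$ the sets $W^{\rs}_{\loc}(\Upsilon_g,g)$ and $W^{\ru}(P_g,g)$ have a tangency point. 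After an arbitrarily small further perturbation $g$, Proposition~\ref{p.homoclinicallyrelated} makes $P_g$ and $\Upsilon_g$ homoclinically related, so they are contained in a common transitive hyperbolic extension $\widetilde{\Upsilon}_g$ with $W^{\rs}(\widetilde{\Upsilon}_g)\supset W^{\rs}(\Upsilon_g)$ and $W^{\ru}(\widetilde{\Upsilon}_g)\supset W^{\ru}(P_g)$; the tangency is then a homoclinic tangency of $\widetilde{\Upsilon}_g$ (hence associated to the blender-horseshoe $\Upsilon_g$), and both the tube property and the homoclinic relation are $C^r$ open, proving robustness.

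The main obstacle is the orientation/side analysis ensuring that the limit surface $T_\infty$ is an in-between $\ruu$-tube rather than (a) a folding surface outside the reference saddles, (b) a pair of disjoint $\ruu$-discs, or (c) a tube on the wrong side of $W^{\rs}_{\loc}(P^{\pm}_{\xi,\mu,\bar\eta})$. This requires a careful sign computation linking the elliptic convexity (the sign of $b_2+b_3+b_4$ together with the $\mathrm{e}^+$ position of $\mathcal{V}$ relative to $\widetilde X$) to the location of the vertex of the limit parabola in $\Delta$, via the rescalings of Section~\ref{sss.parameter} and the superposition region description in Remark~\ref{r.thecurveL}; the rest of the argument is routine propagation of a $C^r$-open geometric condition through the renormalisation scheme.
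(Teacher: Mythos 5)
Your overall skeleton (produce a $\ruu$-tube in-between inside $W^{\ru}(P_g,g)$, apply Corollary~\ref{c.tang}, then use Proposition~\ref{p.homoclinicallyrelated} to turn the tangency into a robust homoclinic tangency of the blender) is the same as the paper's, but the mechanism you propose for producing the tube fails, and it fails exactly at the step you yourself flag as ``the main obstacle''. The rescaled surface $\Phi_k^{-1}\big(f^{N_2}_{\bar\upsilon_k(\mu)}(W^{\ru}_{\loc}(P)\cap U_Y)\big)$ does converge (this is the two-dimensional analogue of Lemma~\ref{l.hankook}), but its limit is the image surface of the limit endomorphism, i.e.\ a graph of the form $y=\mu+Q(x,z)$ opening along the strong unstable direction, where the quadratic form $Q$ is definite in the elliptic case but extremely eccentric because $\bar\eta(\xi,f)$ is small (in the $G$-coordinates it is close to the parabolic cylinder $y=\mu+x^2$). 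Its fold sits at height $y\approx\mu\in(-10,-9)$, \emph{outside} the reference domain $\Delta=[-4,4]^2\times[-40,22]$, and its level curves close up only at $|z|$ of order $|\eta_2|^{-1/2}$, far outside the $z$-range of $\Delta$. Hence the intersection of this limit surface with $\Delta$ consists of two disjoint, nearly vertical sheets (the two branches of the parabola); there is no identification $D_0=D_1$ inside $\Delta$, so no $\ruu$-tube in the sense of Definition~\ref{d.stripstubesetc}, and no orientation/sign analysis can repair this: the obstruction is quantitative (the fold is excluded from $\Delta$, which is precisely why the renormalised return maps are hyperbolic horseshoes there). Also, as written, your $\mathcal{P}_\infty$ lies inside the plane $\{z=z_0\}$, which already signals that the projection formula of Lemma~\ref{l.hankook} has been misapplied to a two-dimensional object. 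So Corollary~\ref{c.tang} cannot be invoked for this surface, and Proposition~\ref{p.tang} does not apply either.

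The missing idea is the paper's global mechanism for creating the tube. For $g_{\ve,k,\mu}$ the unfolded \emph{elliptic} tangency makes $W^{\ru}(P)$ and $W^{\rs}(Q)$ intersect along a closed curve $C_{\ve,k,\mu}$ bounding a compact disc $S_{\ve,k,\mu}\subset W^{\ru}(P)$ whose boundary lies in $W^{\rs}(Q)$. By the $\lambda$-lemma the forward iterates of $S_{\ve,k,\mu}$ stretch along and accumulate on the one-dimensional manifold $W^{\ru}(Q)$, hence contain thin $\ruu$-tubes around segments of $W^{\ru}(Q)$; since $W^{\ru}(Q)$ already contains a disc in the superposition region (Lemma~\ref{l.adiskinthesuperpositionregion}, i.e.\ the one-dimensional connection established in Section~\ref{s.1-conecc}), these tubes are in-between, and Corollary~\ref{c.tang} yields the $C^r$ robust tangency between $W^{\ru}(P)$ and $W^{\rs}_{\loc}(\Upsilon)$; Proposition~\ref{p.homoclinicallyrelated} then upgrades it to a robust homoclinic tangency of the blender, as in your final paragraph. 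Note that the $\mathrm{e}^+$ hypothesis is used to obtain the closed intersection curve and its position relative to $\widetilde X$, not merely a convexity sign for a limit parabola.
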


\begin{proof}
Note that close to the original heterodimensional tangency, the manifold
$W^\mathrm{u}
\big(
P,g_{\ve,k,\mu}
\big)$ intersect $W^\mathrm{s}
\big(
Q,g_{\ve,k,\mu}
\big)$ 
 in closed curve denoted by $C_{\ve,k,\mu}$. Let $S_{\ve,k,\mu}$ be the two-dimensional compact disc contained in $W^\mathrm{u}
\big(
P,g_{\ve,k,\mu}
\big)$ bounded by  $C_{\ve,k,\mu}$.
By the $\lambda$-lemma, the  forward iterates $g^i_{\ve,k,\mu}(S_{\ve,k,\mu})$ of $S_{\ve,k,\mu}$ accumulated to the unstable manifold of $Q$.
 By Lemma~\ref{l.adiskinthesuperpositionregion},  the unstable manifold of $Q$ contains a disc in the superposition region of the blender ${\Upsilon}_{\ve,k,\mu}$. Thus  there are infinitely many iterates of $S_{\ve,k,\mu}$ 
 containing $\ruu$-tubes in
  the superposition region of the blender, see Figure~\ref{fig:tatudo} and recall Definition~\ref{d.stripstubesetc}.
 Corollary~\ref{c.tang} implies that the manifold $W^\mathrm{u}
\big(
P,g_{\ve,k,\mu}
\big)$ and $W^\mathrm{s}_{\mathrm{loc}}
\big(
\Upsilon_{\ve,k,\mu}
,g_{\ve,k,\mu}
\big)$ have a $C^r$ robust tangency. The proposition follows noting that by
Proposition~\ref{p.homoclinicallyrelated} the point $P$ and  the blender $\Upsilon_{\ve,k,\mu}$ are homoclinically related.
\end{proof}

\section{Calculations in the renormalisation scheme} \label{s.explicit1}

We collect some calculations from \cite{DiaPer:19} that we used in the previous sections.

\subsection{The renormalisation formula} \label{ss.renormalisationformula}
First, recall  the perturbations $f_{\bar\upsilon, \rho}$ of $f\in \mathcal{H}_{\BH}^r(M^3)$
 in  \eqref{e.thefamily}. For that, we  borrow from  \cite[Section 7.3]{DiaPer:19} 
the explicit formula for compositions 
of the form
 \begin{equation}
\label{e.borrowedfrom}
\Psi_k^{-1}\circ 
f_{ \bar \upsilon_k(\mu),\rho}^{N_2+m_k+N_1 +n_k}\circ 
 \Psi_k(x,y,z)\eqdef (\check{x}_{k,\mu,\rho}, \check{y}_{k,\mu, \rho}, \check{z}_{k,\mu,\rho}).
\end{equation}
Here, the
iterates corresponding to $n_k$ occurs in $U_Q$, the $N_1$ iterates correspond to the transition $\mathfrak{T}_1$,   the iterates corresponding to $m_k$ occurs in $U_P$, and
the $N_2$ iterates correspond to the transition $\mathfrak{T}_2$. 
      
 Consider the  heteroclinic points $X, Y$ in the cycle (see Section~\ref{ss.semilocal})
 and write
 \begin{equation}\label{e.hayquesepararlos}
 \begin{split}
\textbf{x}_{k,\mu,\rho} (x,y,z) &\eqdef f^{n_k}_{\bar \upsilon_k (\mu), \rho}\circ \Psi_k(x,y,z)-X,\\
\widehat{\textbf{x}}_{k,\mu,\rho} (x,y,z) &\eqdef f^{m_k+N_1+n_k}_{\bar \upsilon_k (\mu),\rho}\circ \Psi_k(x,y,z)-Y.
\end{split}
\end{equation}
 
 Using the notation in Section~\ref{s.ren},
 the composition in \eqref{e.borrowedfrom}  reads as follows:
\begin{equation}
\label{e.k-coord}
\begin{split}
\breve{x}_{k,\mu,\rho}&=
a_1\,\lambda_P^{m_k}\,
{\lambda_Q}^{n_k}\,
\big(
\alpha_1\,(\mathfrak{c}_k\,x-\mathfrak{s}_k\,z)
+\alpha_3\,(\mathfrak{s}_k\,x+\mathfrak{c}_k\,z)
\big)\\
&\quad +
a_1\,\lambda_P^{m_k}\,
\sigma_P^{-m_k}\,\alpha_2\,y
+
\big(\tilde{\mathfrak{c}}_k\,a_2+\tilde{\mathfrak{s}}_k\,a_3\big)\,\beta_2\,y\\
&\quad +
\sigma_P^{m_k}\,\lambda_Q^{n_k}\,\gamma_3
\,\big(\tilde{\mathfrak{c}}_k\,a_3-\tilde{\mathfrak{s}}_k\,a_2\big)\,
(\mathfrak{s}_k\,x+\mathfrak{c}_k\,z)
\\
&\quad +
{\sigma_P}^{m_k}\,\sigma_Q^{n_k}\,\mathrm{hot}^{{x}}_{k,\mu,\rho},
\\
\breve{y}_{k,\mu,\rho}
&=
\mu +b_1\,\lambda_P^{m_k}
\,{\sigma_Q}^{n_k}\,\alpha_2\,y
\\
&\quad +b_1\,{\lambda_P}^{m_k}\,
{\sigma_P}^{m_k}\,
\lambda_Q^{n_k}\,\sigma_Q^{n_k}\,
\big(
\alpha_1\,(\mathfrak{c}_k\,x-\mathfrak{s}_k\,z)
\alpha_3\,(\mathfrak{s}_k\,x+\mathfrak{c}_k\,z)
\big)\\
&\quad+
\Big(\tilde{\mathfrak{c}}_k^2\,b_2+\tilde{\mathfrak{s}}_k^2\,b_3+
\tilde{\mathfrak{c}}_k\,\tilde{\mathfrak{s}}_k
\,b_4\Big)\,\beta_2^2\,y^2
\\
&\quad 
+\sigma_P^{2m_k}\,\lambda_Q^{2n_k}
\Big(
\tilde{\mathfrak{s}}_k^2\,b_2+\tilde{\mathfrak{c}}_k^2\,b_3-\tilde{\mathfrak{c}}_k\,\tilde{\mathfrak{s}}_k\,b_4
\Big)\,\gamma_3^2\,
(\mathfrak{s}_k\,x+\mathfrak{c}_k\,z)
^2
\\
&\quad
+\sigma_P^{m_k}\,\lambda_Q^{n_k}\,
\Big(
2\tilde{\mathfrak{c}}_k\,\tilde{\mathfrak{s}}_k\,(b_3-b_2)
(\tilde{\mathfrak{c}}_k^2\,-\tilde{\mathfrak{s}}_k^2)\,b_4
\Big)\beta_2\,\gamma_3
(\mathfrak{s}_k\,x\,y+\mathfrak{c}_k\,y\,z)
\\
&\quad
+{\sigma_P}^{2m_k}\,\sigma_Q^{2n_k}\,\mathrm{hot}^{y}_{k,\mu,\rho},
\\
\breve{z}_{k,\mu, \rho}
&=
c_1\,\lambda_P^{m_k}\,
{\lambda_Q}^{n_k}\,
\big(
\alpha_1\,(\mathfrak{c}_k\,x-\mathfrak{s}_k\,z)
+\alpha_3\,(\mathfrak{s}_k\,x+\mathfrak{c}_k\,z)
\big)\\
&\quad +
c_1\,\lambda_P^{m_k}\,
\sigma_P^{-m_k}\,\alpha_2\,y
+
\big(\tilde{\mathfrak{c}}_k\,c_2+\tilde{\mathfrak{s}}_k\,c_3\big)\,\beta_2\,y\\
&\quad +\sigma_P^{m_k}\,\lambda_Q^{n_k}\,\gamma_3
\,\big(\tilde{\mathfrak{c}}_k\,c_3-\tilde{\mathfrak{s}}_k\,c_2\big)\,
(\mathfrak{s}_k\,x+\mathfrak{c}_k\,z)
\\
&\quad +
{\sigma_P}^{m_k}\,\sigma_Q^{n_k}\,\mathrm{hot}^{z}_{k,\mu,\rho}, 
\end{split}
\end{equation}
where $\mathrm{hot}^{\ast}_{k,\mu,\rho}= \mathrm{hot}^{\ast}_{k,\mu,\rho}(x,y,z)$, $\ast=x,y,z$,
 are higher order terms 
 whose Laundau's symbols satisfy the following conditions
 (see~\cite[Lemma 8.3]{DiaPer:19}). Write 
$$
\widehat{H}_2 (\textbf{x}_{k,\mu, \rho} ) \eqdef \widetilde{H}_2 (\textbf{x}_{k,\mu,\rho} )-
\lambda_Q^{2n_k}\,\tilde{\rho}_{2,k},
$$
where $\tilde{\rho}_{2,k}$ is defined in \eqref{e.removflatconditions}.
Then
\begin{itemize}
 \item [(i)] 
${O} \big(\mathrm{hot}^x_{k,\mu,\rho} (x,y,z)  \big)= 
{O} \big( {\lambda_P}^{m_k} 
\,\widetilde{H}_1 (\textbf{x}_{k,\mu,\rho})  \big)+
{O} \big(
\sigma_P^{m_k}\,\widehat{H}_2 (\textbf{x}_{k,\mu,\rho}) \big) 
\\
+ {O}\big(
H_1(\widehat{\textbf{x}}_{k,\mu,\rho})\big),
$
\item[(ii)]
${O} \big(\mathrm{hot}^y_{k,\mu, \rho}(x,y,z) \big)= 
 O(
{\lambda_P}^{m_k})+ 
O\big(\widetilde{H}_1 (\textbf{x}_{k,\mu,\rho} )\big)+
O\big(
\big(\sigma_P^{m_k}\,\widehat{H}_2 (\textbf{x}_{k,\mu,\rho})\big)^2\big)\\
+O\big(\sigma_Q^{-n_k}\,\widehat{H}_2 (\textbf{x}_{k,\mu,\rho} )\big)
+ O \big(H_2 (\widehat{\textbf{x}}_{k,\mu,\rho}) \big)$,
\item[(iii)]
${O} \big(\mathrm{hot}^{z}_{k,\mu,\rho} (x,y,z )\big)=
O\big(
{\lambda_P}^{m_k} 
\widetilde{H}_1 (\textbf{x}_{k,\mu,\rho})\big)+
O\big( \sigma_P^{m_k}\,\widehat{H}_2 (\textbf{x}_{k,\mu,\rho} )\big)
\newline + O\big(
H_3 (\widehat{\textbf{x}}_{k,\mu,\rho} )\big).
$
\end{itemize}

\subsection{The high order  terms of 
$\Psi_{k}^{-1}\circ g_{\ve,k,\mu}
^{N_2+m_k}\big({\gamma}_{\ve,k}(t)\big)$}
\label{ss.highorderterms}
Recall the definition of $\gamma_{\ve,k}$ in \eqref{e.gammaepsilon}. 
We now provide an explicit expression of the high order terms
$\mathrm{Hot}^{\ast}_{\ve,k,\mu}(t)$ in
$\Psi_{k}^{-1}\circ g_{\ve,k,\mu}
^{N_2+m_k}\big({\gamma}_{\ve,k}(t)\big)$.
Define $\textbf{x}_{\ve, k,\mu}$ and $\widehat{\textbf{x}}_{\ve, k,\mu}$ as in 
\eqref{e.hayquesepararlos}, where $f_{\bar \upsilon_k (\mu)}$ is replaced by $f_{\ve,\bar \upsilon_k (\mu)}$
(this  is why  the subscript $\ve$ appears).
Write
\begin{equation*}
\textbf{w}_{\ve,k,\mu}(t)\eqdef  
\textbf{x}_{\ve, k,\mu} \circ \Theta_{\bar \varsigma} (0,t,0),
\qquad
\widehat{\textbf{w}}_{\ve, k,\mu}(t)\eqdef  
\widehat{\textbf{x}}_{\ve,k,\mu} \circ \Theta_{\bar \varsigma} (0,t,0),
\end{equation*}
where 
$\Theta_{\bar \varsigma}$ is an in \eqref{e.todaslastetas}.
We have
\[
\begin{split}
{\mathrm{Hot}}^x_{\ve,k,\mu}(t)
&
=a_1\,
{\lambda_P}^{m_k}\sigma_P^{m_k}{{\sigma_Q}}^{n_k}
\widehat{\rho}^{1}_{\ve,k}(t)
+a_2\,
\sigma_P^{2m_k}
{{\sigma_Q}}^{n_k}
u_{\ve,k}(t)
\\
&\quad+
a_3\,
{\sigma_P}^{2m_k}
{{\sigma_Q}}^{n_k}
v_{\ve,k}(t)+
\sigma_P^{m_k}{{\sigma_Q}}^{n_k}
H_1\big(\widehat{\textbf{w}}_{\ve,k,\mu}(t)\big);
\\
{\mathrm{Hot}}^{y}_{\ve,k,\mu}(t)&=
b_1\,
{\lambda_P}^{m_k}\sigma_P^{2m_k}{{\sigma_Q}}^{2n_k}
\widehat{\rho}^{1}_{\ve,k}(t)
\\
&\quad+
\sigma_P^{2m_k}{\sigma_Q}^{n_k}\,
\Big(2b_2\big(
\tilde{\mathfrak{c}}_k(\alpha_2+\widetilde{w}^{1,\ve}_2)-\widetilde{w}^{1,\ve}_3
\tilde{\mathfrak{s}}_k\big)
\\
&\qquad\qquad+
b_4\big(
\tilde{\mathfrak{s}}_k(\alpha_2+\widetilde{w}^{1,\ve}_2)+
\widetilde{w}^{1,\ve}_3\,\tilde{\mathfrak{c}}_k\big)
\Big)\varsigma_2^{-1}\,t\,u_{\ve,k}(t)
\\
&\quad
+
\sigma_P^{2m_k}{\sigma_Q}^{n_k}\,
\Big(2b_3\big(
\tilde{\mathfrak{s}}_k(\alpha_2+\widetilde{w}^{1,\ve}_2)
+\widetilde{w}^{1,\ve}_3
\tilde{\mathfrak{c}}_k\big)
\\
&\qquad\qquad+
b_4\big(
\tilde{\mathfrak{c}}_k(\alpha_2+\widetilde{w}^{1,\ve}_2)-
\widetilde{w}^{1,\ve}_3\,\tilde{\mathfrak{s}}_k\big)
\Big)\varsigma_2^{-1}\,t\,v_{\ve,k}(t)
\\
&\quad
+
\sigma_P^{4m_k}{\sigma_Q}^{2n_k}\,
\Big(
b_2\,\big(u_{\ve,k}(t)\big)^2+
b_3\big(v_{\ve,k}(t)\big)^2
\\
&\qquad\qquad +
b_4\,u_{\ve,k}(t)\,v_{\ve,k}(t)
\Big)
+
\sigma_P^{2m_k}{{\sigma_Q}}^{2n_k}
H_2\big(\widehat{\textbf{w}}_{\ve,k,\mu}(t)\big);
\\
{\mathrm{Hot}}^{z}_{\ve,k,\mu}(t)&=c_1\,
{\lambda_P}^{m_k}\sigma_P^{m_k}{{\sigma_Q}}^{n_k}
\widehat{\rho}^{1}_{\ve,k}(t)
+c_2\,
\sigma_P^{2m_k}
{{\sigma_Q}}^{n_k}
u_{\ve,k}(t)
\\
&\quad+
c_3\,
{\sigma_P}^{2m_k}
{{\sigma_Q}}^{n_k}
v_{\ve,k}(t)+
\sigma_P^{m_k}{{\sigma_Q}}^{n_k}
H_3\big(\widehat{\textbf{w}}_{\ve,k,\mu}(t)\big).
\end{split}
\]
\subsubsection{Proof of item (3) in Claim~\ref{cl.inappendix}} 
\label{sss.claim}
We claim that
\[
\big\Vert
\big(
\mathrm{Hot}^{x}_{\ve,k,\mu}\,-\,
\sigma_P^{m_k}{{\sigma_Q}}^{n_k}
H_1\circ \widehat{\textbf{w}}_{\ve,k,\mu}\big)\,|_{[-4,4]}
\big\Vert_r\to 0.
\]
For this just 
note that 
\begin{itemize}
\item
$u_{\ve,k}(t)$, $v_{\ve,k}(t)$ and $\widehat{\rho}^\ell_{\ve,k}(t)$, $\ell=2,3$, have the same symbol of Landau  $O(\sigma_P^{-4m_k}\,\sigma_Q^{-2n_k})$, see \eqref{e.sinnombre} and
\eqref{e.Landaus},
\item
 $\widehat \rho_{\ve,k}^1$ is bounded and \eqref{e.espectroparazero}. 
 \end{itemize}
Finally, the convergence
$$
\lim_{k\to \infty} \big\Vert
\sigma_P^{m_k}{{\sigma_Q}}^{n_k}
H_1\circ \widehat{\textbf{w}}_{\ve,k,\mu}\,|_{[-4,4]}
\big\Vert_{r} = 0
$$
follows exactly as in
~\cite[Claim 8.4]{DiaPer:19}.

%
%

\bibliographystyle{plain}

\end{document}